%
%

\documentclass{amsbook}


\usepackage{amssymb}
\usepackage{amscd}
\usepackage[latin1]{inputenc}

\usepackage[active]{srcltx}

\newtheorem{theorem}{Theorem}[chapter]
\newtheorem{lemma}[theorem]{Lemma}

\newtheorem{corollary}[theorem]{Corollary}
\newtheorem{prop}[theorem]{Proposition}
\newtheorem{conj}[theorem]{Conjecture}
\newtheorem{question}[theorem]{Question}
\newtheorem{notation}[theorem]{Notation}

\theoremstyle{definition}
\newtheorem{definition}[theorem]{Definition}
\newtheorem{example}[theorem]{Example}

\theoremstyle{remark}
\newtheorem{remark}[theorem]{Remark}

\numberwithin{section}{chapter}
\numberwithin{equation}{chapter}

\newcommand{\skipit}[1]{{}}
\newcommand{\prfend}{\hbox to7pt{\hfil}
\par\vskip-\baselineskip\hbox to\hsize
{\hfil\vbox {\hrule width6pt height6pt}}\vskip\baselineskip}

\newcommand{\fa}{\mathfrak a}
\newcommand{\fb}{\mathfrak b}

\newcommand{\ffi}{\varphi}
\newcommand{\al}{\alpha}

\DeclareMathOperator{\im}{im}

\newcommand{\cH}{\mathcal{H}}

\newcommand{\mif}{\mbox{if} ~}
\newcommand{\s}{\; | \;}

\newcommand{\cocoa}{CoCoA}

\DeclareMathOperator{\Ann}{Ann}
\DeclareMathOperator{\codim}{codim}

\newcommand{\myarrow}[2]{\hbox to #1pt{\hfil$\to$\hfil}{\hskip-#1pt{\raise
10pt\hbox to#1pt{\hfil$\scriptscriptstyle #2$\hfil}}}}

\makeindex

\begin{document}

\frontmatter

\title{On the shape of a pure $O$-sequence}


\author[Mats Boij]{Mats Boij}
\address{Department of Mathematics, Royal Institute of Technology, S-100 44 Stockholm, Sweden}
\curraddr{}
\email{boij@kth.se}
\thanks{}

\author[Juan C. Migliore]{Juan C. Migliore}
\address{Department of Mathematics, University of Notre Dame, Notre Dame, IN 46556}
\curraddr{}
\email{Juan.C.Migliore.1@nd.edu}

\author[Rosa M. Mir\'o-Roig]{Rosa M. Mir\'o-Roig}
\address{Facultat de Matem\`atiques, Department d'\`Algebra i Geometria, Gran Via des les Corts Catalanes 585, 08007 Barcelona, Spain}
\curraddr{}
\email{miro@ub.edu}

\author[Uwe Nagel]{Uwe Nagel}
\address{Department of Mathematics, University of Kentucky, 715 Patterson Office Tower,
Lexington, KY 40506-0027}
\curraddr{}
\email{uwenagel@ms.uky.edu}

\author[Fabrizio Zanello]{Fabrizio Zanello}
\address{Department of Mathematical Sciences, Michigan Technological University, Houghton, MI 49931}
\curraddr{Department of Mathematics, MIT, Office 2-336, Cambridge, MA 02139-4307}
\email{zanello@math.mit.edu}
\thanks{}




\maketitle



\setcounter{page}{1}

\tableofcontents

\newpage
\thispagestyle{empty}

\begin{center} {\Large {\bf Abstract}}
\end{center}
{\ }\\
\\
A  \emph{monomial order ideal} is a finite collection $X$ of (monic) monomials such that, whenever $M\in X$ and $N$ divides $M$, then $N\in X$. Hence $X$  is a poset, where  the partial order is  given by divisibility. If all, say $t$, maximal monomials of $X$ have the same degree, then $X$ is \emph{pure} (of type $t$).

A \emph{pure $O$-sequence} is the vector, $\underline{h}=(h_0=1,h_1,...,h_e)$,
counting the monomials of $X$ in each degree. Equivalently, pure $O$-sequences can be characterized as the
$f$-vectors of pure multicomplexes, or, in the language of commutative algebra, as the $h$-vectors of monomial Artinian level
algebras.

Pure $O$-sequences had their origin in one of  the early works of Stanley's in this area, and
have since played a significant role in at least three different disciplines: the study of
simplicial complexes and their $f$-vectors, the theory of level algebras, and the theory of matroids.
This monograph is intended to be the first systematic study of the theory of pure $O$-sequences.

Our work, which makes an extensive use of both algebraic and combinatorial techniques, in particular includes:
\begin{itemize}
\item[(i)] A characterization of the first half of a pure $O$-sequence, which yields the exact converse to a $g$-theorem of Hausel;
\item[(ii)] A study of (the failing of) the unimodality property;
\item[(iii)] The problem of enumerating pure $O$-sequences, including a proof that \emph{almost all} $O$-sequences are pure, a natural bijection between integer partitions and type 1 pure $O$-sequences, and the asymptotic enumeration of socle degree 3 pure $O$-sequences of type $t$;
\item[(iv)] A study of the \emph{Interval Conjecture for Pure $O$-sequences} (ICP), which  represents perhaps the strongest possible structural result short of an (impossible?) full characterization;
\item[(v)] A pithy connection of the ICP with Stanley's conjecture on the $h$-vectors of matroid  complexes;
\item[(vi)] A more specific study of pure $O$-sequences of type 2, including a proof of the Weak Lefschetz Property in codimension 3 over a field of characteristic zero. As an immediate corollary, pure $O$-sequences of codimension 3 and type 2 are unimodal (over an arbitrary field);
\item[(vii)] An analysis, from a commutative algebra viewpoint, of the extent to which the Weak and  Strong Lefschetz Properties can fail for monomial algebras;
\item[viii)] Some observations about pure $f$-vectors, an important special case of pure $O$-sequences.
\end{itemize}

\let\oldthefootnote\thefootnote
\renewcommand{\thefootnote}{}\footnotetext{2010 {\em Mathematics Subject Classification.} Primary: 13D40, 05E40, 06A07, 13E10, 13H10. Secondary: 05A16, 05B35, 14M05, 13F20.\\
{\em Key words and phrases.} Pure $O$-sequence. Artinian algebra.
Monomial algebra. Unimodality. Differentiable sequence.  Level
algebra. Gorenstein algebra. Enumeration. Interval conjecture.
$g$-element. Weak Lefschetz Property. Strong Lefschetz Property.
Matroid simplicial complex. Macaulay's inverse system.}
\renewcommand{\thefootnote}{\oldthefootnote}


\newpage
\thispagestyle{empty}

\noindent {\bf Acknowledgments.} Many of the ideas for this monograph were developed during a meeting the five authors held at Notre Dame in Fall 2008. They are sincerely grateful to the Department of Mathematics of the University of Notre Dame for providing hospitality and partial financial support.  Part of this work was also completed during a Research in Pairs activity at the Universit\`a di Trento in July 2009, under the auspices of the Centro Internazionale per la Ricerca Matematica (CIRM).  The authors are very grateful to   CIRM for the financial support, to the members of the Mathematics Department for their hospitality, and to Augusto Micheletti for his kindness and many efforts on their behalf.

The fifth author also wishes to thank Scott Nollet, who asked him, during a talk, the farsighted question whether his Interval Conjecture for arbitrary level algebras \cite{Za2} could extend to the monomial case. Also, he wishes to thank Erik Stokes for many  useful discussions on matroids, as well as for converting this manuscript in the Memoirs style.  We would like to thank Bernadette Boyle, David Cook II, Robin Hartshorne, Svante Linusson and Megan Patnott  for reading the manuscript, pointing out some errors and making useful suggestions. We also thank the referee for several helpful comments that improved our presentation.

We would like to acknowledge our substantial use of the computer system \linebreak \cocoa\ \cite{cocoa}, which helped us find many interesting examples and conjectures, and pointed the way to some of our results.

The second  author was partially  sponsored by the National Security Agency under Grant Numbers H98230-07-1-0036 and H98230-09-1-0031.  The third author was partially supported by MTM2007-61104. The fourth author was partially supported by the National Security Agency under Grant Numbers H98230-07-1-0065 and H98230-09-1-0032.

\mainmatter

\chapter{Introduction}

The problem of characterizing pure $O$-sequences has a long history,
dating back to Stanley's 1977 seminal paper \cite{St1}. The goal of
this work is to shed  more light on this fascinating and perhaps
intractable problem, which lies at the crossroads between the
combinatorics of graphs and  simplicial complexes, the theory of
matroids, design theory, commutative algebra, and most recently, even plane partitions and bijective combinatorics. In particular, in
Chapter~\ref{ICP section}, we will present and begin a study of the
{\em Interval Conjecture for Pure $O$-sequences} (ICP),  which is
perhaps one of the best numerical results, short of the full
characterization, that one can hope to prove  on pure $O$-sequences.

Recall that a finite, non-empty set $X$ of (monic) monomials in the
indeterminates $y_1,y_2,...,y_r$ is called a {\em monomial order ideal} \label{useorderideal} if whenever $M\in X$ and $N$ is a monomial dividing $M$, we have $N\in X$. The {\em $h$-vector} of the set $X$ is the vector $\underline{h}=(h_0=1,h_1,...,h_e)$ counting the number of monomials of $X$ in each degree. A monomial order ideal $X$ is called {\em pure} if all maximal monomials of $X$ (the order being given by divisibility)  have the same degree. A {\em pure $O$-sequence} is the $h$-vector of a pure monomial order ideal.

The notion of a monomial order ideal comes from order theory, where in general, an \emph{order ideal} of a given poset $(S,\leq )$ is a subset of $S$ which is closed with respect to the relation $\leq $. Therefore, our monomial order ideals $X$ are the special case of (finite) order ideals in the poset $S$ (with respect to divisibility) of all monomials in $y_1,y_2,\dots $. Perhaps the most famous instances of order ideals are those  in the power set $S=2^V$ of a vertex set $V$ (a poset with respect to inclusion); these order ideals are best known as (abstract) \emph{simplicial complexes}, and can also be regarded as a special case (in fact, the \emph{squarefree} case) of our monomial order ideals, once we naturally identify a vertex $v_i\in V$ with a variable $y_i$. Since in this monograph we will only be concerned with \emph{monomial} order ideals, however, we will often abuse notation and simply refer to them as order ideals. 

The theory of pure $O$-sequences, as we said, began over thirty
years ago with Stanley's paper \cite{St1}, and has since attracted a
great deal of attention.    Some specific connections between pure $O$-sequences and other mathematical objects of interest are the following.

\medskip

\noindent $\bullet$ In algebraic combinatorics, pure $O$-sequences coincide with the $f$-vectors of pure
multicomplexes (i.e.\ simplicial complexes where the vertices are allowed to have multiplicities).

\medskip

\noindent $\bullet$ The best known and most studied special kind of pure $O$-sequence, perhaps, consists of the $f$-vectors of pure  simplicial complexes, i.e. {\em pure $f$-vectors}.  Recall that a pure simplicial complex is one where the facets all have the same dimension, and the component $f_i$ of the $f$-vector counts the faces of dimension $i$.  The characterization of these is also unknown (see some of the standard texts on the subject, such as \cite{MS,St3}, and  the results mentioned below).

\medskip

\noindent $\bullet$ In combinatorial commutative algebra,  thanks to the theory of {\em Macaulay's inverse systems} (also known as {\em Matlis Duality} - see \cite{Ge,IK} for an introduction to this theory), pure order ideals of monomials are in bijective correspondence with (standard graded)  monomial Artinian level algebras, and  pure $O$-sequences with their Hilbert functions. The  general theory of level algebras, which then followed mostly independent paths from that of pure order ideals, also originated with Stanley's work \cite{St1} (along with \cite{St2}). Among the several sources, we refer to the memoir \cite{GHMS} and to the  works \cite{We,Za} and their bibliographies for some of the most recent developments of this subject.

\medskip

\noindent $\bullet$ Recall that a projective plane of order $d$ is a finite projective plane with $d+1$ points on each line.  Such a plane has $q = d^2 +d+1$ points and $q$ lines.  The existence of a projective plane of order $d$ is equivalent to the existence of a pure $O$-sequence of the form
\[
\underline{h} = \left (1,q,q\binom{d+1}{2},q\binom{d+1}{3},\dots,q \binom{d+1}{d}, q\binom{d+1}{d+1} \right ),
\]
which in turn holds if and only if this $\underline{h}$ is a pure $f$-vector.
We will recall this connection in more detail in Chapter \ref{pure f-vectors}.  See also \cite{bjorner}, page 38.

\medskip

\noindent $\bullet$ In a similar way, the theory of pure $O$-sequences is related to the existence of error-correcting codes and designs (see e.g.\ \cite{KOP}).

\medskip

\noindent $\bullet$ Another important motivation for the study of pure
$O$-sequences comes from the theory of matroid  simplicial
complexes. It is  more beautiful than it is unsurprising that also
this theory began with the paper \cite{St1}. Indeed, in Section 7,
Stanley conjectured that the $h$-vector of a matroid complex (which
he actually called a ``$G$-complex'') is always a pure $O$-sequence.
Most of the huge amount of work done on matroids over the last
thirty-four years, involving ideas and techniques coming from
several different disciplines,  has in fact been motivated by that
intriguing conjecture, which remains wide open today (see,
for instance, \cite{HaSt, MNRV, NeNe, Oh, Schw, St3, Sto, Wh,Wh2} and their
bibliographies).

\medskip

\noindent $\bullet$ The very latest research direction  on pure
monomial order ideals is surprising, and sees them even connect to
plane partitions, families of non-intersecting lattice paths, and enumerative combinatorics.
In \cite{CookN}, D.\ Cook II and the fourth author observed that the
determinant  deciding the Weak Lefschetz Property (WLP) for a certain family of monomial almost complete intersections in three variables is the number of lozenge tilings of some hexagon. Then in \cite{LZ}, J. Li
and the fifth author  proved (algebraically) that the determinant of
the matrix deciding the Weak Lefschetz Property (WLP) for a
codimension 3 monomial Artinian level complete intersection is
exactly the function enumerating plane partitions inside a suitable
box (\cite{kra,Percy}). Note that plane partitions inside a box are in
bijection to lozenge tilings and also to certain families of non-intersecting lattice paths. Next, C. Chen, A. Guo, X. Jin, and G. Liu
\cite{CGJL}, explained  \emph{bijectively} and considerably extended
this connection. This seems only the beginning of the story. On the
algebraic  side, the class of (monomial) order ideals  for which the
WLP is decided by a function counting plane partitions might be
significantly large (see already \cite{BMZ,CookN} for more). More
generally, in \cite{CN2} it is shown that in some cases the WLP is
decided by a function counting families of non-intersecting lattice
paths, as studied by Gessel and Viennot (\cite{GV1}, \cite{GV2}).
Further, the  work of \cite{CGJL}  suggests that also the
combinatorics behind this theory has only begun surfacing. The work in
\cite{CGJL} already  connects other families of maps
between monomial spaces to three of the symmetry classes of plane
partitions, and it seems likely that more interesting  families of
plane partitions, including perhaps some of the other seven symmetry
classes, can be enumerated by  determinants of natural maps between
monomial spaces.

\medskip 

The main general results known today on pure $O$-sequences are due to Hibi (\cite{Hi}, Theorem 1.1 and Corollary 1.3) and to Hausel (\cite{Ha}, Theorem 6.3).  These are stated precisely in Theorem \ref{hibi} and Theorem \ref{hausel}.  Briefly, though, Hibi showed that all pure $O$-sequences $\underline{h}=(h_0=1,h_1,...,h_e)$ are {\em flawless} (meaning that $h_i\leq h_{e-i}$ for all indices $i\leq e/2$) and that, consequently, $\underline{h}$ is non-decreasing in the ``first half" (i.e., until degree $\lfloor e/2 \rfloor $). Hausel extended Hibi's work by proving a  ``$g$-result'' for pure $O$-sequences: namely, not only is the ``first half" of $\underline{h}$ non-decreasing, but it is also {\em differentiable}, that is, its first difference is the $h$-vector of some (not necessarily pure) order ideal (or, in the language of commutative algebra, it satisfies Macaulay's theorem). In fact, Hausel's $g$-result was \emph{algebraic}, not only \emph{enumerative} (that is, the differentiability property is a consequence of the injectivity of certain maps between vector spaces). Notice that the corresponding results to Hibi's and Hausel's are already known to hold true for matroid complexes (thanks to Chari \cite{Ch,Ch2} and Swartz \cite{Sw}; Hausel supplied another proof of those theorems in the same paper \cite{Ha}, among a few other interesting algebraic $g$-results).

However, whereas the ``first half" of a pure $O$-sequence $\underline{h}$ enjoys a property  as nice as differentiability, not even the condition of flawlessness, unfortunately, suffices to impose a good behavior on the second half of $\underline {h}$. In fact, even  {\em unimodality} may fail for pure $O$-sequences (a vector is defined to be {\em unimodal} when it does not increase after a strict decrease); the first example of a non-unimodal pure $O$-sequence is due to Stanley, who (always in \cite{St1}) proved that $(1,505,2065, 3395, 3325, 3493)$ \label{stanley example} is the $f$-vector of a Cohen-Macaulay simplicial complex, and therefore is a pure $O$-sequence.

The problem of finding non-unimodal pure $O$-sequences has since been studied by several authors, including Erd\"os and Bj\"orner, and numerous examples have been found even for special classes of pure $O$-sequences, such as the $f$-vectors of pure simplicial complexes or even the independence vectors of well-covered graphs (see, e.g., \cite{AMSE,Bj2,LM,MT}). In fact, Michael and Traves \cite{MT} even proposed the ``Roller-Coaster Conjecture'' for the independence vectors of well-covered graphs, which basically states that such sequences can be non-unimodal with as many peaks as we wish.

Examples of non-unimodal pure $O$-sequences have also been given recently, with algebraic methods, by the first and the last author  \cite{BZ}, in any number of variables $r\geq 3$ (all combinatorial examples, instead, need that $r$ be fairly large).

In general, especially in the light of the non-unimodality results, today the widespread belief is that achieving a full characterization of pure $O$-sequences is virtually hopeless. One of the goals of our monograph is to see this problem under a new perspective, by proposing the Interval Conjecture for Pure $O$-sequences (ICP). The ICP at least guarantees a very strong structural result for the set of pure $O$-sequences, which is perhaps as much as one can realistically  hope for. Namely, the ICP says  that if the entries of two pure $O$-sequences, $\underline{h}$ and $\underline{h}'$, coincide in all degrees but in one - say, in degree $i$, $h'_i=h_i+\alpha $ for some $\alpha >0$ - then for each value of $\beta =0,1,...,\alpha $, there exists a pure $O$-sequence $\underline{h}''$ coinciding with $\underline{h}$ and $\underline{h}'$ in all degrees different from $i$, and such that $h''_i=h_i+\beta $.
An analogous conjecture was  recently  proposed by the last author \cite{Za2} for arbitrary Artinian level algebras, and is still wide open also in that context.

 We will now outline the structure of this monograph. In
Chapter~\ref{defs and prelim results} we  fix the notation and collect the definitions as
well as the basic facts on Hilbert functions needed later on. In
Chapter~\ref{pureness and diff}
  we study  the relationship intercurrent between pure and differentiable $O$-sequences.
 The main result there will be a surprisingly simple characterization of the ``first half" of pure
 $O$-sequences: they exactly coincide with the differentiable $O$-sequences (thus providing a
 converse to Hausel's theorem). In particular, since the truncation of a pure $O$-sequence
 is also a pure $O$-sequence, it follows, when it comes to non-decreasing sequences,  that every
 differentiable  $O$-sequence is pure.

Curiously, we will prove that the converse to the latter result
only holds for socle degrees $e\leq 3$.  In fact, for each $e\geq
4$, we will  construct an example of a non-decreasing  pure
$O$-sequence which is not differentiable. We will also give an (in
general sharp)  bound on the growth of the first difference of a non-decreasing pure $O$-sequence (of any socle degree), yielding a
nice consequence concerning the {\em Weak Lefschetz Property} (WLP).  Recall that this property just means that multiplication by a general linear form  has maximal rank from any component of the algebra to the next.

The techniques used in constructing the latter examples will also
be applied to provide the first example of a non-unimodal, socle
degree 4 pure $O$-sequence (previously, the least socle degree known
to allow non-unimodality was 5, thanks to the above example of
Stanley's from \cite{St1}). This easily implies that the largest
socle degree forcing all pure $O$-sequences to be unimodal is
exactly 3.

We will conclude the chapter on unimodality by proving a ``roller-coaster'' theorem (analogous to that conjectured by Michael-Traves \cite{MT}, and still open, for the special case of well-covered graphs): namely, for any positive integer $N$ and any integer $r\geq 3$, there exist pure $O$-sequences of codimension $r$ having exactly $N$ peaks. In particular, using the standard technique of {\em distraction} of monomial Artinian ideals to one-dimensional ideals of reduced sets of points, a nice geometric consequence of our result is that, in any projective space $\mathbb P^r$ such that $r \geq 3$, there exist level sets of points having an arbitrary number, $N$, of peaks in their $h$-vectors.

In Chapter~\ref{ICP section}, we will prove the ICP in a few interesting cases,
most importantly for pure $O$-sequences whose largest degree $e$
(called the {\em socle degree}) is at most 3. Using the ICP for
$e=2$ and the Brown-Colbourn inequalities (see \cite{BC}), we are
also able to give a very short alternative proof that Stanley's
conjecture on the $h$-vector of matroid complexes holds true for
matroids of dimension 1 (this fact was first  shown by Stokes
in his Ph.D. Thesis \cite{Sto,Sto2}).

In Chapter \ref{enum}, we begin the study of counting the number of pure $O$-sequences of fixed codimension $r$ and socle degree $e$.  We give an explicit asymptotic value for this number, and show that it is the same as the corresponding value for the $O$-sequences of codimension $r$ and socle degree $e$, and the number of differentiable $O$-sequences of codimension $r$ and socle degree $e$.  We conclude that for $r$ large, ``almost all" level sequences are pure.  Similarly, we are able to show that for $r$ large, ``almost all" Gorenstein Hilbert functions are SI-sequences, and give the explicit asymptotic value.   The enumeration of pure $O$-sequences of given codimension, socle degree {\em and type} seems even harder; things take an entirely different path here from the enumeration of the corresponding level Hilbert functions. Chapter \ref{enum} also includes a bijective enumeration of the number of type 1 pure $O$-sequences of given codimension and socle degree.

Recall that the WLP imposes important restrictions for the Hilbert function, including a strong form of unimodality (see Remark \ref{wlpunim}).  It is thus of direct interest, in the study of the possible pure $O$-sequences, to ask under what conditions we can conclude that a monomial Artinian level algebra has the WLP.  In two variables, all algebras have not only the WLP but in fact also the Strong Lefschetz Property (SLP -- the analogous property when multiplication by $L$ is replaced by multiplication by $L^s$ for all possible $s \geq 1$ -- see Definition \ref{def of wlp}).  In three variables it was shown by Brenner and Kaid \cite{BK} that the WLP can fail when the type is 3, even for monomial almost complete intersections. This important class of algebras was, or is being, further studied in \cite{BMZ,CookN,MMN2} with respect to the failure of the WLP.

In Chapter \ref{type 2 three var} we consider the case of three variables.  We first give a result showing that the structure of any such ideal takes one of two possible forms.  Then we show that monomial Artinian level algebras  of type 2 of either form  necessarily have the WLP in characteristic zero.  Thus all pure $O$-sequences of type 2 in three variables are unimodal in the strong sense of Remark \ref{wlpunim}.

In Chapter \ref{type 2 arb var} we give a complete answer to the question of which combinations of type  and number of variables  may fail the WLP, and which may fail the SLP.  Specifically, we show in Theorem \ref{answernd} that all monomial Artinian level algebras in $r$ variables and type $d$ possess the WLP if and only if at least one of the following holds: (a) $r = 1$ or 2; (b) $d=1$; or (c) $r=3$ and $d=2$.  Furthermore, all monomial Artinian level algebras in $r$ variables and type $d$ possess the SLP if and only if either (a) or (b) holds.
This is accomplished via a long series of constructions, including some preparatory lemmas on tensor products.  For type 2 in four or more variables, we show that either injectivity or surjectivity may fail.  This is in striking contrast to the case of three variables analyzed in Chapter \ref{type 2 three var}.

In Chapter \ref{pure f-vectors} we focus on pure $f$-vectors.  First, we show that at least in the Cohen-Macaulay case, these $f$-vectors have the interval property described in the ICP, and we wonder if the analogous result holds for pure $f$-vectors in general.  We then consider pure $f$-vectors of type 2.  We show that all the entries of the $f$-vector in this case fall into a series of inequalities, and that unimodality is an immediate consequence.

Chapter~\ref{final comments} contains a collection of some natural open or open-ended questions arising from our work.

Finally, we end the monograph with an appendix, where  we recap the main definitions and notation that we have been using throughout our work.


\chapter{Definitions and preliminary results} \label{defs and prelim results}

In this chapter we recall some standard terminology and notation
from commutative algebra and algebraic combinatorics, as well as some results needed in the rest of this work.

Let $R = k[x_1,\dots,x_r]$, where $k$ is an infinite field, and let  $\mathfrak{m} = (x_1, \dots ,x_r)$ be its maximal homogeneous ideal. We consider  a  standard graded Artinian $k$-algebra $A=R/I=\bigoplus _{i\ge
0}A_{i}$ where $I$ is a homogeneous ideal of $R$. Since we may assume, without loss of generality, that the ideal $I$ does not contain non-zero forms of degree 1, we define $r$ to be the {\em codimension} of $A$.

The {\em Hilbert
function} $h_A$ of $A$ is defined by $h_A(t):=\dim _kA_{t}$
and the {\em socle} \label{defsocle} of $A$ is defined as the annihilator of the homogeneous maximal ideal $\overline{\mathfrak{m}} = (\overline{x_1}, \dots ,\overline{x_r})\subseteq A$, namely $soc(A)=\{ a\in A \mid a\overline{\mathfrak{m}}=0 \}$.
Note that the Hilbert function of an Artinian $k$-algebra has finite support and is captured in its {\em $h$-vector} $\underline{h} = (h_0,h_1,\dots,h_e)$ where $h_0=1$, $h_{i}=h_A(i)>0$ and $e$ is the last index with this property. The integer $e$ is called the {\em socle degree} of $A$ and $e+1$ the {\em length} of $\underline{h}$.
We say
that $A$ is {\em level of type $t$} if the socle $soc(A)$ of $A$ is of
dimension $t$ and is concentrated in one degree. Furthermore, $A$
is {\em Gorenstein} if and only if it is level of type 1.

\vskip 2mm  For a homogeneous ideal $I\subseteq
R=k[x_1,x_2,\dots,x_r]$, we define the \emph{inverse system} \label{definversesystem} $R$-module
$I^\perp=\operatorname{ann}_{\mathcal R}(I)\subseteq\mathcal
R=k[y_1,y_2,\dots,y_r]$, where $R$ acts on $I^\perp $ by
contraction. That is, the external product is the linear action determined by
$$
x_i \circ y_1^{a_1}y_2^{a_2}\cdots y_r^{a_r} =
  \begin{cases}
    y_1^{a_1}y_2^{a_2}\cdots y_i^{a_i-1}\cdots y_r^{a_r},&\text{if $a_i>0$,}\\
    0,&\text{if $a_i=0$,}
  \end{cases}
$$
for any non-negative integers $a_1,a_2,\dots,a_r$. When considering only monomial ideals, we can simplify the notation by considering inverse system modules to also lie inside the  polynomial ring $R$. It follows that, in any degree $d$, the inverse system $I^\perp_d$ is spanned by the monomials of $R_d$ not in $I_d$. We will call the elements of $I^\perp_d$ \emph{derivatives}. See \cite{Ge,IK} for a comprehensive (and more general) treatment of the theory of inverse systems.

\vskip 2mm
\label{binomialexpansion}
Let $n$ and $d$ be positive integers. There exist uniquely
determined integers $k_d>k_{d-1}> \dots
>k_{\delta}\geq \delta \geq 1 $ such that
\[
n = {\textstyle n_{(d)}= \binom{k_{d}}{d}+ \binom{k_{d-1}}{d-1}+
\cdots + \binom{k_{\delta}}{\delta} .}
\]
 This is called the {\em
$d$-binomial expansion of $n$}. Following \cite{BG}, for any
integers $a$ and $b$, we define
$$ {\textstyle (n_{(d)})^b_a:= \binom{k_{d}+b }{ d+a}+\binom{k_{d-1}+b }{ d+a-1}+
\cdots + \binom{k_{\delta}+b }{ \delta +a}}$$ where we set
$\binom{m}{p}=0$ whenever $p<0$ or $m<p$.

The basic results about the behavior of Hilbert functions are
summarized in the following theorem.

\begin{theorem} \label{macaulay} Let $A$ be a graded algebra. Let $L\in A=R/I$ be  a general linear form. Denote by $h_d$
the degree $d$ entry of the Hilbert function of $A$ and by $h'_d$
the degree $d$ entry of the Hilbert function of $A/LA$. The following holds:
\begin{itemize} \item[(i)] (Macaulay) $h_{d+1}\le ((h_d)_{(d)})_1^1$.
\item[(ii)] (Gotzmann) If $h_{d+1}=((h_d)_{(d)})_1^1$ and $I$ is
generated in degrees $\le d$ then $h_{d+s}=((h_d)_{(d)})_s^s$ for
all $s\ge 1$. \item[(iii)] (Green) $h'_{d}\le ((h_d)_{(d)})_0^{-1}.$
\end{itemize} \end{theorem}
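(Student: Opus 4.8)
The plan is to treat all three statements uniformly, since they are classical results (due, respectively, to Macaulay, Gotzmann, and Green) whose only nontrivial input beyond standard facts about graded algebras is the combinatorics of lexicographic ideals. The unifying device is to replace $I$ by a monomial ideal with the same Hilbert function and then reduce to the case of a lex ideal, where each of the quantities appearing in (i)--(iii) becomes an explicit sum over a $d$-binomial expansion. Concretely, I would fix a term order and pass to the generic initial ideal $\operatorname{gin}(I)$: it is a monomial ideal with $h_{R/\operatorname{gin}(I)} = h_A$, so for the purely numerical statements (i)--(iii) one may assume at the outset that $I$ is monomial.

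For (i), the next step is to invoke Macaulay's theorem on the classification of Hilbert functions: there is a (unique) lexicographic ideal $L \subseteq R$ with $h_{R/L} = h_A$, and $L$ is genuinely an ideal. As this does not change the Hilbert function, it suffices to bound $h_{d+1}$ for $L$. For a lex ideal the degree-$d$ monomials outside $L$ form an initial segment of size $h_d$ in the lex order, the degree-$(d+1)$ monomials outside $L$ are obtained from it by an explicit combinatorial recipe, and carrying out the count in terms of the $d$-binomial expansion of $h_d$ gives $h_{d+1} = ((h_d)_{(d)})_1^1$ exactly for $L$. The substantive point is extremality: that an \emph{arbitrary} monomial ideal satisfies $h_{R/I}(d+1) \le ((h_{R/I}(d))_{(d)})_1^1$. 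This is Macaulay's ``compression'' (shifting) argument, which transforms the degree-$d$ piece of any monomial ideal into a lex segment without increasing the dimension of the quotient in degree $d+1$; together with the reduction above this proves (i).

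For (ii), I would keep the reduction to the monomial/lex setting and then simply invoke Gotzmann's persistence theorem, which states precisely that maximal growth $h_{d+1} = ((h_d)_{(d)})_1^1$ together with generation of $I$ in degrees $\le d$ forces $h_{d+s} = ((h_d)_{(d)})_s^s$ for all $s \ge 1$. I do not see a short self-contained route here: its proof passes either through Gotzmann's bound on the Castelnuovo--Mumford regularity of such an ideal (the ``Gotzmann number''), or through the term-order argument showing that maximal growth forces the generic initial ideal to be lex in degree $d+1$ and that its generators are then exhausted. This is the step I expect to be the main obstacle, and in a write-up I would cite it rather than reprove it.

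For (iii), I would instead use the reverse-lexicographic order with $x_1 > \dots > x_r$ and pass to $J = \operatorname{gin}(I)$ in that order. The key identity is that for a general linear form $L$ one has $\operatorname{gin}(I + (L)) = J + (x_r)$, whence $h'_d = \dim_k\big(R/(J + (x_r))\big)_d$, which is exactly the number of degree-$d$ monomials in $x_1, \dots, x_{r-1}$ not lying in $J$. Bounding this quantity by $((h_d)_{(d)})_0^{-1}$ --- equivalently, that passing a monomial space of dimension $h_d$ in $R_d$ modulo a general linear form leaves a space of dimension at most $((h_d)_{(d)})_0^{-1}$ --- is Green's hyperplane restriction lemma, once again proved by reduction to lex segments together with an explicit $d$-binomial computation (or by Green's compression argument). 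As in (i), the only real content is this extremal combinatorial lemma for lex ideals.
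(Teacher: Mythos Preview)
Your sketch is essentially correct as an outline of how these classical results are proved in the modern literature (via generic initial ideals, lex segments, and compression), and you correctly identify (ii) as the step that one would cite rather than reprove. However, the paper does not attempt any such argument: its ``proof'' consists entirely of citations to the original papers of Macaulay, Gotzmann, and Green, and to the corresponding statements in Bruns--Herzog. So there is no real comparison of methods to make; you have supplied substantially more than the paper does. One minor point: you use $L$ both for the general linear form and for the lex ideal, which is confusing and should be fixed in any write-up.
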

\begin{proof}
 (i) See \cite{macau}, or \cite{BH}, Theorem 4.2.10.

 (ii) See \cite{Go}, or \cite{BH}, Theorem 4.3.3.

 (iii) See \cite{Gr}, Theorem 1.
\end{proof}

\begin{definition}\label{basic-defs}\rm
 Let $\underline{h} = (h_0,\dots,h_e)$ be a sequence of positive integers.

 (1) $\underline{h} $ is said to be an {\em $O$-sequence} if it satisfies
 Macaulay's bound for all $d$.

(2) $\underline{h}$ is called {\em unimodal} if $h_0 \leq h_1
\leq \dots \leq h_j \geq h_{j+1} \geq \dots \geq h_e$ for some
$j$.

(3) $\underline{h}$ is said to be {\em flawless}  if $h_{i}\le
h_{e-i}$ for $i\le \frac{e}{2}$.

(4) We will say that $\underline{h}$ is {\em non-decreasing} if
$h_{i+1}\ge h_{i}$ for all $i$.

 (5) $\underline{h}$ is called an {\em SI-sequence}  if it
satisfies the following two conditions:

\begin{itemize}
\item[(a)] $\underline{h}$ is symmetric, i.e.\ $h_{e-i} = h_i$ for
all $i=0,\dots,\lfloor \frac{e}{2} \rfloor$. \item[(b)] $(h_0,
h_1-h_0,h_2-h_1,\dots,h_j-h_{j-1})$ is an $O$-sequence, where $j =
\lfloor \frac{e}{2} \rfloor$; i.e.\ the ``first half'' of
$\underline{h}$ is a {\em differentiable} $O$-sequence.
\end{itemize}
\end{definition}

We remark that  Macaulay also showed that any $O$-sequence is
actually the Hilbert function of some standard graded algebra. A
{\em pure $O$-sequence} is the $h$-vector of a pure order ideal or, equivalently, the $h$-vector  of an Artinian level
monomial algebra. Note that a pure $O$-sequence is also the $f$-vector of a pure multicomplex. By \cite{Hi}, Theorem 1.1 and Corollary 1.3, any
pure $O$-sequence is flawless. The problem of characterizing pure
$O$-sequences  is far from being solved and the goal of the next chapters
is to shed more light on it.  We now recall the main two results on pure $O$-sequences that we will use in this monograph.

\begin{theorem}[Hibi \cite{Hi}, Theorem 1.1]  \label{hibi}
Let $\underline{h}$ be a pure $O$-sequence of socle degree $e$.  Then
\[
h_i \leq h_j
\]
whenever $0 \leq i \leq j \leq e-i$.  This has the following two important consequences:

\begin{itemize}
\item[(a)] $\underline{h}$ is {\em flawless}, i.e.\ $h_i \leq h_{e-i}$ for all $0 \leq i \leq \lfloor \frac{e}{2} \rfloor$.

\item[(b)] The ``first half'' of $\underline{h}$ is non-decreasing:
\[
1 = h_0 \leq h_1 \leq h_2 \leq \cdots \leq h_{\lfloor \frac{e}{2} \rfloor}.
\]
\end{itemize}
\end{theorem}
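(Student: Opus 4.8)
The plan is to work with a pure monomial order ideal $X$ whose maximal monomials all have degree $e$, and to exhibit, for each pair $i \le j \le e-i$, an injective map from the set $X_i$ of degree-$i$ monomials of $X$ into the set $X_j$ of degree-$j$ monomials of $X$; this immediately gives $h_i = |X_i| \le |X_j| = h_j$. The two consequences are then formal: part (a) is the case $j = e-i$, and part (b) is the case $j = i+1$ (valid precisely when $i+1 \le e-i$, i.e. $i \le \lfloor e/2 \rfloor - 1$ if $e$ is even and $i \le (e-1)/2$ if $e$ is odd; in either case this covers all steps up to $h_{\lfloor e/2 \rfloor}$), combined with a trivial induction on the chain of inequalities $h_0 \le h_1 \le \cdots$.

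The construction of the injection is the heart of the matter. Given a monomial $M \in X_i$, purity guarantees there is a maximal monomial $N$ of $X$ with $M \mid N$ and $\deg N = e$. Since the poset of divisors of $N$ between $M$ and $N$ contains monomials of every intermediate degree (multiply $M$ by variables dividing $N/M$ one at a time), one can choose a monomial of degree $j$ lying between $M$ and $N$; all such monomials divide $N$, hence lie in $X$ since $X$ is an order ideal. The subtlety is to make this choice \emph{canonically} so that distinct $M$'s go to distinct images. The standard device: fix once and for all a total order on the variables $y_1 < y_2 < \cdots < y_r$, and for each $M \in X_i$ first select the maximal monomial $N = \phi(M)$ of degree $e$ lying above $M$ that is lexicographically largest (this breaks ties), then let the image of $M$ be the unique monomial of degree $j$ obtained from $M$ by multiplying by the $j-i$ smallest variable-powers of $N/M$ in a prescribed deterministic rule. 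One recovers $M$ from its image $M'$ together with the knowledge of $N$: indeed $M = M' \cdot \gcd(\text{appropriate factor})$... — more cleanly, one shows the map $M \mapsto (M', \text{some bounded extra data})$ is injective and that the extra data is determined by $M'$, so $M \mapsto M'$ is injective on each fiber and globally.

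The main obstacle I expect is precisely guaranteeing global (not just fiberwise) injectivity of the degree-raising map: two different degree-$i$ monomials $M_1, M_2$ could a priori be pushed to the same degree-$j$ monomial $M'$ via different maximal monomials $N_1 \ne N_2$. The fix is to make the degree-raising rule depend only on $M$ itself in a way that is \emph{reversible}: for instance, define $M' = M \cdot y_{s}^{\,j-i}$ where $y_s$ is the largest variable such that $M \cdot y_s^{\,j-i}$ still divides \emph{some} maximal monomial of $X$ — but this again need not be reversible. The cleanest route, and the one I would ultimately pursue, is Hibi's original flavor of argument: pick an arbitrary maximal monomial $N$, restrict attention to the interval $[1, N]$ in the divisibility poset (a product of chains, hence a distributive lattice), use the fact that the rank-generating function of a product of chains is symmetric and unimodal together with the normalized matching / Sperner property of products of chains to get an injection $X_i \cap [1,N] \hookrightarrow X_j \cap [1,N]$ \emph{that respects divisibility}, and then glue these over a well-chosen family of $N$'s covering all of $X$, checking compatibility. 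In any case, once the degree-raising injection is in hand, the stated consequences follow in one line each, so I would front-load essentially all the effort into setting up that injection carefully.
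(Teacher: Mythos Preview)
The paper does not prove Theorem~\ref{hibi}; it merely quotes it from Hibi's original article, so there is no in-paper argument to compare against. Your proposal therefore has to be judged on its own, and it contains one genuine gap.

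You correctly diagnose that the naive ``push $M$ up along a chosen maximal $N$'' map need not be globally injective, and you end by proposing to use the symmetric chain decomposition (equivalently, normalized matching) of each interval $[1,N]$ and then ``glue.'' The gap is precisely in the gluing: the SCD injections coming from two different maximal monomials $N_1,N_2$ are \emph{not} compatible on the overlap $[1,N_1]\cap[1,N_2]$, so naively patching them produces a multivalued, non-injective map. Saying ``checking compatibility'' does not address this; compatibility in fact fails.

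The standard repair (and essentially Hibi's argument) is to avoid overlaps altogether. Order the maximal monomials as $N_1,\ldots,N_t$ and set
\[
Y_k \;=\; [1,N_k]\setminus\bigl([1,N_1]\cup\cdots\cup[1,N_{k-1}]\bigr),
\]
so that $X=\bigsqcup_{k=1}^{t} Y_k$. The key observation you are missing is that each $Y_k$ is an \emph{upper set} (filter) in the product of chains $[1,N_k]$: if $M\in Y_k$ and $M\mid M'\mid N_k$, then $M'\in Y_k$ (otherwise $M'\mid N_l$ for some $l<k$, forcing $M\mid N_l$). Now take the symmetric chain decomposition of $[1,N_k]$; for $i\le j\le e-i$ the rank-$i$ element of each symmetric chain lies below the rank-$j$ element, so the SCD injection from rank $i$ to rank $j$ moves \emph{upward} and therefore sends $(Y_k)_i$ into $(Y_k)_j$. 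Summing these disjoint injections over $k$ yields the desired injection $X_i\hookrightarrow X_j$. With this in hand your derivations of (a) and (b) are correct.
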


In \cite{Ha}, Hausel has then reproved and strengthened  Hibi's
result. Here, however, we only state the portion of Hausel's theorem
that we will  need in the rest of our work, and we phrase it in a slightly
different way from that of \cite{Ha} for our purposes below.

\begin{theorem}[Hausel \cite{Ha}, Theorem 6.2] \label{hausel}
Let $A$ be a monomial Artinian level algebra of socle degree $e$. If the field $k$ has characteristic zero, then for a {\em general} linear form $L$, the induced multiplication
\[
\times L : A_j \rightarrow A_{j+1}
\]
is an injection, for all $j = 0,1,\dots,\lfloor \frac{e-1}{2} \rfloor$.  In particular, over any field the sequence
\[
1, h_1 -1, h_2 - h_1 ,\dots, h_{\lfloor \frac{e-1}{2} \rfloor +1} - h_{\lfloor \frac{e-1}{2} \rfloor}
\]
is an $O$-sequence, i.e.\ the ``first half'' of $\underline{h}$ is a {\em differentiable $O$-sequence}.
\end{theorem}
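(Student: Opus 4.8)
The plan is to reduce the statement, via Macaulay's inverse systems, to the Hard Lefschetz theorem for monomial complete intersections. The structural point is that a monomial level algebra embeds degree-by-degree into a direct sum of monomial complete intersections all of socle degree $e$, and each of the latter is a regrading of the rational cohomology ring of a product of projective spaces, hence satisfies the Strong Lefschetz Property when $\Char k = 0$. The injectivity in the ``first half'' then propagates to $A$ by a trivial diagram chase, and the numerical ``in particular'' follows from Macaulay's theorem applied to $A/LA$ together with the field-independence of $\underline{h}$ for monomial ideals.

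Concretely, I would first write $A = R/I$ with $I$ a monomial ideal and pass to $N = I^{\perp} \subseteq \mathcal{R} = k[y_1,\dots,y_r]$, a monomial order ideal with $\dim_k N_d = h_d$ for all $d$. Under the duality identifying $\operatorname{soc}(A)_d$ with $N_d/(R_1\circ N_{d+1})$, i.e.\ with the minimal $R$-module generators of $N$ in degree $d$, the hypothesis that $A$ is level of socle degree $e$ and type $t$ says exactly that $N$ is minimally generated by $t$ monomials $m_1,\dots,m_t$, each of degree $e$ (i.e.\ that the order ideal $N$ is pure with top degree $e$). Writing $m_i = y_1^{c_{i1}}\cdots y_r^{c_{ir}}$ and $I_i := \Ann_R(m_i) = (x_1^{c_{i1}+1},\dots,x_r^{c_{ir}+1})$, each $R/I_i$ is a monomial complete intersection, Artinian of socle degree $c_{i1}+\cdots+c_{ir} = e$. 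Since $N = \sum_i R\circ m_i$, taking annihilators gives $I = \bigcap_i I_i \subseteq I_i$, so there is a graded $k$-algebra homomorphism $A \to B := \bigoplus_{i=1}^t R/I_i$ which in each degree $d$ is the injection $R_d/I_d \hookrightarrow \bigoplus_i R_d/(I_i)_d$ (a homogeneous form lies in $\bigcap_i I_i$ iff it lies in every $I_i$), and which is compatible with multiplication by any linear form.

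Next comes Lefschetz. Each $R/I_i$ is isomorphic, after halving the grading, to $H^{*}(\PP^{c_{i1}}\times\cdots\times\PP^{c_{ir}};\Q)$, with $L_0 := x_1+\cdots+x_r$ corresponding to an ample class; since $\Char k = 0$, the Hard Lefschetz theorem gives that $\times L_0^{\,e-2j} : (R/I_i)_j \to (R/I_i)_{e-j}$ is an isomorphism for $j \le e/2$, and factoring off one copy of $L_0$ from this composite shows $\times L_0 : (R/I_i)_j \to (R/I_i)_{j+1}$ is injective whenever $e-2j \ge 1$, i.e.\ for $j = 0,1,\dots,\lfloor\frac{e-1}{2}\rfloor$. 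The single element $L_0$ does this for all $i$ at once, so $\times L_0 : B_j \to B_{j+1}$ is injective in the same range, and then the commutative square
\begin{CD}
A_j @>{\times L_0}>> A_{j+1}\\
@VVV @VVV\\
B_j @>{\times L_0}>> B_{j+1}
\end{CD}
together with injectivity of $A_j \hookrightarrow B_j$ forces $\times L_0 : A_j \to A_{j+1}$ to be injective for $j \le \lfloor\frac{e-1}{2}\rfloor$. For each such $j$ the set of linear forms $L$ with $\times L : A_j \to A_{j+1}$ injective is Zariski open (nonvanishing of some maximal minor of the multiplication matrix) and nonempty (it contains $L_0$); intersecting over the finitely many relevant $j$ shows that a \emph{general} $L$ works, which is the first assertion. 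For the numerical consequence, injectivity of $\times L$ for $j \le \lfloor\frac{e-1}{2}\rfloor$ yields $\dim_k (A/LA)_i = h_i - h_{i-1}$ for $i \le \lfloor\frac{e-1}{2}\rfloor + 1$, so the displayed sequence is an initial segment of the Hilbert function of the standard graded Artinian algebra $A/LA$, which is an $O$-sequence by Theorem~\ref{macaulay}(i); an initial segment of an $O$-sequence is again one, and since the $h$-vector of a monomial ideal, hence the displayed sequence, does not depend on the field while every pure $O$-sequence is realized by a pure monomial order ideal over $\Q$, the conclusion holds over an arbitrary field.

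The only genuinely non-formal ingredient is the Strong Lefschetz Property of monomial complete intersections in characteristic zero invoked in the third step, and that is precisely where the hypothesis $\Char k = 0$ is essential. Everything else is bookkeeping with inverse systems, the two points worth stating carefully being that $A \hookrightarrow B$ is a map of graded algebras (legitimate because $I = \bigcap_i I_i \subseteq I_i$) and that one Lefschetz element serves all $t$ summands simultaneously (automatic for $L_0 = x_1 + \cdots + x_r$). As a byproduct the argument re-derives Hibi's non-decreasingness of the first half (Theorem~\ref{hibi}(b)), since injectivity of $\times L$ forces $h_j \le h_{j+1}$ in that range.
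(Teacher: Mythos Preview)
The paper does not supply its own proof of this statement; it records it as Hausel's theorem with a citation to \cite{Ha} and then uses it as a black box (see also Remark~\ref{g-remarks}). So there is no ``paper's proof'' to compare against directly.

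Your argument is correct. The embedding $A = R/\bigcap_i I_i \hookrightarrow \bigoplus_i R/I_i$ is a graded $R$-module map, hence commutes with multiplication by any linear form; each summand $R/I_i$ is a monomial complete intersection of socle degree $e$, so in characteristic zero it has the Strong Lefschetz Property (this is Stanley's result \cite{St2}, equivalently Hard Lefschetz for $\prod_j \PP^{c_{ij}}$, also \cite{watanabe}); injectivity of $\times L$ on each summand in degrees $j \le \lfloor (e-1)/2 \rfloor$ then pulls back to $A$ via the commuting square, and the numerical consequence follows from Macaulay's theorem applied to $A/LA$ together with the field-independence of monomial Hilbert functions, exactly as you say and as the paper itself spells out in Remark~\ref{g-remarks}(3).

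As for the comparison with Hausel's original argument: his proof in \cite{Ha} proceeds via the geometry of toric hyperK\"ahler varieties and is designed to handle matroid $h$-vectors simultaneously; it is considerably heavier machinery. Your route is the standard algebraic shortcut for the pure $O$-sequence case, relying only on the (classical) SLP of monomial complete intersections and a one-line diagram chase. It is more elementary and more transparent for this particular statement, though it does not directly yield Hausel's companion results for matroids. The paper implicitly acknowledges this alternative line when it notes (Remark~\ref{g-remarks}(2)) that for monomial ideals one may take $L = x_1 + \cdots + x_r$, which is exactly your $L_0$.
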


\begin{remark} \label{g-remarks}
{\em
\begin{enumerate}
\item Without the assumption that the field has characteristic zero, the first part of Hausel's theorem would not be true.  Indeed, there are several observations and results in the literature (cf. for instance \cite{HMNW} Remark 2.9, \cite{MMN2} Example 7.10 and Question 7.12, \cite{LZ} Theorem 3.2) to show that in characteristic $p$, a monomial complete intersection may fail the stated property.

\item The property quoted in the first part of Hausel's theorem can also be phrased by saying that $\underline{h}$ satisfies the $g$-inequalities, and this result is often referred to as ``Hausel's $g$-theorem." We will call such a linear form a {\em $g$-element}.  Note that if the field $k$ is infinite and if $R = k[x_1,\dots,x_r]$, it was shown in \cite{MMN2}, Proposition 2.2 that the linear form $L$ can be taken to be $x_1+\cdots+x_r$.

\item At first glance, it would seem that the second part of Hausel's theorem would also require characteristic zero, since its proof uses the first part (the existence of a $g$-element).  However, the second part is purely about Hilbert functions.  Since the Hilbert function of a monomial ideal does not depend on the base field, the stated property continues to hold for the given monomial ideal even if we change the field.  To rephrase this in the language of algebraic combinatorics, an \emph{algebraic} $g$-theorem is stronger than an \emph{enumerative} $g$-theorem. For this reason, Hausel's theorem is indeed a generalization of Hibi's theorem (which has no assumption on the characteristic).  This observation will allow us, next, to draw conclusions about Hilbert functions over any base field even if we use $g$-elements in the proof.

More precisely, all purely combinatorial statements on monomials do hold over an arbitrary field. Thus even if some of the techniques we are employing in this monograph are algebraic, and so some of our {\em arguments}  need to rely on certain properties of a base field, any conclusion giving a numerical property of pure $O$-sequences is clearly only combinatorial in nature, and therefore, once shown ``over some field,'' it is in fact  true independently of the characteristic.
\end{enumerate}
}
\end{remark}

\begin{definition} \label{simplicialcomplex}
An {\em (abstract) simplicial complex} $\Delta $ on $[n]:= \{1,
\dots  ,n\}$ is a subset of the power set of $[n]$ which is  closed
under inclusion, i.e. if $F \subseteq G$ and $G \in \Delta $, then
$F \in \Delta $. The elements $F$ of $\Delta $ are called {\em
faces}, and the maximal elements under inclusion are called {\em
facets}. The simplicial complex $\Delta $ is said to be  {\em pure}
if all its facets have the same cardinality.  The {\em $f$-vector} of $\Delta $ is the vector $\underline{f_{\Delta }} = \underline{f} = (f_{-1} = 1 ,f_0 , \dots , f_{d-1})$, where $f_i$ counts the number of faces of $\Delta $ of cardinality $i+1$.

A simplicial complex  is called a {\em matroid} if, for all
$W\subseteq [n]$, the restriction $\Delta _W = \{F \in \Delta : F
\subseteq W \}$ is a pure simplicial complex.
\end{definition}

The connection to algebra is provided by the following construction. For a subset $F \subset  [n]$, we write $x_F$ for the squarefree monomial
  $\prod _{i\in F} x_{i}$. The {\em Stanley-Reisner ideal} of $\Delta $ is
$I_{\Delta } = \{ x_F : F \subseteq [n], F \notin \Delta \}$ and the corresponding
{\em Stanley-Reisner ring}
is $K[\Delta] = R/I_{\Delta}$. We will say that $\Delta$ has an algebraic property like Cohen-Macaulayness if $K[\Delta]$ has
this property. For more details on simplicial complexes, Stanley-Reisner rings and their
algebraic properties we refer to the books of Bruns-Herzog \cite{BH} and Stanley \cite{St3}.

\begin{definition}\label{def of wlp} We say that  a standard graded Artinian algebra $A=R/I$
has the {\em Weak Lefschetz Property (WLP)}
if there is a linear form $L \in A_1$ such that, for all
integers $j$, the multiplication map
\[
\times L: A_{j} \to A_{j+1}
\]
has maximal rank, i.e.\ it is injective or surjective.   (We will often abuse notation and say that $I$ has the WLP.)  We say that $A$ has the {\em Strong Lefschetz Property (SLP)} if there is an $L$ such that
\[
\times L^s : A_j \rightarrow A_{j+s}
\]
 has maximal rank for all $j$ and all $s$.
\end{definition}

\begin{remark}\label{wlpunim}{\em
   Part of the great
interest in the WLP stems from the fact that its presence puts
severe constraints on the possible Hilbert functions, which can
appear in various disguises (see, e.g., \cite{St-faces} and
\cite{MN3}).   Specifically, if $R/I$ has the WLP then its Hilbert
function is unimodal in the following strong sense: it is
differentiable in the first interval (in particular it is strictly
increasing), it is constant in the second  (possibly trivial)
interval, and then it is non-increasing in the third (\cite{HMNW},
Remark 3.3).  Furthermore, the fact that $R/I$ is level imposes the
further condition that in the third interval (once there has been a
strict decrease), it is actually strictly decreasing until it
reaches 0 (\cite{AS}, Theorem 3.6). }
\end{remark}

Though many algebras are expected to have the
WLP, establishing this property is often rather difficult. For
example, it is open whether every complete intersection of codimension $\ge 4$
over a field of characteristic zero has the WLP.

We conclude this chapter by recalling a concept from liaison theory,
which we do not state in the greatest generality.    \label{Glink}
 Recall that two homogeneous ideals, $I$ and $I'$, are {\em Gorenstein linked} if there is a homogeneous Gorenstein ideal $\mathfrak a$ such that $\mathfrak a : I = I'$ and $\mathfrak a : I' = I$.  
 \label{basicdoublelink}
 Let $J \subset I \subset R = k[x_1,\ldots,x_r]$ be homogeneous
ideals such that $\codim J = \codim I - 1$. Let $f \in R$ be a
 form of degree $d$ such that $J  : f = J$. Then the ideal $I' := f
\cdot I + J$ is called a {\em basic double link} of $I$. The name
stems from the fact that $I'$ can be Gorenstein linked to $I$ in two
steps if $I$ is unmixed and $R/J$ is Cohen-Macaulay and generically
Gorenstein (\cite{KMMNP}, Proposition 5.10). However, here we essentially only
need the relation among the Hilbert functions.

\begin{lemma}
  \label{lem:BDL}
For each integer $j$,
\[
\dim_k (R/I')_j = \dim_k (R/I)_{j-d} + \dim_k (R/J)_{j} - \dim_k
(R/J)_{j-d}.
\]
\end{lemma}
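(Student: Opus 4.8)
The plan is to exploit the short exact sequence of graded $R$-modules that underlies the basic double link construction. Since $I' = f\cdot I + J$, I would write down the sequence
\[
0 \longrightarrow R/(J:f)(-d) \xrightarrow{\ \cdot f\ } R/J \longrightarrow R/(fI+J) \longrightarrow 0,
\]
where the first map is multiplication by the form $f$ of degree $d$ (hence the twist by $-d$), the second map is the natural surjection, and exactness in the middle is precisely the statement that the preimage of $fI+J$ under the quotient map $R\to R/J$ is $fI + (J:f)$, combined with $fR\cap(fI+J)=f(I + (J:f))$. The hypothesis $J:f=J$ then replaces $R/(J:f)$ by $R/J$ throughout, so the sequence becomes
\[
0 \longrightarrow (R/J)(-d) \xrightarrow{\ \cdot f\ } R/J \longrightarrow R/I' \longrightarrow 0.
\]
This is where I would need to be a little careful: I should check that multiplication by $f$ is genuinely injective on $R/J$, which is exactly the content of $J:f=J$, and that the image of $\cdot f$ together with $J$ accounts for all of $fI+J$ — this is a routine ideal-theoretic verification but it is the technical heart of the argument.

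Once the short exact sequence is established, the conclusion is immediate from additivity of Hilbert functions (vector space dimensions are additive along short exact sequences of finite-dimensional graded pieces). Taking dimensions in degree $j$ gives
\[
\dim_k(R/I')_j = \dim_k(R/J)_j - \dim_k\big((R/J)(-d)\big)_j,
\]
but this is not yet the claimed formula — it would only be correct if $fI+J$ were replaced by $fR+J$. So I realize I have the wrong sequence: the quotient $R/(fR+J)$ is too small. Instead I should use the sequence
\[
0 \longrightarrow (R/J)(-d) \xrightarrow{\ \cdot f\ } (R/J) \oplus (R/I)(-d) \longrightarrow R/I' \longrightarrow 0,
\]
with the left map sending $\bar g \mapsto (f\bar g, -\bar g)$ (note $I\supseteq J$ so $R/I$ is a quotient of $R/J$ and this makes sense) and the right map sending $(\bar a, \bar b)\mapsto \overline{a + fb}$. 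Exactness on the right holds because $I' = J + fI$; the composition is zero by construction; and injectivity on the left uses $J:f = J$ once more. Checking exactness in the middle amounts to: if $a + fb \in J + fI$, then modulo $J$ we get $f(b - i) \in J$ for some $i\in I$, so $b - i \in J:f = J \subseteq I$, whence $\bar b = \bar i$ in $R/I$... this needs the element to come from the image, and tracing it through gives the result. This is the step I expect to be the main obstacle — getting the maps and the exactness verification exactly right.

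With the correct three-term sequence in hand, additivity of Hilbert functions in each degree $j$ yields
\[
\dim_k(R/J)_j + \dim_k(R/I)_{j-d} - \dim_k(R/J)_{j-d} = \dim_k(R/I')_j,
\]
which is exactly the claimed identity after rearranging. I would present the proof by first stating the short exact sequence explicitly, then verifying the three points (well-definedness of the maps using $I \supseteq J$, injectivity on the left using $J:f=J$, and exactness in the middle by the ideal-membership chase above), and finally invoking additivity of dimension. An alternative, if the exactness chase proves cumbersome to write cleanly, is to argue purely on Hilbert functions: $\dim_k (R/I')_j = \dim_k R_j - \dim_k(fI + J)_j$, and then compute $\dim_k(fI+J)_j = \dim_k(fI)_j + \dim_k J_j - \dim_k(fI\cap J)_j = \dim_k I_{j-d} + \dim_k J_j - \dim_k(f(I\cap J))_j$ — but since $I \supseteq J$ we have $I\cap J = J$ and $fI \cap J = fJ$ (using $J:f=J$ to get $fR\cap J = fJ$), giving $\dim_k(fI+J)_j = \dim_k I_{j-d} + \dim_k J_j - \dim_k J_{j-d}$; subtracting from $\dim_k R_j$ produces the formula directly. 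I would likely include this inclusion-exclusion version as it sidesteps the diagram chase entirely, with the only subtle point being the identity $fR \cap J = fJ$, which is again just a restatement of $J:f = J$.
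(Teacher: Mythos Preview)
Your proposal is correct and takes essentially the same approach as the paper. The paper's proof simply cites the exact sequence
\[
0 \to J(-d) \to J \oplus I(-d) \to I' \to 0
\]
from \cite{KMMNP}, Lemma 4.8; your sequence $0 \to (R/J)(-d) \to (R/J)\oplus (R/I)(-d) \to R/I' \to 0$ is the quotient-level version of the same map, and your inclusion-exclusion alternative (with the key identity $fI\cap J = fJ$ following from $J:f=J$) unpacks the same content directly.
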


\begin{proof}
  This follows from the exact sequence (see \cite{KMMNP}, Lemma 4.8)
\[
0 \to J(-d) \to J  \oplus I (-d) \to I' \to 0.
\]
\end{proof}


\chapter{Differentiability and unimodality}  \label{pureness and diff}

As we said  in the introduction, this chapter is devoted to studying the relationship between pure and differentiable $O$-sequences.

We begin with the main result of this chapter, namely a characterization of the ``first half" of pure $O$-sequences, by proving the exact converse to Hausel's theorem.

\begin{theorem}\label{firsthalf}
A finite $O$-sequence $\underline{h}$ is the ``first half" of a pure $O$-sequence if and only if it is differentiable.
\end{theorem}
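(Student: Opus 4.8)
The plan is to establish the two implications separately. Recall that, for a pure $O$-sequence of socle degree $e$, its ``first half'' is the initial segment $(h_0,\dots,h_{\lfloor e/2\rfloor})$ (cf.\ Theorem~\ref{hibi}(b) and Definition~\ref{basic-defs}(5)); so the task is to show that a finite $O$-sequence $\underline{h}=(1,h_1,\dots,h_d)$ is differentiable if and only if there is a pure $O$-sequence of some socle degree $e$ with $\lfloor e/2\rfloor=d$ whose first $d+1$ entries are $\underline{h}$. The implication ``first half of a pure $O$-sequence $\Rightarrow$ differentiable'' is immediate from Hausel's theorem: if $(1,h_1,\dots,h_d,\dots,h_e)$ is the $h$-vector of a monomial Artinian level algebra, then by Theorem~\ref{hausel} (applied over $\Q$, then transferred to any characteristic via Remark~\ref{g-remarks}(3), as differentiability is a statement about the Hilbert function of a monomial ideal) the sequence $(1,h_1-1,\dots,h_{\lceil e/2\rceil}-h_{\lceil e/2\rceil -1})$ is an $O$-sequence; truncating it to degree $d=\lfloor e/2\rfloor$ (an initial segment of an $O$-sequence is again an $O$-sequence) says precisely that $\underline{h}$ is differentiable.

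For the converse, assume $\underline{h}=(1,h_1,\dots,h_d)$ is differentiable, so that $g:=(1,\ h_1-1,\ h_2-h_1,\ \dots,\ h_d-h_{d-1})$ is an $O$-sequence of length $d+1$. By Macaulay's theorem (realized, say, by a lex-segment ideal, which is monomial) there is a monomial Artinian ideal $J\subseteq S=k[y_1,\dots,y_s]$ with $\dim_k(S/J)_i=g_i$ for all $i$; as observed after the definition of inverse systems, the finite set $Y$ of monomials of $S$ not lying in $J$ is then a monomial order ideal whose number of degree-$i$ monomials is $|Y_i|=g_i$, and in particular every monomial of $Y$ has degree at most $d$. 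Adjoin a new variable $y_0$ and define, inside $k[y_0,y_1,\dots,y_s]$,
\[
X\ :=\ \{\,M y_0^{\,j}\ :\ M\in Y,\ j\ge 0,\ \deg(My_0^{\,j})\le 2d\,\}.
\]
The claim is that $X$ is a pure monomial order ideal of socle degree $2d$ whose $h$-vector has first half equal to $\underline{h}$; taking $e=2d$, so that $\lfloor e/2\rfloor=d$, this proves the theorem.

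The three checks are short. First, $X$ is an order ideal: a divisor of $My_0^{\,j}\in X$ has the form $Ny_0^{\,a}$ with $N\mid M$ and $a\le j$, whence $N\in Y$ (as $Y$ is an order ideal) and $\deg(Ny_0^{\,a})\le \deg(My_0^{\,j})\le 2d$, so $Ny_0^{\,a}\in X$. Second, $X$ is pure of socle degree $2d$: it contains $y_0^{2d}=1\cdot y_0^{2d}$ and no monomial of larger degree, while any $My_0^{\,j}\in X$ of degree $<2d$ satisfies $\deg(My_0^{\,j+1})\le \deg M+(2d-\deg M)=2d$ because $\deg M\le d$, so $My_0^{\,j+1}\in X$ and $My_0^{\,j}$ is not a maximal monomial; hence every maximal monomial of $X$ has degree exactly $2d$. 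Third, for $k\le d$ the degree-$k$ monomials of $X$ are exactly the $My_0^{\,k-\deg M}$ with $M\in Y$ of degree $\le k$ (the bound $2d$ being inactive in this range), so $h_k(X)=\sum_{i=0}^{k}|Y_i|=\sum_{i=0}^{k}g_i=h_k$; thus the $h$-vector of $X$ is a pure $O$-sequence whose first half is $\underline{h}$.

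The one point that really makes this work — and where I would concentrate the write-up — is the choice of the degree cap $2d$: it is large enough that every non-top monomial of $X$ can be multiplied by $y_0$ without leaving $X$ (this uses precisely that $Y$, realizing the $(d{+}1)$-term sequence $g$, has socle degree at most $d$), which is exactly what forces purity; and it is inactive in degrees $\le d$, so it does not perturb the first $d+1$ Hilbert-function values, which reproduce $\underline{h}$ because differentiability gives $h_k=\sum_{i\le k}(\Delta\underline{h})_i$. There is no genuinely hard step here; the only care needed is in these two simultaneous requirements on the cap. Finally, since the construction in the converse direction is purely combinatorial and the forward direction was reduced to a statement about Hilbert functions of monomial ideals, the equivalence holds over an arbitrary field.
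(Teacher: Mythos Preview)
Your proof is correct and takes essentially the same approach as the paper: the forward direction is Hausel's theorem, and for the converse you realize the first difference $g$ by a monomial order ideal $Y$, cone over a new variable, and truncate at degree $2d$. The paper phrases this algebraically (lex-segment ideal in $h_1-1$ variables, extend to $h_1$ variables, then add $\mathfrak m^{2e+1}$), but the construction is identical to yours.
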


\begin{proof}
By Hausel's theorem (Theorem \ref{hausel}), the ``first half"  of a
pure $O$-sequence $\underline{h}$ is differentiable. Let us prove
the converse.  Let $\underline{h}$ be a finite differentiable
$O$-sequence, say of socle degree $e$.  Let $\underline{g}$ be the
sequence defined by $g_0 = 1$, $g_i = h_i - h_{i-1}$ for $1 \leq i
\leq e$, and $g_i = 0$ for all $i \geq e+1$.  By hypothesis,
$\underline{g}$ is an $O$-sequence.  Let $S$ be the polynomial ring
in $h_1-1$ variables, and let $I'$ be the lex-segment ideal in $S$
with Hilbert function $\underline{g}$.  Now let $R$ be the
polynomial ring in $h_1$ variables, obtained by adding one variable
to $S$, and let $I''$ be the ideal $RI'$ in $R$.  Note that $R/I''$
has depth 1.  The Hilbert function of $R/I''$ is $\underline{h}$ up
to degree $e$, and then it takes the constant value $h_e$ for all
degrees $\geq e+1$.  If $\mathfrak m$ is the irrelevant ideal of
$R$, let $I = I'' + {\mathfrak m}^{2e+1}$.  Then $I$ is an Artinian
monomial ideal whose Hilbert function has ``first half'' equal to
$\underline{h}$ and $R/I$ is level of socle degree $2e$, as desired.
\end{proof}

Since the truncation of a pure $O$-sequence is again a  pure
$O$-sequence (see, for instance, \cite{Hi}), from Theorem
\ref{firsthalf} we immediately have the following:

\begin{corollary}\label{diff}
Any finite differentiable $O$-sequence $\underline{h}$ is pure.
\end{corollary}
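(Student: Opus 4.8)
The plan is to deduce this directly from Theorem~\ref{firsthalf} together with the fact that purity is preserved under truncation. First I would apply Theorem~\ref{firsthalf}: since $\underline{h}$ is a finite differentiable $O$-sequence, say of socle degree $e$, it is the ``first half'' of a pure $O$-sequence. Reading off the construction used to prove that theorem, one obtains an explicit pure monomial order ideal $X$ (the inverse system of $R/I$ with $I = RI' + \mathfrak{m}^{2e+1}$) whose $h$-vector $\underline{H}$ has socle degree $2e$ and satisfies $H_i = h_i$ for all $0 \le i \le e$; in particular $\underline{h} = (H_0, H_1, \dots, H_e)$ is exactly the initial segment of $\underline{H}$ through degree $e = \lfloor 2e/2 \rfloor$.

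Next I would truncate $X$ in degree $e$. Here the key input is the already-cited fact that the truncation $X_{\le e} := \{ M \in X : \deg M \le e \}$ of a pure monomial order ideal is again a pure monomial order ideal (see \cite{Hi}). The point is that if $M \in X$ has $\deg M < e$, then, $X$ being pure, $M$ divides some maximal monomial $N$ of $X$, which has degree $2e > e$; choosing a monomial $N'$ with $M \mid N' \mid N$ and $\deg N' = \deg M + 1 \le e$, the order-ideal property gives $N' \in X$, hence $N' \in X_{\le e}$, so $M$ is not maximal in $X_{\le e}$. Thus every maximal monomial of $X_{\le e}$ has degree $e$, and the $h$-vector of $X_{\le e}$ is $(H_0, H_1, \dots, H_e) = \underline{h}$. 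Therefore $\underline{h}$ is a pure $O$-sequence.

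I do not expect a genuine obstacle here: the corollary is a formal consequence of Theorem~\ref{firsthalf} and the truncation principle. The only subtlety worth flagging is the need to check that the ``first half'' of the pure $O$-sequence furnished by Theorem~\ref{firsthalf} reproduces $\underline{h}$ in \emph{every} degree from $0$ to $e$, and not merely up to some index strictly smaller than $e$; this is most transparent if one refers back to the explicit socle-degree-$2e$ construction in the proof of that theorem rather than to its black-box statement alone. (The degenerate case $e=0$, where $\underline{h}=(1)$, is trivially pure.)
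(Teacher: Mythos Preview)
Your proposal is correct and follows exactly the paper's approach: the paper's proof is the single sentence that since truncation of a pure $O$-sequence is again pure (citing \cite{Hi}), the result is immediate from Theorem~\ref{firsthalf}. You have simply unpacked these two ingredients in more detail, including the verification that the ``first half'' really covers degrees $0$ through $e$ and an explicit check of why truncation preserves purity.
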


Let us now focus on non-decreasing  pure $O$-sequences; if the socle degree is at most 3, we have a converse to the previous corollary:

\begin{theorem}\label{3}
Any non-decreasing pure $O$-sequence $\underline{h}$  of socle
degree $e\leq 3$ is differentiable.
\end{theorem}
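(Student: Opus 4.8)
The plan is to reduce the socle-degree-3 case to a small number of shapes of non-decreasing $O$-sequences and check differentiability for each directly, using Macaulay's bound together with the Hibi/Hausel machinery already available. Write $\underline h = (1, h_1, h_2, h_3)$ with the non-decreasing hypothesis $1 \le h_1 \le h_2 \le h_3$. The socle degrees $e = 0, 1$ are vacuous, and for $e = 2$ the ``first half'' is $(1, h_1)$, which is always differentiable (its first difference $(1, h_1 - 1)$ is automatically an $O$-sequence in $h_1 - 1$ variables), so those cases need no argument. Hence I would concentrate on $e = 3$, where the thing to prove is that the first difference $(1,\, h_1 - 1,\, h_2 - h_1,\, h_3 - h_2)$ is an $O$-sequence — equivalently, since a finite sequence of positive integers failing to be an $O$-sequence must violate Macaulay's bound at some step, that $h_2 - h_1 \le ((h_1 - 1)_{(1)})_1^1 = \binom{h_1}{2} + (h_1 - 1)$ and $h_3 - h_2 \le ((h_2 - h_1)_{(2)})_1^1$.

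For the first inequality I would use Hausel's theorem (Theorem~\ref{hausel}): with $e = 3$ we have $\lfloor (e-1)/2 \rfloor = 1$, so the sequence $1,\, h_1 - 1,\, h_2 - h_1$ is already known to be an $O$-sequence. That disposes of the growth from degree $1$ to degree $2$ of the first difference for free. So the entire content is the single remaining Macaulay inequality
\[
h_3 - h_2 \le \bigl((h_2 - h_1)_{(2)}\bigr)_1^1,
\]
i.e.\ that the first difference does not grow too fast from degree $2$ to degree $3$. This is exactly the statement that I expect to be the main obstacle, and it is where the hypothesis $e \le 3$ (rather than $e \le 4$) must be used, since for $e = 4$ the authors announce a counterexample.

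To prove this last inequality I would work directly with a pure monomial order ideal $X$ realizing $\underline h$, with its $h$-vector counting monomials of $R = k[y_1, \dots, y_r]$ in $X$ of each degree, and exploit that all maximal monomials of $X$ have degree $3$. The key structural fact to bring in is flawlessness (Hibi, Theorem~\ref{hibi}), which here gives $h_1 \le h_2$ (already assumed) but more importantly $h_0 \le h_3$ and the monotonicity $h_i \le h_j$ for $i \le j \le e - i$. I would then combine the following ingredients: (1) the degree-$3$ monomials of $X$ are a monomial order ideal spanning $X_3$, and every monomial of $X_1$ and $X_2$ divides one of them (purity); (2) apply Macaulay's bound to the order ideal $X$ itself to control $h_3$ in terms of $h_2$, namely $h_3 \le ((h_2)_{(2)})_1^1$; and (3) feed the already-known relation $h_2 \le \binom{h_1}{2} + (h_1 - 1) + h_1 = \binom{h_1 + 1}{2} + (h_1 - 1)$ (which is just differentiability of $1, h_1 - 1, h_2 - h_1$ rewritten) back in. The remaining step is the comparison of binomial expansions: one must show that $((h_2)_{(2)})_1^1 - h_2 \le ((h_2 - h_1)_{(2)})_1^1$, which is a purely arithmetic lemma about how the $d$-binomial expansion behaves under the operations $n \mapsto (n_{(d)})_1^1$ and $n \mapsto n - (\text{lower order ideal count})$. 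I would handle this by writing the $2$-binomial expansion of $h_2$ explicitly as $\binom{a}{2} + \binom{b}{1}$ and tracking how subtracting $h_1$ affects the leading term $a$; the inequality should reduce, after a short case analysis on whether $b = 0$, to the convexity of binomial coefficients. I would expect this arithmetic comparison — not any deep algebra — to be the genuinely fiddly part of the argument, and I would isolate it as a small preliminary lemma rather than inline it.
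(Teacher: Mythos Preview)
Your reduction to the single inequality $h_3 - h_2 \le ((h_2 - h_1)_{(2)})_1^1$ is exactly right, and matches the paper. The gap is in how you propose to prove it. Your plan is to bound $h_3$ by Macaulay applied to $A$ itself, getting $h_3 \le ((h_2)_{(2)})_1^1$, and then establish the purely arithmetic inequality
\[
((h_2)_{(2)})_1^1 - h_2 \;\le\; ((h_2 - h_1)_{(2)})_1^1.
\]
But this inequality is false. Take $h_1 = 4$, $h_2 = 5$: then $5 = \binom{3}{2} + \binom{2}{1}$ gives $((5)_{(2)})_1^1 = \binom{4}{3} + \binom{3}{2} = 7$, so the left side is $2$, while $h_2 - h_1 = 1$ and $((1)_{(2)})_1^1 = 1$. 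The underlying reason is that your step (2) uses only that $\underline{h}$ is an $O$-sequence, and the theorem is genuinely false for arbitrary non-decreasing $O$-sequences of socle degree $3$: $(1,4,5,7)$ is an $O$-sequence but its first difference $(1,3,1,2)$ is not. So purity must enter in a sharper way than Macaulay on $A$ plus flawlessness.

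The paper's argument (packaged as Proposition~\ref{ub}) applies Macaulay not to $A$ but to the quotient $A/LA$ by a general linear form. Hausel's theorem gives that $\times L : A_1 \to A_2$ is injective, so the exact sequence
\[
0 \to [(I:L)/I](-1) \to (R/I)(-1) \xrightarrow{\times L} R/I \to R/(I,L) \to 0
\]
yields $\dim_k [R/(I,L)]_2 = h_2 - h_1$ exactly, while in degree $3$ one only has $h_3 - h_2 \le \dim_k [R/(I,L)]_3$. Now Macaulay's bound applied to $R/(I,L)$ from degree $2$ to degree $3$ gives $\dim_k [R/(I,L)]_3 \le ((h_2 - h_1)_{(2)})_1^1$, which is precisely the needed inequality. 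The point is that the $g$-element furnished by Hausel lets you pass to a quotient whose degree-$2$ piece already has size $h_2 - h_1$, so Macaulay on that quotient hits the target directly rather than going through $h_2$.
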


\begin{proof}  The result is trivial if the socle degree is $e\leq 2$. Hence let $\underline{h}=(1,r,a,b)$ be the non-decreasing $h$-vector of a monomial level algebra $A=R/I$ of socle degree 3; we want to show that $\underline{h}$ is differentiable.

First of all note that, by Hausel's theorem (Theorem \ref{hausel}),   $\underline{h}$ is differentiable up to degree 2.
Hence it remains to show that $\underline{h}$ is differentiable from degree 2 to degree 3, i.e. that
\begin{equation} \label{bar}
b-a\leq ((a-r)_{(2)})^1_1 .
\end{equation}
Since (\ref{bar}) is a special case of  Proposition \ref{ub} (which does not depend on the present result), we defer its proof.
\end{proof}

Let us now make an excursion more properly into commutative algebra. Note that in the previous proof the only way that we have really used Hausel's theorem is that it guarantees the existence of a $g$-element for the algebra $A$, but we have never used that $A$ is monomial, or level, or (at least in an essential way) that it has socle degree 3. Indeed, basically the same argument proves, more generally, the following  purely algebraic result which also follows from \cite{HMNW}, Proposition 3.5:

\begin{theorem}\label{g}
Let $A$ be any Artinian algebra of socle degree $e$ whose $h$-vector $\underline{h}$ is non-decreasing, and suppose that there exists a $g$-element for $A$ up to degree $e-1$ (i.e. for a general linear form $L$ and for all $j\le e-1$, the map $\times L: A_{j} \to A_{j+1}$ is an injection). Then $\underline{h}$ is differentiable.
\end{theorem}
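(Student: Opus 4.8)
The plan is to imitate the argument that was sketched for Theorem \ref{3}, but strip it down to its genuinely used ingredient: the existence of a $g$-element. Fix a general linear form $L \in A_1$, and for each $j$ consider the exact sequence
\[
0 \to (0:_A L)(-1) \to A(-1) \xrightarrow{\times L} A \to A/LA \to 0.
\]
Write $\underline{h} = (h_0, \dots, h_e)$ for the $h$-vector of $A$ and $\underline{h}'$ for the $h$-vector of $B := A/LA$, so that for every $j$ we have $h'_j = h_j - h_{j-1} + \dim_k (0:_A L)_{j-1}$. By hypothesis $\times L \colon A_{j} \to A_{j+1}$ is injective for all $j \le e-1$, which means $(0:_A L)_{j} = 0$ for $j \le e-1$; hence $h'_j = h_j - h_{j-1}$ for all $j \le e$ (using also that $\underline{h}$ is non-decreasing, so these differences are nonnegative, and that $(0:_A L)_e$ only affects $h'_{e+1}$). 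Thus the first difference $\underline{g} := (1, h_1 - 1, h_2 - h_1, \dots, h_e - h_{e-1})$ coincides, in each degree, with the Hilbert function of the quotient algebra $B = A/LA$.

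Now I would simply invoke Macaulay's theorem (Theorem \ref{macaulay}(i)): since $\underline{g}$ is, up to degree $e$, the Hilbert function of the honest standard graded algebra $B$, it satisfies Macaulay's bound $g_{d+1} \le ((g_d)_{(d)})_1^1$ for all $d$. That is exactly the statement that $\underline{h}$ is differentiable in the sense of Definition \ref{basic-defs}(5)(b), extended to the full range of degrees. One small point to check carefully is the top degree: $B$ may have nonzero Hilbert function in degree $e+1$ coming from $(0:_A L)_e$, but this does not affect differentiability of $\underline{h}$ itself, which is a statement only about $g_0, \dots, g_e$; and the bound $g_{d+1} \le ((g_d)_{(d)})_1^1$ for $d \le e-1$ is precisely what we get from $B$ being a genuine algebra. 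Alternatively, one notes that $g_i = 0$ for $i > e$ since $\underline{h}$ stops in degree $e$ and $\underline{g}$ is its first difference, so Macaulay's bound is trivially satisfied there.

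The argument has essentially no obstacle — the only thing to be careful about is the bookkeeping with $(0:_A L)$ in the exact sequence, specifically making sure that injectivity up to degree $e-1$ really does force the first difference of $\underline{h}$ to agree with the Hilbert function of $A/LA$ in every degree up to $e$, rather than up to $e-1$. Since $h_{e+1} = 0$ and $h_e - h_{e-1} \ge 0$ by the non-decreasing hypothesis, and the possible discrepancy term $\dim_k(0:_A L)_{e-1}$ vanishes by the $g$-element assumption at $j = e-1$, the identification $g_j = h'_j$ holds for all $j \le e$, which is all we need. The final remark is that this recovers Theorem \ref{3} (and, via Hausel's theorem, the differentiability half of the pure $O$-sequence story) because a $g$-element exists up to degree $\lfloor (e-1)/2 \rfloor$ for any monomial level algebra in characteristic zero; for $e \le 3$ and $\underline{h}$ non-decreasing one gets the needed injectivity up through degree $e-1$ directly, modulo the degree-$3$ growth bound deferred to Proposition \ref{ub}.
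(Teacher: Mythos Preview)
Your proof is correct and is essentially the same argument the paper has in mind: the paper simply remarks that the proof of Theorem~\ref{3} (via the exact sequence in Proposition~\ref{ub}) uses nothing about $A$ beyond the existence of a $g$-element up to degree $e-1$, so that $h_j - h_{j-1} = \dim_k (A/LA)_j$ for all $j \le e$ and Macaulay's theorem applied to $A/LA$ gives differentiability. Your write-up makes this explicit and handles the bookkeeping at degree $e$ carefully.
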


Also, notice the following fact: the very existence of a $g$-element for $A$ is essential for the conclusion of Theorem \ref{g}, in the sense that this assumption cannot be relaxed by simply requiring, combinatorially, that the $h$-vector of $A$ be  differentiable up to degree $e-1$. This is true even for level algebras, since, for instance, it can be shown that $\underline{h}=(1,13,13,14)$ is a level $h$-vector, but $\underline{h}$  clearly fails to be differentiable from degree 2 to degree 3.  (To construct this level $h$-vector, one can start with Stanley's well-known non-unimodal Gorenstein $h$-vector $(1,13,12,13,1)$ and add the fourth power of a general linear form in the same variables to the inverse system to get $(1,13,13,14,2)$, and then truncate.)

Theorem \ref{g} also carries a consequence concerning the WLP, since it provides an indirect way to construct algebras without the WLP. Take an Artinian algebra $A$ having a non-decreasing $h$-vector, differentiable up to degree $e-1$, but not from degree $e-1$ to degree $e$. Then, by Theorem \ref{g}, the Artinian algebra obtained by truncating $A$ after degree $e-1$ fails to have the WLP. The bottom line is:

\vspace{.5cm}

{\em If the $h$-vector of some Artinian algebra $A$ is non-decreasing but not differentiable, then $A$ starts failing the WLP {\em in an earlier degree} than it has to for numerical reasons.}

\vspace{.5cm}

Returning now to pure $O$-sequences, we note that the result  in Theorem \ref{3} is false in any higher socle degree.  (We have already seen an example in socle degree 4 above, but we give a different one now.)

\begin{prop}\label{4}
There exist non-decreasing  pure $O$-sequences $\underline{h} = (h_0,\dots,h_e)$ of any socle degree $e\geq 4$ that are not differentiable.
\end{prop}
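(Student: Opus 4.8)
The plan is to produce, for each $e\ge 4$, an explicit monomial pure order ideal whose $h$-vector $\underline h$ is non-decreasing but whose first difference is not an $O$-sequence. First I would pin down \emph{where} the failure of differentiability can be placed. By Theorem~\ref{hausel}, the first difference of any pure $O$-sequence is an $O$-sequence through degree $\lfloor\frac{e-1}{2}\rfloor+1$; in addition, truncating $\underline h$ at degree $3$ yields a non-decreasing pure $O$-sequence of socle degree at most $3$, which is differentiable by Theorem~\ref{3}, so the first difference of $\underline h$ also satisfies Macaulay's bound at the step from degree $2$ to degree $3$. Hence the violation of Macaulay's bound for the first difference of $\underline h$ must be engineered at a step from $h_d$ to $h_{d+1}$ with $3\le d\le e-1$. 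The most economical way to force such a violation is to arrange $h_{d-1}=h_d$ together with $h_{d+1}>h_d$ (so that the first difference leaps from $0$ to a positive value, which is forbidden for an $O$-sequence); slightly less economically, one can instead make $h_d-h_{d-1}$ a small positive integer and $h_{d+1}-h_d$ a somewhat larger one, the allowed gap being governed by $((h_d-h_{d-1})_{(d)})_1^1$ versus $((h_d)_{(d)})_1^1-h_d$.

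Next I would treat the base case $e=4$, where the only possibility is a violation at the step $h_3\to h_4$, i.e.\ one needs a pure order ideal with $h_4-h_3>((h_3-h_2)_{(3)})_1^1$; concretely one aims for an $h$-vector of shape $(1,h_1,a,a,a+1)$ or $(1,h_1,a-1,a,a+2)$. I would exhibit such an order ideal by writing down its set of degree-$4$ maximal monomials explicitly and then verifying, by a direct count of divisors in each degree, that (a) the resulting $h$-vector is non-decreasing and (b) its first difference fails Macaulay's bound at degree $3$. A small example of this kind is most easily located by a machine computation (the authors note repeatedly that they used \cocoa\ for exactly this sort of search), after which the verification is an entirely finite and routine matter.

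For $e\ge 5$ I would give an analogous explicit construction, again specified by its maximal monomials, in which the violation is still placed at a single step in the upper half of the sequence while the remaining degrees are filled in so that $\underline h$ stays non-decreasing. One cannot simply bootstrap from the $e=4$ example by a naive socle-degree-raising operation: for instance tensoring a monomial Artinian level algebra with $k[z]/(z^{e-4})$ (equivalently, convolving $\underline h$ with $(1,\dots,1)$) keeps the algebra monomial and level but turns the top of the $h$-vector into a strictly decreasing tail, destroying non-decreasingness. So the family for $e\ge 5$ has to be built essentially from scratch for each $e$, although it can share the underlying idea with the $e=4$ example and with the constructions used elsewhere in this chapter for non-unimodality.

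The main obstacle is the construction itself, because \emph{purity} and the required \emph{jump in the first difference} pull in opposite directions. Purity is a delicate global condition: one must check that \emph{every} monomial of degree $<e$ in the order ideal divides some monomial of degree exactly $e$, since a single overlooked maximal monomial of smaller degree would lower the socle degree or break levelness. On the other hand, to push the first difference past Macaulay's bound at degree $d$ one needs the growth from $h_d$ to $h_{d+1}$ to be at (or very near) the extremal Gotzmann value $((h_d)_{(d)})_1^1$ (Theorem~\ref{macaulay}(ii)) while $h_d-h_{d-1}$ stays small; but extremal growth forces the top monomials into an essentially lexicographic configuration, and the complement of a lex ideal is almost never pure (it typically leaves stray maximal monomials involving the lex-last variable). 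Reconciling ``near-lex at the top'' with ``pure throughout'' is precisely the point where one must be ingenious, and it is why the $e=4$ seed example is most naturally found by computer and the higher-$e$ family then modeled on it.
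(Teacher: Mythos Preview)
Your proposal correctly locates where the failure of differentiability must occur and correctly formulates what numerical pattern would witness it, but it does not actually contain a proof: no pure order ideal is ever exhibited, and for $e\ge 5$ you explicitly say the family ``has to be built essentially from scratch for each $e$'' without indicating how. Deferring the $e=4$ case to a computer search and then hoping to model the rest on it is not an argument, especially after you yourself observe that no natural bootstrapping operation is available.

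The gap is that you miss the construction technique entirely. The paper's proof does not wrestle with the tension you describe between ``near-lex growth at the top'' and purity; it sidesteps it by taking two pure order ideals in \emph{disjoint} sets of variables and forming their union. The resulting order ideal is automatically pure (the maximal monomials are just the union of the two sets of maximal monomials, all of degree $e$), and its $h$-vector is the entrywise sum of the two $h$-vectors with $h_0=1$. One then chooses the two pieces so that one grows fast at the top while the other is small there: for $e=4$, the truncated polynomial ring gives $(1,4,10,20,35)$ and a single squarefree monomial in four new variables gives $(1,4,6,4,1)$, summing to $(1,8,16,24,36)$ with first difference $(1,7,8,8,12)$; since $12>((8)_{(3)})_1^1=10$, differentiability fails. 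Analogous explicit pairs handle $e=5$ and all $e\ge 6$. The obstacle you spend the last paragraph worrying about is an artifact of looking for a single ``near-lex'' order ideal rather than a superposition of two.
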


\begin{proof}
We first construct an example in socle degree 4.  The $h$-vector $\underline{h}' = (1,4,10,20,35)$ is a pure $O$-sequence since it is the $h$-vector of the truncation of a polynomial ring in 4 variables, $w,x,y,z$; the pure order ideal arises using 35 monomials of degree 4 in $w,x,y,z$.  The $h$-vector $\underline{h}'' = (1,4,6,4,1)$ is also a pure $O$-sequence, since it arises as the pure order ideal of a monomial $abcd$ in 4 new variables.  Putting these 36 monomials together gives the pure $O$-sequence $\underline{h} = (1,8,16,24, 36)$.  Since the first difference is $(1,7,8,8,12)$, which is not an $O$-sequence (since $12>(8_{(3)})^1_1=10$), we obtain our desired example.

For socle degree 5 we make a similar construction using the pure $O$-sequences $\underline{h}' = (1,3,6,10,15,21)$ and $\underline{h}'' = (1,3,5,5,3,1)$ to obtain the pure $O$-sequence $\underline{h} = (1, 6, 11, 15, 18, 22)$. Hence  $\underline{h}$ is not differentiable, since its first difference is $(1,5,5,4,3,4)$, and  $4>(3_{(4)})^1_1=3$.

For socle degree $a \geq 6$ we choose the pure $O$-sequence
\[
\underline{h}' = (1,3,6,9,12,\dots,3a-6, 3a-3, 3a)
\]
  (which is the Hilbert function of the truncation of the algebra $R/(x^3)$ after degree $a$, where $R$ is a polynomial ring in 3 variables) and a pure $O$-sequence $(\dots 6,3,1)$ arising from a single monomial (in  three new variables) of degree $a$ with each variable having an exponent at least equal to 2.  These combine to give the pure $O$-sequence $(\dots, 3a, 3a, 3a+1)$, which is clearly not differentiable.
\end{proof}

Note that all the examples used in the proof of Proposition \ref{4} fail to have the WLP, precisely because the Hilbert function $\underline{h}$ is not differentiable while it is non-decreasing.  In contrast, the next result is an (in general sharp) upper bound on the growth of the first difference of a non-decreasing pure $O$-sequence of any socle degree, which also has a nice algebraic corollary concerning the WLP. For this result we do not need to make any assumption on the base field, even though $g$-elements only have to exist in characteristic zero --- see Remark \ref{g-remarks}.

\begin{prop}\label{ub}
Let $\underline{h}=(1,h_1,h_2,...,h_e)$ be a  pure $O$-sequence, with $e \geq 2$. Then, for any index $i$ satisfying $2 \leq i \leq e$,
$$h_i-h_{i-1}\leq ((h_2-h_1)_{(2)})^{i-2}_{i-2}.$$
Moreover, if equality holds in some degree $j$, then it holds in all
degrees $2\le i\leq j$. Also, if $X\subset R=k[x_1,...,x_{h_1}]$ is
a pure order ideal with $h$-vector $\underline{h}$,
then the multiplication by a general linear form of $R$ between the
vector spaces spanned  by the monomials of $X$ of degree $i$ and the
monomials of degree $i+1$, is injective for all $i\leq j-1$.
\end{prop}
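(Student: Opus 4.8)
The plan is to pass to the inverse system and work with the Artinian monomial level algebra $A = R/I$ whose inverse system $I^\perp$ realizes the pure order ideal $X$, so that $h_i = \dim_k A_i$ and (by purity) $A$ is level of socle degree $e$. First I would recall that for a general linear form $L$, Hausel's theorem (Theorem~\ref{hausel}, valid in characteristic zero, hence enough for the numerical conclusion by Remark~\ref{g-remarks}(3)) gives that $\times L: A_{j}\to A_{j+1}$ is injective for $j \leq \lfloor (e-1)/2\rfloor$; but this alone is not enough, since I need a statement about all degrees. The real engine should be Green's theorem (Theorem~\ref{macaulay}(iii)) applied to $B = A/LA$: writing $h'_d = \dim_k B_d$, Green gives $h'_d \leq ((h_d)_{(d)})_0^{-1}$, and combined with the exact sequence $A_{d-1}\xrightarrow{\times L} A_d \to B_d \to 0$ one gets $h_d - h_{d-1} \leq h'_d$, i.e.\ the ``difference'' $h_d - h_{d-1}$ is bounded by the Hilbert function of the Artinian quotient $B$ in codimension $h_1 - 1$.

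The key point is then to control the Hilbert function of $B$ itself. Since $A$ is level of socle degree $e$ and $\times L$ is injective at least through the middle, $B$ is an Artinian algebra whose $h$-vector $\underline{h}'$ has $h'_0 = 1$, $h'_1 = h_1 - 1$, and (for $i$ in the relevant range) $h'_i \geq h_i - h_{i-1} > 0$; moreover one can arrange $B_2$ to have dimension exactly $h_2 - h_1$ — this uses that in the first two degrees $\times L$ is automatically injective (Hibi/Hausel), so $h'_1 = h_1-1$ and $h'_2 = h_2 - h_1$. Now apply Macaulay's bound (Theorem~\ref{macaulay}(i)) iteratively to the $O$-sequence $\underline{h}'$ starting from degree $2$: one gets $h'_i \leq ((h'_2)_{(2)})^{i-2}_{i-2} = ((h_2-h_1)_{(2)})^{i-2}_{i-2}$ for all $i\geq 2$. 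Chaining this with $h_i - h_{i-1}\leq h'_i$ yields exactly the claimed inequality $h_i - h_{i-1}\leq ((h_2-h_1)_{(2)})^{i-2}_{i-2}$.

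For the ``moreover'' clauses: if equality holds in some degree $j$, then tracing back through the chain forces equality $h'_i = ((h_2-h_1)_{(2)})^{i-2}_{i-2}$ for all $2 \leq i \leq j$ (because Macaulay growth is eventually forced downward: if the bound is attained at step $j$ it must have been attained at every earlier step — this is the standard ``maximal growth'' rigidity, and I would cite Gotzmann persistence, Theorem~\ref{macaulay}(ii), to make it precise, noting that the lex ideal governing $\underline{h}'$ from degree $2$ on must have no generators in degrees $3,\dots,j$). Equality $h_i - h_{i-1} = h'_i$ in turn forces $\times L: A_{i-1}\to A_i$ to be injective for all $i \leq j-1$ (the map has no kernel exactly when $\dim B_i = h_i - h_{i-1}$), which, translated back through the inverse system, is precisely the stated injectivity of multiplication by a general linear form between the spans of the degree-$i$ and degree-$(i+1)$ monomials of $X$.

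I expect the main obstacle to be the ``moreover'' part — specifically, making rigorous the claim that attaining the Macaulay bound for $\underline{h}'$ in degree $j$ propagates equality backward to all degrees $2\leq i\leq j$ \emph{and simultaneously} pins down the behavior of $\times L$ on $A$. The forward direction (the inequality) is a fairly mechanical combination of Green's theorem, the surjection $A_d \to B_d$, and iterated Macaulay; the delicate bookkeeping is in degree $2$, where I must be sure that $h'_2$ is \emph{exactly} $h_2 - h_1$ and not merely $\geq$, so that the iteration starts from the right binomial expansion. This is where I would lean on the fact that for a level algebra the first two multiplication maps by a general $L$ are automatically injective, which is itself a consequence of Hibi's theorem (Theorem~\ref{hibi}) or a direct elementary argument.
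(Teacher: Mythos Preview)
Your approach is essentially the paper's: set $B=A/LA$, use the exact sequence to get $h_i-h_{i-1}\le \dim_k B_i=:h'_i$ with the equality $h'_2=h_2-h_1$ coming from Hausel, then iterate Macaulay's bound on $B$; the equality clause follows from maximal-growth rigidity together with the (implicit in both your sketch and the paper's proof) fact that for a level algebra injectivity of $\times L$ in one degree propagates downward. Two detours you can drop: Green's theorem is never actually used in your argument (only Macaulay on $B$ matters, exactly as in the paper), and Gotzmann persistence is overkill for the backward propagation --- the strict monotonicity of $n\mapsto (n_{(d)})^1_1$ already forces every link of the chain $h'_j\le ((h'_{j-1})_{(j-1)})^1_1\le\cdots\le ((h'_2)_{(2)})^{j-2}_{j-2}$ to be an equality once the endpoints agree.
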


\begin{proof}
The statement is trivial if $e = 2$, so we will assume that $e \geq 3$.  Let $I$ be a monomial Artinian  ideal such that $R/I$ has Hilbert function $\underline{h}$.  Let $L$ be a general linear form.  Consider the exact sequence
\[
0 \rightarrow [((I:L)/I)(-1)]_i \rightarrow  [(R/I)(-1)]_i \stackrel{\times L}{\longrightarrow} [R/I]_i \rightarrow [R/(I,L)]_i \rightarrow 0.
\]
From Hausel's theorem (Theorem \ref{hausel}), we know that $[I:L]_j = I_j$ for $j \leq \lfloor \frac{e-1}{2} \rfloor$.  It then follows from the above sequence that for $i \leq \lfloor \frac{e+1}{2} \rfloor$,
\[
h_i - h_{i-1} = \dim [R/(I,L)]_i.
\]
This is true in particular for $i = 2$.  But more generally, for all $i$ we have
\[
h_i - h_{i-1} \leq \dim [R/(I,L)]_i
\]
(with equality if and only if the kernel, $[((I:L)/I)(-1)]_i$, is zero).  Then a repeated application of Macaulay's theorem (Theorem \ref{macaulay}) gives for all $i \geq 3$
\[
h_i - h_{i-1} \leq (\dim[R/(I,L)]_{(2)})^{i-2}_{i-2} = ((h_2 - h_1)_{(2)})^{i-2}_{i-2}
\]
as claimed.  If equality holds in some degree $j$ then it represents maximal growth for $R/(I,L)$ at each step from degree 2 to degree $j$, so we have equality at every step.  Furthermore, if we have equality for some $j$ then we must have $(I:L) = I$ in all the degrees $\leq j-1$, so multiplication by a general linear form gives an injection in those degrees.
\end{proof}

\begin{remark}{\em
Notice that the last part of the statement of the above proposition can be
rephrased algebraically in the following way, which yields a useful
corollary concerning the WLP:

\vspace{.5cm}

{\em Let $A$ be a
monomial Artinian  level algebra of socle degree $e$ with a differentiable $h$-vector, $\underline{h}$, and suppose that the first difference of $\underline{h}$ has
maximal growth from degree 2 on. Then $A$ enjoys the WLP.}
\vspace{.5cm}

In contrast, in Chapter \ref {type 2 arb var} we will explore situations where a monomial algebra $A$ has a quotient that fails to have the WLP, focusing on the case of type 2.  Earlier examples of this phenomenon (but not having type 2) can be found in \cite{BK} and in \cite{MMN2}.

}\end{remark}

\vskip 2mm
We now exhibit the first non-unimodal pure $O$-sequence of socle degree 4. Our idea uses the same techniques employed in constructing the non-differentiable examples in the proof of Proposition \ref{4}. In particular, the existence of a non-unimodal $O$-sequence of socle degree 4 proves that the largest socle degree forcing all pure $O$-sequences to be unimodal is exactly 3: indeed, by Hibi's flawlessness result, any pure $O$-sequence $\underline{h}=(1,r,h_2,h_3)$ of socle degree 3 must have $h_2\geq r$, and is therefore unimodal.  Socle degree $5$ was known to Stanley \cite{St1}, p.60.

\begin{example}\label{nonunimodal soc deg 4}
{\em We give an example of non-unimodal pure $O$-sequence with socle degree 4. In fact, we will check that
$$\underline{h}=(1,49,81,79,81)$$ is a pure $O$-sequence. Indeed, $\underline{h}'=(1,5,15,35,70)$ is a pure $O$-sequence, since it is the $h$-vector of the truncation of a polynomial ring in 5 variables, and $\underline{h}''=(1,4,6,4,1)$ is also pure, for it corresponds to the maximal monomial $y_1y_2y_3y_4\in k[y_1,y_2,y_3,y_4]$. Hence, reasoning as in the proof of Proposition \ref{4}, we now consider one copy of $\underline{h}'$ and eleven copies of $\underline{h}''$ as $h$-vectors of pure $O$-sequences in twelve different rings, and we work in the tensor product of those rings. It follows that $$\underline{h}=(1,5,15,35,70)+ (0\cdot 1,11\cdot 4,11\cdot 6,11\cdot 4,11\cdot 1)=(1,49,81,79,81)$$ is a non-unimodal pure $O$-sequence, as desired.
}\end{example}

If we allow the socle degree to be large, we now show that pure $O$-sequences of any codimension can be non-unimodal with arbitrarily many peaks. This result answers positively Question iii) (section 6) of \cite{BZ}.

\begin{theorem} \label{arb many peaks}
Let $M$ be any positive integer and fix an integer $r \geq 3$.  Then there exists a pure $O$-sequence in $r$ variables which is non-unimodal, having exactly $M$ maxima.
\end{theorem}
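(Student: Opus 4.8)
The plan is to build such an $O$-sequence by the same ``tensor product of simple pieces'' technique used in Proposition~\ref{4} and Example~\ref{nonunimodal soc deg 4}, but iterated $M$ times so as to produce $M$ separate dips. The basic building blocks are of two kinds. First, for a long stretch where the sequence should be increasing we use a truncated polynomial ring in three variables, whose $h$-vector looks like $(1,3,6,\dots)$; more flexibly, we may use $R/(x^a)$ in $r$ variables truncated at a suitable degree to get a piece that grows and then flattens out. Second, for a ``bump that comes back down'' we use the pure order ideal generated by a single monomial $y_1^{a_1}\cdots y_r^{a_r}$ (in a fresh set of $r$ variables) with all $a_i \geq 1$: its $h$-vector $(1,r,\dots,r,1)$ is symmetric and unimodal, rising then falling, and by choosing the $a_i$ we can place its peak wherever we want. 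Tensoring order ideals corresponds to adding their $h$-vectors (after placing them in disjoint sets of variables); since $r \geq 3$ is fixed, we must be careful to keep the total number of variables equal to $r$, which is why each ``bump'' must be produced by reusing the same $r$ variables rather than introducing new ones — but this is fine, since the tensor construction of Example~\ref{nonunimodal soc deg 4} already shows that one may take many copies of a bump-type $h$-vector and a single copy of a growth-type $h$-vector, all in $r$ variables, and add them.

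The key steps, in order, are: (1) Fix a ``skeleton'' socle degree $e$ large enough to accommodate $M$ peaks; concretely, choose $e$ and $M$ interior ``peak positions'' $p_1 < p_2 < \dots < p_M$ well spread out in $\{1,\dots,e\}$. (2) Take one copy of a growth-type pure $O$-sequence $\underline{h}'$ of socle degree $e$ (e.g.\ from a truncated $R/(x_1^N)$ in $r$ variables) that is strictly increasing up to degree $e$ — this guarantees the final sum is positive in every degree $\le e$ and has the right socle degree. (3) For each $i=1,\dots,M$, take $N_i$ copies of a bump-type pure $O$-sequence $\underline{h}^{(i)}$ (arising from a single monomial in $r$ new variables, with the exponents chosen so that its unique interior maximum sits at degree $p_i$ and so that it is $1$ in degrees $0$ and $e$). (4) Form the pure $O$-sequence $\underline{h} = \underline{h}' + \sum_{i=1}^M N_i\,\underline{h}^{(i)}$, valid because the disjoint-union/tensor-product of pure order ideals is a pure order ideal whose $h$-vector is the sum. (5) Choose the multiplicities $N_1, \dots, N_M$ in increasing order of magnitude, $N_1 \ll N_2 \ll \dots \ll N_M$, each huge compared with the contribution of $\underline{h}'$ and of the previously chosen bumps, so that near degree $p_i$ the sum is dominated by the $i$-th bump and therefore strictly increases up to $p_i$ and strictly decreases just after $p_i$. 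This produces exactly $M$ local maxima.

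The main obstacle — and the part requiring genuine care rather than being routine — is the bookkeeping that forces the total count of local maxima of $\underline{h}$ to be \emph{exactly} $M$ and not more: one must verify that between consecutive peaks $p_i$ and $p_{i+1}$ the sequence genuinely decreases and then increases once (a single valley), and that no spurious extra oscillation is introduced by the tails of the various bump sequences overlapping, nor by the growth-type piece. The clean way to manage this is a separation-of-scales argument: choose the socle degree $e$ first, then choose the exponents in each monomial so that the bump $\underline{h}^{(i)}$ is genuinely unimodal with a single strict peak at $p_i$, and finally choose $N_i \gg (\text{everything already fixed with index} < i)$ by a downward induction or a direct greedy choice, so that in a neighborhood of $p_i$ the $i$-th bump swamps all other terms. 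One then checks degree by degree — using that each $\underline{h}^{(i)}$ is strictly increasing on $[0,p_i]$ and strictly decreasing on $[p_i,e]$ — that $\underline{h}$ strictly increases on $[0,p_1]$, strictly decreases then strictly increases on each $[p_i,p_{i+1}]$, and strictly decreases on $[p_M,e]$, giving precisely $M$ maxima. Finally, one notes that the construction produces a pure order ideal and hence $\underline{h}$ is a genuine pure $O$-sequence, completing the proof.
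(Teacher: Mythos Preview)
Your proposal has a genuine gap that breaks the argument: the codimension is not controlled. The tensor/disjoint-variable trick from Proposition~\ref{4} and Example~\ref{nonunimodal soc deg 4} \emph{adds} variables. In Example~\ref{nonunimodal soc deg 4} the eleven copies of $(1,4,6,4,1)$ are taken in eleven disjoint sets of four variables, together with five more variables for the $(1,5,15,35,70)$ piece, yielding codimension $49$, not $4$ or $5$. So your sentence ``this is fine, since the tensor construction of Example~\ref{nonunimodal soc deg 4} already shows that one may take many copies \dots\ all in $r$ variables, and add them'' is simply a misreading of that example. If instead you reuse the same $r$ variables, you are taking the union of order ideals in one ring, and then the $h$-vectors do \emph{not} add: every degree-$0$ and degree-$1$ monomial lies in every piece, and in fact the overlaps are substantial in all low degrees. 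Your steps (4) and (5) therefore do not produce the claimed sequence $\underline{h}' + \sum_i N_i\,\underline{h}^{(i)}$ as a pure $O$-sequence in $r$ variables. Since the number of bump copies $N_i$ must grow with $M$, the disjoint-variable construction would force the codimension to grow without bound, contradicting the requirement that $r$ be fixed.

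The paper's proof takes an entirely different route precisely to stay inside three variables. It first reduces to $r=3$ (adding a power of one new variable increases $r$ by one and shifts all $h_i$ by one, preserving the number of maxima), and observes that truncation allows one to aim for \emph{at least} $M$ maxima. Then, rather than summing many pieces, it works with a single inverse system in $k[x,y,z]$: it builds a monomial Artinian ideal with a carefully chosen differentiable $h$-vector $\underline{h}$, and then adds \emph{one} further monomial $G = F\cdot x^k y^k z^k$ (with $F$ already in the ideal) whose first $k$ derivatives grow maximally and are disjoint from the existing inverse system. This single monomial plays the role of a ``general form of degree $e$'' from \cite{Za} for enough steps, and the resulting $h$-vector oscillates $\dots,t,t,t+1,t,t,t+1,\dots$ with as many peaks as desired. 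The essential point is that this one monomial is very far from squarefree, which is exactly what lets the whole construction live in three variables; the paper even remarks after the proof that this is why the same argument cannot be adapted to squarefree settings.
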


\begin{proof}
We prove the result using inverse systems.  The main task is to prove the result in codimension 3.  Having done this, by adding a suitable power of a new variable the number of variables increases by one and the values of the Hilbert function all increase by one, so repeating this we obtain an example with $M$ maxima in any number $\geq 3$ of variables. Also, it is enough to prove that we can get {\em at least} $M$ maxima, since the truncation of a pure $O$-sequence is again a pure $O$-sequence.

In order to prove the result for $r=3$, we begin with the numerical computation of \cite{Za}, Remark 5.  With this as our guide, we have only to choose suitable inverse systems.  However, because the number of variables is so small, we have to use some care.

So we first recall the numerical computation.  For any positive integer $N \geq 1$, we will recall how a level algebra with $N$ maxima was constructed in \cite{Za}.  (In our modification, $N$ will not be equal to $M$.)  Consider the $h$-vector
\[
\underline{h}=(1,3,h_2,...,h_e),
\]
 where the  socle degree $e$ is  large enough  that, for some integer $t$, we have
 \[
 \begin{array}{lllllllllll}
 h_e=t, & h_{e-1}=t-3, &  h_{e-2}=t-6,\\
 h_{e-3}=t-9, & h_{e-4}=t-15, & h_{e-5}=t-21,\\
 h_{e-6}=t-27, &  h_{e-7}=t-36, & h_{e-8}=t-45, \\
 h_{e-9}=t-54
 \end{array}
 \]
  and so on, down to
  \[
  \begin{array}{llllllllllllllll}
  h_{e-3(N-2)-1}=t-{(3(N-2)+1)+2\choose 2}, \\  h_{e-3(N-2)-2}=t-{(3(N-2)+1)+2\choose 2}-3(N-1), \\ h_{e-3(N-2)-3}=t-{(3(N-2)+1)+2\choose 2}-6(N-1),  \hbox{and } \\ h_{e-3(N-2)-j}={e-3(N-2)-j+2\choose 2} \hbox{  for all $j\geq 4$.}
  \end{array}
  \]
Adding a general form of degree $e$ to an inverse system module generating $\underline{h}$, it is shown in \cite{Za} that  we obtain the level $h$-vector
\[
\begin{array}{rcllll}
\underline{H} & = & (1,3,6,...,H_{e-3(N-2)-4}={(e-3(N-2)-4)+2\choose 2},
\ \  H_{e-3(N-2)-3}=t+1, \\
&& H_{e-3(N-2)-2}=t, \ \ H_{e-3(N-2)-1}=t, \ \ t+1,t,t,t+1,...,t,t,t+1),
\end{array}
\]
because the two inverse systems are ``as independent as possible" in $R$.
In our situation, we have to show that everything can be done with monomials.  The main steps are producing the inverse system module, and producing a monomial that mimics the behavior of a general form of degree $e$ ``long enough.''

Notice that for any choices of $e$, $N$ and $t$ satisfying the above conditions, the $h$-vector $\underline{h}$ is a differentiable $O$-sequence.  Choose a monomial ideal $I_1$ in $k[x,y]$ whose Hilbert function is the first difference $\Delta \underline{h}$.  Let $I_2 = RI \subset R$, where $R = k[x,y,z]$.  The minimal generators of $I_1$ also generate $I_2$, but the Hilbert function of $I_2$ is $\underline{h}$ up to degree $e$.  Adding $(x,y,z)^{e+1}$ gives us a monomial algebra with Hilbert function exactly $\underline{h}$.  This is our inverse system module generating $\underline{h}$ (once we choose $N$ and $e$).

Choose an integer $B > \max \{ M,2 \}$.
Let $N$ be an integer satisfying $N \geq$ \linebreak $ 3(B-2)+4$.  Choose $e$ large enough, according to the set-up given above. It follows that
\[
e-(3(N-2)+3) \leq e-3(3(B-2)+3).
\]
Note that $3(B-2)+3 >0$ , since $B > 2$.

Let $F$ be a monomial in $I_2$ of degree $e - 3(3(B-2)+3)$, which we see must exist by considering the value of $h_{e-3(N-2)-3}$.  Notice that the element $G = F \cdot x^{3(B-2)+3} y^{3(B-2)+3} z^{3(B-2)+3}$ is clearly not in the inverse system module, since it is in $I_2$.  For the same reason, all the derivatives up to the $[3(B-2)+3]$-th are not in the inverse system module.  Since the derivatives of $G$ up to the $[3(B-2)+3]$-th \linebreak ones grow maximally, $G$ acts in the same way as the general form for its first $3(B-2)+3$ derivatives.  Then the computations from \cite{Za} guarantee that we have {\em at least} $B > M$ maxima.  Then we truncate the Hilbert function to obtain exactly $M$ maxima.
\end{proof}

\begin{example} \label{type 14}
Using the approach of the previous proof, we can produce the following infinite family of non-unimodal examples of type 14.  We are not aware of any non-unimodal example in three variables with a smaller  type.

Starting from $n=2m$ for $m\ge 11$, and from $n=2m+1$ for $m\ge 14$, we have
an infinite class of non-unimodal pure $O$-sequences with socle degree $e=4n$ and
type $14$. This class is given by the inverse system  
$$
\begin{array}{rl}
  M=\{&
x^{2m}y^{2m}z^{4m}, 
x^{m-2}y^{6m+3}z^{m-1}, 
x^{2m-3}y^{5m+4}z^{m-1}, 
x^{3m-4}y^{4m+5}z^{m-1}, \\&
x^{4m-5}y^{3m+6}z^{m-1},
x^{5m-6}y^{2m+7}z^{m-1}, 
x^{6m-7}y^{m+8}z^{m-1}, 
x^{7m-8}y^{9}z^{m-1}, \\& 
x^{m-2}y^{5m+3}z^{2m-1},
x^{2m-3}y^{4m+4}z^{2m-1}, 
x^{3m-4}y^{3m+5}z^{2m-1}, \\& 
x^{4m-5}y^{2m+6}z^{2m-1}, 
x^{5m-6}y^{m+7}z^{2m-1},
x^{6m-7}y^{8}z^{2m-1}
\}
\end{array}
$$
when $n=2m$, and by the inverse system
$$
\begin{array}{rl}
  M=\{&
x^{2m+1}y^{2m+1}z^{4m+2}, 
x^{m-2}y^{6m+6}z^{m}, 
x^{2m-3}y^{5m+7}z^{m}, 
x^{3m-4}y^{4m+8}z^{m}, \\&
x^{4m-5}y^{3m+9}z^{m},
x^{5m-6}y^{2m+10}z^{m}, 
x^{6m-7}y^{m+11}z^{m}, 
x^{7m-8}y^{3}z^{m}, \\& 
x^{m-1}y^{5m+2}z^{2m},
x^{2m-1}y^{4m+2}z^{2m}, 
x^{3m-1}y^{3m+2}z^{2m}, \\& 
x^{4m-1}y^{2m+2}z^{2m}, 
x^{5m-1}y^{m+2}z^{2m},
x^{6m-1}y^{2}z^{2m}
\}
\end{array}
$$
when $n=2m+1$.

  In order to prove this, notice  that, in either inverse system, the generator $x^ny^nz^{2n}$ gives maximal growth from
  degree $4n$ to degree $3n$. Hence, in degrees $3n$, $3n+1$, $3n+2$
  and $3n+3$, it contributes to the Hilbert function respectively by
 \begin{multline*}\textstyle
  \binom{n+2}{2}=\frac{(n+2)(n+1)}2, 
  \binom{n+1}{2}=\frac{(n+1)n}2,
  \binom{n}{2}=\frac{n(n-1)}2, 
  \binom{n-1}{2}=\frac{(n-1)(n-2)}2.
\end{multline*}
 All monomials in these degrees are divisible by $z^n$.
 We claim that the other thirteen generators of $M$ yield all the monomials not divisible by
 $z^{n}$, hence contributing to the Hilbert function in the corresponding degrees by 
 $$
 \binom{3n+2}2-\binom{2n+2}2=\frac{5n^2+3n}2,
 \binom{3n+3}2-\binom{2n+3}2=\frac{5n^2+5n}2,$$
$$ \binom{3n+4}2-\binom{2n+4}2=\frac{5n^2+7n}2,
 \binom{3n+5}2-\binom{2n+5}2=\frac{5n^2+9n}2.
$$

Since these sets of monomials are disjoint,  the  Hilbert function in those degrees is, respectively:
$$\begin{array}{rcl}
\frac{(n+2)(n+1)}2 + \frac{5n^2+3n}2&=& 3n^2+2n+1 , \\
 \frac{(n+1)n}2 + \frac{5n^2+5n}2 &= & 3n^2+2n, \\
\frac{n(n-1)}2 + \frac{5n^2+7n}2 &= &3n^2+2n,\\
\frac{(n-1)(n-2)}2 + \frac{5n^2+9n}2 & =&3n^2+2n+1,
\end{array}
$$
which  clearly gives non-unimodality.

It remains to show that the last thirteen generators indeed give all the
monomials not divisible by $z^n$ in degrees at most $3n+3$. In order to do
this, it is sufficient to show that they give all the monomials not
divisible by $z^n$ in degree exactly $3n+3$. 

We start with the case $n=2m$, and $m\ge 11$. 
 For $i=0,1,\dots,6$, the  $(m-1)m$ monomials of degree $6m+3$ with
 $x$-degrees $i(m-1),i(m-1)+1,\dots ,i(m-1)+m-2$ and $z$-degrees
 $0,1,\dots ,m-1$  
 are divisors of  $x^{(i+1)(m-1)-1}y^{(6-i)m+i+3}z^{m-1}$.  For $i=0,1,\dots,5$, the $(m-1)m$
 monomials of degree $6m+3$ with $x$-degrees
 $i(m-1),i(m-1)+1,\dots ,i(m-1)+m-2$ and $z$-degrees $m,m+1,\dots ,2m-1$
 are divisors of  $x^{(i+1)(m-1)-1}y^{(5-i)m+i+3}z^{2m-1}$.   
 
 In order for these to be all the monomials of degree $6m+3$ and $z$-degrees $0,1,\dots ,m-1$, one checks that for
 $i=6$, we get the monomials of $x$-degrees $6m-6,6m-5,\dots ,7m-8$,
 and indeed we get all such monomials when $7m-8\ge 6m+3$, i.e.,  $m\ge
 11$. For the monomials of $z$-degrees $m,m+1,\dots ,2m-1$, one checks
 that for $i=5$, we get $x$-degrees $5m-5,5m-4,\dots ,6m-7$, which are
 all the desired monomials when $6m-7\ge 5m+3$, i.e.,  $m\ge 10$. 

Now we consider the case $n=2m+1$, for $m\ge 14$. For $i=0,1,\dots,6$, the $(m-1)(m+1)$ monomials of degree $6m+6$ with
 $x$-degrees $i(m-1),i(m-1)+1,\dots ,i(m-1)+m-2$ and $z$-degrees
 $0,1,\dots ,m$   are divisors of $x^{(i+1)(m-1)-1}y^{(6-i)m+i+6}z^{m}$. For $i=0,1,\dots ,5$, the $m^2$
 monomials of degree $6m+6$ with $x$-degrees
 $im,im+1,\dots ,im+m-2$ and $z$-degrees $m,m+1,\dots ,2m$
 are divisors of the monomial $x^{(i+1)m-1}y^{(5-i)m+i+5}z^{2m}$. 

  In order for these to be all the monomials of degree $6m+6$ with $z$-degrees $0,1,\dots ,m-1$, one checks that for
 $i=6$, we get the monomials of $x$-degrees $6m-6,6m-5,\dots ,7m-8$,
 and indeed we get all such monomials when $7m-8\ge 6m+6$, i.e.,  $m\ge
 14$. To get all the monomials of degree $6m+6$ with $z$-degrees
 $m,m+1,\dots ,2m$, one checks that for $i=5$, we get the monomials of
 $x$-degrees $5m,5m+1,\dots ,6m-2$ and these are all the desired monomials  if $6m-2\ge 5m+6$,
 i.e.,  $m\ge 8$. 
 
 The smallest examples of this family correspond to $n=22$, where our Hilbert function  is 
$$
\begin{array}{l}
 (1, 3, 6, 10, 15, 21, 28, 36, 45, 55, 66, 78, 91, 105, 120, 136, 153,
 171, 190, 210, 231, 253, \\ \qquad
 276, 300, 325, 351, 378, 406, 435, 465, 496, 528, 561, 595, 630, 666,
 703, 741, 780,  \\ \qquad
 820, 861, 903, 946, 990, 1035, 1078, 1119, 1158, 1195, 1230, 1263, 1294, 1323, \\ \qquad
 1350, 1375, 1398, 1419, 1438, 1455, 1470, 1483, 1494, 1503, 1510, 1515, 1518, \\ \qquad
 \textbf{1519, 1518, 1518, 1519}, 1507, 1481, 1441, 1387, 1319, 1237, 1141, 1031, 907,  \\ \qquad
 769, 630, 504, 392, 294, 210, 140, 84, 42, 14),
    \end{array}
    $$
and to $n=29$, where our Hilbert function  is
$$
\begin{array}{l}
(1, 3, 6, 10, 15, 21, 28, 36, 45, 55, 66, 78, 91, 105, 120, 136, 153,
 171, 190, 210, 231, 253, \\ \qquad
  276, 300, 325, 351, 378, 406, 435, 465, 496, 528, 561, 595, 630,
  666, 703, 741,  780, \\ \qquad
  820, 861, 903, 946, 990, 1035, 1081, 1128,  1176, 1225, 1275, 1326, 1378,
  1431,  \\ \qquad
  1485, 1540, 1596, 1653, 1711, 1770, 1827, 1882, 1935, 1986, 2035, 2082, 2127, \\ \qquad
  2170, 2211, 2250, 2287, 2322, 2355, 2386, 2415, 2442, 2467, 2490, 2511, 2530, \\ \qquad
  2547, 2562, 2575, 2586, 2595, 2602, 2607, 2610, \textbf{2611, 2610, 2610, 2611}, 2599,   \\ \qquad
  2574, 2536, 2485, 2421, 2344,  2254, 2151, 2034, 1903, 1758, 1599, 1426, 1246,   \\ \qquad 
  1071, 909, 760, 624, 501, 391, 294, 210, 140, 84, 42, 14).
\end{array}
$$
\end{example}

\begin{remark}\label{soc deg 16}{\em 
Example \ref{type 14} produced the smallest socle type for which we are aware that a non-unimodal example exists in codimension 3.  Another challenge is to find the smallest socle degree for which a non-unimodal example exists.  The best example of which we are aware is Example 5.1 of \cite{BZ}, where it is shown that
\[
h = (1, 3, 6, 10, 15, 21, 28, 33, 36, 37, 36, 36, 37)
\]
is a pure $O$-sequence; notice that this has socle degree 12. }
\end{remark}

{\ }\\
\\
In \cite{Mig}, an example is constructed of a level set of points in   $\mathbb P^3$ with the property that an Artinian reduction (hence any Artinian reduction) has a non-unimodal Hilbert function, but it is remarked that no example is known of more than two maxima.  (By this we mean that there is at most one ``valley" in the sense of  \cite{Bo}.)  In \cite{BZ}, a monomial example is constructed (which can be lifted to reduced sets of points -- cf.\ for instance \cite{MN}), but again it is only done for two maxima.  Theorem \ref{arb many peaks} allows us to extend these results as follows.

\begin{corollary}
In any projective space $\mathbb P^r$, $r \geq 3$, there are reduced sets of points whose Artinian reduction is level with a non-unimodal Hilbert function having arbitrarily many maxima (i.e.\ arbitrarily many ``valleys'').
\end{corollary}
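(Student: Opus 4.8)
\emph{Proof proposal.} The plan is to feed the output of Theorem~\ref{arb many peaks} into the standard \emph{lifting} (distraction) procedure, which converts a monomial Artinian level algebra into the homogeneous coordinate ring of a reduced set of points in such a way that neither the Hilbert function of the Artinian reduction nor the level property is disturbed.

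Fix $r \geq 3$ and an integer $N \geq 1$. First I would invoke Theorem~\ref{arb many peaks} with $M = N+1$ to produce a monomial Artinian level algebra $A = R/I$, where $R = k[x_1,\dots,x_r]$ over an infinite field $k$, whose $h$-vector $\underline{h}$ is non-unimodal with exactly $N+1$ maxima, hence with $N$ strict local minima (``valleys''). Next I would apply lifting: starting from a minimal monomial generating set of $I$ and performing the general distraction $x_i^j \mapsto \prod_{s=0}^{j-1}(x_i - s\,x_0)$ in the larger ring $R' = k[x_0,x_1,\dots,x_r]$, one obtains a homogeneous radical ideal $\wp \subset R'$ which is the saturated ideal of a finite set $Z \subset \mathbb{P}^r$ of distinct points, with $x_0$ a non-zerodivisor on $R'/\wp$ and $(R'/\wp)/(x_0) \cong R/I = A$ as graded algebras (see the references cited just above, in particular \cite{MN}, and the classical distraction results they point to). Property $(R'/\wp)/(x_0) \cong A$ says precisely that the Artinian reduction of the arithmetically Cohen--Macaulay ring $R'/\wp$ has Hilbert function $\underline{h}$, which is non-unimodal with $N$ valleys.

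It then remains to check that this Artinian reduction is level. Lifting preserves the graded minimal free resolution: a minimal free resolution of $R'/\wp$ over $R'$ is obtained from one of $R/I$ over $R$ by extension of scalars along $R \hookrightarrow R'$, so $R'/\wp$ and $R/I$ have the same graded Betti numbers. Since a codimension $r$ Cohen--Macaulay standard graded algebra is level exactly when the last free module in its minimal free resolution is generated in a single degree, and $R/I$ is level by construction, $R'/\wp$ is level; passing to the Artinian reduction modulo the degree-one non-zerodivisor $x_0$ changes neither the Betti numbers nor the degrees in which the socle is concentrated, so $A$ is level as well. Thus $Z$ is a reduced set of points in $\mathbb{P}^r$ with level Artinian reduction and non-unimodal Hilbert function having $N$ valleys; letting $N \to \infty$ gives the corollary. (As in Remark~\ref{g-remarks}, all numerical input is characteristic-free, so $Z$ may be taken over any infinite field.)

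The main obstacle is not a computation but the precise invocation of the lifting machinery: one needs all three features simultaneously --- that the distracted ideal is genuinely radical and cuts out \emph{distinct} points, that $x_0$ is a non-zerodivisor giving the displayed isomorphism, and that the graded Betti numbers (hence levelness) survive. Once the lifting theorem is quoted in this strong form, the corollary follows immediately from Theorem~\ref{arb many peaks}.
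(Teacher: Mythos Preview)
Your proposal is correct and follows exactly the route the paper intends: invoke Theorem~\ref{arb many peaks} to obtain a monomial Artinian level algebra in $r$ variables with the desired number of peaks, then apply the standard distraction/lifting of monomial ideals (as in \cite{MN}) to pass to a reduced set of points in $\mathbb{P}^r$. The paper does not spell out a proof beyond the sentence preceding the corollary, so your added verification that lifting preserves the graded Betti numbers --- and hence levelness --- is a welcome elaboration rather than a departure.
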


Theorem \ref{arb many peaks} is related to Michael and Traves' ``Roller-Coaster conjecture'' (see \cite{MT}) for the $f$-vectors of well-covered graphs (which make a considerably smaller subset of pure $O$-sequences).  Notice, however, that the argument we have just given to prove our ``roller-coaster'' result for arbitrary pure $O$-sequences (Theorem \ref{arb many peaks}) would not apply to prove the Michael-Traves conjecture on independence polynomials of well-covered graphs (in fact, it does not seem to be helpful, more generally, when we restrict to any class of $f$-vectors of simplicial complexes). Indeed, in those contexts, all monomials involved in the ``inverse system'' need to be squarefree, whereas our argument for arbitrary pure $O$-sequences essentially relies on the fact  that we can choose at least one of the monomials to be very far from squarefree.

\begin{remark}
{\em Notice that the socle type of the pure $O$-sequences produced in Theorem~\ref{arb many peaks} is very large.  It is interesting to ask what constraints (e.g.\ for unimodality) are placed by asking that the type be small.  We begin this inquiry with a study of type 2 pure $O$-sequences in Chapters \ref{type 2 three var} and \ref{type 2 arb var}.
}\end{remark}


\chapter{The Interval Conjecture for Pure $O$-sequences} \label{ICP section}

In this chapter we present the Interval Conjecture for Pure $O$-sequences (ICP), and begin its study. As we said in the Introduction, especially in view of the many non-unimodality results, it seems nearly impossible to find a complete characterization of the set of pure $O$-sequences; therefore, the ICP may possibly be the most powerful result that can be proved short of a characterization, since it provides a particularly strong form of regularity for the structure of pure $O$-sequences, and could thus reveal itself to be the  successful perspective to approach this problem.

Also, even though it is too early to predict the scope of the applications of the ICP to other areas, some initial indications seem quite promising. First, in the second portion  of this chapter, we show how the ICP provides a very short  alternative proof of Stanley's matroid $h$-vector conjecture for matroids of rank 2 (or dimension 1). Second, after a first draft of this monograph  was written, Stanley's conjecture has been the focus of a paper  of T. H\'a, E. Stokes and the fifth author \cite{TZ}, where the ICP has proved to be one of the crucial tools to show the conjecture for the class of all matroids of rank 3.

\begin{conj}\label{ICP}
(The Interval Conjecture for Pure $O$-sequences (ICP)). Suppose
that,  for some positive integer $\alpha $, both
$(1,h_1,...,h_i,...,h_e)$ and $(1,h_1,...,h_i+\alpha ,...,h_e)$ are
pure $O$-sequences. Then $(1,h_1,...,h_i+\beta ,...,h_e)$ is also a
pure $O$-sequence  for each integer $\beta =0,1,..., \alpha .$
\end{conj}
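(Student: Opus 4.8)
The plan is to reduce the conjecture to a single combinatorial ``exchange'' move, to carry that move out completely when the socle degree is $e\le 3$, and to isolate it as the essential obstacle for larger $e$. Write $\underline{h}^{(\beta)}$ for the sequence obtained from $(1,h_1,\dots,h_e)$ by replacing $h_i$ with $h_i+\beta$. We are given that $\underline{h}^{(0)}$ and $\underline{h}^{(\alpha)}$ are pure $O$-sequences and must produce $\underline{h}^{(\beta)}$ for each $0<\beta<\alpha$. It suffices to prove the one-step statement: \emph{if $\underline{h}^{(0)}$ and $\underline{h}^{(\gamma)}$ are pure $O$-sequences with $\gamma\ge 2$, then so is $\underline{h}^{(\gamma-1)}$}; applying it to the pair $(\underline{h}^{(0)},\underline{h}^{(\alpha)})$, then to $(\underline{h}^{(0)},\underline{h}^{(\alpha-1)})$, and so on, we descend through every intermediate value. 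By Macaulay's inverse systems (Chapter~\ref{defs and prelim results}) this is purely a statement about finite sets of monomials: a pure $O$-sequence of socle degree $e$ records, degree by degree, the numbers of divisors of some finite set $\{M_1,\dots,M_t\}$ of monomials of degree $e$ in $k[y_1,\dots,y_r]$, with $r=h_1$ and $t=h_e$ forced.

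Given pure order ideals $X$ and $X'$ realizing $\underline{h}^{(0)}$ and $\underline{h}^{(\gamma)}$, the plan is to pass from $X'$ to a realization of $\underline{h}^{(\gamma-1)}$ by a local modification: deleting or replacing a few of the generators $M_k$ (and, where needed, adjoining auxiliary generators living in disjoint new variables, exactly as in the coordinatewise-addition construction of Example~\ref{nonunimodal soc deg 4}), arranged so that the only divisor-count that changes drops by $1$, in degree $i$. The hypothesis that \emph{both} endpoints are pure is what makes this plausible: $X$ and $X'$ together prescribe, in each degree, which monomials must survive, so one has a template for trimming $X'$ toward $X$ one unit at a time without overshooting elsewhere. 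When $i$ lies strictly in the first half one can also try to read the required move off from Theorem~\ref{firsthalf} and Corollary~\ref{diff}, since the first half of a pure $O$-sequence is \emph{exactly} a differentiable $O$-sequence, and the differentiable $O$-sequences of a fixed length do satisfy the interval property (they are cut out by the Macaulay inequalities, each of which leaves a single coordinate ranging over an interval); the catch even there is that this controls only the first half of $\underline{h}^{(\gamma-1)}$, whereas its second half has to be produced simultaneously.

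For $e\le 1$ the statement is vacuous. For $e=2$ one checks directly that $(1,h_1,h_2)$ is a pure $O$-sequence precisely when $h_1\le 2h_2$ (so the $h_2$ maximal quadrics can involve all $h_1$ variables) and $h_2\le\binom{h_1+1}{2}$ (Macaulay, Theorem~\ref{macaulay}); this region meets each line $h_1=\mathrm{const}$ and each line $h_2=\mathrm{const}$ in an interval, and the intermediate realization is obtained by adjoining or deleting one quadric while keeping a set that covers all the variables. For $e=3$ one first pins down which $(1,r,a,b)$ occur, combining $r\le a$ (Hibi, Theorem~\ref{hibi}), $a\le\binom{r+1}{2}$ (differentiability up to degree $2$, Hausel, Theorem~\ref{hausel}), the growth bound $b-a\le((a-r)_{(2)})^1_1$ of Proposition~\ref{ub}, and an explicit lower bound for $b$ (at least $b\ge\lceil a/3\rceil$), together with matching monomial constructions for the converse; then, in each of the three cases --- vary $r$ with $a,b$ fixed, vary $a$, vary $b$ --- the monotonicity of $((a-r)_{(2)})^1_1$ in $a-r$ makes it routine that the admissible set is an interval, and the interpolating pure order ideal is again exhibited by hand (for instance, to lower $b$ by one unit at a time one deletes a top monomial all of whose lower-degree divisors are shared with the remaining generators).

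The crux is the exchange move in general. There is no structure theorem for pure $O$-sequences --- unimodality already fails (Example~\ref{nonunimodal soc deg 4}) --- and no elementary operation is known that alters a single entry of a pure $O$-sequence by one unit while fixing the rest, precisely because one monomial of degree $e$ inevitably contributes divisors in every degree $\le e$. So for $e\ge 4$ and arbitrary type the chain from $X'$ down to a realization of $\underline{h}^{(\gamma-1)}$ has to be engineered ad hoc, and a construction uniform in all the parameters --- which is what a full proof of the ICP would require --- is exactly what is missing. In the special case of type $2$ one can instead route the argument through the Weak and Strong Lefschetz Properties, as will be done in Chapters~\ref{type 2 three var} and \ref{type 2 arb var}.
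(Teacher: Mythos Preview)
The statement you are ``proving'' is a \emph{conjecture}: the paper does not prove it, and indeed states explicitly that it remains open. There is therefore no ``paper's own proof'' to compare against. What the paper does prove are special cases --- the ICP for socle degree $\le 3$ (Theorem~\ref{ICP3}, Corollary~\ref{2}), for type~1, for codimension~2, and a few other situations --- and you are right to recognize in your final paragraph that the general case is out of reach with current tools. So your proposal is not a proof of the conjecture, and to your credit you do not pretend otherwise.

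That said, your treatment of the special cases you do address is far too casual to count as a proof even there. Your $e=2$ argument is essentially the content of Corollary~\ref{2} and is fine. But your $e=3$ sketch does not come close to what is actually required: the paper's proof of Theorem~\ref{ICP3} is several pages of careful, algorithmic manipulation of inverse systems, handling separately the interval property in each of the three coordinates $h_1,h_2,h_3$, with explicit exchange moves (tables of monomial replacements of types $B_1$--$B_6$, a ten-case analysis for $h_1$, a three-step argument for $h_2$ that uses the already-established cases for $h_1$ and $h_3$). Your claim that ``the monotonicity of $((a-r)_{(2)})^1_1$ in $a-r$ makes it routine that the admissible set is an interval'' is simply not true --- the necessary conditions you list (Hibi, Hausel, Proposition~\ref{ub}, and the lower bound $b\ge\lceil a/3\rceil$) are not known to be sufficient, so you cannot read the interval property off from them; one really must construct the intermediate pure order ideals by hand, and that construction is the entire content of the proof. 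Your remark that the type~2 case can be ``routed through'' the WLP/SLP results of Chapters~\ref{type 2 three var} and~\ref{type 2 arb var} is also a misreading: those chapters establish WLP (hence unimodality) for type~2 in three variables, which constrains the shape of such pure $O$-sequences but does not by itself yield the ICP for them.
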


\begin{remark}\label{rrr} {\em
The last author \cite{Za2} had recently formulated the analogous
conjecture for the class of Hilbert functions of Artinian level
algebras, of which pure $O$-sequences are a subset (they correspond
to {\em monomial} Artinian level algebras). However, notice that a
similar  Interval Conjecture would not hold if, instead, we wanted
to consider the (conjecturally smaller, according to Stanley) set of
$h$-vectors of matroid complexes. For instance (see Stokes's thesis
\cite{Sto}, p.51), $(1,4,4)$ and $(1,4,6)$ are $h$-vectors of
matroids, but $(1,4,5)$ is {\em not}.
}\end{remark}

The main result of  this chapter is that  the ICP holds true for
pure $O$-sequences $\underline{h}$ of socle degree $3$.

\begin{theorem}\label{ICP3}
The set $S$ of pure $O$-sequences of  length 4 (i.e. of socle degree
3) satisfies the ICP, i.e.,

\begin{enumerate}
\item[$i)$]
if $(1,h_1',h_2,h_3)$ and $(1,h_1'',h_2,h_3)$ are in $S$ and $h'_1\le h_1\le h_1''$, or
\item[$ii)$]
if $(1,h_1,h_2',h_3)$ and $(1,h_1,h_2'',h_3)$ are in $S$ and $h'_2\le h_2\le h_2''$, or
\item[$iii)$]
if $(1,h_1,h_2,h_3')$ and $(1,h_1,h_2,h_3'')$ are in $S$ and $h'_3\le h_3\le h_3''$,
\end{enumerate}
then $(1,h_1,h_2,h_3)$ is in $S$.
\end{theorem}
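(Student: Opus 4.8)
The plan is to pass to Macaulay's inverse systems and reduce Theorem~\ref{ICP3} to the statement that the set of triples $(h_1,h_2,h_3)$ coming from pure $O$-sequences of socle degree $3$ is, in each coordinate direction, an interval. As in the proof of Theorem~\ref{firsthalf}, $(1,r,a,b)$ is such a pure $O$-sequence if and only if there is a finite set $G$ of degree-$3$ monomials, involving exactly $r$ variables, whose degree-$2$ divisors are exactly $a$ in number and with $|G|=b$; thus $b$ is the type, $a$ the number of degree-$2$ derivatives, and $(1,r,a)$ is automatically a pure $O$-sequence of socle degree $2$. First I would collect the necessary conditions already available: $r\le a\le\binom{r+1}{2}$ (Theorems~\ref{hibi} and \ref{hausel}), $b\le((a)_{(2)})_1^1$ (Theorem~\ref{macaulay}(i)), and $b-a\le((a-r)_{(2)})_1^1$ (Proposition~\ref{ub}); and I would add a lower bound on $b$ arising from the elementary fact that no degree-$3$ monomial is divisible by two distinct squares $y_i^2,y_j^2$, so that squares forced into the degree-$2$ part $D$ (which happens once $a$ exceeds the number $\binom{r}{2}$ of squarefree degree-$2$ monomials, but also in subtler situations) compel $|G|$ to grow correspondingly.

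The substance is the converse: realize every admissible triple by some $G$, uniformly enough that varying a single entry between two admissible values stays admissible. I would do this coordinate by coordinate. For $h_3$: fix an admissible degree-$2$ part $D$ (a set of $a$ degree-$2$ monomials on $r$ variables, each of which divides a degree-$3$ monomial all of whose degree-$2$ divisors lie in $D$); the sets $G$ realizing $D$ are exactly the subsets of $G_D:=\{\text{degree-}3\text{ monomials all of whose degree-}2\text{ divisors lie in }D\}$ that still cover $D$ and all $r$ variables, and, adjoining elements of $G_D$ one at a time, their sizes fill the whole interval from the minimal cover size up to $|G_D|$. One then has to show that as $D$ varies the resulting intervals chain up into one interval — this is where I would construct explicit extremal $D$'s (morally complements of lex-type segments, ``cliques plus pendants'') and, where no single $D$ is simultaneously optimal, check that the intervals for adjacent $D$'s abut. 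For $h_1$ and $h_2$ I would instead fix the type $b$ and analyse how the variable set and the degree-$2$ derivatives of a $b$-element set of degree-$3$ monomials move under local operations — separating a variable, or enlarging the support of one monomial — reducing again to the socle-degree-$2$ picture (the $e=2$ case of the ICP) and to the ``add a power of a new variable'' device used throughout Chapter~\ref{pureness and diff}.

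Granting the interval property in each direction, Theorem~\ref{ICP3} follows at once in all three cases: given two admissible vectors agreeing off position $i$, any vector between them agrees with both off position $i$ and has $i$-th coordinate squeezed between two admissible values, hence is admissible. The main obstacle, I expect, is exactly this connectivity: since the extremal values of the free coordinate are governed by covering-design-type quantities rather than a clean closed form, one cannot simply read an interval off a list of inequalities, and the constructions — above all the identification of the extremal degree-$2$ parts and the verification that consecutive ranges abut — will need careful bookkeeping, heaviest near the large values of $a$ where squares in $D$ become unavoidable.
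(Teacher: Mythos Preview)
Your setup via inverse systems is correct, and the reduction of the ICP to ``each coordinate sweeps out an interval'' is exactly the right target. But what you have written is an outline, not a proof, and at the points where the real work lies your proposed route diverges from the paper's and introduces difficulties that the paper avoids.

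For $h_3$ you fix the degree-$2$ shadow $D$ and argue that the admissible $|G|$'s over that particular $D$ form an interval, and then you face the problem of showing that as $D$ ranges over all degree-$2$ shadows with $|D|=a$ on $r$ variables these intervals ``chain up'' into a single interval. You correctly identify this chaining as the crux, but you give no mechanism for it, and in fact it is delicate: the minimal cover size over a fixed $D$ is a covering-design quantity with no clean formula. The paper sidesteps this entirely. It does \emph{not} fix $D$. Instead it takes any inverse system realizing $(1,r,a,b_{\min})$ and gives an explicit algorithm that processes the degree-$3$ monomials one at a time, sometimes replacing a monomial by two or three new ones, in such a way that $h_1$ and $h_2$ are preserved while $h_3$ increases by at most one at each step, terminating at $h_3=a$. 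Crucially, the degree-$2$ shadow changes along the way (e.g.\ $x_jx_k$ is traded for $x_i^2$), so the argument never needs to compare different $D$'s. Together with the differentiability argument for $b\ge a$ (your Theorem~\ref{3}), this covers the full range $[b_{\min},b_{\max}]$ directly.

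For $h_1$ and $h_2$ your sketch (``local operations --- separating a variable, or enlarging the support of one monomial'') is too vague to assess. The paper's argument for $h_1$ is a concrete ten-case analysis of what the $h$-vector loses when a single degree-$3$ monomial is removed, with an explicit replacement rule in each case that raises $h_1$ by one while fixing $h_2,h_3$. The argument for $h_2$ is the most intricate: it first constructs an inverse system achieving $a_{\min}$ with the additional property that $x_1^2$ and $x_1x_2$ are absent from its degree-$2$ shadow, then walks from an $a_{\max}$ system to this one by exchanging monomials in lex order through the pivots $x_1^2x_2$ and $x_1^3$, controlling that $h_2$ drops by at most one each time; and finally it \emph{uses the already-proved interval property for $h_1$} to repair the codimension at each intermediate stage. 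This dependency (ii) on (i) is an essential feature of the proof that your independent-coordinates plan does not reflect.

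In short: the overall architecture is right, but the constructive content is missing, and where you do propose a construction (for $h_3$) it is a harder route than the one the paper takes.
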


\begin{proof}
We will start by proving the interval property for $h_3$, then
proceed to $h_1$ and end by proving it for $h_2$, since we will
refer back to the previously proved parts.

\vskip 2mm $iii)$ Given $h_1=r$ and $h_2=a$, there is a  maximal
possible value for $h_3=b_{\max }$, \label{bmax} and a minimal possible value
$h_3=b_{\min }$. We have already proved that $(1,h_1,h_2,h_3)$ is
differentiable if it is non-decreasing (see Theorem \ref{3}).
Furthermore, since it is flawless by Hibi's Theorem \ref{hibi}, we
have that $h_2\ge h_1$. We have also proved that any differentiable
sequence is a pure $O$-sequence (see Corollary \ref{diff}). Hence we
know that any value for $h_3$ in the range $a = h_2\le h_3\le
b:_{\max }$ is possible.

We now prove that any value for $h_3$ in the  range $b_{\min }\le
h_3\le h_2=a$ is possible. Let $I$ be a monomial ideal with inverse
system $I^\perp$ such that the Hilbert function of $R/I$ is
$(1,r,a,b_{\min })$. Any monomial in $I^\perp$ has to have at least
one derivative which is unique to this monomial, by the minimality
of $b_{\min }$ (otherwise we can remove that monomial and lower $h_3$
without changing $h_1$ or $h_2$).   We will use two specific
consequences stemming from this remark.

\begin{enumerate}
\item If $x_i^2 x_j$ is in $I^{\perp}$ then $x_i^3$ is not
there, since $x_i^3$ would not have a derivative unique to it.

\item If $x_i x_j x_k$ is in $I^\perp$ then without
loss of generality we can assume that $x_j x_k$ is not  the
derivative of any other monomial. In this case we say that the
derivative $x_j x_k$ is {\em determined} by $x_i x_j x_k$.
\end{enumerate}

We will now present an algorithm which will  begin with $I^\perp$
and change it, step by step, so that we get each  pure $O$-sequence
$(1,r,a,b)$ with $b_{\min }\le b\le a$.  We will go through the degree
three monomials in $I^\perp$ one by one.  Some we will keep, and
some will be replaced, to form new inverse systems for which $h_1$
and $h_2$ remain the same, and $h_3$ either remains the same or
increases by one.  In the end we will arrive at $h_3 = a$ and we
will be finished.

We start with the cubes, and we keep them.  Then  consider monomials
in $I^\perp$ that are divisible by a square but not by a cube.  For
any such monomial $x_i^2 x_j$, we can add $x_i^3$ (if it is not
already there) without changing $h_1$ or $h_2$.  Initially $x_i^3$
will not be in $I^\perp$, as noted in (1) above, but after some
number of steps it may be there.  When we have finished running
through all {\em original} monomials in $I^\perp$ of the form
$x_i^3$ and $x_i^2 x_j$, let $B_1$ be the set of {\em original}
monomials after considering which we did not add a new monomial, and let $B_2$ be the
set of {\em original} monomials after considering which we did add a new monomial.

We now proceed to the  squarefree monomials in $I^\perp$.  Let $x_i
x_j x_k$ be such a monomial.  By (2), we may assume that $x_j x_k$
is not the derivative of any other monomial in $I^\perp$.   If
$x_i^2$ is not among the derivatives of the previous monomials, we
can replace $x_ix_jx_k$ by $x_i^2x_j$, $x_i^2x_k$ and then add
$x_i^3$. In doing this, the number of degree three monomials
increases by two, in two steps, while we keep the number of
derivatives, since $x_jx_k$ is replaced by $x_i^2$. We denote by $B_3$ the
set of such monomials.

If $x_i^2$ is among the derivatives, we  can add $x_i^2x_j$ and
$x_i^2x_k$ if those have not yet appeared. We divide the set of such
monomials in $I^\perp$ into three subsets, $B_4$, $B_5$ and $B_6$,
depending on whether we add zero, one or two monomials in the
process. Denote by $J^{\perp}$ the resulting inverse system.

In order to see that we will reach  $h_3=h_2=a$ in this process, we
make the following table which summarizes what happens during the
algorithm:

$$\begin{array}{l|l|l}
  \hbox{\it Original}& \hbox{\it monomials} & \hbox{\it derivatives in } J^\perp_2\\
  \hbox{\it monomial}& \hbox{\it in }J^\perp_3 &  \hbox{\it determined by these}\\ \hline
  x_i^3 & x_i^3 & x_i^2\\
  x_i^2x_j\in B_1 & x_i^2x_j  & x_ix_j\\
  x_i^2x_j\in B_2 & x_i^3,x_i^2x_j & x_i^2,x_ix_j\\
  x_ix_jx_k\in B_3 & x_i^3,x_i^2x_j,x_i^2x_k & x_i^2,x_ix_j,x_ix_k\\
  x_ix_jx_k\in B_4 & x_ix_jx_k & x_jx_k\\
  x_ix_jx_k\in B_5 & x_ix_jx_k,x_i^2x_j & x_jx_k,x_ix_j\\
  x_ix_jx_k\in B_6 & x_ix_jx_k,x_i^2x_j,x_i^2x_k & x_jx_k,x_ix_j,x_ix_k\\
\end{array}
$$

\vskip 2mm \noindent Since in all cases we have exactly as many
derivatives  in $J^\perp_2$ as we have monomials in $J^\perp_3$, we
end up with $\dim_k J^\perp_2 = \dim_k J^\perp_3$, i.e.\ $h_2=h_3$.
These proves the ICP for $h_3$ by considering the modifications in
the cases where we replace a monomial in $I^\perp$ by 3 monomials as
a 2-step process where the first step is  replacing the original
monomial by two monomials and the second step is adding the third
monomial.

\vskip 2mm $i)$
Given $h_2=a$ and $h_3=b$, we know from Theorem \ref{hibi} that
$r = h_1\le h_2=a$. Thus if $r = a$ there is nothing to prove, and we may assume $r<a$.
 If $b\ge a$, the sequence is non-decreasing and
hence differentiable by Theorem~\ref{3}. Since, as we will prove
later  (see Proposition \ref{gor}(ii)), the set of differentiable
$O$-sequences has the interval property with respect to any
coordinate, we can now assume that $b<a$.


The lower bound for $h_1$ given $h_2=a$ and  $h_3=b<a$ is not known
from before, so we assume that we are given an inverse system
$I^\perp$ which achieves the minimal possible value for
$h_1=r_{\min }$.

Note that if  $b=1$ then $r_{\min }=r_{\max }=a$  and so there is
nothing to prove.

We will now argue that if $h_1=r<h_2=a$ and  $1<h_3=b<h_2$, we can
change $I^{\perp}$ into some other inverse system $J^{\perp}$ whose
$h$-vector is $(1,r+1,a,b)$. When removing one degree three monomial, $m$,
from the inverse system, it is easy to see that we decrease the $h$-vector by one of the
following:
\[
\begin{array}{ccccc}
(0,0,0,1),  &(0,0,1,1), &(0,1,1,1), &(0,0,2,1), &(0,1,2,1),\\   (0,2,2,1),   &(0,0,3,1),     &(0,1,3,1),    &(0,2,3,1), &(0,3,3,1).
\end{array}
\]
In each of the cases, we propose an action  in order to see that we
can increase $h_1$ by one, from $h_1=r$ to $h_1=r+1$, while
preserving $h_2$ and $h_3$. If $h_1$ decreases when we remove the
monomial $m$, we know that at least one of the variables dividing
$m$ was unique to $m$ and does not divide any other monomial in the
inverse system. In most cases, the action is to replace $m$ with a
monomial involving some new variables. We summarize this in the
following table:

\vskip 2mm
$$
\begin{tabular}{r|c|p{3in}}
\em Case&\em Difference & \em Action\\ \hline
1&$(0,0,0,1)$   & Remove $m$.     \\
2&$(0,0,1,1)$   & Replace $m$ by $x_i^3$, where $x_i$ is a new variable.\\
3&$(0,1,1,1)$   & Keep $m$.\\
4&$(0,0,2,1)$   & Replace $m$ by $x_i^2x_j$, where $x_i$ is new and $x_j$ old.   \\
5&$(0,1,2,1)$   & Replace $m$ by $x_i^2x_j$, where $x_i$ is new and $x_j$ was unique to $m$.    \\
6&$(0,2,2,1)$   & Keep $m$.     \\
7&$(0,0,3,1)$   & Replace $m$ by $x_ix_jx_k$ where $x_i$ is new and $x_jx_k$ was unique to $m$. \\
8&$(0,1,3,1)$   & Replace $m$ by $x_ix_jx_k$, where $x_i$, $x_j$ are
new and $x_k$ was unique to $m$.  \\
9&$(0,2,3,1)$   & Replace $m$ by $x_ix_jx_k$ where $x_i,x_j$ and $x_k$ are new.  \\
10&$(0,3,3,1)$  & Keep $m$. \\
\end{tabular}
$$

\vskip 2mm \noindent In cases $2,4,5,7,8$ and $9$, we  manage to
increase $h_1$ to $r+1$, fixing $h_2=a$ and $h_3=b$. Thus, we are
done if $I^{\perp}$ contains any monomial of type 2, 4, 5, 7, 8, or
9.  We assume that this is not the case.

Consider now the monomials in $I^\perp$ of types  6 and 10 (i.e. belonging to cases 6 and 10).  A monomial $m$ of type 10 is of the form $xyz$, where all variables are unique to $m$.  A monomial of type 6 must be of the form $x^2y$ where both variables are unique to $m$.  Removing all monomials of types 6 and 10 (we will replace them later), assume without loss of generality that $I^\perp$ contains only monomials of types 1 and 3.

Let $m$ be a monomial of type 3.  Then $m$ must either  be of the form $xy^2$ with $x$ unique to $m$ but $y^2$ not unique to $m$, or of the form $x^3$ with $x$ unique to $m$.  We remove all of the monomials of the latter form (we will replace them later).  Then we can partition the type 3 monomials in  the new $I^\perp$ into subsets $A_j$, each of which corresponds to a fixed square $x_j^2$, and the monomials  in $A_j$ are of the form $m = x_i x_j^2$ with $x_i$ unique to $m$ but $x_j^2$ not unique to $m$.  In addition to these, there may be some type 1 monomials, and these must be of the form $x_j^3$ for some $x_j$ appearing in a type 3 monomial.  Now we count.  The  first derivatives of the monomials in $A_j$ are $x_j^2$ and all of the $x_ix_j$, where $x_j$ is fixed for $A_j$ and the $x_i$ occur only in one monomial.  The second derivatives are all the unique $x_i$, plus $x_j$.  Hence the number of first derivatives and the number of second derivatives from the monomials in $A_j$ are both $|A_j|+1$.  Given two of the subsets $A_{j_1}$ and $A_{j_2}$, the first derivatives of the monomials in $A_{j_1}$ are distinct from those of $A_{j_2}$, and similarly for the second derivatives.  Hence the type 3 monomials
produce the same number of first derivatives as second derivatives.  Adding in the type 1 derivatives as described above does not produce any new first or second derivatives.  Thus it follows that for the new inverse system $I^\perp$, we have $h_1 = h_2$, and hence working backwards (replacing the removed monomials) we have $r=a$, contradicting our assumption $r<a$.
It follows that in order to have $r<a$ there must be at least one monomial of types 2, 4, 5, 7, 8 or 9.
This completes the argument for the ICP of $h_1$.

\vskip 2mm $ii)$  We now prove the interval property  for $h_2$. If
$r \leq 2$, this is not too difficult.  If $r=3$, it is not hard to verify that the only possible pure $O$-sequences of length 4 are
\[
\begin{array}{ccccccccccc}
(1,3,3,1) & (1,3,3,2) & (1,3,3,3) \\
(1,3,4,2) & (1,3,4,3) &  (1,3,4,4) &  (1,3,4,5) \\
(1,3,5,3) &  (1,3,5,4) &  (1,3,5,5) & (1,3,5,6) &  (1,3,5,7) \\
(1,3,6,3) & (1,3,6,4) & (1,3,6,5) & (1,3,6,6)\\
(1,3,6,7) & (1,3,6,8) & (1,3,6,9) & (1,3,6,10) .
\end{array}
\]
This clearly satisfies the interval property.

So we assume $r \geq 4$ for the remainder of the proof.
Given $h_1=r$ and $h_3=b$ there is a minimal possible
value for $h_2=a_{\min }$ and a maximal possible $h_2=a_{\max }$. \label{amax} The
first step is to produce an inverse system with $h$-vector $(1, r,
a_{\min }, b)$ that does not contain $x_1^2$ or $x_1 x_2$.

If $b\ge r$, then by Theorem \ref{hibi} it immediately follows that
 $(1,r,a_{\min },b)$ is non-decreasing, and hence we know
by Theorem~\ref{3} that it is differentiable. In this case, the
minimal possible value for $h_2$ allowed by differentiability can in
fact be achieved as follows:

Take the lex-segment ideal, $J$, in $S=k[x_1,x_2,\dots,x_{r-1}]$
with $h$-vector $(1, r-1, a_{\min } - r, b - a_{\min })$ and then put $I
= J \cdot R +(x_1,\ldots,x_r)^{4} \subset R$, where
$R=k[x_1,x_2,\dots,x_r]$. If the inverse system of $J$ is given by
$J^\perp=\bigoplus_{i=0}^3 J^\perp_i$, then we get the inverse
system of $I$ by $I^\perp_3 = \bigoplus_{i=0}^3 x_r^{3-i}J^\perp_i$.
Observe that the generators of the inverse system of $J$ might not
all be in degree three, while $I^\perp$ is generated by $I^\perp_3$.
In each degree, the inverse system $J^\perp_i$ is given by the
lex-segment of smallest monomials. Notice that $x_1 x_2 \in
I_2^\perp$ implies, by construction of $I^\perp$, that $x_1 x_2 \in
J_2^\perp$, thus $x_1 x_2 \notin J$. It follows that $\dim_k J_2
\leq 1$, so equivalently, $a_{\min } - r \geq \binom{r}{2} - 1$, hence
$\binom{r+1}{2} - 1 \leq a_{\min } \leq a_{\max } \le \binom{r+1}{2}$.  Thus the interval property for $h_2$ is trivial in this case. We
conclude that we may assume that $x_1 x_2$ and thus $x_1^2$ are not
in $I_2^\perp$ if $b \geq r$.

If $b< r$, we have that $a_{\min } =r$ (if it exists),  since the sequence is flawless.  If $r =5$ and $b=2$, we take the inverse system generated by $x_2x_3x_4$ and $x_1x_5^2$.  If $r \geq 4$ is arbitrary and $b = r-1$, we take the inverse system generated by $x_2^3,\dots,x_{r-1}^3, x_{r-1}x_r^2$.  Otherwise, we claim that we can use an inverse system generated by
monomials without common factors to achieve our goal of obtaining the desired inverse system without $x_1^2 $ or $x_1x_2$. Indeed, we use $\lfloor (r-b)/2\rfloor$
squarefree monomials, $\lfloor(3b-r)/2\rfloor$ cubes, and the
monomial $x_{r-1} x_r^2$ if $r-b$ is odd. Since $r \geq
4$, we can avoid taking a monomial of the form $x_1 x_2 x_i$ with $i
\geq 3$ as a generator of the inverse system. Moreover, using the fact that
$\lfloor(3b-r)/2\rfloor \leq r-2$, we see that we also can avoid
$x_1^3$ as a minimal generator. Thus, again we may assume that $x_1
x_2$ and $x_1^2$ are not among the derivatives in the inverse
system.

In the second step, we will start with an inverse system,
$I^\perp$, which has the maximal possible value of $h_2= \dim_k
I^\perp_2$ and gradually change this inverse system step by step
until we get to the inverse system $J^\perp$ that has the minimal
possible value of $h_2$. In each step, we shall see that we will not
decrease $h_2$ by more than one. This means that we will pass
through all the intermediate values of $h_2$.

 Let $J^\perp$
denote the inverse system which achieves the minimal value for $h_2
= \dim_k J^\perp_2$ given $\dim_k J^\perp_3 =b$. We have seen in the
first step that in both of the cases, when $b\ge r$ or when $b<r$,
we may assume that neither $x_1^2$ nor  $x_1x_2$ is a derivative of any
of the monomials in $J^\perp_3$.

We now order the monomials in $I^\perp_3$  and $J^\perp_3$ in the
lexicographic order with $x_1>x_2>\cdots>x_r$. Starting with the
largest monomial in $I^\perp_3$ not in $J^\perp_3$, the basic idea
is to exchange it first by $x_1^2x_2$, then by $x_1^3$, and at last
by the smallest monomial in $J^\perp_3$ which has not been added
yet. In each step we have to make sure that $h_2$ does not decrease
by more than one and that we do not already have the monomials that
we need to add.

Let $m$  in $I^\perp_3$ be the largest monomial that is not in
$J^\perp_3$, and let $\tilde{I}$ be the inverse system generated by
$I_3^\perp \setminus \{m\}$. When processing the monomial $m$, we
have to distinguish three cases depending on whether $x_1^2$ or
$x_1x_2$ or neither is among the derivatives of  $\tilde{I}$.

If neither $x_1^2$ nor $x_1x_2$ is in $\tilde{I}_2$ we know that
$x_1^2x_2$ has two derivatives that are not in the span of
$\tilde{I}_3$, which means that $h_2$ cannot drop by more than one
when exchanging $m$ by $x_1^2x_2$. We then exchange $x_1^2x_2$ by
$x_1^3$ and $h_2$ decreases by one. At the third step, we exchange
$x_1^3$ by the smallest monomial in $J^\perp_3$ which is not in
$I^\perp_3$ and $h_2$ drops by at most one, but potentially
increases.

When $x_1^2$ is in $\tilde{I}_2$, we have to be in a case when
$m=x_1^2x_i$ for some $x_i$, because our assumption on $J^\perp$
provides that, for all $j$, $x_1^2 x_j$ is not in $J^\perp$. Thus,
in this case, we can start by replacing $m$ by $x_1^3$, and $h_2$
drops by at most one, and then continue as above.

When $x_1x_2$ is in $\tilde{I}_2$, but $x_1^2$ is not, we have to
have $m=x_1^2x_i$ or $m=x_1x_2x_i$, for some $x_i$. Indeed, this
follows by the choice of $m$ and the fact that, for each $j$, $x_1^2
x_j$ and $x_1 x_2 x_j$ are not in $J^\perp$.

If $m=x_1^2x_i$, we replace $m$ by $x_1^3$ as above and if
$m=x_1x_2x_i$ with $i>1$, we introduce $x_1^2$ as a derivative when
replacing $m$ by $x_1^2x_2$, so $h_2$ drops by at most one. Once we
have done this we are back in the situation above.

This proves that for any $h_2$ with $a_{\min } \leq h_2 \leq a_{\max }$
we can produce a pure $O$-sequence of the form $(1, h_1, h_2, b)$
with some $h_1 \leq r$. In the third step we show that we can
always achieve $h_1 = r$ while fixing the other entries in the
sequence.

 We know that $h_1=r$ for the inverse system $I^\perp$ that
we start with and for the inverse system $J^\perp_3$ that we end up
with. We now  use the interval property for $h_1$ in order to
conclude that we can also get the pure $O$-sequences with $h_1=r$
for all the intermediate values of $h_2$ obtained above. The only
thing that can happen is that $h_1<r$ for some of the intermediate
examples. Hence it is sufficient to prove that $r\le r_{\max }$, given
$h_2$ and $b$.

If $h_2\le b$, then $r_{\max }$ is given by differentiability because
of Theorem \ref{3}. We know that $(1,r,a_{\min },b)$ is
differentiable, which implies that $(1,r,h_2,b)$ is differentiable
whenever $a_{\min }\le h_2\le b$. Hence $r\le r_{\max }$ when $h_2\le
b$. If $h_2>b$, we have that $r_{\max }=h_2$,  hence we get $r\le
a_{\min } \le h_2 = r_{\max }$. We conclude that in both cases we can
use the interval property for $h_1$ to prove that $(1,r,h_2,b)$ is a
pure $O$-sequence.
\end{proof}

There are a few other cases where  we can already prove the ICP. The
argument for the next result, where we consider pure $O$-sequences
of type 1, will use purely algebraic methods, since those sequences
coincide with the $h$-vectors of monomial Artinian  (Gorenstein)
complete intersections, which are very well known.

\begin{prop}
The ICP trivially holds for pure $O$-sequences of type 1.  More precisely,
let $e$ be even, and let $\underline{h} = (1, h_1, h_2, \dots, h_{\frac{e}{2}-1}, h_{\frac{e}{2}}, h_{\frac{e}{2}+1} \dots, h_{e-1} , h_e)$ be a pure $O$-sequence of type 1.  Since this is a complete intersection Hilbert function, we have $h_{\frac{e}{2}+1} = h_{\frac{e}{2}-1}, \dots, h_{e-1} = h_1, h_e = 1$.  Assume that we fix all the values except in degree $\frac{e}{2}$.  If $h_1 = 2$ then there are at most  two possibilities for $h_{\frac{e}{2}}$, which are consecutive.  If $h_1 \geq 3$ then there is a unique value for $h_\frac{e}{2}$.
\end{prop}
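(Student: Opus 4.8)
The plan is to exploit the classical fact that a monomial Artinian Gorenstein algebra is forced to be a complete intersection $A=R/(x_1^{a_1},\dots,x_r^{a_r})$ with all $a_i\ge 2$; thus a type $1$ pure $O$-sequence is precisely the coefficient vector of $\prod_{i=1}^r(1+t+\cdots+t^{a_i-1})$, which is symmetric about its midpoint $e/2$, where $e=\sum_{i=1}^r(a_i-1)$. Hence ``fixing all entries except in degree $e/2$'' is the same as fixing the socle degree $e$ together with the initial string $h_0,h_1,\dots,h_{e/2-1}$, and the whole statement reduces to the following count: how many multisets $\{a_1,\dots,a_r\}$ of integers $\ge 2$ with $\sum(a_i-1)=e$ produce a prescribed string $h_0,\dots,h_{e/2-1}$?

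First I would recover the multiplicities $m_v:=\#\{i:a_i=v\}$ of the small part sizes. Writing the Hilbert series as $(1-t)^{-r}\prod_i(1-t^{a_i})$ and comparing coefficients of $t^d$, one checks that $h_d=-m_d+F_d(r,m_2,\dots,m_{d-1})$ for an explicit function $F_d$: indeed the $t^d$-coefficient of $\prod_i(1-t^{a_i})$ is $-m_d$ plus a sum over subsets of size $\ge 2$, all of whose parts are necessarily $<d$. Since $h_1=r$ is given, induction on $d$ recovers $m_2,\dots,m_{e/2-1}$ from the string, i.e.\ it determines the multiset of \emph{small parts} ($a_i<e/2$). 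Write $s:=r-\sum_{v<e/2}m_v$ for the number of \emph{large parts} ($a_i\ge e/2$) and $w:=e-\sum_{v<e/2}(v-1)m_v=\sum_{\text{large}}(a_i-1)$; both $s$ and $w$ are now determined.

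When $r\ge 3$ I claim the large parts are determined as well, which finishes this case. If $s\le 1$ this is clear (there is at most one large part, of size $w+1$). If $s\ge 2$, set $T:=\sum_{\text{large}}(a_i-e/2)=w-s(e/2-1)\ge 0$, an integer depending only on $s,w$. Each of the $r-s$ small parts contributes at least $1$ to $e-w$, so $w\le e-(r-s)$, whence $T\le e-r+s(2-e/2)$; since $r\ge 3$ forces $e\ge 4$ (so $2-e/2\le 0$) and $s\ge 2$, we may replace $s$ by $2$ to get $T\le e-r+(4-e)=4-r\le 1$. Thus $T\in\{0,1\}$, so the large parts are $s-T$ copies of $e/2$ and $T$ copies of $e/2+1$ --- a uniquely determined multiset. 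Hence $\{a_1,\dots,a_r\}$, and in particular $h_{e/2}$, is uniquely determined.

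Finally, for $r=2$ one has $s\in\{0,1,2\}$; $s=0$ is impossible and $s=1$ again gives a unique large part. When $s=2$ both parts are $\ge e/2$ and $T=e-2(e/2-1)=2$, so the large parts are either $\{e/2,\,e/2+2\}$ or $\{e/2+1,\,e/2+1\}$; a direct computation of the two Hilbert functions shows they share the string $1,2,\dots,e/2$ and give the consecutive middle values $h_{e/2}=e/2$ and $h_{e/2}=e/2+1$ (only the latter surviving when $e=2$). So there are at most two possibilities, and they are consecutive. The only delicate point is the bookkeeping in the $r\ge 3$ step: one must make sure the small/large split is genuinely a dichotomy (this needs $e\ge 4$, automatic when $r\ge 3$) and that the small-$e$ degenerate configurations are covered --- but the single inequality $T\le 4-r$ disposes of all of them uniformly once $s\ge 2$.
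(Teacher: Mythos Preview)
Your proof is correct and follows essentially the same strategy as the paper: reduce to complete intersections $R/(x_1^{a_1},\dots,x_r^{a_r})$, then show that for $r\ge 3$ the multiset $\{a_i\}$ (and hence $h_{e/2}$) is forced by the data, while for $r=2$ there are at most two choices.

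There is one genuine point where your argument is more complete than the paper's. The paper writes ``Suppose now that $J$ is a complete intersection whose generators of degree $<\frac{e}{2}$ are exactly the same as those of $I$\ldots'' and proceeds from there, effectively \emph{assuming} that the small generator degrees are determined by $h_0,\dots,h_{e/2-1}$ without justifying it. Your generating-function recursion $h_d=-m_d+F_d(r,m_2,\dots,m_{d-1})$ makes this step explicit and rigorous. The bookkeeping at the end is also organized a bit differently: the paper counts generators of degree strictly greater than $e/2$ and shows there is at most one when $r\ge 3$, then argues that altering it would change the socle degree; you instead split at $e/2$ and bound $T=\sum_{\text{large}}(a_i-e/2)\le 4-r\le 1$, which pins down the large parts directly. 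Both routes arrive at the same conclusion, but yours avoids the somewhat informal ``dropping $h_{e/2}$ by one'' discussion in the paper's final paragraph.
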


\begin{proof}
By inverse systems, the set of $O$-sequences of type 1 coincides with the set of Hilbert functions of monomial Artinian  complete intersections, which are the Gorenstein algebras of the form $A = R/I = k[x_1,\dots,x_r]/(x_1^{a_1},\dots, x_r^{a_r})$.  By the symmetry of Gorenstein $h$-vectors, if two such Hilbert functions -- say $\underline{h}'$ and $\underline{h}''$, given by algebras $A'$ and $A''$ -- differ in only one spot, this  has to be the middle degree.  Hence the assumptions in the statement of the theorem can be made  without loss of generality to prove the ICP.

If $h_1 = 2$, since $e$ is even we have two possibilities for $\underline{h}$:
\[
(1,2,3,\dots, a-1, a, a-1, \dots, 3,2,1) \hbox{\hspace{1cm} (where $a = \frac{e}{2} +1$)},
\]
or
\[
(1,2,3,\dots, b-1,b,b,\dots,b,b-1,\dots,3,2,1)
\]
where there is an odd number, $\ell \geq 3$, of $b$'s.  It is clear that in the second  case, if $\ell \geq 5$ then we cannot change the middle $b$, while if $\ell=3$ then we can only increase it to $b+1$. In the first case,  we can only change the middle $a$ to an $a-1$.

Now suppose that
\[
\underline{h} = (1, h_1, h_2, \dots, h_{\frac{e}{2}-1}, c, h_{\frac{e}{2}+1} \dots, h_{e-1} , h_e)
\]
where $h_1 \geq 3$ and  $c = h_{\frac{e}{2}}^{max}$.  We note that $\sum_{i=1}^r a_i = e+r$.  We first ask how many generators $I$ can have of degree $\frac{e}{2}+1$, if all other generators have degree $\leq \frac{e}{2}$.  Suppose there are $s$ such generators.  Then
\begin{equation} \label{1st eq}
e+r = \left (\frac{e}{2} +1 \right )\cdot s + \sum_{a_i \leq \frac{e}{2}} a_i.
\end{equation}
We have $r-s$ generators of degree $\leq \frac{e}{2}$.  Since $1\leq a_i$, we obtain
\[
e+r \geq \left ( \frac{e}{2} +1 \right ) \cdot s + (r-s), \ \ \ \hbox{ i.e. } \ \ \  e \geq \frac{e}{2} \cdot s.
\]
We conclude that $s \leq 2$.  But note that if $s=2$ then from  (\ref{1st eq}) we obtain $\sum_{a_i \leq \frac{e}{2}} a_i = r-2$, and since there are $r-2$ terms in that sum, they must all be 1.  So $h_1 = 2$, which we have already considered.

If we allow generators of degree $> \frac{e}{2}+1$, we no longer can have $\sum a_i \geq r-s$.  We deduce that if there are two generators of degree $> \frac{e}{2}$ then they  must both be of degree $\frac{e}{2}+1$, and furthermore $h_1 = 2$.  So we may assume without loss of generality that there is at most one generator of degree $> \frac{e}{2}$.

Suppose now that $J$ is a complete intersection whose generators of degree $< \frac{e}{2}$ are exactly the same as those of $I$, but which has one or more additional generators of degree $\frac{e}{2}$ (so $h_{\frac{e}{2}}$ drops from the maximum value).  Note first that because $I$ has at most one generator of higher degree, we already have either $r-1$ or all $r$ generators of degree $\leq \frac{e}{2}$.  This already shows that there are at most two values for $h_{\frac{e}{2}}$, which are consecutive, so the ICP holds.

But in fact we claim that there is only one value.   Indeed, dropping $h_{\frac{e}{2}}$ by one means that we fix $r-1$ of the degrees of the generators, and change the last one.  But this changes the socle degree, which contradicts our assumption.
\end{proof}

\begin{prop} \label{ICP codim 2}
The Interval Conjecture (cf. \cite{Za2}) holds in the following cases:
\begin{itemize}
\item for arbitrary level $h$-vectors of codimension two;
\item  for the subclass (a priori) of pure $O$-sequences of codimension two.
\end{itemize}
In fact, these classes coincide.
\end{prop}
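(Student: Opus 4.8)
The plan is to prove both bullet points at once by isolating a single explicit description governing both classes. Let $\mathcal{C}$ be the set of sequences $\underline{h}=(1,2,h_2,\dots,h_e)$ with $h_e\ge 1$, $h_{e+1}=0$, and with non-positive second difference in the range $2\le j\le e+1$, i.e.\ $h_j-2h_{j-1}+h_{j-2}\le 0$ there. I would show that the codimension two level $h$-vectors, and the (a priori smaller) codimension two pure $O$-sequences, each coincide with $\mathcal{C}$. Granting this, the Interval Conjecture is immediate: $\mathcal{C}$ is cut out by the constant constraints $h_0=1$, $h_1=2$ together with the \emph{linear} inequalities $h_j-2h_{j-1}+h_{j-2}\le 0$, so fixing all entries but $h_i$, the admissible values of $h_i$ form an interval of integers; since $h_i$ and $h_i+\alpha$ are admissible, so is every $h_i+\beta$.

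First I would recall (or re-derive from the Hilbert--Burch theorem) that the codimension two level $h$-vectors of socle degree $e$ and type $t$ are exactly the members of $\mathcal{C}$ with $h_e=t$. A codimension two Artinian level algebra $A=R/I$ has minimal free resolution
\[
0\to R(-e-2)^{t}\to \bigoplus_{j} R(-j)^{\beta_{1,j}}\to R\to A\to 0,
\]
where levelness forces all second syzygies into degree $e+2$, codimension two forces $\beta_{1,1}=0$, and $I\supseteq\mathfrak{m}^{e+1}$ forces $\beta_{1,j}=0$ for $j\ge e+2$. Comparing Hilbert series gives $(1-z)^2H_A(z)=1-\sum_j\beta_{1,j}z^j+t\,z^{e+2}$, so $h_j-2h_{j-1}+h_{j-2}=-\beta_{1,j}\le 0$ for $2\le j\le e+1$ (which also forces the usual codimension two shape, so $\underline{h}$ is automatically an $O$-sequence). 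Conversely, given $\underline{h}\in\mathcal{C}$, set $\beta_{1,j}:=-(h_j-2h_{j-1}+h_{j-2})\ge 0$ and $t:=h_e$; a telescoping sum gives $\sum_j\beta_{1,j}=t+1$, and since every generator degree is $\le e+1$, hence strictly below the syzygy degree $e+2$, the numerical hypotheses of the Hilbert--Burch / Peskine--Szpiro structure theorem are trivially met, producing a codimension two Artinian level algebra with this resolution and hence with $h$-vector $\underline{h}$.

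Next I would show the codimension two pure $O$-sequences also form exactly $\mathcal{C}$. One inclusion is free: such a sequence is the $h$-vector of a monomial, hence level, codimension two algebra, so it lies in $\mathcal{C}$ by the previous paragraph. For the reverse, I use Macaulay duality: a codimension two monomial level algebra of socle degree $e$ and type $t$ has inverse system generated by monomials $x^{a_1}y^{e-a_1},\dots,x^{a_t}y^{e-a_t}$ with $0\le a_1<\dots<a_t\le e$, which I encode by the $t+1$ gaps $g_0=a_1$, $g_k=a_{k+1}-a_k-1$, $g_t=e-a_t$ (so $g_i\ge 0$, $\sum_i g_i=e+1-t$). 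A direct count of the monomials of the order ideal in each degree yields
\[
h_d=(d+1)-\sum_{i=0}^{t}\max\bigl(g_i-(e-d),\,0\bigr),
\]
and convexity of $c\mapsto\sum_i\max(g_i-c,0)$ gives the second-difference inequalities defining $\mathcal{C}$. Conversely, given $\underline{h}\in\mathcal{C}$, the function $f(c):=(e+1)-c-h_{e-c}$ on $0\le c\le e$ is integer-valued, non-negative (this is $h_d\le d+1$), non-increasing, convex, with $f(e)=0$; reading off its successive decrements reconstructs a legitimate gap vector (the inequality $h_{e-1}\le 2h_e$ from $\mathcal{C}$ being exactly what bounds the number of positive gaps by $t+1$, and $h_1=2$ forcing every $g_i\le e-1$, i.e.\ genuine codimension two), hence a monomial level algebra realizing $\underline{h}$. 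Thus the two classes coincide, and both equal $\mathcal{C}$.

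Finally the Interval Conjecture is formal. If $\underline{h},\underline{h}'\in\mathcal{C}$ agree outside degree $i$ with $h'_i=h_i+\alpha$, then necessarily $i\ge 2$ and $i\le e$ (the common socle degree, since once a Hilbert function vanishes it stays $0$), so $h''_i:=h_i+\beta\ge h_i\ge 1$ for every $0\le\beta\le\alpha$, keeping all entries through degree $e$ positive and, when $i=e$, keeping the type $h_e+\beta$ (hence $h_{e+2}-2h_{e+1}+h_e=h_e+\beta$) positive. Altering $h_i$ changes only the second differences in degrees $i$, $i+1$, $i+2$ (those that occur), each linear in $h_i$ and satisfying the required inequality both at $h_i$ and at $h_i+\alpha$, hence at every $h_i+\beta$ in between; so $\underline{h}''\in\mathcal{C}$, i.e.\ it is again a codimension two level $h$-vector and a codimension two pure $O$-sequence. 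The bulk of the work is the two ``$=\mathcal{C}$'' steps, and I expect the main obstacle there to be the honest bookkeeping of the inverse-system count and the convexity/realizability of the gap vector in the third paragraph; the Hilbert--Burch input is routine once one observes that all syzygy degrees ($e+2$) strictly exceed all generator degrees ($\le e+1$), so no realizability obstruction can arise, and the ICP itself is then essentially immediate.
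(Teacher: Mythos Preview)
Your proposal is correct and follows the same essential strategy as the paper: both identify the class of codimension two level $h$-vectors with the set cut out by the linear inequalities $h_{i+1}\le 2h_{i}-h_{i-1}$ (equivalently, your non-positive second-difference condition), observe that the ICP is then immediate by linearity, and show this class coincides with the pure $O$-sequences of codimension two.

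Where you differ is in the execution of the two ``$=\mathcal{C}$'' steps. The paper simply quotes the characterization of codimension two level $h$-vectors from \cite{GL} (attributed there to Iarrobino and Valla), whereas you rederive it from the Hilbert--Burch resolution; this is the same content, just made self-contained. More interestingly, for the realization of every $\underline{h}\in\mathcal{C}$ as a pure $O$-sequence, the paper takes the Hilbert--Burch degree matrix of \emph{any} level algebra with that $h$-vector and manufactures a monomial one by the standard trick of placing powers of $x$ along the main diagonal and powers of $y$ along the adjacent diagonal. You instead work entirely on the inverse-system side, encoding a type-$t$ pure order ideal by its gap vector $(g_0,\dots,g_t)$, deriving the closed formula $h_d=(d+1)-\sum_i\max(g_i-(e-d),0)$, and then inverting this via the convex, non-increasing function $f(c)=(e+1)-c-h_{e-c}$ to recover a legitimate gap vector. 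Your route is more combinatorial and explicit (and yields a transparent bijection-style parametrization of the monomial realizations), while the paper's route is shorter and leans on the structure theorem; both produce the same conclusion.
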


\begin{proof}
For Artinian level algebras in codimension two (not necessarily monomial), it was shown in \cite{GL} Corollary 2.5 that a necessary and sufficient condition on the $h$-vector $(1,2,h_2,\dots,h_e)$ is
\begin{equation} \label{GL result}
h_{i+1} \leq 2h_i - h_{i-1} \hbox{ for all } i = 0,\dots,e.
\end{equation}
(The authors  of that paper credit Iarrobino and Valla, independently, for this result but no citation is given.  It can be derived from \cite{iarrobino} Theorem 4.6.)  It is clear that the interval conjecture holds for such algebras.   This proves the first part of the assertion.

Passing to Artinian {\em monomial} algebras, recall that a Cohen-Macaulay codimension two standard graded algebra has a minimal free resolution of the form
\[
0 \rightarrow \mathbb F_2 \stackrel{A}{\longrightarrow} \mathbb F_1 \rightarrow R \rightarrow R/I \rightarrow 0,
\]
where the maximal minors of $A$ (the {\em Hilbert-Burch matrix}) form a minimal generating set for $I$, and the degrees of the entries of $A$ determine the Hilbert function of $R/I$.  Now, given any Hilbert function satisfying (\ref{GL result}), choose an ideal $I$ such that $R/I$ is level with the given Hilbert function and $A$ is the Hilbert-Burch matrix of $R/I$, and let $B$ be the corresponding matrix of integers giving the degrees of the entries of $A$.  Then  a monomial Artinian  algebra can be constructed with this degree matrix by the standard trick of putting the corresponding power of $x$ along the main diagonal and the corresponding power of $y$ along the next diagonal.  The resulting matrix is then the Hilbert-Burch matrix of a standard graded monomial Artinian algebra, and by construction it has the given Hilbert function.  Hence all Hilbert functions satisfying (\ref{GL result}) are pure $O$-sequences, and so the ICP holds for pure $O$-sequences.  This also shows that the classes coincide.
\end{proof}

\begin{prop}\label{gor}
The ICP holds also in the following cases:\\
i) For the last entry of pure $O$-sequences of the form
$$\underline{h}=\left(1,h_1=r,h_2=\binom{r+1}{2},...,\\h_{e-1}=
\binom{r+e-2}{e-1},h_e\right);$$
that is, when all entries of
$\underline{h}$ up to degree $e-1$ are maximal for a polynomial  ring
in $r$ variables;\\
ii) For finite differentiable $O$-sequences.
\end{prop}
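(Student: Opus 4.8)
The plan is to dispose of the two parts separately, each reducing to an elementary observation, consistent with the fact that the proposition lists ``easy'' cases of the ICP.

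\emph{Part (i).} A pure order ideal $X\subset R=k[x_1,\dots,x_r]$ whose $h$-vector is $\left(1,r,\binom{r+1}{2},\dots,\binom{r+e-2}{e-1},b\right)$ must contain \emph{every} monomial of $R$ of degree $\le e-1$, since in each degree $i\le e-1$ the prescribed value $\binom{r+i-1}{i}$ is exactly the number of monomials of $R$ of that degree. Since $X$ has socle degree $e$, purity is therefore equivalent to the condition that every monomial of degree $e-1$ divide some monomial of $X$ of degree $e$; conversely, given any set $T$ of monomials of degree $e$ with this covering property, the union of $T$ with all monomials of degree $\le e-1$ is a pure order ideal realizing the $h$-vector whose last entry is $|T|$. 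Now suppose $b=\alpha$ is realized by such an $X$, with degree-$e$ part $T$ of size $\alpha$, and that $b=\alpha'\ge\alpha$ is realized as well; then $\alpha'\le\binom{r+e-1}{e}$, and for any $\beta$ with $\alpha\le\beta\le\alpha'$ we may adjoin to $T$ arbitrary $\beta-\alpha$ further monomials of degree $e$ (there are enough, since $\binom{r+e-1}{e}-\alpha\ge\beta-\alpha$). Enlarging $T$ preserves the covering property, hence purity, so we obtain a pure $O$-sequence with last entry $\beta$, as required. (In fact this shows the admissible last entries form the full interval from the minimal covering number up to $\binom{r+e-1}{e}$.)

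\emph{Part (ii).} Here the plan is to pass to first differences. Write $\Delta\underline{h}$ for the sequence whose $0$th entry is $1$ and whose $j$th entry is $h_j-h_{j-1}$; then $\underline{h}$ is a differentiable $O$-sequence precisely when $\Delta\underline{h}$ is an $O$-sequence, because $\underline{h}$ is the sequence of partial sums of $\Delta\underline{h}$ and the partial sums of an $O$-sequence again form an $O$-sequence (if $\Delta\underline{h}$ is the Hilbert function of $S/J$, then $\underline{h}$ is the Hilbert function of $S[t]/JS[t]$). Suppose $\underline{h}$ and $\underline{h}'$ are differentiable $O$-sequences agreeing in all degrees except that $h'_i=h_i+\alpha$ for some $\alpha>0$. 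Then $\underline{g}:=\Delta\underline{h}$ and $\underline{g}':=\Delta\underline{h}'$ agree in all degrees except that $g'_i=g_i+\alpha$ and, when $i<e$, also $g'_{i+1}=g_{i+1}-\alpha$ (if $i=e$ the change affects the single degree $e$ and the verification below is even easier). For $0\le\beta\le\alpha$, define $\underline{g}''$ to equal $\underline{g}$ except with $g''_i=g_i+\beta$ and $g''_{i+1}=g_{i+1}-\beta$.

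It is enough to check that $\underline{g}''$ is an $O$-sequence: then its partial sums $\underline{h}''$ form a differentiable $O$-sequence agreeing with $\underline{h}$ outside degree $i$ and with $h''_i=h_i+\beta$. The entries of $\underline{g}''$ are positive, since at degree $i$ the value only went up and at degree $i+1$ we have $g''_{i+1}=g_{i+1}-\beta\ge g_{i+1}-\alpha=g'_{i+1}\ge 1$. Only the Macaulay growth conditions at degrees $i$, $i+1$ and $i+2$ can be affected, and each follows from the monotonicity in $n$ of the operator $n\mapsto((n)_{(d)})^1_1$ of Theorem \ref{macaulay}(i): from degree $i-1$, $g''_i=g_i+\beta\le g'_i\le((g_{i-1})_{(i-1)})^1_1$; from degree $i$, $g''_{i+1}=g_{i+1}-\beta\le g_{i+1}\le((g_i)_{(i)})^1_1\le((g_i+\beta)_{(i)})^1_1$; and from degree $i+1$, $g''_{i+2}=g_{i+2}=g'_{i+2}\le((g'_{i+1})_{(i+1)})^1_1\le((g_{i+1}-\beta)_{(i+1)})^1_1$. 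All remaining growth conditions are those of the $O$-sequence $\underline{g}$, so $\underline{g}''$ is an $O$-sequence. The main thing to be careful about is precisely this last verification, and in particular the point that the single-coordinate interval move for $\underline{h}$ translates into the two-coordinate move $(g_i,g_{i+1})\mapsto(g_i+\beta,g_{i+1}-\beta)$ for $\Delta\underline{h}$; but this move is exactly the shape for which monotonicity of the Macaulay bound yields all the needed inequalities, so no real obstacle arises, and part (i) is similarly routine once one sees that purity reduces to a covering condition.
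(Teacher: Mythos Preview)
Your proof is correct and follows essentially the same approach as the paper. Part (i) is identical in spirit: both arguments observe that enlarging the degree-$e$ monomial set preserves the maximal values in lower degrees, so one can step from $h_e$ to $h_e+1$. Part (ii) is the paper's one-line invocation of the monotonicity of $((n)_{(d)})^1_1$ spelled out in detail; the only thing you leave implicit is the appeal to Corollary~\ref{diff} (differentiable $\Rightarrow$ pure), which is what converts ``$\underline{h}''$ is a differentiable $O$-sequence'' into ``$\underline{h}''$ is a pure $O$-sequence'' and hence establishes the ICP proper.
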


\begin{proof}
i) If $\underline{h}=(1,h_1=r,h_2=\binom{r+1}{2},...,h_{e-1}=\binom{r+e-2}{e-1},h_e)$ is a pure $O$-sequence, then there exist $h_e$ monomials of degree $e$ giving a total of $h_i$ different derivatives in degree $i$, for all $i<e$. Let us now add one new monomial of degree $e$ to the previous $h_e$ (if $h_e<\binom{r+e-1}{e}$). Because the values of $h_i$ is  maximal (given $r$) for $i<e$, this new set of monomials, similarly, gives an $O$-sequence where all $h_i$'s are unchanged if $i<e$, and where $h_e$ is replaced by $h_e+1$. This clearly proves the assertion.

ii) This fact is an immediate consequence of Corollary \ref{diff}, that all finite differentiable $O$-sequences are pure, and of the fact that the formula $((n)_{(d)})_1^1$, determining the bound of Macaulay's theorem, is an increasing function of $n$ for all given $d$.
\end{proof}

\begin{corollary}\label{2}
The set $S$ of pure $O$-sequences of length 3 (i.e. of
socle degree 2) satisfies the ICP. More precisely, $(1,r,h_2)$ is a pure $O$-sequence if and only if $\lceil r/2\rceil \leq h_2\leq \binom{r+1}{2}.$
\end{corollary}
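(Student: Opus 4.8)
The plan is to prove the stated numerical characterization of length-$3$ pure $O$-sequences directly, and then to read off the ICP for $S$ as a purely formal consequence of the shape of the admissible parameter set. First I would recall that $(1,r,h_2)$ being a pure $O$-sequence means there is a pure monomial order ideal $X\subseteq k[y_1,\dots,y_r]$ with $h_0=1$, with $h_1=r$ monomials of degree $1$, with $h_2$ monomials of degree $2$, and with none of higher degree. Since $h_1=r$, after relabeling $X$ contains exactly the variables $y_1,\dots,y_r$ in degree $1$, and purity forces every $y_i$ to divide at least one of the $h_2$ chosen monomials of degree $2$ --- otherwise $y_i$ itself would be a maximal monomial of degree $1$, contradicting purity.

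Granting this setup, the two bounds are short. The inequality $h_2\le\binom{r+1}{2}$ is immediate, since $\binom{r+1}{2}$ is the total number of monomials of degree $2$ in $r$ variables. For $h_2\ge\lceil r/2\rceil$, I would note that a monomial of degree $2$ is divisible by at most two distinct variables, so the $h_2$ quadratic monomials in $X$ are together divisible by at most $2h_2$ variables; as they must cover all $r$ of them, $r\le 2h_2$. For the converse, given $\lceil r/2\rceil\le h_2\le\binom{r+1}{2}$, I would start from the $\lceil r/2\rceil$ monomials $y_1y_2,\,y_3y_4,\dots$, together with $y_r^2$ when $r$ is odd --- every variable occurs among them --- and then adjoin arbitrary further monomials of degree $2$, one at a time, until there are exactly $h_2$ of them (possible since $h_2\le\binom{r+1}{2}$). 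Taking $X$ to be the union of $\{1\}$, the $r$ variables, and these $h_2$ quadratic monomials, one checks that $X$ is an order ideal (every divisor of a chosen quadratic monomial is $1$, a variable, or the monomial itself) and that all its maximal monomials have degree $2$ (no variable is maximal, since each divides one of the chosen monomials). Hence $X$ is a pure monomial order ideal with $h$-vector $(1,r,h_2)$, completing the characterization. (One could alternatively invoke Corollary~\ref{diff} together with Macaulay's theorem on the non-decreasing range $r\le h_2\le\binom{r+1}{2}$, but the construction above covers all cases uniformly.)

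To deduce the ICP, I would observe that if $r$ is fixed and $h_2$ varies, the characterization says the admissible values of $h_2$ form the single integer interval $[\lceil r/2\rceil,\binom{r+1}{2}]$, so any value between two admissible ones is admissible; and if $h_2$ is fixed and $h_1=r$ varies, then both $\lceil r/2\rceil$ and $\binom{r+1}{2}$ are non-decreasing in $r$, so the set of admissible $r$ is again an integer interval, and the same conclusion holds. In either case the interval property required by the ICP is satisfied. The only step in the argument that is not a formality is the lower bound $h_2\ge\lceil r/2\rceil$, which is where I would expect whatever (minimal) difficulty there is to lie; but as sketched it reduces to a one-line covering count, so in fact no serious obstacle arises.
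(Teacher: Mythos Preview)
Your proof is correct and follows essentially the same approach as the paper's: the same covering argument for the lower bound $h_2\ge\lceil r/2\rceil$, the same explicit construction for the minimum case, and the same monotonicity observation for the ICP in the $h_1$ coordinate. The only minor difference is that where the paper invokes Proposition~\ref{gor}(i) to pass from the minimum $h_2$ to all larger admissible values, you carry out that step directly by adjoining quadratic monomials one at a time --- a self-contained variant of the same idea.
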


\begin{proof} The fact that there exists a positive integer $a_{\min }$ such that $(1,r,h_2)$ is a pure $O$-sequence if and only if $a_{\min } \leq h_2\leq \binom{r+1}{2}$, is an obvious consequence of part i) of the previous proposition (with $e=2$). Also, by inverse systems, the minimum such $a_{\min } $ must satisfy the inequality $a_{\min } \geq \lceil r/2\rceil $, since any monomial of degree 2 has at most two non-zero derivatives.

Therefore, it remains to prove the opposite  inequality, $a_{\min }
\leq \lceil r/2\rceil $. We will do it by constructing a set of
$\lceil r/2\rceil$ monomials of degree 2 in exactly $r$ variables.
If $r=2m$ is even, these monomials can chosen to be, for instance,
$x_1x_2$, ..., $x_{2m-1}x_{2m}$; if $r=2m+1$ is odd, such monomials
can be $x_1x_2$, ..., $x_{2m-1}x_{2m}$, $x_{2m+1}^2$.

We now  show the ICP  on the degree 1 entry, i.e. that, if
$(1,r',h_2)$ and $(1,r'',h_2)$ are two pure $O$-sequences such that
$r'<r''$, then $(1,r,h_2)$ is also a pure $O$-sequence for all
integers $r$ such that $r'\leq r\leq r''$. But if $h_2$ satisfies
both bounds of the statement for $r'$ and $r''$,  it is clear that
is also satisfies both bounds for $r$, and the proof is complete.
\end{proof}

\begin{remark}\label{22} {\em
(i) The condition $\lceil r/2\rceil \leq h_2\leq \binom{r+1}{2}$ of Corollary \ref{2} was already known to Hibi (\cite{Hi}, Example 1.2), even if he did not provide a proof for it in his paper.

\medskip

\noindent (ii) Instead, when $e>2$,  it seems very hard in general to determine explicitly the minimum possible value of $h_e$ for the pure $O$-sequences of Proposition \ref{gor}. This further suggests that studying the ICP instead of seeking a complete characterization for pure $O$-sequences might be the right approach to this problem. See Remark \ref{mats rmk} below for a  result providing the minimum $h_3$ corresponding to a nice, infinite special set of values for $h_2$ (not  maximal for our polynomial ring).

\medskip

\noindent (iii) We have verified the ICP experimentally on \cocoa\   in the following cases:

\begin{itemize}
\item codimension $\leq 3$, socle type 3, socle degree $\leq 10$;

\item codimension $\leq 3$, socle type 2, socle degree $\leq 14$;

\item codimension $\leq 4$, socle type 2, socle degree $\leq 12$.

\end{itemize}

}\end{remark}

\begin{remark} \label{mats rmk}
  The ICP deals with the interior property of the set of pure
  $O$-sequences and as we have mentioned before, a solution of the full
  characterization problem would imply solutions to several well-known open problem
  in design theory. Even in the case of pure $O$-sequences of length
  four there are such relations. For example, we have the inequality
  $h_2\le 3h_3$ and if we have equality $h_2= 3h_3$, the inverse
  system has to be squarefree, forcing the inequality $h_2\le
  \binom{h_1}{2}$. Thus we can say that pure $O$-sequences of the form
  $$h=\left(1,r,\binom{r}{2},\frac{1}{3}\cdot \binom{r}{2}\right)$$
  are \emph{extremal} in the set of pure $O$-sequences of length four.
  In this case, the corresponding object in design theory is known
  as a \emph{Steiner triple system}, which means a set of $3$-sets where every
  $2$-set occurs exactly once as a subset. Steiner triple systems are
  well studied and it was known already to Kirkman~\cite{kirkman}
  in 1847 that the only restriction to the existence of such a system
  was the  numerical condition $r\equiv 1$ or $3\pmod 6$. In fact,
  the condition that $h_3$ is an integer is equivalent to $r\equiv 0$
  or $1 \pmod 3$, but we also have that each of the variables have to
  occur together with all the other variables exactly once in the
  inverse system, which means that the other variables have to be
  matched in pairs and $r$ has to be odd.
\end{remark}

As a consequence of the ICP for socle degree 2 pure $O$-sequences, we will now provide a very short proof of Stanley's conjecture that the $h$-vectors of matroid simplicial complexes are pure $O$-sequence, when such complexes have  dimension 1. This is a result first shown by Stokes in his thesis \cite{Sto,Sto2}; in all fairness, Stokes's proof, which is quite long, is however explicit and possibly  generalizable to classes of higher dimensional matroids, whereas our argument is very short but appears to be {\em ad hoc}. Before proving the result, we  recall the Brown-Colbourn inequalities:

\begin{lemma}(Brown-Colbourn \cite{BC})\label{bc}
Let $\underline{h}=(h_0=1,h_1,...,h_{d+1})$ be the $h$-vector of a matroid complex of dimension $d$ (where each $h_i\geq 0$). Then, for any index $j\leq d+1$ and for any real number $\alpha \geq 1$, we have
$$(-1)^j\sum_{i=0}^j(-\alpha )^ih_i\geq 0,$$
where the inequality is strict if $\alpha \neq 1.$
\end{lemma}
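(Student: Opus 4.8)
Since this lemma is a theorem of Brown and Colbourn, the plan is to reduce to \cite{BC}, but let me indicate the shape of the argument. First I would translate the statement into the language of the Tutte polynomial: if $M$ denotes the matroid whose independence complex is the given matroid complex, of rank $r=d+1$, then $T_M(x,1)=\sum_{i=0}^{r}h_i x^{r-i}$, and an elementary rearrangement shows that, up to a positive monomial factor, $(-1)^j\sum_{i=0}^j(-\alpha)^ih_i$ is the value at $-1/\alpha\in[-1,0)$ of the polynomial $P_j(x):=h_0x^j+h_1x^{j-1}+\cdots+h_j$ formed from the top $j+1$ coefficients of $T_M(x,1)$. Thus the lemma is equivalent to the assertion that every such ``top truncation'' $P_j$ of $T_M(x,1)$ is nonnegative on $[-1,0]$, and strictly positive on $[-1,0)$.

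Next I would dispose of the degenerate cases: loops of $M$ may be deleted without altering the complex, and coloops only cone the complex and hence leave the $h$-vector unchanged, so one may assume $M$ is loopless and coloopless. The core is then an induction on the number of elements of $M$, driven by the deletion--contraction recursion for a non-coloop element $e$. In reliability-polynomial form this recursion is a genuine convex combination, $\mathrm{Rel}(M;p)=p\,\mathrm{Rel}(M/e;p)+(1-p)\,\mathrm{Rel}(M\setminus e;p)$, and it is precisely this convexity that Brown and Colbourn exploit to propagate the coefficient (equivalently, root-location) bounds. The base cases are the smallest loopless coloopless matroids --- the rank-one uniform matroids, the circuits, and the like --- which one checks by hand; strictness for $\alpha\neq1$ then follows because it already holds in the base cases and is preserved along the recursion.

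The main obstacle is that the quantities one must control are \emph{partial} sums of coefficients (the truncations $P_j$), and partial sums do not transform as transparently under deletion--contraction as full evaluations do; moreover $M\setminus e$ can acquire a coloop even when $M$ has none, so the inductive family must be set up with care. The technical heart of \cite{BC} is exactly the choice of a refined package of inequalities --- tracking, for all truncation levels simultaneously, the relevant signs on both $M/e$ and $M\setminus e$ --- that is closed under the recursion. A purely combinatorial alternative would be to read the inequalities off a shelling of the matroid complex, using the description of the $h_i$ via internal and external activities (Bj\"orner, Chari); I expect that route to be conceptually cleaner but comparably delicate in its bookkeeping. Since here the lemma is needed only as input to the short proof of Stanley's conjecture in rank $2$, I would be content to cite \cite{BC} for the full details.
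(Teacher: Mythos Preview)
Your proposal is correct and, in fact, goes beyond what the paper does: the paper gives no proof at all for this lemma, merely attributing it to Brown--Colbourn \cite{BC} and immediately invoking it (with $j=2$, $\alpha=1$) in the proof of Theorem~\ref{d1}. Your sketch of the Tutte-polynomial reformulation and the deletion--contraction induction is a reasonable outline of what \cite{BC} contains, and your conclusion --- that for the purposes of this monograph one should simply cite \cite{BC} --- matches exactly what the authors do.
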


\begin{theorem}\label{d1}
The $h$-vector of a dimension 1 matroid complex is a pure $O$-sequence.
\end{theorem}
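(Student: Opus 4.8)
The plan is to deduce this from Corollary \ref{2}, which says that a length-$3$ sequence $(1,r,h_2)$ is a pure $O$-sequence exactly when $\lceil r/2\rceil\le h_2\le\binom{r+1}{2}$. So, writing the $h$-vector of a $1$-dimensional matroid complex $\Delta$ (that is, a rank-$2$ matroid) as $\underline h=(1,h_1,h_2)$, it suffices to verify these two bounds with $r=h_1$. The upper bound is automatic: a matroid complex is Cohen--Macaulay, so $\underline h$ is an $O$-sequence, and Macaulay's bound (Theorem \ref{macaulay}(i)) in degree $1$ gives $h_2\le((h_1)_{(1)})_1^1=\binom{h_1+1}{2}$. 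Thus everything is in the lower bound $h_2\ge\lceil h_1/2\rceil$, equivalently $h_1\le 2h_2$.

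To get the lower bound I would first set aside the case $h_2=0$: then $\underline h$ is really the sequence $(1,h_1)$, which is visibly a pure $O$-sequence (take the monomials $1,y_1,\dots,y_{h_1}$). So assume $h_2\ge 1$. Then $\Delta$ has no coloop -- a coloop would exhibit $\Delta$ as a cone, forcing $h_2=0$ -- so the Brown--Colbourn inequalities (Lemma \ref{bc}) apply. Taking $j=d+1=2$ in Lemma \ref{bc} gives
\[
1-\alpha h_1+\alpha^2 h_2\ge 0\qquad\text{for every real }\alpha\ge 1 .
\]
Now argue by contradiction: if $h_1>2h_2$ then $\alpha:=h_1/(2h_2)>1$ is admissible, and substituting it yields $1-\frac{h_1^2}{4h_2}\ge 0$, i.e.\ $h_1^2\le 4h_2$; but $h_1>2h_2$ together with $h_2\ge 1$ forces $h_1^2>4h_2^2\ge 4h_2$, a contradiction. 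Hence $h_1\le 2h_2$, so $h_2\ge\lceil h_1/2\rceil$, and Corollary \ref{2} concludes that $\underline h$ is a pure $O$-sequence.

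I do not expect a real obstacle: the argument is short, and its substance is entirely carried by the two imported results, Lemma \ref{bc} and Corollary \ref{2} (the latter being where the ICP for socle degree $2$ enters). The only point needing care is the bookkeeping around the final entry of $\underline h$ -- separating off $h_2=0$ before dividing by $h_2$, and observing that in the remaining case $\Delta$ is coloop-free, which is what makes the last inequality of Lemma \ref{bc} nonvacuous and available. One could instead bypass Brown--Colbourn by classifying rank-$2$ matroids by hand (a loopless rank-$2$ matroid is a multiset of points on a line, with $h$-vector an explicit function of the sizes of its parallel classes), but the Brown--Colbourn route is shorter and is the one suggested by the lemma just recalled.
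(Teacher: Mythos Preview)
Your proof is correct and follows the same overall route as the paper: reduce to the numerical criterion of Corollary~\ref{2} and verify the lower bound $h_2\ge\lceil h_1/2\rceil$ via the Brown--Colbourn inequalities (Lemma~\ref{bc}), after setting aside $h_2=0$. The only difference is in how you extract the inequality from Brown--Colbourn: you run a contradiction argument with the optimizing value $\alpha=h_1/(2h_2)$, whereas the paper simply plugs in $\alpha=1$ to obtain $h_2\ge h_1-1$, and then observes that $h_1-1\ge\lceil h_1/2\rceil$ for $h_1\ge 2$ (the case $h_1=1$ being immediate since $h_2\ge 1$). Their choice gives a one-line computation; yours is a little longer but yields the sharp bound $h_1\le 2h_2$ directly. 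Your extra care about coloops is harmless and arguably more precise than the paper's statement of Lemma~\ref{bc}.
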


\begin{proof}
Let $(1,r,h_2)$ be the $h$-vector of a matroid of dimension $d=1$. If $h_2=0$, the result is obvious. Thus, let $h_2>0$. In view of the inequalities of Corollary \ref{2}, we only need to show that $h_2\geq \lceil r/2\rceil $.

But, for  $j=2$ and $\alpha =1$, the  Brown-Colbourn inequalities say that we have $1- r+h_2\geq 0$, that is, $h_2\geq r-1$. Now, it is immediate to see that, for any $r\geq 2$, $r-1\geq  \lceil r/2\rceil $, and the theorem follows.
\end{proof}

\begin{remark}\label{d2} {\em
Unfortunately, the ICP and the  Brown-Colbourn inequalities do not suffice to prove Stanley's conjecture for matroid complexes of dimension $d>1$. For instance, consider the dimension 2 matroids whose $h$-vectors have the form $\underline{h}=(1,100,100,h_3)$. It can be easily checked that  the best bound on $h_3$ that the Brown-Colbourn inequalities guarantee is $h_3\geq 26$, whereas any pure $O$-sequence of the form $(1,100,100,h_3)$ clearly needs to have $h_3\geq 34$.

This fact, however, was definitely to be expected, since the Brown-Colbourn inequalities are normally far from being sharp as the dimension of the complex grows; notice that, in fact,  the only positive values of $h_3$ for which $\underline{h}=(1,100,100,h_3)$ is the $h$-vector of a matroid are actually 99 and 100. (Thanks to Erik Stokes for pointing out this latter fact to the last author.)
}\end{remark}


\chapter{Enumerating pure $O$-sequences} \label{enum}

A good question to ask in mathematics is always: How many are the objects that we are talking about? Answering this question is one of the goals of enumerative combinatorics. Answering it in a purely combinatorial fashion --- viz., by means of a {\em bijective proof} (\cite{St01}) --- is an important goal of enumerative combinatorics. 

Thus we now discuss the enumeration of the objects studied in this monograph, and ask a few more questions: How many pure $O$-sequences are there of given codimension and socle degree? Also, if we take an arbitrary $O$-sequence $H$, what are the odds that $H$ is pure? What about if we start with $H$ level? It turns out that all these questions have very satisfactory asymptotic answers (essentially due to Linusson \cite{Li}): in fact, we are going to see that {\em almost} all $O$-sequences are both level and pure, in a sense that will be made precise below. In the second part  we will specialize the previous questions to the sets of pure $O$-sequences whose type $t$ is also fixed, and see how  the scenario dramatically changes.

Recall that if $f$ and $g$ are two  arithmetic functions, we say that $f$ is {\em asymptotic} to $g$, and write $f(r)\sim_r g(r)$, if $\lim_{r}f(r)/g(r)=1$. All limits are taken for $r$ going to infinity.

\begin{theorem}\label{almost} Fix a positive integer $e$. Let $O^r(e)$ be the number of  $O$-sequences of codimension $r$ and socle degree $e$, and let $P^r(e)$ be the number of {\em pure} $O$-sequences of codimension $r$ and socle degree $e$. Then:

\begin{itemize}
\item[(a)] {\em (essentially Linusson \cite{Li})} For $r$ large, almost all $O$-sequence are pure: precisely, we have
$$P^r(e)\sim_r O^r(e)\sim_r c_er^{\binom{e+1}{2}-1},$$
\label{c_e}
where $$c_e=\frac{\prod_{i=0}^{e-2}\binom{\binom{e+1}{2}-\binom{i+1}{2}-1}{i}}{(\binom{e+1}{2}-1)!}.$$

\item[(b)] Let $D^r(e)$ be the number of differentiable $O$-sequences of codimension $r$ and socle degree $e$.  Then  $D^r(e)$ is also asymptotic to the above arithmetic functions, i.e. $D^r(e) \sim_r c_e r^{\binom{e+1}{2}-1}$.
\end{itemize}
\end{theorem}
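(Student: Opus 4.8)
The plan is to treat the three assertions separately: the count $O^r(e)\sim_r c_er^{\binom{e+1}{2}-1}$ is essentially Linusson's \cite{Li}, so I would cite it (and, if desired, reprove it as indicated below); then I would deduce $P^r(e)\sim_r O^r(e)$ and $D^r(e)\sim_r c_er^{\binom{e+1}{2}-1}$ simultaneously by a squeeze. The starting point is the elementary chain of inequalities
\[
D^r(e)\le P^r(e)\le O^r(e),
\]
valid for all $r$ and $e$: a differentiable $O$-sequence is in particular an $O$-sequence, and is pure by Corollary \ref{diff}, while every pure $O$-sequence is an $O$-sequence. So it is enough to bound $D^r(e)$ below by a quantity asymptotic to $c_er^{\binom{e+1}{2}-1}$.

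The key step I would use is a bijection: forming the first difference carries the strictly increasing differentiable $O$-sequences of codimension $r$ and socle degree $e$ onto \emph{all} $O$-sequences of codimension $r-1$ and socle degree $e$. In one direction, if $\underline h=(1,r,h_2,\dots,h_e)$ is strictly increasing and differentiable, then $(1,r-1,h_2-h_1,\dots,h_e-h_{e-1})$ has strictly positive entries, is an $O$-sequence (that is the definition of differentiability), and has first two entries $1,r-1$ and length $e+1$. Conversely, given an $O$-sequence $\underline g=(1,r-1,g_2,\dots,g_e)$, I realize it as the Hilbert function of a standard graded algebra $S/J$ in $r-1$ variables; then $(S/J)[x]$ is standard graded in $r$ variables with Hilbert function $h_i=\sum_{j\le i}g_j$, so the truncation $(1,r,h_2,\dots,h_e)$ is an $O$-sequence, it is strictly increasing because each $g_j>0$, and it is differentiable with first difference $\underline g$. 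These two constructions are mutually inverse.

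Granting the bijection, its source is contained in the set of all differentiable $O$-sequences of codimension $r$ and socle degree $e$, so $O^{r-1}(e)\le D^r(e)$. Applying the asymptotic $O^n(e)\sim_n c_en^{\binom{e+1}{2}-1}$ with $n=r$ and with $n=r-1$, and using $(r-1)^{\binom{e+1}{2}-1}\sim_r r^{\binom{e+1}{2}-1}$, I get
\[
c_er^{\binom{e+1}{2}-1}\sim_r O^{r-1}(e)\le D^r(e)\le P^r(e)\le O^r(e)\sim_r c_er^{\binom{e+1}{2}-1},
\]
whence $D^r(e)\sim_r P^r(e)\sim_r c_er^{\binom{e+1}{2}-1}$ by the squeeze theorem; this is part (b) and the remaining half of part (a). To supply the input $O^r(e)\sim_r c_er^{\binom{e+1}{2}-1}$ without merely citing \cite{Li}, I would write $O^r(e)=\sum_{h_2}\sum_{h_3\le((h_2)_{(2)})_1^1}\cdots\sum_{h_e\le((h_{e-1})_{(e-1)})_1^1}1$, invoke the uniform estimate $((n)_{(d)})_1^1=\frac{(d!)^{(d+1)/d}}{(d+1)!}\,n^{(d+1)/d}(1+o(1))$ to identify this iterated sum as a Riemann sum for $r^{\binom{e+1}{2}-1}$ times the volume of $\{v\in\mathbb{R}_{\geq 0}^{e-1}:\ v_2\le\frac12,\ v_{d+1}\le\frac{(d!)^{(d+1)/d}}{(d+1)!}v_d^{(d+1)/d}\ (2\le d\le e-1)\}$, and compute that volume: the substitution $v_d=\gamma_dw_d^{\,d}$ (with the $\gamma_d$ chosen so the constraints become $0\le w_e\le\cdots\le w_2\le W$) reduces it to the integral of a monomial over a simplex, which evaluates to $c_e$.

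The main obstacle is entirely in the count $O^r(e)\sim_r c_er^{\binom{e+1}{2}-1}$: making the Macaulay-bound asymptotics uniform in $n$, bounding the error accumulated when the iterated sums are replaced by integrals, and evaluating the limiting volume to the stated closed form. Once that is in hand, the deduction for $P^r(e)$ and $D^r(e)$ is formal, the only subtle point being that partial sums of an $O$-sequence again form an $O$-sequence — which is exactly the role of the $(S/J)[x]$ construction above.
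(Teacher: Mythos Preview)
Your proof is correct and follows essentially the same approach as the paper. Both arguments derive the count $O^r(e)\sim_r c_er^{\binom{e+1}{2}-1}$ from Linusson (the paper does so by taking successive differences of Linusson's cumulative counts $M^r(e)$), and both then squeeze using the chain $O^{r-1}(e)\le D^r(e)\le P^r(e)\le O^r(e)$, where the left inequality comes from the integration map $\int:O(r-1,e)\hookrightarrow D(r,e)$ (your bijection onto strictly increasing differentiable sequences is the same map) and the inequality $D^r(e)\le P^r(e)$ is Corollary~\ref{diff}.
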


\begin{proof} From \cite{Li}, Theorem 2.7 and Proposition 2.8, we have that the number, $M^r(e)$, of $O$-sequences of socle degree {\em at most} $e$ and   codimension {\em at most} $r$ is, with our notation, $\sim_r \frac{c_e}{\binom{e+1}{2}}r^{\binom{e+1}{2}}.$ Hence, the number, say $S^r(e)$, of $O$-sequences of socle degree {\em exactly} $e$ and  codimension {\em at most} $r$ is
$$S^r(e)=M^r(e)-M^r(e-1)\sim_r \frac{c_e}{\binom{e+1}{2}}r^{\binom{e+1}{2}}-\frac{c_{e-1}}{\binom{e}{2}}r^{\binom{e}{2}}\sim_r \frac{c_e}{\binom{e+1}{2}}r^{\binom{e+1}{2}}.$$

It follows that $O^r(e)$ equals
$$S^r(e)-S^{r-1}(e)\sim_r \frac{c_e}{\binom{e+1}{2}}r^{\binom{e+1}{2}} - \frac{c_e}{\binom{e+1}{2}}(r-1)^{\binom{e+1}{2}},$$
and it is easy to see that this function is $\sim_r c_er^{\binom{e+1}{2}-1},$ as desired.

It remains to show that $P^r(e)\sim_r O^r(e)$. An argument can already be found in Linusson's paper (\cite{Li}, Corollaries 3.4 and 3.5), where he actually proves a stronger result. He shows (still with his definitions considering inequalities on codimension and socle degree, but this discrepancy can be dealt with as above) that the number of $O$-sequences is asymptotic to the corresponding number of $f$-vectors of simplicial complexes, and that in turn this is asymptotically equal  to the number of $f$-vectors of shellable simplicial complexes. Since shellable simplicial complexes are (by definition) pure, and pure $f$-vectors are pure $O$-sequences, the result easily follows.

We now give a second and more direct argument to show that $P^r(e)\sim_r O^r(e)$, which relies on a result of this monograph and on an injection that will also be the key to prove Theorem \ref{gorsi} below. This will also prove (b).  Let $O(r,e)$ be the set of  $O$-sequences of codimension $r$ and socle degree $e$, and $D(r,e)$ the subset of $O(r,e)$ of those $O$ sequences that are differentiable. Let $\int $ be the converse operation to differentiation for $O$-sequences (for instance, if $h=(1,3,5,1)$, then $\int h= (1,4,9,10)$). It is clear that the map $\int : O(r-1,e) \rightarrow D(r,e)$ is an injection ($\int $ is in general not surjective for its image is given by the {\em strictly increasing} sequences of $D(r,e)$. Alternatively, $\int $  would define a bijection between the set of codimension $r-1$ $O$-sequences of socle degree {\em at most} $e$ and $D(r,e)$).

Let $P(r,e)$ be the set of pure $O$-sequences of codimension $r$ and socle degree $e$. In Chapter 3, we have seen that differentiable $O$-sequences are pure.
Thus, we have the following chain of inequalities on cardinalities of sets: $$\#O(r-1,e)\leq \#D(r,e)\leq \#P(r,e)\leq \#O(r,e).$$
But the first and last values are, respectively, $O^{r-1}(e)$ and $O^r(e)$, which are both asymptotically equal, for $r$ large, to $c_e r^{\binom{e+1}{2}-1}$. Since the cardinality of $P(r,e)$ is $P^r(e)$, the result  follows.
\end{proof}

Since pure $O$-sequences are level, we immediately have:

\begin{corollary}\label{level} Fix a positive integer $e$. Let $L^r(e)$ be the number of  {\em level} Hilbert functions of codimension $r$ and socle degree $e$. Then, for $r$ large, almost all level sequences are pure, and
$$L^r(e) \sim_r c_e r^{\binom{e+1}{2}-1}.$$
\end{corollary}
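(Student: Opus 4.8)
The plan is to obtain Corollary~\ref{level} as an immediate squeeze between the two asymptotics already established in Theorem~\ref{almost}. First I would record the set inclusions that pin down $L^r(e)$. Write $P(r,e)$, $L(r,e)$, $O(r,e)$ for the sets of pure $O$-sequences, level Hilbert functions, and $O$-sequences of codimension $r$ and socle degree $e$, respectively. Then $P(r,e)\subseteq L(r,e)\subseteq O(r,e)$: the first inclusion holds because, via Macaulay's inverse systems, every pure $O$-sequence is by definition the Hilbert function of a monomial Artinian level algebra, hence a level Hilbert function; the second holds because the Hilbert function of any standard graded algebra satisfies Macaulay's bound (Theorem~\ref{macaulay}(i)). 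All three sets are finite, since fixing the codimension forces $h_1 = r$ and then $h_i \le \binom{r+i-1}{i}$ for each $i \le e$, so the cardinalities $P^r(e) \le L^r(e) \le O^r(e)$ make sense term by term.

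Next I would invoke Theorem~\ref{almost}(a), which gives $P^r(e) \sim_r c_e\, r^{\binom{e+1}{2}-1} \sim_r O^r(e)$. Combining this with the chain $P^r(e) \le L^r(e) \le O^r(e)$ valid for every $r$, the squeeze theorem forces $L^r(e) \sim_r c_e\, r^{\binom{e+1}{2}-1}$, which is exactly the displayed formula.

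Finally, for the statement that \emph{almost all} level sequences are pure, I would observe that $1 \ge P^r(e)/L^r(e) \ge P^r(e)/O^r(e)$, and that the right-hand ratio tends to $1$ because $P^r(e) \sim_r O^r(e)$; hence $P^r(e)/L^r(e) \to 1$ as $r \to \infty$, which is the precise meaning of the claim.

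There is essentially no obstacle: the entire content of the corollary is already carried by Theorem~\ref{almost}, and what remains is only the (routine) remark that ``level'' sits between ``pure'' and ``$O$-sequence.'' The one point I would be careful to state explicitly is that the estimate for $O^r(e)$ used in the sandwich is the one for codimension and socle degree \emph{exactly} $r$ and $e$ --- the version derived in the proof of Theorem~\ref{almost} from Linusson's ``at most'' counts --- so that both ends of the inequality and the middle term $L^r(e)$ are indexed in the same way.
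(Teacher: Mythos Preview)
Your proposal is correct and follows exactly the paper's approach: the paper simply remarks ``Since pure $O$-sequences are level, we immediately have'' the corollary, leaving the squeeze $P^r(e) \le L^r(e) \le O^r(e)$ implicit. You have spelled out precisely this sandwich argument, together with the justification that $P^r(e)/L^r(e) \to 1$, so there is nothing to add.
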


Let us now turn our attention to the enumeration of pure $O$-sequences of codimension $r$, socle degree $e$ {\em and given type $t$}. Let $P^r_t(e)$ be such counting function.  The behavior of $P^r_t(e)$ and that of the function, $L^r_t(e)$, counting the corresponding number of arbitrary level sequences, become entirely different, and in particular there is no hope to specialize  the beautiful asymptotic results given above. We have:

\begin{prop} \label{000} Let $P^r_t(e)$ be the number of pure $O$-sequences of codimension $r$, socle degree $e$, and type $t$. Then $P^r_t(e)=0$ for $r>te$,  this bound being sharp.
\end{prop}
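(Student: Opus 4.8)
The plan is to translate the statement into the language of monomial order ideals via Macaulay's inverse systems (as recalled in Chapter~\ref{defs and prelim results}) and then observe that the vanishing becomes a counting triviality. By definition, a pure $O$-sequence of codimension $r$, socle degree $e$ and type $t$ is the $h$-vector of a pure monomial order ideal $X\subseteq k[y_1,\dots,y_r]$ whose maximal monomials $M_1,\dots,M_t$ all have degree $e$. The first thing I would record is that the codimension equals the number of variables actually occurring: since $X$ is precisely the set of divisors of $M_1,\dots,M_t$, we have $y_i\in X$ if and only if $y_i\mid M_j$ for some $j$, so $h_1=\#X_1$ counts exactly those indices $i$ for which $y_i$ divides one of the $M_j$.

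The heart of the argument is then the elementary observation that a monomial of degree $e$ is divisible by at most $e$ distinct variables. Hence the $t$ monomials $M_1,\dots,M_t$ together involve at most $te$ distinct variables, whence $r=h_1\le te$; consequently $P^r_t(e)=0$ whenever $r>te$. I do not expect any real obstacle here: the only point requiring a word of care is the identification ``type $t$ $\leftrightarrow$ exactly $t$ maximal monomials,'' which is just the level case of the inverse systems correspondence already recalled in the text.

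For sharpness I would exhibit an explicit example attaining $r=te$: partition $\{y_1,\dots,y_{te}\}$ into $t$ consecutive blocks of size $e$ and let $M_j$ be the (squarefree) product of the variables in the $j$-th block. The $M_j$ are pairwise coprime, hence pairwise incomparable under divisibility, so the order ideal $X$ consisting of all their divisors is pure, with maximal monomials exactly $M_1,\dots,M_t$, each of degree $e$; and each of the $te$ variables divides precisely one $M_j$, so $X$ has codimension $te$. Thus the $h$-vector of $X$ (which is $(1,te,\dots,t)$) witnesses $P^{te}_t(e)\ge 1$, so the bound $r\le te$ cannot be improved.
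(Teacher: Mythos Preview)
Your proof is correct and follows essentially the same approach as the paper: both argue that each degree-$e$ monomial involves at most $e$ variables, so $t$ of them use at most $te$ variables, and both exhibit sharpness via the $t$ pairwise-coprime squarefree monomials $y_1\cdots y_e,\ y_{e+1}\cdots y_{2e},\ \dots,\ y_{(t-1)e+1}\cdots y_{te}$. The paper additionally remarks that this construction is unique up to permutation of the variables, so in fact $P^{te}_t(e)=1$.
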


\begin{proof}
Since each monomial of degree $e$ is in at most $e$ variables, we have at once that $P^r_t(e)=0$ for all integers $r>te$. To prove that $P^{te}_t(e)>0$, it suffices to consider the pure $O$-sequence of codimension $te$ determined by the $t$ squarefree monomials $y_1\cdots y_e$, $y_{e+1}\cdots y_{2e}$, ..., $y_{(t-1)e+1}\cdots y_{te}$. (It is easy to see that this is the only possible construction, up to permutation of the variables, hence in fact $P^{te}_t(e)=1$.)
\end{proof}

A fascinating problem is to determine a closed formula for $P^r_t(e)$. It is not even clear if it is at all possible in general. There is a  combinatorial solution for the simplest case, type $t=1$.

\begin{prop}\label{p_r} $P^r_1(e)=p_r(e)$, the number of partitions of the integer $e$ with exactly $r$ parts.
\end{prop}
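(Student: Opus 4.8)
The plan is to exhibit an explicit bijection between partitions of $e$ with exactly $r$ parts and type $1$ pure $O$-sequences of codimension $r$ and socle degree $e$. First I would recall that a pure monomial order ideal $X$ of type $1$ has, by definition, a single maximal monomial $M$, whence $X = \{N : N \mid M\}$ by the defining property of order ideals. Writing $M = y_1^{a_1} y_2^{a_2} \cdots y_r^{a_r}$, the $h$-vector of $X$ is the coefficient sequence of
\[
H_M(t) \;=\; \prod_{i=1}^{r} \bigl(1 + t + \cdots + t^{a_i}\bigr) \;=\; \prod_{i=1}^{r} \frac{1 - t^{a_i+1}}{1 - t},
\]
the condition ``codimension $r$'' (that is, $h_1 = r$) is equivalent to $a_i \ge 1$ for all $i$, and ``socle degree $e$'' is equivalent to $\sum_i a_i = e$. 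Since $H_M$ depends only on the multiset $\{a_1, \ldots, a_r\}$, every such $O$-sequence arises from a partition $\lambda = (\lambda_1 \ge \cdots \ge \lambda_r \ge 1)$ of $e$ via the monomial $M_\lambda = y_1^{\lambda_1} \cdots y_r^{\lambda_r}$; this already gives $P^r_1(e) \le p_r(e)$, and shows that the map $\lambda \mapsto (h\text{-vector of the divisors of } M_\lambda)$ is a well-defined surjection onto the set in question.

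The hard part will be injectivity: showing that distinct partitions of $e$ into $r$ parts give distinct $h$-vectors. Here is the argument I would use. From the $h$-vector one reads off both $r = h_1$ and the polynomial $H_{M_\lambda}(t)$, hence the polynomial
\[
H_{M_\lambda}(t)\,(1-t)^r \;=\; \prod_{i=1}^r \bigl(1 - t^{\lambda_i + 1}\bigr).
\]
Factoring this over $\mathbb{C}$, every root is a root of unity, and a primitive $d$-th root of unity occurs with multiplicity $N_d := \#\{\, i : d \mid \lambda_i + 1 \,\}$, independently of which primitive $d$-th root is chosen. Thus all the $N_d$ are determined by the $h$-vector, and Möbius inversion over the divisibility poset yields $\#\{\, i : \lambda_i + 1 = m \,\} = \sum_{m \mid d} \mu(d/m)\, N_d$ for each $m \ge 2$, recovering the multiset $\{\lambda_1 + 1, \ldots, \lambda_r + 1\}$ and hence $\lambda$. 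Therefore the map of the previous paragraph is injective, so $P^r_1(e) \ge p_r(e)$, and combining the two inequalities gives $P^r_1(e) = p_r(e)$.

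I expect this injectivity step to be the only real obstacle; it can alternatively be handled by an elementary induction, noting that in $\prod_{i=1}^r (1 - t^{\lambda_i+1})$ the lowest-degree term after the constant is $-n\,t^{\mu}$ with $\mu = \lambda_r + 1$ the smallest part plus one and $n$ the number of parts equal to $\lambda_r$ (no cancellation occurs, since a product of two or more of the factors $1 - t^{\lambda_i+1}$ contributes only in degrees $\ge 2\mu > \mu$), so that dividing out $(1 - t^{\mu})^n$ returns $\prod_{i : \lambda_i + 1 > \mu}(1 - t^{\lambda_i+1})$ and one recurses on the remaining parts. Either way the conclusion is the asserted bijection, giving $P^r_1(e) = p_r(e)$.
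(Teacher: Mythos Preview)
Your argument is correct. The setup (type $1$ pure $O$-sequences are exactly the divisor complexes of a single monomial $y_1^{a_1}\cdots y_r^{a_r}$ with all $a_i\ge 1$, and only the multiset $\{a_i\}$ matters) is identical to the paper's. The difference is in the injectivity step. The paper argues by direct comparison: given two ordered tuples $(a_1\le\cdots\le a_r)$ and $(b_1\le\cdots\le b_r)$ that first differ at index $d$, one checks by a simple count that the two $h$-vectors agree up through degree $\min\{a_d,b_d\}$ and differ at degree $\min\{a_d,b_d\}+1$. Your route instead reconstructs the partition from the $h$-vector, passing to $H(t)(1-t)^r=\prod_i(1-t^{\lambda_i+1})$ and then reading off the multiset of exponents either via multiplicities of primitive roots of unity and M\"obius inversion, or by iteratively stripping the smallest exponent. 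Both of your variants are sound; the second one is close in spirit to the paper's argument (both identify the smallest degree at which something nontrivial happens), while the roots-of-unity version is a genuinely different, more algebraic, proof. The paper's comparison argument is slightly quicker and avoids the detour through the factored polynomial, but your reconstruction viewpoint has the mild advantage of showing that the partition can be effectively read off from the $h$-vector.
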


\begin{proof} Each pure $O$-sequence of codimension $r$ and socle degree $e$ is determined by some monomials of the form $y_1^{a_1}\cdots y_r^{a_r}$, where $a_1+ \dots + a_r=e$ and $a_i\geq 1$ for all $i$. If, moreover, we order the $a_i$'s, for instance by requiring that $a_1 \leq \dots \leq a_r$, then we have a bijection between the set of such monomials $y_1^{a_1}\cdots y_r^{a_r}$, and our set of type 1 pure $O$-sequences of codimension $r$ and socle degree $e$: indeed, the pure $O$-sequences given by $y_1^{a_1}\cdots y_r^{a_r}$ and $y_1^{b_1}\cdots y_r^{b_r}$ begin differing in degree $\min \{ a_d +1, b_d +1 \}$,  if $d$ is the smallest index such that $a_d \neq b_d$.

But the number of $r$-tuples $(a_1, \dots , a_r)$ of positive integers such that $a_1+ ... + a_r=e$ and $a_1 \leq ... \leq a_r$ is clearly given by $p_r(e)$ (one can just think of a Ferrers, or Young, diagram), as desired.
\end{proof}

As an immediate consequence, we have proved bijectively the following new characterization of integer partitions:

\begin{corollary}
The number, $p(e)$, of  partitions of the integer $e$ coincides with the number of pure $O$-sequences of type 1 and socle degree $e$.
\end{corollary}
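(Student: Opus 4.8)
The plan is to deduce this immediately from Proposition \ref{p_r} by summing over the codimension. A type $1$ pure $O$-sequence $\underline{h}=(1,h_1,\dots,h_e)$ has a well-defined codimension $r=h_1$, and since it is realized by a single monomial $y_1^{a_1}\cdots y_r^{a_r}$ of degree $e$ with all $a_i\ge 1$, we have $1\le r\le e$. Thus the set of type $1$ pure $O$-sequences of socle degree $e$ is the disjoint union, over $r=1,\dots,e$, of the sets $P(r,e)$ of those with codimension exactly $r$; in particular its cardinality is $\sum_{r=1}^{e}P^r_1(e)$.

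By Proposition \ref{p_r}, $P^r_1(e)=p_r(e)$, the number of partitions of $e$ into exactly $r$ parts. Hence the number of type $1$ pure $O$-sequences of socle degree $e$ equals $\sum_{r=1}^{e}p_r(e)$, which is just $p(e)$, since every partition of $e$ has exactly $r$ parts for a unique $r\in\{1,\dots,e\}$. This proves the claim.

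There is essentially no obstacle here: the only thing to check is that the bijections furnished by Proposition \ref{p_r}, as $r$ varies, patch together to a bijection onto the set of all partitions of $e$, and this is immediate because partitioning partitions of $e$ by their number of parts is a standard fact. One could alternatively phrase the argument directly: the assignment sending a type $1$ pure $O$-sequence (given by $y_1^{a_1}\cdots y_r^{a_r}$ with $a_1\le\cdots\le a_r$) to the partition $(a_1,\dots,a_r)$ of $e$ is a bijection, for the same reason given in the proof of Proposition \ref{p_r} — two such monomials give the same pure $O$-sequence only if they are equal — and every partition of $e$ arises this way. Either formulation yields the stated equality $p(e)=\#\{\text{type }1\text{ pure }O\text{-sequences of socle degree }e\}$.
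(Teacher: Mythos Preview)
Your proof is correct and follows exactly the paper's approach: the corollary is stated there as an immediate consequence of Proposition~\ref{p_r}, obtained by summing the bijections $P^r_1(e)\leftrightarrow p_r(e)$ over all codimensions $r=1,\dots,e$. Your additional remark that the bijections patch together into a single bijection with the set of all partitions of $e$ is precisely what the paper means by saying the result is proved ``bijectively.''
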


As we said, the behavior of the function $L^r_t(e)$ is different from that of $P^r_t(e)$, and in particular $L^r_t(e)$ is very far from going to 0 for $r$ large. We will stick to the case $t=1$ below, and determine the asymptotic value of $L^r_1(e)$, by proving the fact that the asymptotic number of Gorenstein Hilbert functions of given socle degree is, in a sense, both the largest possible and the originally expected one: even though it is now well known that the result initially hoped for by Stanley and Iarrobino (see \cite{St3}), namely that all Gorenstein Hilbert functions need to be SI-sequences, is not true (see \cite{BI,Bo,BL,MNZ1,St2,Za0}), we will now show that, when it comes to enumerating Gorenstein Hilbert functions, that result is {\em almost true}! That is, the number of Gorenstein Hilbert functions which are not SI-sequences is negligible:

\begin{theorem}\label{gorsi} Fix a positive integer $e$. Let $G^r(e)=L^r_1(e)$ be the number of Gorenstein Hilbert functions of codimension $r$ and socle degree $e$, and let $SI^r(e)$ be the number of SI-sequences of codimension $r$ and socle degree $e$. Then, for $r$ large, almost all  Gorenstein Hilbert functions are SI-sequences. Precisely, we have
$$G^r(e)\sim_r SI^r(e)\sim_r c_{\lfloor e/2 \rfloor}r^{\binom{{\lfloor e/2 \rfloor}+1}{2}-1}.$$
\end{theorem}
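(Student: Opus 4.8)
The plan is to mimic the structure of the proof of Theorem \ref{almost}, using a sandwich of cardinalities whose two outer terms have the same asymptotics, together with the known fact that the ``first half'' of a Gorenstein $h$-vector can be essentially arbitrary subject to one constraint. First I would recall the relevant structural facts. By Macaulay duality, a Gorenstein Hilbert function of socle degree $e$ is symmetric: $h_i = h_{e-i}$, so the entire function is determined by its ``first half'' $(h_0, h_1, \dots, h_{\lfloor e/2 \rfloor})$; moreover an SI-sequence of socle degree $e$ is, by Definition \ref{basic-defs}(5), exactly one whose first half is a differentiable $O$-sequence. Thus $SI^r(e)$ equals the number of differentiable $O$-sequences $\underline{g}=(1,g_1,\dots,g_{\lfloor e/2\rfloor})$ that arise as the first half of \emph{some} Gorenstein $h$-vector of codimension $r$; since any differentiable $O$-sequence of codimension $r$ and ``socle degree'' $\lfloor e/2\rfloor$ can be completed symmetrically to a genuine Gorenstein $h$-vector (for instance using the ideal constructions analogous to those in the proof of Theorem \ref{firsthalf}, or simply because differentiable $O$-sequences are pure $O$-sequences by Corollary \ref{diff} and the truncation/doubling arguments already used), we get the exact identity $SI^r(e) = D^r(\lfloor e/2\rfloor)$ — modulo the usual harmless distinction between socle degree ``exactly'' versus ``at most'', which is absorbed by the injection $\int$ used in the proof of Theorem \ref{almost}.

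Next I would set up the sandwich for $G^r(e)$. On one side, every SI-sequence is a Gorenstein Hilbert function, so $SI^r(e) \le G^r(e)$. On the other side, a Gorenstein Hilbert function of codimension $r$ and socle degree $e$ is determined by its first half, and by Macaulay's theorem (Theorem \ref{macaulay}(i)) that first half is an $O$-sequence of codimension $r$ of socle degree $\le \lfloor e/2\rfloor$; hence $G^r(e) \le \#\{O\text{-sequences of codim } r,\ \text{socle degree} \le \lfloor e/2\rfloor\}$, which by the computation in the proof of Theorem \ref{almost} (the quantities $S^r(\cdot)$ there, and Linusson's $M^r(\cdot)$) is $\sim_r c_{\lfloor e/2\rfloor}\, r^{\binom{\lfloor e/2\rfloor+1}{2}-1}$ after passing from ``at most $r$'' to ``exactly $r$'' via the difference $S^r - S^{r-1}$. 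Combining, we obtain
\[
D^r(\lfloor e/2\rfloor) = SI^r(e) \le G^r(e) \le c_{\lfloor e/2\rfloor}\, r^{\binom{\lfloor e/2\rfloor+1}{2}-1}(1+o(1)).
\]
By Theorem \ref{almost}(b), $D^r(\lfloor e/2\rfloor) \sim_r c_{\lfloor e/2\rfloor}\, r^{\binom{\lfloor e/2\rfloor+1}{2}-1}$, so the left and right ends of the sandwich agree asymptotically, forcing $G^r(e) \sim_r SI^r(e) \sim_r c_{\lfloor e/2\rfloor}\, r^{\binom{\lfloor e/2\rfloor+1}{2}-1}$, which is the claim.

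The main obstacle, and the step requiring the most care, is the upper bound $G^r(e) \le S^r(\lfloor e/2\rfloor)$: one must argue that \emph{not too much} is lost by only remembering the first half, i.e.\ that distinct Gorenstein Hilbert functions have distinct first halves (immediate from symmetry) and that the first half genuinely satisfies Macaulay's bound in codimension $r$ with the correct socle degree bound $\lfloor e/2\rfloor$ — one should be slightly careful about the middle entry when $e$ is odd versus even, and about whether $h_{\lfloor e/2\rfloor} < h_{\lfloor e/2 \rfloor + 1}$ can occur, but in the Gorenstein case symmetry rules this out. A secondary technical point is the bookkeeping translating between Linusson's conventions (inequalities on codimension and socle degree) and ours (equalities); this is exactly the manipulation $S^r(e) - S^{r-1}(e) \sim_r c_e r^{\binom{e+1}{2}-1}$ already carried out in the proof of Theorem \ref{almost}, so I would simply invoke it with $e$ replaced by $\lfloor e/2\rfloor$. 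Everything else is formal.
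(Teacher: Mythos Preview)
Your proposal is correct and follows essentially the same sandwich argument as the paper: the chain $D^r(\lfloor e/2\rfloor) = SI^r(e) \le G^r(e) \le O^r(\lfloor e/2\rfloor)$ (the paper writes it as $\#O(r-1,\lfloor e/2\rfloor) \le \#D(r,\lfloor e/2\rfloor) = \#SI(r,e) \le \#G(r,e) \le \#O(r,\lfloor e/2\rfloor)$, citing \cite{CI,MN3} for the fact that SI-sequences are Gorenstein), with both ends asymptotic to $c_{\lfloor e/2\rfloor} r^{\binom{\lfloor e/2\rfloor+1}{2}-1}$ by Theorem~\ref{almost}. Your justification of $SI^r(e) = D^r(\lfloor e/2\rfloor)$ is slightly overcomplicated---no ideal construction is needed, since the definition of SI-sequence is purely combinatorial and the bijection is just ``take the first half'' / ``symmetrize''---but the argument is otherwise the same as the paper's.
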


\begin{proof} Let $SI(r,e)$ and $G(r,e)$ be, respectively, the set of SI-sequences and of Gorenstein Hilbert functions of codimension $r$ and socle degree $e$. With the same notation as at the end of the proof of Theorem \ref{almost}, let us first show that:
\begin{equation}\label{ddd}
\#O(r-1,\lfloor e/2 \rfloor)\leq \#D(r,\lfloor e/2 \rfloor )= \#SI(r,e)\leq \#G(r,e)\leq \#O(r,\lfloor e/2 \rfloor ).
\end{equation}

The first inequality has already been shown in the proof of Theorem \ref{almost}. The next (equality) follows from the fact that SI-sequences are symmetric; in fact, it is easy to see that function associating a differentiable sequence of $D(r,\lfloor e/2 \rfloor )$ with its obvious symmetrization in $SI(r,e)$ is a bijection. The inequality $\#SI(r,e)\leq \#G(r,e)$ holds because all SI-sequences are Gorenstein (\cite{CI,MN3}). Finally, to prove the last inequality of (\ref{ddd}), note that  Gorenstein Hilbert functions  are symmetric. Hence if we associate a Gorenstein sequence with its first half, we have clearly defined an injection of $G(r,e)$ into $O(r,\lfloor e/2 \rfloor )$. This proves (\ref{ddd}).

Since, by Theorem \ref{almost}, $$\#O(r-1,\lfloor e/2 \rfloor)\sim_r \#O(r,\lfloor e/2 \rfloor ) \sim_r c_{\lfloor e/2 \rfloor}r^{\binom{{\lfloor e/2 \rfloor}+1}{2}-1},$$
 the result  follows.
\end{proof}

Since differentiable sequences are obviously unimodal, a similar argument, based on the injection of Theorem \ref{almost} of the set of codimension $r-1$ $O$-sequences into the set of codimension $r$ differentiable $O$-sequences, immediately gives the following consequence that {\em almost all} pure $O$-sequences, as well as {\em almost all} level Hilbert functions, are unimodal:

\begin{corollary}
For any given positive integer $e$, almost all pure $O$-sequences and almost all level Hilbert functions of codimension $r$ and socle degree $e$ are unimodal, when $r$ goes to infinity.
\end{corollary}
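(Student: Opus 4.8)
The plan is to run a sandwich argument entirely parallel to the proofs of Theorem~\ref{almost} and Theorem~\ref{gorsi}, trapping the number of unimodal pure (respectively, level) sequences between two quantities already known to be asymptotic to $c_e r^{\binom{e+1}{2}-1}$.

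First I would record the two elementary facts that make the squeeze work. By Corollary~\ref{diff}, every finite differentiable $O$-sequence is pure, hence in particular level; and since the first difference of a differentiable $O$-sequence is itself an $O$-sequence, its entries are nonnegative, so a differentiable $O$-sequence is non-decreasing and therefore unimodal. Keeping the notation $O(r,e)$, $D(r,e)$, $P(r,e)$ from the proof of Theorem~\ref{almost}, writing $L(r,e)$ for the set of level Hilbert functions of codimension $r$ and socle degree $e$, and writing $U(r,e)$ and $UL(r,e)$ respectively for the subsets of unimodal sequences in $P(r,e)$ and in $L(r,e)$, these facts together with ``pure $\Rightarrow$ level $\Rightarrow$ $O$-sequence'' give the inclusions
\[
D(r,e) \subseteq U(r,e) \subseteq P(r,e) \subseteq O(r,e), \qquad D(r,e) \subseteq UL(r,e) \subseteq L(r,e) \subseteq O(r,e).
\]

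Next I would invoke the injection $\int : O(r-1,e) \hookrightarrow D(r,e)$ from the proof of Theorem~\ref{almost}, which gives $\#O(r-1,e) \le \#D(r,e)$, together with the asymptotics already established there and in Corollary~\ref{level}, namely
\[
\#O(r-1,e) \sim_r \#O(r,e) \sim_r \#L(r,e) \sim_r \#P(r,e) \sim_r c_e r^{\binom{e+1}{2}-1}.
\]
Combining the displayed inclusions with these asymptotics, $\#U(r,e)$ and $\#UL(r,e)$ are squeezed between $\#O(r-1,e)$ and $\#O(r,e)$, hence both are asymptotic to $c_e r^{\binom{e+1}{2}-1}$; dividing by $\#P(r,e)$ and by $\#L(r,e)$ respectively and letting $r \to \infty$ then yields the corollary.

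I do not expect a genuine obstacle here: the substantive work (the asymptotic enumeration of $O$-sequences following \cite{Li}, the injection $\int$, and Corollary~\ref{diff}) is already in place. The only steps needing a sentence of care are the observation that a differentiable $O$-sequence is non-decreasing, hence unimodal, and the routine reconciliation of the ``exactly $r$, exactly $e$'' counts with the ``at most'' conventions of \cite{Li}, which is handled exactly as in the proof of Theorem~\ref{almost}.
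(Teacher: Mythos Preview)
Your proposal is correct and follows essentially the same approach as the paper: the paper's proof is just a one-sentence remark that differentiable sequences are obviously unimodal and that the injection $\int: O(r-1,e) \hookrightarrow D(r,e)$ from Theorem~\ref{almost} then gives the result by the same sandwich argument you have spelled out.
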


As we have seen above, it is very useful to consider the asymptotic behavior, for $r$ large, of $L^r_t(e)$, the cardinality of the set of level Hilbert functions of given codimension, socle degree and type, whereas the cardinality of the corresponding set of pure $O$-sequences, $P^r_t(e)$, is zero for $r>te$. It was shown in Theorem \ref{gorsi} that the subset of Gorenstein Hilbert functions
preserves the property that almost all of its sequences are unimodal. This motivates the following combinatorial conjecture specifically on level Hilbert functions:

\begin{conj}
For any given positive integers $e$ and $t$, {\em almost all} codimension $r$ level Hilbert functions of socle degree $e$ and type $t$ are unimodal, when $r$ goes to infinity.
\end{conj}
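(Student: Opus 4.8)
The plan is to run the same kind of argument used for Theorem~\ref{almost} and Theorem~\ref{gorsi}, but the type constraint blocks a direct appeal to the integration trick of Theorem~\ref{almost}, so more work is needed. First I would pin down the asymptotic order of $L^r_t(e)$. Writing a level algebra as $R/\Ann(W)$ with $W\subseteq R_e$ of dimension $t$ (Macaulay duality), one has $h_i=\dim(R_{e-i}\circ W)\le\min\!\left(\binom{r+i-1}{i},\, t\binom{r+e-i-1}{e-i}\right)$, with equality for generic $W$. Thus the \emph{generic} level Hilbert function of codimension $r$, socle degree $e$ and type $t$ is the pointwise minimum of an increasing and a decreasing sequence, hence is unimodal for every $r$. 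The degrees of freedom are of two kinds: the entries $h_2,h_3,\dots$ below the peak, each constrained from below only by Macaulay's theorem (Theorem~\ref{macaulay}(i)) and hence varying in an interval of length $\Theta(r^i)$; and the entries $h_{e-1},h_{e-2},\dots$ above the peak, where a module-theoretic analogue of Macaulay's bound (for graded modules with $t$ generators) controls $h_{e-j-1}$ in terms of $h_{e-j}$ and again allows an interval of length $\Theta(r^{j})$. Matching these bounds by inverse-system constructions (as in the proof of Theorem~\ref{firsthalf}, gluing a lex-type ``head'' to a controlled, strictly decreasing ``tail'' ending at $t$) would give $L^r_t(e)=\Theta(r^{N_{e,t}})$ for an explicit exponent $N_{e,t}$, and in fact show that almost all sequences satisfying the obvious Macaulay and dual-Macaulay inequalities are genuine level Hilbert functions.

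Second I would bound the number of \emph{non-unimodal} such h-vectors. A non-unimodal level h-vector has a valley: a degree $v$ with $h_v<\max_{i<v}h_i$ and $h_v<\max_{i>v}h_i$, in particular a strict descent $h_v<h_{v-1}$ and a strict ascent $h_v<h_{v+1}$. I would fix the valley position $v$ and the valley value $w=h_v$ (polynomially many choices of each) and count separately the admissible ``heads'' $(1,h_1,\dots,h_{v-1},w)$ and ``tails'' $(w,h_{v+1},\dots,h_{e-1},t)$. The key point is that the descent forces $w<h_{v-1}\le\binom{r+v-2}{v-1}=\Theta(r^{v-1})$, whereas in an unconstrained level h-vector $h_v$ ranges over $\Theta(r^{v})$ values; and, symmetrically, the tail is squeezed between the module-Macaulay bound coming from the type-$t$ side and the requirement that it first climb out of the valley and then return to $t$ at degree $e$. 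Carrying out the telescoping count, the number of level h-vectors with a valley at a fixed degree $v$ should be $O(r^{N_{e,t}-c})$ for some integer $c\ge 1$ independent of $v$, so that summing over the $O(e)$ admissible positions of $v$ still gives $o(r^{N_{e,t}})=o(L^r_t(e))$, which is the conjecture.

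The hard part will be this second step: one must show that a valley genuinely costs at least one full power of $r$ in the enumeration, \emph{uniformly} in its position and depth. This calls for a careful two-sided accounting that uses Macaulay's theorem below the valley and a genuine module-theoretic (or inverse-system) analogue above it, and it presupposes the precise asymptotic enumeration $L^r_t(e)\sim c_{e,t}\,r^{N_{e,t}}$, which is itself a nontrivial extension of Chapter~\ref{enum} and of Linusson's work \cite{Li}. A further difficulty is that, unlike the Gorenstein case treated in Theorem~\ref{gorsi}, level h-vectors of type $t$ need be neither symmetric nor flawless, so the two halves of the sequence cannot be read off from one another and must be handled by different tools. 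Once these pieces are in place, the conjecture follows exactly as in Theorem~\ref{gorsi}: the unimodal level h-vectors of type $t$ (which include all ``generic-shape'' ones) dominate asymptotically, and the non-unimodal ones are negligible.
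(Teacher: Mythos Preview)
The statement you are addressing is labeled a \emph{Conjecture} in the paper and is left open: the authors give no proof, only the motivation immediately preceding it (the injection argument of Theorem~\ref{almost} and the Gorenstein case of Theorem~\ref{gorsi}). So there is nothing in the paper to compare your attempt against.

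As to the proposal itself: it is a reasonable strategic outline, and you are candid that the two load-bearing ingredients are not established. The asymptotic formula $L^r_t(e)\sim c_{e,t}\,r^{N_{e,t}}$ is not in the paper or in the cited literature for $t\ge 2$; Linusson's work and Chapter~\ref{enum} handle only the unconstrained and the Gorenstein ($t=1$) cases. The ``module-theoretic analogue of Macaulay's bound'' you invoke for the tail side is not a standard result and would itself require proof; without it, neither the enumeration of all level $h$-vectors of type $t$ nor the claim that a valley costs a full power of $r$ can be made precise. Until those two pieces are supplied, what you have is a plausible plan of attack on an open problem rather than a proof.
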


We now address a slightly different asymptotic enumeration question, letting the socle type go to infinity.  Specifically, we will prove the following theorem.

\begin{theorem}\label{thmAsymptoticSocleDegree3}
Let $P(t)$ denote the number of pure $O$-sequences of socle degree 3 and socle type $t$.  Then
\[
\lim_{t \rightarrow \infty} \frac{P(t)}{t^2} = \frac{9}{2}.
\]
\end{theorem}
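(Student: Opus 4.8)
The plan is to turn $P(t)$ into a lattice-point count in the $(h_1,h_2)$-plane and then squeeze it between an elementary upper bound and a lower bound coming from the ICP in socle degree $3$. First I would note that the socle type of a pure $O$-sequence of socle degree $3$ must equal its last entry: in a pure order ideal of top degree $3$, a maximal monomial of degree $<3$ would contradict purity (there is a monomial of degree $3$ because $h_3\ge 1$), so every maximal monomial has degree $3$ and there are exactly $h_3$ of them. Thus $P(t)=\#\{(h_1,h_2):(1,h_1,h_2,t)\text{ is a pure }O\text{-sequence}\}$.

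The upper bound is immediate. By Hibi's theorem (Theorem~\ref{hibi}) one has $h_1\le h_2$, and in a pure order ideal of top degree $3$ every quadratic monomial divides some cubic monomial while each cubic monomial has at most three quadratic divisors, so $h_2\le 3h_3=3t$. Hence
\[
P(t)\ \le\ \#\{(h_1,h_2)\in\Z^2:\ 1\le h_1\le h_2\le 3t\}\ =\ \binom{3t+1}{2}\ =\ \tfrac92 t^2+\tfrac32 t .
\]

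For the lower bound the key input is Theorem~\ref{ICP3}$(ii)$: for fixed $h_1=r$ and $h_3=t$ the admissible values of $h_2$ form an interval $[a_{\min}(r,t),a_{\max}(r,t)]$, so it suffices to estimate its endpoints by constructing suitable pure order ideals with $h_1=r$, $h_3=t$. (i) For $t\le r\le 3t$, writing $r=t+q$ with $0\le q\le 2t$ and taking $\lfloor q/2\rfloor$ squarefree cubics, one monomial $x^2y$ if $q$ is odd, and the remaining cubes, all in pairwise disjoint variable sets, gives a pure order ideal in exactly $r$ variables with exactly $r$ quadratic divisors, so $a_{\min}(r,t)=r$ here. (ii) For $(1+o(1))(6t)^{1/3}\le r\le t$, combining $t-s$ squarefree cubics on a fixed set of $m=\min\{k:\binom k3\ge t\}\sim(6t)^{1/3}$ variables (chosen so as to cover all $m$ of them, possible since $t-s\gg m$) with $s$ further squarefree/quadratic/cube monomials on $r-m$ fresh pairwise disjoint variables, $s$ chosen so that exactly $r-m$ new variables occur, gives $a_{\min}(r,t)\le r+O(t^{2/3})$, the error being uniform in $r$. (iii) For $r$ in the range $[c_0\sqrt t,\,3t]$ ($c_0$ a fixed constant, $t$ large) I would produce $t$ squarefree cubics in exactly $r$ variables that pairwise share at most one variable; this makes all $3t$ quadratic divisors distinct and yields $(1,r,3t,t)$. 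Concretely, start from the $3$-term arithmetic progressions in $\Z/p\Z$ for a prime $p\le 2\sqrt{2t}$ — a partial Steiner triple system — keep any $t$ of these triples, and then repeatedly split a vertex lying in at least two triples into two distinct vertices; this preserves the property that two triples meet in at most one vertex and increases the vertex count by exactly one, so every value of $r$ from the starting value ($\le 2\sqrt{2t}$) up to $3t$ is attained. Feeding (i) and (ii)+(iii) into the ICP, $(1,r,h_2,t)$ is a pure $O$-sequence whenever $c_0\sqrt t\le r\le 3t$ and $r+O(t^{2/3})\le h_2\le 3t$ (and simply $r\le h_2\le 3t$ when $r\ge t$), so
\[
P(t)\ \ge\ \sum_{r=t}^{3t}(3t-r+1)\ +\ \sum_{r=c_0\sqrt t}^{t-1}\bigl(3t-r+1-C t^{2/3}\bigr)\ =\ \tfrac92 t^2+O(t^{5/3}),
\]
which together with the upper bound gives $P(t)=\tfrac92 t^2+O(t^{5/3})$ and hence $\lim_{t\to\infty}P(t)/t^2=\tfrac92$.

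The hard part is construction (iii): realizing, for essentially every admissible $r$, a partial Steiner triple system with exactly $t$ blocks on exactly $r$ points. The vertex-splitting step reduces this to the existence of one such system with $t$ blocks on $O(\sqrt t)$ points, which the arithmetic-progression construction supplies; what still needs care is that the splitting never stalls before reaching $3t$ points, that the monomials produced in (i) and (ii) genuinely use all $r$ variables, and that the $O(t^{2/3})$ error in (ii) is uniform in $r$. The range $r<c_0\sqrt t$ (and the gap between the Macaulay threshold $\sim(6t)^{1/3}$ and $m$) can be ignored throughout, as it contributes only $O(t^{3/2})$ to $P(t)$.
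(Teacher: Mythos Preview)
Your overall architecture is right and, in some ways, cleaner than the paper's: you bound $P(t)$ above by the full triangle $\{1\le h_1\le h_2\le 3t\}$ and below by filling that triangle via the degree-$2$ ICP, using explicit constructions for $a_{\min}$ and $a_{\max}$. The paper instead cuts the triangle into three regions (according to whether $r$ and $a$ lie above or below $t$) and treats them separately, invoking differentiability for the region $a<t$; your construction (ii) absorbs that region without a case split. Your vertex-splitting device to move $r$ from $O(\sqrt t)$ all the way to $3t$ is a nice substitute for the paper's use of the degree-$1$ ICP in that range.

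There is, however, a genuine error in construction (iii). The $3$-term arithmetic progressions in $\Z/p\Z$ do \emph{not} form a partial Steiner triple system: for $p>3$ every pair $\{x,y\}$ lies in exactly three $3$-APs, namely those with third element $2y-x$, $2x-y$, and $(x+y)/2$. Concretely, $\{0,1,2\}$ and $\{0,2,4\}$ in $\Z/7\Z$ share the pair $\{0,2\}$. So your triples can share a quadratic divisor and you do not get $h_2=3t$. The fix is immediate and does not disturb the rest of your argument: replace the $3$-AP family by any genuine (partial) Steiner triple system on $n\sim c_0\sqrt t$ points with at least $t$ blocks --- e.g.\ take $n\equiv 1$ or $3\pmod 6$ near $\sqrt{6t}$ and use Kirkman's theorem, or use the paper's ``Latin square'' construction on three groups of $n$ variables with blocks $\{x_{i},x_{n+j},x_{2n+k}\}$, $k\equiv i+j\pmod n$, which yields exactly the partial STS you need with $t=n^2$ blocks on $3n$ points. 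With that correction your vertex-splitting step goes through verbatim, and the lower bound $(9/2)t^2+O(t^{5/3})$ follows as you wrote.
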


\begin{proof}
Consider a pure $O$-sequence $(1,r,a,t)$.  We have the following immediate fact:
\[
r \leq a \leq 3t
\]
 (by Hibi's theorem and the fact that $t$ monomials of degree 3 have at most $3t$ derivatives).
For fixed $t$, we thus see that the possible values of $r$ and $a$ fall into one of the following three marked areas, which we will treat separately.

\bigskip
\bigskip
\bigskip

\begin{center}
\begin{picture}(160,160)(10,10)
\thicklines
\put (10,10){\vector(0,1){170}}
\put (10,10){\vector(1,0){170}}
\put (10,10){\line(1,1){160}}
\put (180,0){$r$}
\put (0,180){$a$}
\thinlines
\put (60,10){\line(0,1){150}}
\put (160,10){\line(0,1){150}}
\put (10,60){\line(1,0){150}}
\thicklines
\put (10,160){\line(1,0){150}}
\thinlines
\put (60,-2){$t$}
\put (156,-2){$3t$}
\put (-4,156){$3t$}
\put (0,60){$t$}
\put (87,120){{\Large I}}
\put (27,120){{\Large II}}
\put (15,40){{\Large III}}
\end{picture}
\end{center}
\bigskip  \bigskip

Let $P_1(t)$ denote the number of pure $O$-sequences occurring in Region I, and similarly for Regions II and III.  For the lines separating the regions, we will make clear below in which region they will be considered.

\medskip

\noindent \underline{Region I}: $t \leq r \leq a \leq 3t$.  We will compute, for $t\geq 7$, the exact value of $P_1(t)$, from which it will follow that
\begin{equation}\label{222}
\lim_{t \rightarrow \infty} \frac{P_1(t)}{t^2} = 2.
\end{equation}

The two key steps here will be showing that:
\begin{itemize}
\item[  i)] For all $t\geq 7$, $(1,t,3t,t)$ is a pure $O$-sequence;
\item[  ii)] For all $r,t\geq 1$ such that $t\leq r\leq 3t$, $(1,r,r,t)$ is  a pure $O$-sequence.
\end{itemize}

Then, since $(1,3t,3t,t)$ is pure (it is produced by an inverse system generated by $t$ squarefree degree 3 monomials in $3t$ different variables), it follows from  i) and the ICP in degree 1 that $(1,r,3t,t)$ is also pure for all $t\leq r\leq 3t$. Hence, using ii) and the ICP applied to degree 2, we conclude that  $(1,r,a,t)$ is pure for all values of $r,a,t$ satisfying  $7\leq t\leq r\leq a\leq 3t$.

Therefore, for each $t\geq 7$,
$$P_1(t)=\sum_{r=t}^{3t} (a_{\max }(r,t)-a_{\min }(r,t)+1) =\sum_{r=t}^{3t} (3t-r+1)
=2t^2+3t+1,$$
and (\ref{222}) follows. Thus, it remains to prove i) and ii).

{\em Proof of i).}   We want to show that, for any given $t\geq 7$, $(1,t,3t,t)$ is a pure $O$-sequence.  The idea is to take $t$ variables $y_1,\dots ,y_t$, and consider the monomials $y_i y_{i+1} y_{i+3}$, where $i=1,2,\dots ,t$ and the indices are taken modulo $t$.
To see that the $3t$ first partials given by those $t$ monomials are all different, consider $t$ vertex points on a circle, corresponding to the $t$ variables.  The $3t$ partials correspond to the $t$ chords joining adjacent vertices, the $t$ chords joining vertex $i$ to vertex $i+2$ (mod $t$), and the $t$ chords joining vertex $i$ to vertex $i+3$ (mod $t$).  Since $t \geq 7$, clearly no two of these chords coincide.

Notice, as an aside, that the least value of $t$ for which  $h=(1,t,3t,t)$ is a pure $O$-sequence is in fact 7. Indeed, since the $t$ monomials of degree 3 generating an inverse system with $h$-vector $h$ must have $3t$ distinct first partial derivatives, they  need all be squarefree. But the number of degree 2 squarefree monomials in $t$ variables is $\binom{t}{2}$, which is $\geq 3t$ only when $t\geq 7$.

{\em Proof of ii).} Fix two positive integers $r,t$, where $t\leq r \leq 3t$. We want to construct $t$ monomials of degree 3 producing the pure $O$-sequence $(1,r,r,t)$, such that each uses different variables. Those monomials will be either of the form $y_u y_v y_w$ (say, $i$ of them), or $y_u y_v^2$ ($j$ of them), or $y_u^3$ ($k$ of them).

It is now clear that our claim is equivalent to the combinatorial problem of showing the existence of a weak 3-composition of the integer $t$ as $t=i+j+k$, where $3i+2j+k=r$.

A solution is the following: if $r=t+m$, for some $0\leq m<t$, then pick $i=0$, $j=m$, and $k=t-m$; if $r=2t+m$, for some $0\leq m\leq t$, then pick $i=m$, $j=t-m$, and $k=0$.

\bigskip

\noindent \underline{Region II}: $0 < r < t \leq a \leq 3t$.  We will show that
\[
\lim_{t \rightarrow \infty} \frac{P_{2}(t)}{t^2} = 2
\]
in this region.

Notice that there are $(t-1)(2t+1) = 2t^2-t-1$ integer lattice points in this region.  For given $t$ it is not the case that all such lattice points occur as pure $O$-sequences.  However, we will show that as $t$ gets large, the proportion of the rectangle that does not occur goes to zero.  To this end, let us fix $t$ ``sufficiently large" (which will be made more precise below).

\medskip

\noindent \underline{\it Claim 1:}  {\it For $r$ satisfying $\sqrt{2t} \leq r \leq t$, the sequence $(1, r , t, t)$ is a pure $O$-sequence.}

\medskip

Since this sequence is non-decreasing as long as $r \leq t$, we know from Corollary \ref{diff} and Theorem \ref{3} that the sequence is pure if and only if it is differentiable.  This means that we need  $(1, (r -1), (t-r ) , 0)$ to satisfy Macaulay's bound.
We calculate that this holds whenever
\[
r \geq \frac{-1 + \sqrt{1+8t}}{2}.
\]
Clearly this holds, in particular, if $r \geq \sqrt{2t}$.
This finishes Claim 1.

\medskip

\noindent \underline{\it Claim 2:}  {\it Assume that $t  \geq 49$.  Let $r$ satisfy $3\sqrt{t} \leq r \leq t$.  Then $(1,r,3t,t)$ is a pure $O$-sequence.  }

\medskip

\underline{Step 1}.  We first show that for any positive integer $n$, the sequence $(1,3n, 3n^2,n^2)$ is a pure $O$-sequence.   We divide the $3n$ variables into three sets: $\{ x_1,\dots,x_n \},\\ \{x_{n+1},\dots,x_{2n} \}$, $\{ x_{2n+1},\dots,x_{3n}\}$.  For each choice of $i$ and $j$ with $0 \leq i \leq n-1$ and $ 0 \leq j \leq n-1$, we form the degree 3 monomial $x_{1+i} x_{n+1+j} x_{2n+1+ k}$ with $k =   i+j \ (\hbox{mod } n) $.  The group law for $\mathbb Z_n$ guarantees that no monomial of degree 2 divides more than one of these monomials, and we are done.

\medskip

\underline{Step 2}.  Now we show that for any positive integer $n \geq 7$ and for any $t$ satisfying $n^2 \leq t \leq (n+1)^2$, the sequence $(1,3n,3t,t)$ is a pure $O$ sequence.  We again use the partition of the $3n$ variables into three sets as described in Step 1.  The monomials of degree 3 chosen so far consist of one variable from each of the three sets.  Now, it follows from step i) of Region I that  as long as $n \geq 7$, we can form an additional $n$ monomials of degree 3 from the variables in any one of these three sets, having no divisors of degree 2 in common.  Thus we can take $n$ such monomials from each of the first two sets (added one at a time) and one from the third, and combining these with the $n^2$ monomials already chosen, we are done.   This completes Step 2.

\medskip

\underline{Step 3}.  Now with $t \geq 49$ fixed, let $n$ be the largest integer such that $n^2 \leq t$.  Then $3n \leq 3 \sqrt{t} \leq r \leq t$.  We saw in Steps 1 and 2 that $(1,3n,3t,t)$ is a pure $O$-sequence, and we saw in the argument for Region I that $(1,t,3t,t)$ is a pure $O$-sequence.  Then Claim 2 follows from the ICP for pure $O$-sequences of socle degree 3 (Theorem \ref{ICP3}) for $r$.

\medskip

\noindent \underline{\it Claim 3:} {\it  Assume that $t \geq 49$.  Then for all lattice points $(r,a)$ in the rectangular region $3\sqrt{t} \leq r \leq t \leq a \leq 3t$, the sequence $(1,r,a,t)$ is a pure $O$-sequence.  }

\medskip

Claim 3 follows immediately from Claims 1 and 2, using the ICP for pure $O$-sequences of socle degree 3 (Theorem \ref{ICP3}), this time for $a$.

From Claim 3 we have that for $t \geq 49$,
\[
(t - 3 \sqrt{t})(2t) \leq P_2(t) \leq 2t^2 - t - 1.
\]
Hence
\[
2  - \frac{6}{\sqrt{t}} \leq \frac{P_2(t)}{t^2} \leq 2 - \frac{1}{t} - \frac{1}{t^2},
\]
and the result for Region II is complete.

\bigskip

\noindent \underline{Region III}: $0 < r \leq a < t$.  We will show that
\[
\lim_{t \rightarrow \infty} \frac{P_{3}(t)}{t^2} = \frac{1}{2}
\]
in this region.

By Corollary \ref{diff} and Theorem \ref{3}, the points $(a,r)$  in this region that correspond to pure $O$-sequences  are precisely those for which $(1,r,a,t)$ is a differentiable $O$-sequence.  This also forces $r<a$ in each such sequence.

 We will begin by considering a scaled model: we consider points $(\bar r, \bar a)$ such that $0 < \bar r < \bar a < 1$.  Fix any point $(\bar r_0, \bar a_0)$ in this region.  This defines a triangular region $A(\bar r_0, \bar a_0)$ given by the points $(\bar r,\bar a)$ satisfying
\[
\bar r_0  \leq \bar r \leq \bar r_0  - \bar a_0  + \bar a \ \hbox{ and } \ \bar a < 1.
\]

\bigskip  \bigskip

\begin{center}
\begin{picture}(160,160)(10,10)
\thicklines
\put (10,10){\vector(0,1){170}}
\put (10,10){\vector(1,0){170}}
\put (10,10){\line(1,1){160}}
\put (180,0){$\bar r$}
\put (0,180){$\bar a$}
\thinlines
\put (160,10){\line(0,1){170}}
\thicklines
\put (10,160){\line(1,0){150}}
\put (156,-2){$1$}
\put (-4,156){$1$}
\put (30,55){$\bullet$}
\put (14,45){{\scriptsize $(\bar r_0, \bar a_0)$}}
\thinlines
\put (32,57){\line(1,1){103}}
\put (32,57){\line(0,1){103}}
\put (45,125){$A(\bar r_0,\bar a_0)$}
\end{picture}
\end{center}

\bigskip

We will show that for $t \gg 0$, all points in the subregion $A(\bar r_0,\bar a_0)$ ``exist" (in a suitable sense).  Then letting $(\bar r_0,\bar a_0)$ get arbitrarily close to $(0,0)$ will give the result.  Notice that unlike the previous two regions, at this point we have not fixed $t$.
We will want to show that $(1, \lfloor \bar r_0 t \rfloor , \lfloor \bar a_0 t \rfloor, t)$ is a differentiable $O$-sequence for $t \gg 0$; hence it is a pure $O$-sequence by Corollary \ref{diff}.

\medskip

\noindent \underline{\it Claim 1:}  {\it There exists an integer $t_1$ such that for $t \geq t_1$, $(1, \lfloor \bar r_0 t \rfloor - 1,  \lfloor \bar a_0 t \rfloor  - \lfloor \bar r_0 t \rfloor )$ is an $O$-sequence. }

\medskip

This is clear from Macaulay growth: since $\bar r_0$ and $\bar a_0$ are fixed, for $t$ large enough $ \binom{\lfloor \bar r_0 t \rfloor }{2}$ will be larger than $ \lfloor \bar a_0 t \rfloor  - \lfloor \bar r_0 t \rfloor$.

\medskip

\noindent \underline{\it Claim 2:}  {\it There exists an integer $t_2$ such that for $t \geq t_2$, the growth from $\lfloor \bar a_0 t \rfloor - \lfloor \bar r_0t \rfloor$ in degree 2 to $t - \lfloor \bar a_0t \rfloor$ in degree 3 satisfies Macaulay's bound. }

\medskip

From $\bar a_0 t - 1 \leq \lfloor \bar a_0 t \rfloor \leq \bar a_0 t $ and $\bar r_0 t - 1 \leq \lfloor \bar r_0 t \rfloor \leq \bar r_0 t $, we obtain
\[
 \bar a_0 t - \bar r_0 t -1 \leq \lfloor \bar a_0 t \rfloor - \lfloor \bar r_0 t \rfloor \leq \bar a_0 t - \bar r_0 t +1,
\]
and hence
\[
\lfloor (\bar a_0 - \bar r_0) t \rfloor -1 = \lfloor \bar a_0 t - \bar r_0 t \rfloor -1 \leq \lfloor \bar a_0 t \rfloor - \lfloor \bar r_0 t \rfloor \leq \lfloor \bar a_0 t - \bar r_0 t \rfloor +2 = \lfloor (\bar a_0 - \bar r_0) t \rfloor +2.
\]
Similarly,
\[
(1-\bar a_0)t = t - \bar a_0 t \leq t - \lfloor \bar a_0 t \rfloor \leq t - \bar a_0 t + 1 = (1- \bar a_0 )t +1.
\]
Macaulay's bound from $\lfloor \bar a_0 t \rfloor - \lfloor \bar r_0t \rfloor  $ in degree 2 will be on the order of $t^{\frac{3}{2}}$ (up to scalar multiple), so for $t \gg 0$ this will exceed $t- \lfloor \bar a_0t \rfloor$.  This establishes Claim 2.

From Claims 1 and 2 we have that $(1, \lfloor \bar r_0 t \rfloor, \lfloor \bar a_0 t \rfloor, t)$ is a differentiable $O$-sequence for $t \geq \max \{ t_1, t_2 \}$, since a sequence whose first difference is an $O$-sequence is itself an $O$-sequence.  Hence it is a pure $O$-sequence.

\medskip

\noindent \underline{\it Claim 3:}  {\it There exists an integer $t_3$ such that for $t \geq t_3$, the sequence $(1, \lfloor \bar r_0 t \rfloor, t, t)$ is a pure $O$-sequence.  }

\medskip

From Claim 1 of Region II, we know that if $\sqrt{2t} \leq \lfloor \bar r_0 t \rfloor$ then $(1, \lfloor \bar r_0 t \rfloor, t, t)$ is a pure $O$-sequence.  Clearly this is true for $t \gg 0$, establishing Claim 3.

\medskip

\noindent \underline{\it Claim 4:}  {\it For any $(r,a)$ with $a < t$, if $(1,r,a,t)$ is a non-decreasing pure $O$-sequence then so is $(1, r+1, a+1, t)$.  }

\medskip

It is enough to check differentiability.  But if $(1, r-1, a-r, t-a)$ is an $O$-sequence then clearly so is $(1, r, a-r, t-a-1)$.  This proves Claim 4.

Now let $t \geq \max \{ t_1, t_2, t_3 \}$.  Notice that this depends only on the initial choice of $\bar r_0$ and $\bar a_0$.  We now have that $(1, \lfloor \bar r_0 t \rfloor, \lfloor \bar a_0 t \rfloor, t)$ is a pure $O$-sequence, thanks to Claims 1 and 2, and that $(1, \lfloor \bar r_0 t \rfloor, t,t)$ is a pure $O$-sequence thanks to Claim 3.  Then by the ICP for $h_2$, we have $(1,\lfloor \bar r_0 t \rfloor, a,t)$ is a pure $O$-sequence for all $\lfloor \bar a_0 t \rfloor \leq a \leq t$.  This corresponds to the vertical line segment joining the point $(\lfloor \bar r_0 t \rfloor, \lfloor \bar a_0 t \rfloor)$ to $(\lfloor \bar r_0 t \rfloor, t)$.  We also have from Claim 4  that for each point on the diagonal line segment with slope 1 starting at $(\lfloor \bar r_0 t \rfloor, \lfloor \bar a_0 t \rfloor)$ and ending at $( r_1,t)$ (for suitable $r_1$), the corresponding sequence is pure.  It follows using the ICP for $h_1$ (looking at horizontal line segments within the triangle)  that for any $(r,a)$ in the triangular region
\[
\lfloor \bar r_0 t  \rfloor \leq r \leq \lfloor \bar r_0 t \rfloor  - \lfloor \bar a_0 t \rfloor  + a \ \hbox{ and } \ a < t ,
\]
$(1, r, a, t)$ is a pure $O$-sequence.   Of course other points outside this triangle may also be pure.  We now count:
\[
\frac{1}{2} (t  - \bar a_0 t)(t - \bar r_0 t) \leq \frac{1}{2} ( t - \lfloor \bar a_0 t \rfloor)(t - \lfloor \bar r_0 t \rfloor ) \leq P_3(t) \leq \frac{1}{2} t^2.
\]
Hence
\[
\frac{1}{2} (1-\bar a_0)(1-\bar r_0) \leq \frac{P_3(t)}{t^2} \leq \frac{1}{2}, \ \  \hbox{ or equivalently } \ \ \frac{1}{2} - (\bar a_0 + \bar r_0 - \bar a_0 \bar r_0)  \leq \frac{P_3(t)}{t^2} \leq \frac{1}{2}.
\]
Notice that the left-hand side is the area of the triangle $A(\bar r_0, \bar a_0)$.
Now, given $\epsilon >0$ we can choose the point $(\bar r_0, \bar a_0)$ in Region III such that $\bar a_0 + \bar r_0 - \bar a_0 \bar r_0 < \epsilon$.  For this choice of $\bar r_0$ and $\bar a_0$ we can find $t_1, t_2, t_3$ as above and conclude that for $t \geq \max \{ t_1, t_2, t_3 \}$ we have
\[
\left | \frac{P_3 (t)}{t^2} - \frac{1}{2} \right | < \epsilon,
\]
and the result follows.

Combining the results obtained for Regions I, II and III, we conclude the proof.
\end{proof}


\chapter{Monomial Artinian  level algebras of type two in three variables} \label{type 2 three var}

We have already seen in Remark \ref{wlpunim} that the WLP imposes strong conditions on the Hilbert function, hence in our case on the possible pure $O$-sequences.  In this monograph we completely answer the question of which combinations of the type and the number of variables force the WLP or the SLP (see Theorem \ref{answernd}).  A key ingredient in this classification is the main result of this chapter,  Theorem~\ref{prop-WLP-type2}.

We first give a more concrete description of  a level monomial ideal of type 2 in three variables.  This allows us, in turn, to give a more concrete formula for the Hilbert function, which involves only the sum of two complete intersection Hilbert functions (one shifted).  We then turn our attention to the WLP for such algebras.

For the following result, let us denote by $H(a,b,c)$ the Hilbert function of the ideal $( x^a,y^b,z^c )$, and as usual we denote by $H(a,b,c)(-r)$ the shifted function.

\begin{prop} \label{two forms}
Let $R = k[x,y,z]$ and let $I$ be a monomial ideal such that $R/I$
is level of type 2.  Then up to a change of variables, $I$ has one
of the following two forms:

\begin{enumerate}
\item[(a)] $( x^a, y^b, z^c, y^\beta z^\gamma )$,
where $b-\beta = c - \gamma$.  In this case the Hilbert function
is
\[
H(a,b-\beta,\gamma)(-\beta) + H(a,\beta,c).
\]

\item[(b)] $( x^a, y^b, z^c, x^\alpha y^\beta, x^\alpha
z^\gamma )$ where $a-\alpha = (b - \beta) + (c - \gamma)$.
(In particular, without loss of generality we may assume $a >
\alpha$.)  In this case the Hilbert function is
\[
H(a-\alpha, \beta, \gamma)(-\alpha) + H(\alpha,b,c).
\]

\end{enumerate}

\noindent Conversely, any Hilbert function $\underline{H}$ that can be decomposed as
\[
\begin{array}{rcl}
\underline{H} & = & H(a,b-\beta,\gamma) (-\beta) + H(a,\beta,c) \hbox{ with } b-\beta = c-\gamma, \hbox{ or } \\
\underline{H} & = & H(a-\alpha, \beta, \gamma)(-\alpha) + H(\alpha,b,c) \hbox{ with } a-\alpha = (b-\beta) + (c - \gamma)
\end{array}
\]
is a pure $O$-sequence.
\end{prop}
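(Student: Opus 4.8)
The plan is to prove the two directions (decomposition and converse) of Proposition \ref{two forms} essentially together, exploiting Macaulay's inverse systems. Let me first describe the forward direction. Let $A = R/I$ be level monomial of type $2$, so the inverse system module $I^\perp$ is generated by exactly two monomials $M_1, M_2$ of degree $e$. Write $M_1 = x^{a_1}y^{b_1}z^{c_1}$ and $M_2 = x^{a_2}y^{b_2}z^{c_2}$. After a change of variables (permuting $x,y,z$) and using that $a_1 + b_1 + c_1 = a_2 + b_2 + c_2 = e$, we may arrange the exponents so that one of two combinatorial cases occurs: either $M_1$ and $M_2$ agree in the $x$-exponent (and then the ``missing" degree is split between $y$ and $z$ — this will be case (a)), or they differ in all three variables in a way that, after relabeling so that $M_1$ has the larger $x$-exponent, forces the shape giving case (b). The key point is that $I$ is the annihilator of the $R$-module generated by $M_1, M_2$ under contraction, and one computes this annihilator directly: the generators of $I$ are the ``obvious" powers $x^a, y^b, z^c$ (where $a = \max(a_i)+1$, etc.) together with the mixed monomials that kill $M_1$ but whose ``partner" still survives on $M_2$ and vice versa. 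Carrying out this annihilator computation in each of the two combinatorial cases yields exactly the generator lists (a) and (b), and the numerical relations $b - \beta = c - \gamma$ (resp. $a - \alpha = (b-\beta)+(c-\gamma)$) are precisely the conditions that make $\langle M_1, M_2\rangle$ have a $2$-dimensional socle concentrated in one degree, i.e. that $A$ be level of type $2$ rather than accidentally Gorenstein or of higher type.

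For the Hilbert function formula: in case (a), observe that $\langle M_1, M_2 \rangle$ decomposes, as a vector space, into the monomials that are derivatives of $M_1$ only together with all derivatives of $M_2$; equivalently one identifies the inverse system with a direct sum corresponding to an exact sequence of the basic double link type (Lemma \ref{lem:BDL}). Concretely, the monomials in $I^\perp$ split into those divisible by $z^\gamma$ (giving, after dividing, the derivatives of a monomial of degree $e - \beta$ in the pattern of $H(a, b-\beta, \gamma)$, shifted up by $\beta$) and those not divisible by $z^\gamma$ (giving $H(a,\beta,c)$); summing dimensions degree by degree gives $H(a, b-\beta,\gamma)(-\beta) + H(a,\beta,c)$. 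The argument in case (b) is the mirror image with the roles of $x$ and the pair $(y,z)$ interchanged. One should double-check that these two pieces really are disjoint and exhaustive in each degree — this is the routine but slightly fiddly bookkeeping step.

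For the converse, suppose $\underline H$ admits one of the two stated decompositions. I would simply run the above construction in reverse: given the numerical data $(a,b,c,\beta,\gamma)$ with $b - \beta = c - \gamma$, form the two monomials $M_1 = y^{b-1}z^{\gamma - 1} \cdot(\text{appropriate } x \text{ power})$ and $M_2$ realizing case (a) (and analogously for case (b)), take the order ideal they generate, and verify by the same disjoint-union count that its $h$-vector is exactly $H(a,b-\beta,\gamma)(-\beta) + H(a,\beta,c)$. Since this order ideal is pure (its two maximal monomials have the same degree $e$) and monomial, its $h$-vector is by definition a pure $O$-sequence.

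The main obstacle I expect is the forward structural statement — showing that \emph{every} monomial level ideal of type $2$ in three variables reduces, after a change of variables, to exactly one of the two listed forms. The subtlety is that a priori the two socle-generating monomials $M_1, M_2$ could be in quite general position, and one must argue that the type being exactly $2$ (socle dimension $2$, concentrated in a single degree) is rigid enough to force the overlap pattern; in particular one must rule out configurations that would produce extra socle elements in degree $e$ or a socle element in a different degree. This amounts to a careful case analysis on which variables $M_1$ and $M_2$ share, and checking that the annihilator of $\langle M_1, M_2\rangle$ has a $2$-dimensional socle iff the stated linear relation among the exponents holds. Once this dichotomy is established, the Hilbert function computation and the converse are comparatively mechanical consequences of splitting the order ideal according to divisibility by the appropriate pure power.
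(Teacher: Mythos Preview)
Your proposal is correct and follows essentially the same approach as the paper: start from the two inverse-system generators $M_1,M_2$, split into the case where one exponent pair agrees (giving form (a)) versus all three differ (giving form (b)), compute the annihilator explicitly, and read off the Hilbert function via a basic double link decomposition (Lemma~\ref{lem:BDL}).

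One remark: the ``main obstacle'' you flag is not actually an obstacle. By the inverse-system correspondence, \emph{any} two distinct monomials of the same degree $e$ automatically generate a level algebra of type~$2$ with socle concentrated in degree~$e$; there is nothing to rule out. The numerical conditions $b-\beta=c-\gamma$ and $a-\alpha=(b-\beta)+(c-\gamma)$ are not extra hypotheses needed for levelness---they are forced by $\deg M_1=\deg M_2$ once you translate exponents into the $(a,b,c,\alpha,\beta,\gamma)$ notation. The case split itself is just pigeonhole: either some exponent pair coincides, or all three differ and then two of the three inequalities must point the same way. The paper disposes of this in a couple of lines, and so can you.
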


\begin{proof}
A monomial Artinian  level algebra of type 2 over $k[x,y,z]$
arises as the inverse system of two monomials of the same degree,
say $x^{a_1} y^{a_2} z^{a_3}$ and $x^{b_1} y^{b_2} z^{b_3}$, where
$a_1 + a_2 + a_3 = b_1 + b_2 + b_3$.  We distinguish two cases.

\medskip

\noindent \underline{Case 1}: either $a_1 = b_1$ or $a_2 = b_2$ or
$a_3 = b_3$.
Without loss of generality assume that $a_1 = b_1$.  Without loss
of generality we may also assume that $a_2 < b_2$ and $a_3 > b_3$.
Then  the annihilator of these two forms is the ideal
\[
I = ( x^{a_1 +1}, y^{b_2+1}, z^{a_3+1}, y^{a_2+1} z^{b_3+1}
).
\]
This is of the claimed form.  Now for simplicity of notation, set $a = a_1+1, b = b_2+1, c = a_3+1, \beta = a_2+1, \gamma = b_3+1$.  We see that
\[
I = y^\beta  ( x^a,y^{b-\beta}, z^\gamma) + (x^a,z^c),
\]
which is a basic double link.  The claimed Hilbert function follows immediately from Lemma \ref{lem:BDL}.

\medskip

\noindent \underline{Case 2}: $a_1 \neq b_1, a_2 \neq b_2, a_3
\neq b_3$.  If we compare $a_1$ with $b_1$, $a_2$ with $b_2$ and
$a_3$ with $b_3$, either two $a_i$ will be bigger than the
corresponding $b_i$ or vice versa.
Without loss of generality assume that
$a_2 > b_2$ and $a_3 > b_3$, and so $a_1 < b_1$.  Then the
annihilator of the two forms is the ideal
\[
I = ( x^{b_1 +1}, y^{a_2+1}, z^{a_3+1}, x^{a_1+1} y^{b_2+1},
x^{a_1+1} z^{b_3+1} ).
\]
Setting the values for $a,b,c,\alpha, \beta, \gamma$ accordingly,
we obtain the desired classification.  Note that now
\[
I = x^\alpha (x^{a-\alpha},y^\beta,z^\gamma) + (y^b,z^c).
\]
Again this is a basic double link, so the Hilbert function result follows immediately from Lemma \ref{lem:BDL}.

For the converse, the proof above gives the construction of the monomial ideal, and the numerical conditions guarantee that the resulting algebra will be level.
\end{proof}

 We now show that all  monomial Artinian level algebras of type 2 in three variables have the WLP.  (Based on computer evidence, we conjecture  furthermore that the multiplication by $L^2$ also has maximal rank.)

\begin{theorem}\label{prop-WLP-type2}
A monomial Artinian  level algebra of type 2 in three variables over a field of characteristic zero has the WLP.
\end{theorem}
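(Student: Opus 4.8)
Here is the plan I would follow to prove Theorem~\ref{prop-WLP-type2}.

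The starting point is the structural dichotomy of Proposition~\ref{two forms}: after a change of variables, $I$ has one of the two explicit forms (a) or (b). In both cases the proof of that proposition already displays $I$ as a basic double link. In case (a) one has $I = y^{\beta}\cdot I_1 + J$ with $I_1 = (x^a,y^{b-\beta},z^{\gamma})$ and $J = (x^a,z^c)$; in case (b) one has $I = x^{\alpha}\cdot I_1 + J$ with $I_1 = (x^{a-\alpha},y^{\beta},z^{\gamma})$ and $J = (y^b,z^c)$. Write $f$ for the monomial ($y^{\beta}$, resp.\ $x^{\alpha}$) and $d$ for its degree. Thus, uniformly, $I_1$ is an Artinian monomial complete intersection in three variables, $J$ is a monomial complete intersection in the other two variables (so $R/J$ is Cohen--Macaulay of dimension one), and, exactly as in Lemma~\ref{lem:BDL} (via \cite{KMMNP}, Lemma~4.8), there is a short exact sequence of graded modules
\[
0 \to (R/J)(-d) \to (R/I_1)(-d)\oplus (R/J) \to R/I \to 0,
\]
whose maps are the surjection $R/J \to R/I_1$ (shifted) together with $\times f$ on the left, and $\times f$ together with the surjection $R/J \to R/I$ on the right. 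By Remark~\ref{g-remarks}(2) it is enough to check the WLP for a single general linear form $L$ (indeed for $L = x+y+z$).

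The next step is to apply the snake lemma to multiplication by $L$ on this sequence. Since $R/J$ is Cohen--Macaulay of dimension one, a general $L$ is a nonzerodivisor on it, so $\times L$ is injective on $R/J$ in every degree and has cokernel the Artinian complete intersection $R/(J,L)$. Since $R/I_1$ is a monomial complete intersection in three variables over a field of characteristic zero, it has the WLP by \cite{HMNW} (see also \cite{St-faces}); as its $h$-vector is symmetric and unimodal, $\times L\colon (R/I_1)_j \to (R/I_1)_{j+1}$ is injective for $j$ below its midpoint and surjective above it, and the Hilbert function of its Artinian reduction $R/(I_1,L)$ is accordingly the positive part of $\Delta h(R/I_1)$. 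Feeding this in, the snake lemma yields
\[
0 \to \ker(\times L\mid_{R/I_1})(-d) \to \ker(\times L\mid_{R/I}) \to \ker\!\bigl(\,R/(J,L)(-d)\ \stackrel{\tau}{\longrightarrow}\ R/(I_1,L)(-d)\oplus R/(J,L)\,\bigr) \to 0
\]
together with $\cok(\times L\mid_{R/I}) \cong \cok(\tau)$, where $\tau = (\pi,\ \times\bar f)$ with $\pi\colon R/(J,L)\to R/(I_1,L)$ the natural surjection and $\bar f$ the image of $f$. So it remains to show that, in every degree $j$, at least one of $\ker(\times L\mid_{R/I})_j$ and $\cok(\tau)_j$ vanishes.

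I would finish by a degree-by-degree argument, splitting according to the position of $j$ relative to the transition degree of $R/I_1$ and the socle degree $e$ of $R/I$. For $j$ below that transition degree one has $\ker(\times L\mid_{R/I_1})=0$, so one must prove $\tau$ injective in degree $j$; for $j$ above it one has $\cok(\times L\mid_{R/I_1})=0$, so one must prove $\tau$ surjective in degree $j$. In both extremes this is a comparison of the explicitly known complete-intersection Hilbert functions of $R/(J,L)$ and $R/(I_1,L)$, in which the numerical relation built into Proposition~\ref{two forms} --- $b-\beta = c-\gamma$ in case (a), and $a-\alpha = (b-\beta)+(c-\gamma)$ in case (b) --- is exactly what forces the shift $d$ to align the two complete intersections compatibly. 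The genuinely delicate range is the overlap near degree $e/2$, where the transition regions of $R/I_1$ and $R/(J,L)$ need not coincide and where both a nonzero kernel and a nonzero cokernel of $\tau$ are a priori possible; there I would use that $R/I$ is level of type $2$ (so the drop $h_j - h_{j+1}$ is bounded by the corresponding drop of a three-variable complete intersection plus a controlled term) together with Macaulay's and Green's estimates (Theorem~\ref{macaulay}) to exclude such a "plateau with a valley." I expect this overlap range to be the main obstacle: it is precisely the point at which type $2$ in three variables behaves better than the type $2$ cases in four or more variables treated in Chapter~\ref{type 2 arb var}, so any correct argument must use the three-variable WLP of $R/I_1$ and the rigidity of the basic double link in an essential way.
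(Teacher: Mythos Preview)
Your framework via the basic double link sequence and the snake lemma is sound, and the reduction to the map $\tau$ is correct. But the ``delicate middle range'' is the entire content of the theorem, and your plan for it does not work. You propose to close it with Macaulay's and Green's estimates together with the levelness of $R/I$; however, all of these inputs are characteristic-free, and the theorem is \emph{false} in positive characteristic (the paper exhibits an explicit type~2 example at the end of this chapter). So the characteristic-zero hypothesis must enter more deeply than merely through the WLP of the complete intersection $R/I_1$. Concretely, once you unwind $\tau$ in, say, case~(a), you find $R/(J,L)\cong k[x,z]/(x^a,z^c)$, $\bar f=(\pm(x+z))^{\beta}$, and $\ker\pi$ is the ideal generated by $(x+z)^{b-\beta}$ and $z^{\gamma}$ in that ring. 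Deciding simultaneously when $\ker\tau$ and $\cok\tau$ vanish is then a maximal-rank question for multiplication by a power of $x+z$ on a module of the shape $((x+z)^m,z^n)/(x^a,z^c)$, which is exactly the kind of statement that is \emph{not} accessible by Macaulay/Green-type numerics.

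The paper proceeds by a different (and more direct) decomposition: it writes $I$ as the intersection of the two annihilator complete intersections and, after reducing modulo $L=x+y-z$, converts the WLP into the assertion that multiplication by $(x+y)^{c_1+1}$ has maximal rank on the explicit $k[x,y]$-module $M=((x+y)^{c_2-c_1},x^{a_2+1},y^{b_2+1})/(x^{a_1+1},y^{b_1+1})$ (Proposition~\ref{prop-Key}). Proving this occupies Lemmas~\ref{lem-det}--\ref{lem-SLP-mod} and is where characteristic zero is genuinely used: an SLP statement for the auxiliary module $K=(x^{a_2+1},y^{b_2+1})/(x^{a_1+1},y^{b_1+1})$ is established via bases, an automorphism by $(t+1)^{-c-1}$, repeated differentiation, and finally a determinant of an integer binomial matrix (Lemma~\ref{lem-det}) that must be nonzero in $k$. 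Your $\tau$-analysis would, if carried through, land on an equivalent two-variable module question, but you would still need a substitute for this SLP/determinant argument; Macaulay and Green will not supply it.
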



The proof  relies on results about certain modules over  polynomial
rings in two variables.

\begin{remark}
  \label{rem-SLP-mod}
  {\em
In \cite{HMNW}, Proposition 4.4, it has been shown that all  graded
quotients of a polynomial ring in two variables $S = k[x, y]$ have
the SLP if $k$ has characteristic zero.
However, this result does not generalize to all graded $S$-modules.
For example, the module $S/(x, y) \oplus S/(x, y)^2$ fails to have
the WLP.
}
\end{remark}

The key to proving Theorem \ref{prop-WLP-type2} is the following
result.

\begin{prop}\label{prop-Key}
Given two triples of non-negative integers, $(a_1,b_1,c_1)$ and $(a_2,b_2,c_2)$, satisfying
$$
a_1+b_1+c_1=a_2+b_2+c_2, \quad b_2\le b_1\le a_1, \quad a_2\le a_1,
\quad\hbox{and}\quad c_1<c_2,
$$
consider the $S$-module
\[
M := ((x+y)^{c_2 - c_1}, x^{a_2+1},y^{b_2+1})/(x^{a_1+1},y^{b_1+1}).
\]
Then multiplication by $(x+y)^{c_1 + 1}$ on $M$ has maximal rank,
provided the field $k$ has characteristic zero.
\end{prop}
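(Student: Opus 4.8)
The plan is to transport the question about the module $M$ into a statement about an ideal inside an Artinian Gorenstein complete intersection, and then run a snake--lemma argument driven by the (known) Lefschetz behaviour in two variables.

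\textbf{Step 1: reduction to an ideal in a complete intersection.} Since $a_{2}\le a_{1}$ and $b_{2}\le b_{1}$, we have $x^{a_{1}+1}\in(x^{a_{2}+1})$ and $y^{b_{1}+1}\in(y^{b_{2}+1})$, so $(x^{a_{1}+1},y^{b_{1}+1})\subseteq N:=((x+y)^{c_{2}-c_{1}},x^{a_{2}+1},y^{b_{2}+1})$; in particular $M$ is well defined and equals the ideal of $B:=S/(x^{a_{1}+1},y^{b_{1}+1})$ generated by the images of $(x+y)^{c_{2}-c_{1}}$, $x^{a_{2}+1}$, $y^{b_{2}+1}$. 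This gives a short exact sequence of graded $S$-modules
\[
0\longrightarrow M\longrightarrow B\longrightarrow C\longrightarrow 0,\qquad C:=S/\big((x+y)^{c_{2}-c_{1}},x^{a_{2}+1},y^{b_{2}+1}\big).
\]
Because $k$ has characteristic zero, $B$ has the SLP (\cite{HMNW}, Proposition 4.4) and, being a monomial complete intersection, admits $x+y$ as a Lefschetz element. Hence for every $s\ge 0$ the map $\times(x+y)^{s}\colon B_{j}\to B_{j+s}$ has maximal rank, with kernel nonzero only in the ``upper half'' of the symmetric unimodal $h$-vector of $B$, cokernel nonzero only in the ``lower half'', and never both at once.

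\textbf{Step 2: the snake lemma.} Next I would apply $\times(x+y)^{c_{1}+1}$ to the sequence above; with $d:=c_{1}+1$, the snake lemma yields in each degree $j$ an exact sequence
\[
0\to\ker\phi_{j}\to\ker\psi_{j}\to\ker\chi_{j}\to\cok\phi_{j}\to\cok\psi_{j}\to\cok\chi_{j}\to 0,
\]
where $\phi,\psi,\chi$ denote multiplication by $(x+y)^{d}$ on $M$, $B$, $C$ (and $\cok\phi_{j}=M_{j+d}/(x+y)^{d}M_{j}$, etc.). By Step 1 the terms $\ker\psi_{j}$ and $\cok\psi_{j}$ are explicitly known and, in any fixed degree, at most one of them is nonzero. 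So proving that $\phi$ has maximal rank in degree $j$ — i.e.\ $\ker\phi_{j}=0$ or $\cok\phi_{j}=0$ — reduces to understanding the kernel and cokernel of $\times(x+y)^{d}$ on the \emph{cyclic} module $C$, together with the connecting map $\ker\chi_{j}\to\cok\phi_{j}$.

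\textbf{Step 3: the module $C$, and the main obstacle.} The module $C$ is a quotient of a polynomial ring in two variables by three forms; after the linear change of variables sending $x+y$ to a new variable $u$ (say $x\mapsto v$, $y\mapsto u-v$) it becomes $k[u,v]/\big(u^{c_{2}-c_{1}},v^{a_{2}+1},(u-v)^{b_{2}+1}\big)$, and the relevant map is $\times u^{d}$. I would compute the Hilbert function of $C$ and the ranks of these maps directly from this presentation — Hilbert functions of $k[u,v]$-quotients are explicit, and $\times u^{d}$ is tractable because $u^{c_{2}-c_{1}}=0$ in $C$ — and feed the answer into the snake sequences of Step 2. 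The place where the hypotheses $a_{1}+b_{1}+c_{1}=a_{2}+b_{2}+c_{2}$, $b_{2}\le b_{1}\le a_{1}$, $a_{2}\le a_{1}$, $c_{1}<c_{2}$ must be used in an essential way is precisely this last bookkeeping: they should force the degrees in which $\ker\psi$ and $\ker\chi$ (respectively the cokernels) are nonzero to line up, so that $\ker\phi_{j}$ and $\cok\phi_{j}$ can never be simultaneously nonzero. Ruling out such a simultaneous nonvanishing in an intermediate degree is the main obstacle; the degree equality $a_{1}+b_{1}+c_{1}=a_{2}+b_{2}+c_{2}$ — which pins down the socle degree of $B$ relative to the generator degrees of $N$ — is what I expect to close the argument. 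If the purely numerical estimate proves too crude, a fallback is to filter $M$, e.g.\ by $0\to(x+y)^{c_{2}-c_{1}}B\to M\to M/(x+y)^{c_{2}-c_{1}}B\to 0$, whose outer terms are a twist of a piece of $B$ and an essentially cyclic module, and apply the Lefschetz behaviour of $B$ together with \cite{HMNW} to each term separately.
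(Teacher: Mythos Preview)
Your Steps~1--2 coincide with the paper's opening move: the paper uses exactly your short exact sequence (with $A:=S/(x^{a_1+1},y^{b_1+1})$ and $B/(x+y)^{c_2-c_1}B$ in place of your $B$ and $C$) and the snake lemma for $\times(x+y)^{c_1+1}$, disposing quickly of the degrees where the middle map is injective or the right-hand map is injective.

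The gap is in Step~3. Your hope that Hilbert--function bookkeeping will force $\ker\phi_j$ and $\cok\phi_j$ never to be simultaneously nonzero does not pan out; in the paper's analysis there is a genuine range of degrees (roughly, past the midpoint of $A$ but before $C$ has died) where neither the kernel nor the cokernel on the outer terms vanishes, and one must actually compute the rank of $\phi$ there. The paper handles this with two ingredients you do not anticipate. First, an auxiliary \emph{module} result (their Lemma~6.7): $x+y$ is a strong Lefschetz element on $K:=(x^{a_2+1},y^{b_2+1})/(x^{a_1+1},y^{b_1+1})$. This is not a cyclic $S$-module, so \cite{HMNW} does not apply (cf.\ the paper's Remark~6.3), and the proof requires choosing explicit bases, several changes of coordinates, and finally the regularity of a binomial-coefficient determinant. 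Second, to connect the image of $\phi$ back to $K$, the paper applies the differential operator $D=\partial/\partial(x-y)$ iterated $d-c_2+1$ times; this kills exactly $(x+y)^{c_2+1}S$ and sends the relevant pieces of $\mathfrak a,\mathfrak b$ to new complete intersections $\mathfrak a',\mathfrak b'$ with shifted exponents, reducing the claim to the SLP of $K':=\mathfrak b'/\mathfrak a'$, which is again an instance of the lemma on $K$.

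Your fallback filtration $0\to(x+y)^{c_2-c_1}B\to M\to M/(x+y)^{c_2-c_1}B\to 0$ points in the right direction --- the quotient is essentially $K$ modulo the image of $(x+y)^{c_2-c_1}$ --- but you would still need the SLP of $K$ (not just of algebra quotients of $S$) and something playing the role of the differential-operator reduction to combine the pieces. As written, Step~3 is a plan rather than a proof, and the missing idea is precisely the Lefschetz statement for the non-cyclic module $K$.
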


In order to have a numerical version of this result,  set
\[
A := S/\fa := S/(x^{a_1+1},y^{b_1+1}) \quad  \text{and} \quad B := S/\fb := S/(x^{a_2+1},y^{b_2+1}).
\]
Then we have the exact sequence
\begin{equation}
  \label{eq-sequence-with-mod}
0 \to M \to A \to B/(x+y)^{c_2 - c_1}B \to 0.
\end{equation}
Since $x+y$ is a strong Lefschetz element of $B$ by Remark \ref{rem-SLP-mod} and \cite{MMN2}, Proposition 2.2, we conclude that the Hilbert function of $M$ is
\begin{eqnarray}
  \label{eq-Hilb-mod}
h_M (j) & = & h_A (j) - h_{B/(x+y)^{c_2 - c_1}B} (j) \nonumber\\
& = & h_A(j) - \max\{0, h_B (j) - h_B( j - c_2 + c_1)\} \nonumber \\
& = & \min\{ h_A (j), h_A(j) - h_B(j) + h_B( j - c_2 + c_1) \}.
\end{eqnarray}
Hence Proposition \ref{prop-Key} is equivalent to the following statement.

\begin{lemma}
  \label{lem-key-numerical}

Adopt the above notation, let $d$ be any integer, and consider the multiplication map
\begin{equation*}
\begin{CD}
\ffi: M_{d - c_1} @>{(x+y)^{c_1 + 1}}>> M_{d+1}
\end{CD}
\end{equation*}
Then
\begin{eqnarray}
  \label{eq-num-claim}
\dim_k [\im \ffi]_{d+1} & = & \min\{ h_A(d+1), h_A(d-c_1), \nonumber\\
& & \hspace*{1cm}  h_A(d+1) - h_B(d+1) + h_B(d+1-c_2+c_1), \nonumber\\
& & \hspace*{1cm}  h_A(d-c_1) - h_B(d-c_1) + h_B(d-c_2) \}.
\end{eqnarray}
\end{lemma}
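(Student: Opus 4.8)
\textbf{Proof plan for Lemma~\ref{lem-key-numerical}.}

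The plan is to show that Lemma~\ref{lem-key-numerical} is a purely formal consequence of Proposition~\ref{prop-Key} together with the Hilbert function computation \eqref{eq-Hilb-mod}, so that in fact the two statements are equivalent (as claimed in the sentence preceding the lemma). First I would record what the map $\ffi$ does: since $M$ is a graded $S$-module and $\ffi$ is multiplication by $(x+y)^{c_1+1}$, its rank in degree $d+1$ is, by Proposition~\ref{prop-Key} (which asserts maximal rank), equal to $\min\{h_M(d-c_1),\, h_M(d+1)\}$. Indeed, ``maximal rank'' for a graded map between the degree-$(d-c_1)$ and degree-$(d+1)$ components means precisely that the map is injective or surjective, so its rank is the minimum of the two dimensions. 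This is the one place where Proposition~\ref{prop-Key} (hence characteristic zero) is used; everything else is bookkeeping with Hilbert functions, which is characteristic-independent.

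Next I would substitute the formula \eqref{eq-Hilb-mod} for $h_M$ into $\min\{h_M(d-c_1), h_M(d+1)\}$. We have
\[
h_M(d+1) = \min\{h_A(d+1),\, h_A(d+1) - h_B(d+1) + h_B(d+1-c_2+c_1)\}
\]
and
\[
h_M(d-c_1) = \min\{h_A(d-c_1),\, h_A(d-c_1) - h_B(d-c_1) + h_B(d-c_1-c_2+c_1)\}.
\]
Here I would simplify the last argument: $h_B(d-c_1-c_2+c_1) = h_B(d-c_2)$, which matches the fourth term in \eqref{eq-num-claim}. Then $\min\{h_M(d-c_1), h_M(d+1)\}$ is the minimum of these four quantities, which is exactly the right-hand side of \eqref{eq-num-claim}. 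Since the minimum of a nested collection of minima is just the minimum over the union of all the arguments, no case analysis is needed; one only has to be careful that each of the four arguments is a genuine lower bound being intersected, which is automatic from the way \eqref{eq-Hilb-mod} was derived (via the short exact sequence \eqref{eq-sequence-with-mod} and the strong Lefschetz property of $B$).

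The main (and essentially only) obstacle is not in this lemma at all but in Proposition~\ref{prop-Key} itself, whose proof is deferred; granting it, Lemma~\ref{lem-key-numerical} follows mechanically. The only subtlety I would flag in writing this up is to make sure the indices in \eqref{eq-Hilb-mod} are applied with the correct shift — i.e.\ that the submodule $M \subseteq A$ sits inside \eqref{eq-sequence-with-mod} with $B/(x+y)^{c_2-c_1}B$ as the quotient and \emph{not} a shifted copy — and to double check that $d$ is allowed to be an arbitrary integer (for $d$ outside the relevant range all the relevant Hilbert function values vanish and the identity is the trivial $0=0$). Thus the write-up is short: invoke Proposition~\ref{prop-Key} to get rank $=\min\{h_M(d-c_1),h_M(d+1)\}$, expand $h_M$ via \eqref{eq-Hilb-mod}, and simplify the nested minimum to the stated four-term minimum.
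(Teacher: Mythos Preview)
Your proposal is correct and matches the paper's approach exactly: the paper also treats Lemma~\ref{lem-key-numerical} as a restatement of Proposition~\ref{prop-Key} via the Hilbert function formula \eqref{eq-Hilb-mod}, asserting the equivalence in the sentence preceding the lemma and then supplying the substantive argument only in the (deferred) proof of Proposition~\ref{prop-Key}. Your expansion of the nested minimum is precisely the bookkeeping the paper leaves implicit in that one-line equivalence claim.
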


The proof of this result requires some preparation. First, we need:

\begin{lemma}
  \label{lem-det}
Let $p \geq q$ be non-negative integers. Then the $n \times n$
integer matrix
\[
N = \left ( \binom{p}{q-i+j} \right )_{1 \leq i, j \leq n}
\]
is regular.

Furthermore its determinant is
\[
\det N = \frac{\cH (n) \cH(p + n) \cH(p - q) \cH(q)}{\cH (p) \cH(p - q + n) \cH(q+n)},
\]
where $\cH (m)$ is the hyperfactorial of $m$ defined by $\cH(0) := 1$ and
\[
\cH(m) := \prod_{i = 0}^{m-1} i! \quad \mif m > 0.
\]
\end{lemma}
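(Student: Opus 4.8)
The plan is to recognize the matrix $N = \left(\binom{p}{q-i+j}\right)_{1\le i,j\le n}$ as a determinant that can be evaluated in closed form by a standard technique for binomial determinants, and to read off nonsingularity from the fact that the closed form is a ratio of hyperfactorials of nonnegative arguments, hence a positive rational (in fact a positive integer). First I would observe that $N$ is a submatrix of the infinite Pascal-type array, so its determinant is a \emph{Lindstr\"om--Gessel--Viennot} (or Carlitz--Lindstr\"om) determinant: by the Lindstr\"om--Gessel--Viennot lemma, $\det N$ counts families of non-intersecting lattice paths between a suitable set of sources and sinks, which is manifestly a nonnegative integer, and one checks the configuration admits at least one such family, so $\det N > 0$ and $N$ is regular. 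This already gives the ``regular'' assertion without needing the exact formula.

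For the explicit value, I would invoke the classical evaluation of a determinant of binomial coefficients $\det\left(\binom{p}{q-i+j}\right)$, which is a well-known special case of the Lindgren/Gessel--Viennot count of plane partitions in a box (equivalently, of the Andrews/MacMahon box formula). Concretely, one reduces to the determinant $\det\left(\binom{x_i+j-1}{j-1}\right)$ or to a Vandermonde-like product after elementary row operations, then applies the standard product formula. I would carry this out by: (i) normalizing the entries to a hypergeometric form so that the $(i,j)$ entry is $\binom{p}{q-i+j}$; (ii) recognizing this as the count of families of non-intersecting paths in a rectangle of shape determined by $n$, $p$, $q$; (iii) quoting (or reproving by induction on $n$) the resulting product, which telescopes into the stated ratio of hyperfactorials $\cH(n)\cH(p+n)\cH(p-q)\cH(q)\big/\cH(p)\cH(p-q+n)\cH(q+n)$. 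An alternative, purely computational route is induction on $n$: relate $\det N$ for size $n$ to size $n-1$ by column operations using Pascal's rule $\binom{p}{k}=\binom{p-1}{k}+\binom{p-1}{k-1}$, producing a recursion whose solution matches the hyperfactorial ratio; the base case $n=1$ is $\det N = \binom{p}{q}$, which indeed equals the claimed formula after cancellation.

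The main obstacle I anticipate is bookkeeping: matching the indices in the lattice-path / plane-partition formula to the precise entry $\binom{p}{q-i+j}$ (the shift by $q$ and the antidiagonal indexing $-i+j$) so that the sources and sinks are placed correctly, and then simplifying the resulting product of factorials into the compact hyperfactorial form without sign or off-by-one errors. Since the claimed identity is symmetric-looking and the hyperfactorial of every argument appearing ($n$, $p+n$, $p-q$, $q$, $p$, $p-q+n$, $q+n$) is a well-defined positive integer under the hypothesis $p\ge q\ge 0$, once the formula is established the positivity — and hence regularity — is immediate, so the only real work is the evaluation itself. I would therefore present the LGV/non-intersecting-paths argument for regularity first (short and robust), and then either cite the standard binomial-determinant evaluation or include the short inductive verification for the explicit determinant.
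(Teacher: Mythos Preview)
Your proposal is correct and more self-contained than the paper's own argument, which simply cites two references: regularity is attributed to Theorem~B.1 of \cite{codes}, and the explicit hyperfactorial formula to Lemma~3 of Krattenthaler's \emph{Advanced determinant calculus} \cite{Kratt} (with a pointer to \cite{CookN}). So the paper does not actually carry out any computation.

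Your Lindstr\"om--Gessel--Viennot route is a genuine alternative presentation. Interpreting $\binom{p}{q-i+j}$ as the number of north--east lattice paths from $A_i=(i,-i)$ to $B_j=(q+j,\,p-q-j)$, the sources and sinks lie on parallel lines in compatible order, so only the identity permutation contributes and $\det N$ equals the number of nonintersecting $n$-tuples of paths; exhibiting the obvious ``staircase'' family shows this count is positive, hence $N$ is regular. This is arguably more transparent than invoking \cite{codes}. For the explicit value, what you are heading toward is exactly the evaluation recorded in \cite{Kratt}, and your two suggested derivations (MacMahon box formula via LGV, or induction on $n$ using Pascal's rule) are both standard proofs of that identity. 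The trade-off is that the paper's citation is a two-line proof, whereas your approach would take a page but would make the monograph self-contained on this point and would connect naturally with the lattice-path and plane-partition themes already mentioned in the introduction.
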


\begin{proof}
Regularity of $N$ follows from Theorem B.1 in \cite{codes}.  The
formula for the determinant is a consequence of Lemma 3 in
\cite{Kratt} (see also \cite{CookN}, Lemma 4.2).
\end{proof}

Second, we consider a  module that is simpler than the module $M$
appearing in Proposition \ref{prop-Key}.

\begin{lemma}
  \label{lem-SLP-mod}
Given two pairs of non-negative integers, $(a_1,b_1)$ and $(a_2,b_2)$, satisfying
\[
\quad b_2\le b_1\le a_1 \quad \hbox{and} \quad a_2\le a_1,
\]
consider the $S$-module
\[
K := (x^{a_2+1},y^{b_2+1})/(x^{a_1+1},y^{b_1+1}).
\]
Then  $(x+y)$ is a  strong Lefschetz element of $K$,  provided the
field $k$ has characteristic zero.
\end{lemma}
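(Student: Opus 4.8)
\medskip

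The plan is to recognize $K$ as a degree shift of the Matlis dual of a cyclic graded Artinian quotient of $S=k[x,y]$, and then to appeal to the fact (\cite{HMNW}, Proposition 4.4) that in characteristic zero \emph{every} graded Artinian quotient of $k[x,y]$ has the SLP, the Lefschetz element being $x+y$ by \cite{MMN2}, Proposition 2.2.

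Write $\fa=(x^{a_1+1},y^{b_1+1})$ and $\fb=(x^{a_2+1},y^{b_2+1})$. Since $a_2\le a_1$ and $b_2\le b_1$ we have $\fa\subseteq\fb$, so $K=\fb/\fa$ is an ideal of the complete intersection --- hence Gorenstein Artinian --- ring $A:=S/\fa$, namely $K=\fb A$, and its annihilator is $(0:_A K)=(\fa:\fb)/\fa$. Let $e=a_1+b_1$ be the socle degree of $A$. Gorenstein duality provides a graded isomorphism of $A$-modules $A^{\vee}:=\operatorname{Hom}_k(A,k)\cong A(e)$. Applying the exact functor $\operatorname{Hom}_k(-,k)$ to $0\to K\to A\to A/K\to 0$ and using $\operatorname{Hom}_A(A/K,A^{\vee})\cong\operatorname{Hom}_A(A/K,A(e))=\bigl((0:_A K)\bigr)(e)$, one obtains a graded isomorphism of $S$-modules
\[
K\;\cong\;\bigl(A/(0:_A K)\bigr)^{\vee}(-e)\;=\;\bigl(S/(\fa:\fb)\bigr)^{\vee}(-e).
\]
(If one wishes, $\fa:\fb=(x^{a_1+1},y^{b_1+1},x^{a_1-a_2}y^{b_1-b_2})$; but all that matters is that $S/(\fa:\fb)$ is a graded Artinian quotient of $k[x,y]$, which is clear since $\fa:\fb\supseteq\fa$ is $\mm$-primary.)

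Next I would record the elementary remark that Matlis duality preserves the SLP with respect to a fixed linear form: if $N$ is a finite-length graded $S$-module and $L\in S_1$, then for all $j$ and $s$ the map $\times L^s:(N^{\vee})_j\to (N^{\vee})_{j+s}$ is the $k$-linear transpose of $\times L^s:N_{-j-s}\to N_{-j}$, hence has the same rank, while $\dim_k(N^{\vee})_i=\dim_k N_{-i}$; thus $N$ has the SLP with respect to $L$ if and only if $N^{\vee}$ does, and likewise after an arbitrary internal degree shift. Combining this with the displayed isomorphism and the SLP of $S/(\fa:\fb)$ with respect to $x+y$, we conclude that $K$ has the SLP with respect to $x+y$, which is the assertion.

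The one point demanding care is the Gorenstein-duality step: one must verify that $A^{\vee}\cong A(e)$ is an isomorphism of \emph{graded $A$-modules}, so that it is in particular $S$-linear and intertwines multiplication by $x+y$, and one must keep the degree shift straight so that ``maximal rank in every degree'' passes correctly through the dualization. The remaining ingredients (the inclusion $\fa\subseteq\fb$, the colon-ideal bookkeeping, and the transpose argument) are routine.
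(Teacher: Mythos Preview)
Your argument is correct and takes a genuinely different, much shorter route than the paper.

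The paper proves the lemma by a lengthy direct computation: it first reduces to the case where the multiplication map is expected to be injective, then works through several degree ranges, introduces explicit bases of $\fa_{d+1}$ and of a complement $V$ of $\fa_{d-c}$ in $\fb_{d-c}$, transports everything into $k[t]/(t^{d+2})$ via a monomial-to-power-of-$t$ isomorphism, applies automorphisms given by multiplication by $(t+1)^m$, differentiates to kill certain subspaces, and finally reduces the independence statement to the regularity of a matrix of binomial coefficients, which is Lemma~\ref{lem-det}. In particular, the paper's proof of Lemma~\ref{lem-SLP-mod} is the sole consumer of the hyperfactorial determinant Lemma~\ref{lem-det}.

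Your approach sidesteps all of this by recognizing $K$ as a shift of the Matlis dual of a cyclic Artinian quotient of $k[x,y]$. The key identifications --- that $(A/K)^{\vee}$ sits inside $A^{\vee}\cong A(e)$ as $(0:_A K)(e)$, and that $(0:_A K)=(\fa:\fb)/\fa$ with $\fa:\fb$ a monomial ideal --- are exactly right, and the transpose argument for passing SLP through $(-)^{\vee}$ and degree shifts is the correct elementary observation. This reduces the lemma to \cite{HMNW}, Proposition~4.4 together with \cite{MMN2}, Proposition~2.2, both of which the paper already invokes elsewhere. Your proof thus renders Lemma~\ref{lem-det} unnecessary for this lemma. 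The paper's approach has the minor virtue of being more self-contained and explicit about bases, but yours is cleaner and makes transparent \emph{why} the result holds: $K$ is, up to dualizing and shifting, just another two-variable Artinian algebra.
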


\begin{proof}
We have to show that, for each non-negative integer $c$ and any integer $d$, the multiplication map
\begin{equation*}
\begin{CD}
\psi: K_{d - c} @>{(x+y)^{c + 1}}>> K_{d+1}
\end{CD}
\end{equation*}
has maximal rank, that is
\begin{equation*}
  \dim_k [(x+y)^{c+1} K]_{d+1} = \min \{h_K (d-c), h_K (d+1)\}.
\end{equation*}
To this end set, as above,
\[
A := S/\fa := S/(x^{a_1+1},y^{b_1+1}) \quad  \text{and} \quad B := S/\fb := S/(x^{a_2+1},y^{b_2+1}).
\]
Recall that, for non-negative integers $b \leq a$, the Hilbert function of the complete intersection $S/(x^{a+1}, y^{b+1})$ is given by
\begin{eqnarray}
  \label{eq-hilb-ci}
  \quad \quad
\dim_k [S/(x^{a+1}, y^{b+1})]_j  & = &
\begin{cases}
j+1 & \mif 0 \leq j \leq b \\
b+1 & \mif b \leq j \leq a \\
a+b+1-j & \mif a \leq j \leq a+b \\
0 & \text{otherwise}
\end{cases}  \\
& = & \max\{0,   \min \{j+1, a+1, b+1, a+b+1-j \} \}, \nonumber
\end{eqnarray}
where the last formula is also true if $b > a$.

We will use this formula frequently. Note that this Hilbert function
is unimodal and symmetric about $\frac{a+b}{2}$.

We now show that it is enough to consider the map $\psi$ if it is
expected  to be injective. Indeed, using that both $a_2$ and $b_2$
are at most $a_1$, we see that the Hilbert function of $K$,
\[
h_K (j) = h_A (j) - h_B (j),
\]
is non-decreasing if $j \leq \max \{a_1, a_2 + b_2 + 1\}$. Hence,
$h_K (d+1) < h_K (d-c)$ implies in particular $d+1 > a_2 + b_2 + 1$,
so $h_K (d+1) = h_A(d+1)$. It follows that $\psi$ has maximal rank
if and only if
\begin{equation}
   \label{eq-num-info-K}
\dim_k [(x+y)^{c+1} K]_{d+1} = \min \{h_K (d-c), h_A (d+1)\}.
\end{equation}
Assume now that $h_K(d-c) > h_A(d+1)$, so that we have to show
surjectivity of $\psi$. We have seen that the assumption implies
$d+1 > \max \{a_1, a_2 + b_2 + 1\}$. If $d-c > a_2 + b_2$, then we
get $K_{d-c} = A_{d-c}$ and $K_{d+1} = A_{d+1}$, hence
$\psi$ is surjective because $x+y$ is a strong Lefschetz element of
$A$ by Proposition 2.2 in \cite{MMN2}. If $d-c < \max\{a_1, a_2 +
b_2 + 1\}$, then there is an integer $d'$ such that
\[
\max \{a_1, a_2 + b_2 + 1\} \leq d' \leq d \quad \text{and} \quad h_K(d-c)
= h_K (d').
\]
The map $\psi$ decomposes as
\begin{equation*}
\begin{CD}
 K_{d - c}  @>{(x+y)^{c+d' -d}}>> K_{d'} @>{(x+y)^{d -d'}}>> K_{d+1},
\end{CD}
\end{equation*}
where the first map is expected to be injective, thus an
isomorphism, and  the second map is surjective because $x+y$ is a
strong Lefschetz element of $A$. Hence, surjectivity of $\psi$
follows from the expected injectivity.

This argument shows that it is enough to assume that
\begin{equation}
\label{eq-num-assump}
  h_K (d-c) \leq h_K (d+1)
\end{equation}
and then to show that $\psi$ is injective or, equivalently,
\begin{equation}
\label{eq-num-state-1}
  \dim_k [(x+y)^{c+1} K]_{d+1} =  h_K (d-c).
\end{equation}

Note that the Hilbert function of $A$ is non-decreasing until
degree $a_1$. Hence the multiplication map
\begin{equation*}
\begin{CD}
A_{d - c} @>{(x+y)^{c + 1}}>> A_{d+1}
\end{CD}
\end{equation*}
is injective if $d+1 \leq a_1$. Since $K$ is an ideal of $A$, this
implies  injectivity of $\psi$ in this case.

Similarly, if $d-c > a_2 + b_2$, then $K_{d-c} = A_{d-c}$ and
injectivity of $\psi$ follows again from the SLP of $A$. Thus we may assume
\begin{equation}
  \label{eq-lower-bound-d-for-K}
  a_1 \leq d \leq a_2 + b_2 + c.
\end{equation}

Assume now that $d \leq a_2 + c$.  In this case we have  $K_{d-c}
= K'_{d-c}$, where
\[
K' := ((y^{b_2+1}) + \fa)/\fa \cong (S/(x^{a_1 + 1}, y^{b_1 - b_2}))(-b_2 - 1).
\]
Hence, $x+y$ is a strong Lefschetz element of $K'$ by Remark
\ref{rem-SLP-mod} and \cite{MMN2}, Proposition 2.2.

The map $\psi$ is the composition of the maps
\begin{equation*}
\begin{CD}
 K_{d - c} = K'_{d - c} @>{\psi'}>> K'_{d+1} @>>> K_{d+1}
\end{CD}
\end{equation*}
where $\psi'$ is multiplication by $(x+y)^{c + 1}$ and the second
map is injective. However, $\psi'$ is injective by Assumption
\eqref{eq-num-assump} and the SLP of $K'$. It follows that  $\psi$
is injective, as desired.

If $d \leq b_2 + c$, then we argue similarly. Thus, we may assume
throughout the remainder of this proof that
\begin{equation}
  \label{eq-lower-bound-d}
  d - c > \max \{a_2, b_2\}.
\end{equation}
We may also assume that
\begin{equation}
  \label{eq-upper-bound-d-for-K}
  d < a_1 + b_1
\end{equation}
because otherwise $K_{d+1} \subset A_{d+1} = 0$ and the desired
injectivity of $\psi$ follows from Assumption \eqref{eq-num-assump}.

We now rewrite our Claim \eqref{eq-num-state-1}. Observe that
\[
[(x+y)^{c+1} K]_{d+1} \cong [((x+y)^{c+1} \fb + \fa)/\fa]_{d+1}
\]
it is equivalent to
\[
\dim_k [(x+y)^{c+1} \fb + \fa]_{d+1} = \dim_k \fa_{d+1} + h_K (d-c).
\]
Let $V \subset \fb_{d-c}$ be a subspace such that  $\fb_{d-c} =
V \oplus \fa_{d-c}$. Then the last equality means that we have to
show
\[
\dim_k \left ( (x+y)^{c+1} V +  \fa_{d+1} \right ) = \dim_k V + \dim_k \fa_{d+1},
\]
that is
\begin{equation}
  \label{eq-num-state-2}
(x+y)^{c+1} V +  \fa_{d+1}  =  (x+y)^{c+1} V \oplus \fa_{d+1}.
\end{equation}

To prove this we start by choosing bases of $V$ and $\fa_{d+1}$,
which we then manipulate in several steps. Set
\begin{eqnarray*}
W_1 & = &  \langle x^{d-a_1 - i} y^i \s 0 \leq i \leq d-a_1 \rangle \\
W_2 & = &  \langle x^i y^{d-b_1 - i} \s 0 \leq i \leq d-b_1 \rangle.
\end{eqnarray*}
Note that $0 \leq d - a_1 < b_1$ by Assumptions \eqref{eq-lower-bound-d-for-K} and \eqref{eq-upper-bound-d-for-K}. It follows that
\[
\fa_{d+1} = x^{a_1 + 1} W_1 \oplus y^{b_1 + 1} W_2.
\]

Next, set
\begin{eqnarray*}
V_1 & = &  \langle x^{d-c-a_2 -1 - i} y^i \s
\max\{0, d-c-a_1\} \leq i \leq d-c-a_2-1 \rangle \\
V_2 & = &  \langle x^i y^{d-c-b_2 -1 - i} \s
\max\{0, d-c-b_1\} \leq i \leq d-c-b_2-1 \rangle.
\end{eqnarray*}
Note that $V_1$ is trivial if $a_1 = a_2$, and, similarly, $V_2$ is
trivial if $b_1 = b_2$.

Using that $d-c-a_2 - 1 < b_2$  by Assumption
\eqref{eq-lower-bound-d-for-K}, observe that
\[
\fb_{d-c} = \fa_{d-c} \oplus x^{a_2 + 1} V_1 \oplus y^{b_2 + 1} V_2.
\]
Thus, it suffices to prove Equation \eqref{eq-num-state-2} in case,
where we take $x^{a_2 + 1} V_1 \oplus y^{b_2 + 1} V_2$ as subspace
$V$.

Now consider the isomorphism of vector spaces
\[
\al_1: S_{d+1} \to k[t]/(t^{d+2})
\]
that maps $x^{d+1-i} y^i$ onto $\bar{t}^i$, the residue class of $t^i$. For simplicity, we will often identify $t^i$ with  $\bar{t}^i$. The vector spaces above have the following images
\begin{eqnarray*}
V_1' & := & \al_1 ((x+y)^{c+1} x^{a_2 + 1} V_1) \\
     & = & \; (t+1)^{c+1} \langle t^i \s \max\{0, d-c-a_1\} \leq i \leq d-c-a_2-1 \rangle \\
V_2' & := & \al_1 ((x+y)^{c+1} y^{b_2 + 1} V_2)
        = (t+1)^{c+1} \langle t^i \s b_2 + 1 \leq i \leq \min \{d-c, b_1 \} \rangle \\
W_1' & := & \al_1 (x^{a_1 + 1} W_1) = \langle t^i \s 0 \leq i \leq d-a_1 \rangle \\
W_2' & := & \al_1 (y^{b_1 + 1} W_2) = \langle t^i \s b_1 + 1 \leq i \leq d+1 \rangle.
\end{eqnarray*}

Equation \eqref{eq-num-state-2} will be established once we have shown
that the sum \linebreak
$V_1' + V_2' + W_1' + W_2'$ is direct or, equivalently,
\begin{eqnarray}
\label{eq-key-claim-3}
\quad \quad \quad
\dim_k (V_1' + V_2' + W_1' + W_2') & = &
    \dim_k V_1' + \dim_k V_2' + \dim_k W_1' + \dim_k W_2' \\ \nonumber
& = & (d-c-a_2) + (\min \{d-c, b_1\} - b_2) \\
&  & \hspace*{.3cm}  + (d-a_1 + 1) + (d-b_1). \nonumber
\end{eqnarray}

Assumption \eqref{eq-upper-bound-d-for-K} provides  that the
intersection of $W_1'$ and $W_2'$ is trivial.

Assume that $d \leq a_2 + b_2$. Then it follows that all
polynomials in $V_1'$ have degree at most $b_2$, which is less than
the degree of any polynomial in $V_2'$. Similar, though simpler,
considerations show that the intersection of any other two of the
spaces $V_1', V_2', W_1'$, and $W_2'$ is trivial. Thus the sum of
these spaces is a direct sum, as desired. Hence, we may assume that
\begin{equation}
  \label{eq-lower-bound-d-4}
d > a_2 + b_2.
\end{equation}

Assume now that $d-c \geq a_1$. Since $a_1$ is an upper  bound for
$b_1, b_2$, and $a_2$, we get for the following Hilbert functions
\begin{eqnarray*}
h_A (d+1) & = & a_1 + b_1 -d > 0 \quad
   (\text{by Assumption \eqref{eq-upper-bound-d-for-K}}) \\
h_A (d-c) & = & a_1 + b_1 + c + 1 - d\\
h_B (d+1) & = & \max \{ 0, a_2 + b_2 -d \} \\
h_B (d-c) & = & a_2 + b_2 + c + 1 -d > 0 \quad
   (\text{by Assumption \eqref{eq-lower-bound-d-for-K}}).
\end{eqnarray*}

Now, a routine computation shows that Assumption
\eqref{eq-num-assump}  is equivalent to \linebreak $d \leq a_2 + b_2$,  a
contradiction to Assumption \eqref{eq-lower-bound-d-4}. Thus, we
also may assume that
\begin{equation}
  \label{eq-lower-bound-d-3}
d-c < a_1.
\end{equation}
Notice that along with Assumption \eqref{eq-lower-bound-d} this
shows  that  $V_1$ and $V_2$ are non-trivial. Next, we take
quotients modulo $W_2'$. For simplicity, we still denote these
quotients by $V_1', V_2'$, and $W_1'$. They are now subspaces of
$k[t]/(t^{b_1 + 1})$, and, in order to establish Equation
\eqref{eq-key-claim-3}, it is enough to show that
\begin{eqnarray}
\label{eq-key-claim-4}
 \dim_k (V_1' + V_2' + W_1') & = &
    \dim_k V_1' + \dim_k V_2' + \dim_k W_1'  \\ \nonumber
& = & (d-c-a_2) + (\min \{d-c, b_1\} - b_2) \\ \nonumber
&&  + (d-a_1 + 1).
\end{eqnarray}


To prepare for the next modification, we note that multiplication by
$t+1$ on $k[t]/(t^{b_1 + 1})$ is an automorphism as a $k$-vector
space. Indeed, the inverse map is given by multiplication by
\[
(t+1)^{-1} \!\!\!\!\!\mod t^{b_1+1} := \sum_{i = 0}^{b_1} \binom{-1}{i} t^i,
\]
where
\[
\binom{m}{i} := \frac{m (m-1) \cdots (m-i+1)}{i!}
\]
if $i \ge 0$.
Therefore the map
\[
\al_2: k[t]/(t^{b_1+1}) \to k[t]/(t^{b_1+1})
\]
given by multiplication by
\[
(t+1)^{-c-1} \!\!\!\!\!\mod t^{b_1+1} := \sum_{i = 0}^{b_1} \binom{-c-1}{i} t^i
\]
is also an automorphism. Again we list the images of the various subspaces considered above
\begin{eqnarray*}
V_1'' := \al_2 (V_1') & = & \langle t^i \s 0 \leq i \leq d-c-a_2-1 \rangle \\
V_2'' := \al_2 (V_2') & = & \langle t^i \s b_2 + 1 \leq i \leq \min \{d-c, b_1 \} \rangle \\
W_1'' := \al_2 (W_1') & = & (t+1)^{-c-1} \!\!\!\!\!\mod t^{b_1 + 1}  \langle t^i \s 0 \leq i \leq d-a_1 \rangle,
\end{eqnarray*}
where we used Assumption \eqref{eq-lower-bound-d-3} to simplify the
description of $V_1''$.

In the next step we consider the elements of $k[t]/(t^{b_1+1})$ as
polynomials whose degree is at most $b_1$ and denote by $D$
differentiation with respect to $t$. Recall that the characteristic
of the field $k$ is zero by assumption. Differentiating  $d-c-a_2$
times (which is a positive number by \eqref{eq-lower-bound-d}), we
get a $k$-linear map
\[
\al_3 := D^{d-c-a_2} : k[t]/(t^{b_1+1}) \to k[t]/(t^{a_2+b_1+c+1-d})
\]
whose kernel is precisely $V_1''$. Furthermore, one computes that
\[
D((t+1)^{-c-1} \!\!\!\!\!\mod t^{b_1 + 1}) = (-c-1) \cdot (t+1)^{-c-2} \!\!\!\!\!\mod t^{b_1}.
\]
Hence we get the following images:
\begin{eqnarray*}
V_2''' := \al_3 (V_2'') & = & \langle t^i \s a_2 + b_2 + c + 1 - d \leq i \leq \min \{a_2, a_2 + b_1 + c -d \} \rangle \\
W_1''' := \al_3 (W_1'') & = & (t+1)^{-d+a_2-1} \!\!\!\!\!\mod t^{a_2+b_1+c+1-d} \langle t^i \s 0 \leq i \leq d-a_1 \rangle,
\end{eqnarray*}
where we used twice the equality
\[
\langle t^i \s 0 \leq i \leq d-a_1 \rangle =
\langle (t+1)^i \s 0 \leq i \leq d-a_1 \rangle
\]
in order to compute $W_1'''$.

Note that  $a_2 + b_2 + c + 1 - d \geq 1$ by Assumption
\eqref{eq-lower-bound-d-for-K}. We are reduced to showing that
\[
\dim_k  (V_2''' + W_1''')  = (\min \{d-c, b_1\} - b_2) + (d-a_1 + 1).
\]


In the next step we consider multiplication by $(t+1)^{d-a_2+1}$,
\begin{equation*}
\begin{CD}
\al_4: k[t]/(t^{a_2+b_1+c+1-d})  @>{(t+1)^{d-a_2+1}}>> k[t]/(t^{a_2+b_1+c+1-d}).
\end{CD}
\end{equation*}
We get
$$
\begin{array}{rclcccccccccccccc}
\widetilde{V_2} & := &  \al_4 (V_2''') \\
& = & (t+1)^{d-a_2+1} \langle t^i \s a_2 + b_2 + c + 1 - d \leq i \leq \min
\{ a_2, a_2 + b_1 + c -d \} \rangle ,
\end{array}
$$
and
$$
\begin{array}{rclccccccccccc}
\widetilde{W_1} := \al_4 (W_1''') & = & \langle t^i \s 0 \leq i \leq d-a_1 \rangle.
\end{array}
$$

If $d-a_1 \leq a_2 + b_2 + c - d$, then degree considerations show that $\widetilde{V_2} \cap \widetilde{W_1} = \{0\}$, as desired.
Thus we may assume that
\begin{equation}
  \label{eq-def-m}
d-a_1  > a_2 + b_2 + c - d.
\end{equation}

Now we again compute some Hilbert functions:
\begin{eqnarray*}
h_A (d+1) & = & a_1 + b_1 -d \quad (\text{by} \; \eqref{eq-lower-bound-d-for-K})\\
h_A (d-c) & = & \min \{d-c+1, b_1 + 1\} \quad (\text{by} \;  \eqref{eq-lower-bound-d-3}) \\
h_B (d+1) & = & 0 \quad (\text{by} \; \eqref{eq-lower-bound-d-4}) \\
h_B (d-c) & = & a_2 + b_2 + c + 1 -d \quad (\text{by} \;  \eqref{eq-lower-bound-d}).
\end{eqnarray*}
If $d-c \geq b_1$, then Assumption \eqref{eq-num-assump} provides
$d-a_1 < a_2 + b_2 + c - d$, a contradiction to \eqref{eq-def-m}.
Hence, we may assume
\begin{equation}
  \label{eq-low-bound-d-last}
d-c < b_1,
\end{equation}
and we can rewrite $\widetilde{V_2}$ as
\[
\widetilde{V_2} = (t+1)^{d-a_2+1} \langle t^i \s a_2 + b_2 + c + 1 - d \leq i \leq a_2 \rangle.
\]
Finally, consider the matrix $N$ with $a_2+b_1+c+1-d$ columns whose rows are given by the coefficients of the polynomials listed in the bases of $\widetilde{V_2}$ and $\widetilde{W_2}$ in this order. Thus, $N$ has $(d-b_2-c) + (d-a_1 +1)$ rows, where the submatrix consisting of the last  $d-a_1 +1$ is the concatenation of the identity matrix and a zero matrix. Assumption \eqref{eq-num-assump} means that $N$ has at least as many columns as rows. We have to prove that the rows of $N$ are linearly independent. To this end we consider the submatrix $N'$ of $N$ formed by its first $d-b_2-c$ rows and its last $d-b_2-c$ columns. It is enough  to show that $N'$ is regular. Explicitly, the matrix $N'$ is
\[
N'  = \left ( \binom{d-a_2+1}{b_1+c+1-d-i+j} \right )_{1 \leq i, j \leq n}.
\]
Note that, by \eqref{eq-low-bound-d-last}, $b_1+c+1-d > 0$. Furthermore, using $b_1 \leq a_1$, Assumption \eqref{eq-def-m} provides
\[
d-a_2 > a_1  + b_2 + c - d \geq b_1 + c - d.
\]
Hence, Lemma \ref{lem-det} shows that $N'$ is regular, which
concludes the argument.
\end{proof}

We are ready to establish Proposition \ref{prop-Key} (in its
equivalent form Lemma \ref{lem-key-numerical}), which is the key
result for proving Theorem \ref{prop-WLP-type2}.

\begin{proof}[Proof of Proposition \ref{prop-Key}]
We begin with some computations of Hilbert functions. Adopt the
notation  above Lemma \ref{lem-key-numerical} and set
\[
K := \ker(A\rightarrow B) = \fb/\fa.
\]
By Lemma \ref{lem-SLP-mod}, $(x+y)^{c_1 + 1}$ is a strong Lefschetz
element of $K$. Thus we get (cf.\ Equation \eqref{eq-num-info-K})
\begin{equation}  \label{eq-hilb-K}
\dim_k [(x+y)^{c_1 + 1} K]_{d+1}   =  \min \{h_A (d+1), h_A (d-c_1) - h_B (d-c_1) \}
\end{equation}

Consider now multiplication by $(x+y)^{c_1 + 1}$ on   Sequence
\eqref{eq-sequence-with-mod}. It induces the following commutative
diagram
\begin{equation*}
\begin{CD}
0 @>>> M_{d-c_1} @>>> A_{d-c_1} @>>> [B/(x+y)^{c_2 - c_1}B]_{d-c_1} @>>>0 \\
& & @VV{\ffi}V   @VV{\alpha}V @VV{\beta}V &  &   \\
0 @>>> M_{d + 1}  @>>>  A_{d + 1} @>>> [B/(x+y)^{c_2 - c_1}B]_{d + 1} @>>>0.  \\
\end{CD}
\end{equation*}
Since $(x+y)^{c_1 + 1}$ is a strong Lefschetz element of $A$, the map $\alpha$ is injective if and only if $h_A(d-c_1) \leq h_A(d+1)$. This is certainly true if $d+1 \leq a_1$ or $d < c_1$.  If $d \geq \max \{a_1, c_1\}$ and $d-c_1 < b_1$, then  $h_A(d+1) = a_1 + b_1 - d$ and $h_A (d-c_1) = d-c_1+1$, thus $h_A(d-c_1) \leq h_A(d+1)$ if and only if $d \leq \frac{a_1 + b_1 + c_1 -1}{2}$. It follows that $\alpha$ is injective if either $d+1 \leq \max\{a_1, c_1\}$ or
\[
\max \{a_1, c_1\} \leq d \leq \min \left \{\frac{a_1 + b_1 + c_1 -1}{2}, b_1 + c_1 - 1 \right\}.
\]
The latter condition is equivalent to
\[
\max \{a_1, c_1\} \leq d \leq \frac{a_1 + b_1 + c_1 -1}{2}.
\]
Now, if the map $\alpha$ is injective, then so is $\ffi$. Hence, we may assume throughout the remainder of this proof that $\alpha$ is surjective,  that
\begin{equation}
  \label{eq-estimate-hilb-A}
h_A (d+1) < h_A (d-c_1),
\end{equation}
and that
\begin{equation}
  \label{eq-first-restr}
d \geq \max \left \{a_1, c_1, \frac{a_1 + b_1 + c_1}{2}\right\}.
\end{equation}

If the map $\beta$ is injective, then the Snake Lemma provides that
$\ffi$ is surjective, and we are done. Therefore, we may also assume
that $\beta$ is not injective, so in particular $[B/(x+y)^{c_2 -
c_1}B]_{d-c_1} \neq 0$. Using that $x+y$ is also a strong Lefschetz
element of $B$, this implies
\begin{equation}
  \label{eq-estimate-hilb-B}
h_B (d-c_1) > h_B (d-c_2).
\end{equation}
Since the Hilbert function $h_B$ is weakly decreasing if $j \geq \min\{a_2, b_2\}$, we conclude that
\begin{equation}
  \label{eq-upper-bound-d}
d - c_2 < \min\{a_2, b_2\}.
\end{equation}

Next, we claim that
\begin{equation}
  \label{eq-other-hilb-B}
h_B (d+1) \leq h_B (d+1 - c_2 + c_1) \quad \mif \quad d \geq   \frac{a_1 + b_1 + c_1}{2}.
\end{equation}
Indeed, the latter assumption implies in particular that $a_1 + b_1
- d \leq d$. Furthermore, using $a_1 + b_1 + c_1 = a_2 + b_2 + c_2$,
we get
\[
h_B (d+1 - c_2 + c_1) = h_B (a_2 + b_2 + c_2 - c_1 - d - 1) =
h_B (a_1 + b_1 - d - 1).
\]
Since $d+1 - c_2 + c_1$ and $a_1 + b_1 - d - 1$ have the same
distance to $\frac{a_2 + b_2}{2}$, it follows that one of the two
numbers is in the interval $[\frac{a_2 + b_2}{2}, d+1]$,
in which $h_B$ is weakly decreasing. We conclude that  $h_B (d+1)
\leq h_B (d+1 - c_2 + c_1)$, as claimed.

Taking Inequalities \eqref{eq-estimate-hilb-A}  and
\eqref{eq-other-hilb-B} into account, our Assertion
\eqref{eq-num-claim} becomes
\begin{eqnarray}
  \label{eq-rev-claim}
\dim_k [\im \ffi]_{d+1} & = & \min\{ h_A(d+1),  h_A(d-c_1) - h_B(d-c_1) + h_B(d-c_2) \} \nonumber \\
& = & \min\{ h_A(d+1),  h_A(d-c_1) - h_B(d-c_1) + h_A(d-c_2) \},
\end{eqnarray}
where the second equality follows from Relation
\eqref{eq-upper-bound-d} because $\min\{a_2, b_2\} \leq b_1$, thus
$h_A (d-c_2) = h_B(d-c_2) = d-c_2 +1$.
\smallskip

If $d < c_2$, then $[\im \ffi]_{d+1} = [(x+y)^{c_1 + 1} K]_{d+1}$, thus the above claim follows from
 Equality \eqref{eq-hilb-K}. This allows us to assume
\begin{equation}
  \label{eq-compare-d-and-c2}
d \geq c_2.
\end{equation}

Now we will consider certain differential operators. Their use is another reason for our assumption about the characteristic being zero. Let $D$ be the differential operator on $S$ defined by
\[
D := \frac{\partial}{\partial (x-y)} := \frac{\partial}{\partial x} - \frac{\partial}{\partial y}.
\]
Its action on polynomials is determined by the following formulas, where $i$ and $j$ are non-negative integers:
\begin{equation*}
  D ((x+y)^j) = 0, \quad D (x^i (x+y)^j) = i x^{i-1} (x+y)^j,  \quad D (y^i (x+y)^j) = i y^{i-1} (x+y)^j.
\end{equation*}
Thus, we get the following exact sequence of $k$-vector spaces
\begin{equation*}
\begin{CD}
0 @>>>  [(x+y)^{c_2 + 1}]_{d+1} @>>> S_{d+1} @>{D^{d-c_2+1}}>> S_{c_2} @>>> 0.
\end{CD}
\end{equation*}
Moreover, for any subspace $V \subset S_{d+1}$, the kernel of the restriction of $D^{d-c_2+1}$ to $V$ is
\begin{equation}
\label{eq-kernel}
  \ker (D^{d-c_2+1} |_V: V \to S_{c_2}) = V \cap [(x+y)^{c_2 + 1}]_{d+1}.
\end{equation}

We now compute the image of various subspaces of $S_{d+1}$. The
subspace $V_0 := [(x^{a_1+1})]_{d+1}$ has a basis $\{x^{d+1-i}
(x+y)^i \s 0 \leq i \leq d-a_1\}$. Since, by Condition
\eqref{eq-upper-bound-d}, $d-c_2 < \min\{a_2, b_2\} \leq \min\{a_1,
b_1\}$, we get that $V_0' := D^{d-c_2+1} (V_0)$ is generated by the
set $\{x^{c_2-i} (x+y)^i \s 0 \leq i \leq d-a_1\}$, thus  $V_0' =
[(x^{a_1 - (d-c_2)})]_{c_2}$. Similarly, we get $D^{d-c_2+1}
([(y^{b_1+1})]_{d+1}) = [(y^{b_1 - (d-c_2)})]_{c_2}$, thus
\begin{equation*}
  D^{d-c_2+1} (\fa_{d+1}) = [(x^{a_1 - (d-c_2)}, y^{b_1 - (d-c_2)})]_{c_2} = \fa_{c_2}',
\end{equation*}
where
\[
\fa' := (x^{a_1 - (d-c_2)}, y^{b_1 - (d-c_2)}).
\]

Next consider $V_2 := [(x^{a_2 + 1} (x+y)^{c_1 + 1})]_{d+1}$.
A basis of it is $\{x^{d+1-i} (x+y)^i \s c_1 + 1 \leq i \leq
d-a_2\}$.  Using again Condition \eqref{eq-upper-bound-d}, it
follows that $V_2' := D^{d-c_2+1} (V_2)$ is generated by the set
$\{x^{c_2-i} (x+y)^i \s c_1 + 1 \leq i \leq d-a_2\}$, thus $V_2'
=[(x^{a_2 - (d-c_2)} (x+y)^{c_1 + 1})]_{c_2}$. A similar computation
provides $D^{d-c_2+1} ([(y^{b_2 + 1} (x+y)^{c_1 + 1})]_{d+1}) =
[(y^{b_2 - (d-c_2)} (x+y)^{c_1 + 1})]_{c_2}$. It follows that
{\small
\begin{equation*}
  D^{d-c_2+1} ([(x+y)^{c_1 + 1} \fb]_{d+1}) = [(x+y)^{c_1 + 1} (x^{a_2 - (d-c_2)}, y^{b_2 - (d-c_2)})]_{c_2} = [(x+y)^{c_1 + 1} \fb']_{c_2},
\end{equation*}}
where
\[
\fb' := (x^{a_2 - (d-c_2)}, y^{b_2 - (d-c_2)}).
\]
As a result of these computations we conclude that there is the
following commutative diagram of vector spaces where the rows are
exact, the vertical maps are surjective, and $K' = \fb'/\fa'$:
\begin{equation*}
{\tiny \begin{CD}
0 @>>> \fa_{d+1} @>>> [(x+y)^{c_1 + 1} \fb + (x+y)^{c_2 + 1} + \fa]_{d+1} @>>> [\im \ffi]_{d+1}  @>>>0 \\
& & @VV{D^{d-c_2+1}}V   @VV{D^{d-c_2+1}}V @VVV &  &   \\
0 @>>> \fa_{c_2}'  @>>>  [(x+y)^{c_1 + 1} \fb' +  \fa']_{c_2} @>>> [(x+y)^{c_1 + 1} K']_{c_2} @>>>0.  \\
\end{CD}}
\end{equation*}
Combined with Formula \eqref{eq-kernel}, the  Snake Lemma implies
the following exact sequence of vector spaces
\begin{equation*}
  0 \to [(x+y)^{c_2 + 1} A]_{d+1} \to [\im \ffi]_{d+1} \to [(x+y)^{c_1 + 1} K']_{c_2} \to 0.
\end{equation*}
Thus, we have shown
\begin{equation*}
  \label{eq-dimensions-1}
\dim_k [\im \ffi]_{d+1} = \dim_k [(x+y)^{c_2 + 1} A]_{d+1}  + \dim_k [(x+y)^{c_1 + 1} K']_{c_2}.
\end{equation*}
Since $[(x+y)^{c_2+1} A]_{d+1}  \hookrightarrow [\im \varphi]_{d+1}
\subset A_{d+1}$, we see that if $[(x+y)^{c_2 + 1} A]_{d+1} =
A_{d+1}$, then $\ffi$ is surjective, and hence we are done.
 Otherwise, we get by the WLP of $A$ that
\begin{equation}
  \label{eq-compare-hilb-A}
\dim_k [(x+y)^{c_2 + 1} A]_{d+1} = h_A (d-c_2) < h_A (d+1).
\end{equation}
By Lemma \ref{lem-SLP-mod}, $x+y$ is a strong Lefschetz  element of
$K'$ and $A$. Thus, the previous equality provides
\begin{equation}
  \label{eq-dimensions}
\dim_k [\im \ffi]_{d+1} = h_A (d-c_2)  +
\min \{h_{K'} (c_2), h_{K'} (c_2- c_1 - 1)\}.
\end{equation}
Comparing this with our Claim in the  form \eqref{eq-rev-claim},  we
see that it remains to show
\begin{eqnarray}
  \label{eq-remaining-claim}
\lefteqn{ \min \{h_{A} (d+1) - h_{A} (d-c_2), h_{K} (d- c_1) \} } & &  \nonumber \\
 & \hspace*{5cm} =  &  \min \{h_{K'} (c_2), h_{K'} (c_2- c_1 - 1) \}.
\end{eqnarray}

However, using our knowledge about the generators  of $\fa'$ and
$\fb'$, for any integer $j$, Formula \eqref{eq-hilb-ci} provides
\begin{eqnarray}
  \label{eq-Hilb-relations}
h_{A'} (j) & = & \max \{ 0, h_A (j+ (d - c_2 +1)) - (d-c_2 +1)\}, \nonumber \\
h_{B'} (j) & = & \max \{ 0, h_B (j+ (d - c_2 +1)) - (d-c_2 +1)\}.
\end{eqnarray}
Using Assumptions \eqref{eq-compare-d-and-c2} and
\eqref{eq-upper-bound-d}, we get
\[
h_B (d-c_2) = d-c_2 + 1.
\]
Hence Assumption \eqref{eq-estimate-hilb-B} implies
\[
h_{B'} (c_2-c_1-1) = h_B (d-c_1)  - (d-c_2 +1) > 0.
\]
Since the Hilbert function of $B'$ is a lower bound for the  Hilbert
function of $A'$, it follows by \eqref{eq-Hilb-relations} that
\[
0 < h_{A'} (c_2-c_1-1)  =  h_A (d-c_1)  - (d-c_2 +1),
\]
thus,
\begin{equation}
  \label{eq-hilb-K-K'}
h_{K'} (c_2 - c_1 - 1) = h_{A'} (c_2 - c_1 - 1) - h_{B'} (c_2 - c_1 - 1)
 = h_K (d-c_1).
\end{equation}

Similarly, using $h_A (d-c_2) = d-c_2 + 1$, Relations
\eqref{eq-compare-hilb-A} and \eqref{eq-Hilb-relations} imply
\begin{equation}
  \label{eq-compare-A-A'}
h_A (d+1) - h_A (d-c_2) = h_{A'} (c_2).
\end{equation}

Finally, we distinguish two cases.

{\it Case 1}. Assume that $h_{K'} (c_2) < h_{K'} (c_2 -c_1 - 1)$.

Since the Hilbert function of $K'$ is non-decreasing up to degree
$a_2 + b_2 - 1 - 2(d - c_2)$, this implies $h_{B'} (c_2) = 0$. Using
also \eqref{eq-compare-A-A'}, we get
\[
h_{K'} (c_2) = h_{A'} (c_2) = h_A (d+1) - h_A (d-c_2).
\]
Combined with Equality \eqref{eq-hilb-K-K'}, the desired claim in
the form \eqref{eq-remaining-claim} follows.

{\it Case 2}. Assume that $h_{K'} (c_2) \geq h_{K'} (c_2 -c_1 - 1)$.

Then, using \eqref{eq-hilb-K-K'} and \eqref{eq-compare-A-A'}, we
obtain
\begin{eqnarray*}
h_K (d-c_1) = h_{K'} (c_2 - c_1 - 1) & \leq & h_{K'} (c_2) \\
& = & h_{A'} (c_2) - h_{B'} (c_2) \\
& = & h_A (d+1) - h_A (d-c_2) - h_{B'} (c_2)\\
& \leq & h_A (d+1) - h_A (d-c_2).
\end{eqnarray*}
This implies the desired Equality \eqref{eq-remaining-claim} also in
this case, thus the proof is complete.
\end{proof}

Our desired result follows now easily.

\begin{proof}[Proof of Theorem \ref{prop-WLP-type2}]
By assumption, the inverse system of our algebra $A$ is generated
by two monomials $x^{a_1}y^{b_1}z^{c_1}$ and $x^{a_2}y^{b_2}z^{c_2}$
of the same degree. Thus, we may assume that $b_2 < b_1\le a_1$, \;
$a_2\le a_1$, and  $c_1<c_2$. It follows that $A$ is isomorphic to
$R/I$, where $R := k[x, y, z]$ and
\begin{eqnarray*}
I & = & (x^{a_1 + 1}, y^{b_1 + 1}, z^{c_1 + 1}) \cap
(x^{a_2 + 1}, y^{b_2 + 1}, z^{c_2 + 1}) \\
& = & (x^{a_1 + 1}, y^{b_1 + 1}) +
z^{c_1 + 1} (x^{a_2 + 1}, y^{b_2 + 1}, z^{c_2 - c_1}).
\end{eqnarray*}
Thus we get the following exact sequence
\[
0 \to (R/(x^{a_2 + 1}, y^{b_2 + 1}, z^{c_2 - c_1})) (-c_1-1) \to A
\to R/(x^{a_1 + 1}, y^{b_1 + 1}, z^{c_1 + 1}) \to 0.
\]
Hence, setting $\tilde{A} := R/(x^{a_1 + 1}, y^{b_1 + 1})$ and
$\tilde{B} := R/(x^{a_2 + 1}, y^{b_2 + 1})$, we get for the Hilbert
function of $A$
\begin{eqnarray*}
h_{A} (j) & = & h_{\tilde{A}} (j) - h_{\tilde{A}} (j-c_1-1) \\
& & \; + h_{\tilde{B}} (j-c_1-1) - h_{\tilde{B}} (j-c_2-1).
\end{eqnarray*}
Let $L$ be any linear form. Then the multiplication map $A_d
\stackrel{L}{\longrightarrow} A_{d+1}$ has maximal rank if
and only if
\begin{eqnarray}
  \label{eq-hilb-restr}
h_{A/L A} (d+1) & = & \max \{0, h_{A} (d+1) - h_{A} (d) \}
\nonumber\\
& = & \max\{0, h_A(d+1) - h_A (d-c_1) + h_B(d-c_1) - h_B (d-c_2) \},
\end{eqnarray}
where $A$ and $B$ are the rings introduced below Proposition
\ref{prop-Key}. By Hausel's theorem, this is true if $d+1 \leq
\frac{a_1 + b_1 + c_1}{2}$. Thus, we assume $d \geq \frac{a_1 + b_1
+ c_1}{2}$. In this case, Proposition \ref{prop-Key} provides (see
\eqref{eq-rev-claim})
\begin{equation}
  \label{eq-cokerM}
\dim_k [(x+y)^{c_1 + 1} M]_{d+1} = \min \{h_A(d+1), h_A(d-c_1)
- h_B(d-c_1) + h_B(d-c_2) \}.
\end{equation}
Let now $L$ be $x+y-z$. Then $A/L A \cong S/\bar{I}$,
where
\[
\bar{I} = (x^{a_1 + 1}, y^{b_1 + 1}) +
(x+y)^{c_1 + 1} (x^{a_2 + 1}, y^{b_2 + 1}, (x+y)^{c_2 - c_1}).
\]
Thus, we have an exact sequence
\[
0 \to (x+y)^{c_1 + 1} M \to A \to A/L A \to 0.
\]
Hence Equation \eqref{eq-cokerM} implies the desired Equation
\eqref{eq-hilb-restr}, and the argument is complete.
\end{proof}


Theorem \ref{prop-WLP-type2} has a consequence for pure $O$-sequences:

\begin{corollary} \label{pure 2 unimodal}
Pure $O$-sequences of type 2 in three variables are unimodal in the strong sense of Remark \ref{wlpunim}.
\end{corollary}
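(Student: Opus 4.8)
The plan is to deduce the corollary directly from Theorem \ref{prop-WLP-type2} together with the structural consequences of the WLP recorded in Remark \ref{wlpunim}, dealing with the characteristic hypothesis via the combinatorial invariance of monomial Hilbert functions noted in Remark \ref{g-remarks}(3).

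First I would note that a pure $O$-sequence $\underline{h}$ of type $2$ in three variables is, by definition, the $h$-vector of a monomial Artinian level algebra $A = R/I$ with $R = k[x,y,z]$ and $\dim_k \operatorname{soc}(A) = 2$; equivalently, $A$ is the quotient of $R$ by the annihilator of two monomials of the same degree. Such an algebra, and its Hilbert function, can be taken over any field, in particular over a field of characteristic zero, since a monomial ideal and the resulting Hilbert function do not depend on the base field. Working over characteristic zero, Theorem \ref{prop-WLP-type2} gives that $A$ has the WLP. Then the first part of Remark \ref{wlpunim} shows that $\underline{h}$ is unimodal in the strong sense: it is differentiable (in particular strictly increasing) in an initial interval, constant in a possibly trivial middle interval, and non-increasing afterwards; and because $A$ is level, the second part of Remark \ref{wlpunim} forces $\underline{h}$ to be strictly decreasing in the final interval once a strict decrease has occurred.

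Finally, since strong unimodality is a purely numerical property of $\underline{h}$, and $\underline{h}$ is independent of the characteristic, the conclusion holds for pure $O$-sequences of type $2$ in three variables over an arbitrary field. I expect no real obstacle here beyond invoking the correct results; the only subtlety is that the WLP input requires characteristic zero, which is exactly the situation anticipated by Remark \ref{g-remarks}(3) — any assertion of a numerical property of pure $O$-sequences, once proved "over some field," is automatically characteristic-independent.
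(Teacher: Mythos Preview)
Your argument is correct and is exactly the approach the paper intends: the corollary is stated as an immediate consequence of Theorem \ref{prop-WLP-type2} via Remark \ref{wlpunim}, with the characteristic issue handled precisely as in Remark \ref{g-remarks}(3). There is nothing to add.
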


\begin{remark}{\em
The first and last authors \cite{BZ} showed that without the condition of being monomial, there do exist type two level algebras failing the WLP, even when the inverse system is generated by one monomial and one binomial.
}
\end{remark}

\begin{example}{\em
We remark that monomial ideals of Cohen-Macaulay type
2 \label{useCMtype} that are
not level do not necessarily have unimodal Hilbert function, or
even a Hilbert function that is differentiable while it is
non-decreasing.  Indeed, we have the following examples:

\begin{itemize}

\item  $I = \langle x^3, y^7, z^7, xy^2, xz^2 \rangle$, $R/I$ has
Cohen-Macaulay type 2 and Hilbert function
\[
(1,3,6,7,6,6,7,6,5,4,3,2,1).
\]

\item$I = \langle x^2, y^4, z^4, xy, xz \rangle$, $R/I$ has
Cohen-Macaulay type 2 and Hilbert function
\[
(1,3,3,4,3,2,1).
\]
These are the smallest examples of which we are aware.
\end{itemize}
}
\end{example}

We close this chapter with a short discussion of the characteristic.  We show via a simple example, using the methods of \cite{MMN2}, that the same monomial ideal can exhibit different behavior with respect to the WLP if the characteristic changes.  Furthermore, the characteristics for which the WLP fails arise as the prime factors of the determinant of a certain matrix of integers, and the list of these primes can have ``gaps."  Notice that the following example is level of type 2, and so we already know that it has the WLP in characteristic zero.

\begin{example}{\em
Let $I = (x^{10},y^7, z^7, x^4 y^3, x^4z^5 )$.  Then $R/I$ is level of type 2, and it can be checked (e.g.\ with \cocoa ) that the Hilbert function of $R/I$ is
\[
(1,3,6,10, 15,21,28,33, 36, 36,32,26,19,12,6,2).
\]
We will show that  the linear form $L = x+y-z$ is a Lefschetz element if and only if the characteristic of $k$ is not 2, 5 or 7.  Note that if $k$ is finite, the notion of a ``general'' linear form does not make sense, so we might as well assume that $k$ is infinite.  In this case,  we will conclude (thanks to \cite{MMN2}, Proposition 2.2) that $R/I$ has  the WLP if and only if the characteristic is not 2, 5 or 7.

Note first that thanks to \cite{MMN2}, Proposition 2.1 (c), it is equivalent to show that the multiplication by $L$ from degree 8 to degree 9 is surjective.  Our approach is exactly the same as the analogous arguments in \cite{MMN2}.  The stated map is not surjective if and only if $R/(I,L)$ is non-zero in degree 9.  Working in the ring $\bar R = R/(L)$, the ideal
\[
J = \frac{(I,L)}{(L)} \cong (x^{10}, y^7,(x+y)^7,x^4y^3,x^4(x+y)^5) \subset k[x,y]
\]
 contains the   regular sequence $(x^{10}, y^7)$.  Using linkage, $R/(I,L)$ is non-zero in degree 9 if and only if the ideal quotient $(x^{10},y^7) :J$ is non-zero in degree 6.  Letting $F = a_1 x^6 + \cdots + a_7 y^6$, the condition that $F \in (x^{10},y^7) :J$ gives rise to a system of seven linear equations in the variables $a_1,\dots,a_7$.  One checks that the determinant of the corresponding matrix is 70.  But there is a non-zero $F$ in the ideal quotient if and only if this determinant is zero, from which the result follows.

}\end{example}


\chapter{Failure of the WLP and the SLP} \label{type 2 arb var}

The goal of this chapter is to show that apart from the positive results of Chapter \ref{type 2 three var} and previously known results from \cite{St2}, \cite{watanabe} and \cite{HMNW}, for any given number of variables and type, the SLP and the WLP may fail, even in characteristic zero.

We begin with type 2 in three variables.  We first observe that the SLP does not necessarily hold, even though we showed in the last chapter that the WLP does.

\begin{remark}\label{wlpnotslp}{\em
Let $R = k[x,y,z]$ and let $I$ be a monomial ideal such that $R/I$ is level of type~2.  Then for both kinds of ideals given in Proposition \ref{two forms}, $R/I$ does {\em not} necessarily have the SLP.
For the kind given in (a), consider first a monomial ideal in four variables of the form given in Theorem~\ref{nonwlp in n vars}, i.e.\ $I = (x_1^4,x_2^3,x_3^3,x_4^4,x_1^2x_4^2)$.  Since
\[
R/I \cong \left (k[x_1,x_2,x_4]/(x_1^4,x_2^3,x_4^4,x_1^2x_4^2) \right )[x_3]/(x_3^3)
\]
(where $k[x_1,x_2,x_4]/(x_1^4,x_2^3,x_4^4,x_1^2x_4^2)$ is of the form (a) in Proposition~\ref{two forms}, hence level), and $R/I$ fails the WLP, it follows from Proposition 2.2 of \cite{HP} that the algebra $k[x_1,x_2,x_4]/(x_1^4,x_2^3,x_4^4,x_1^2x_4^2)$ fails to satisfy the SLP.  Indeed, \cocoa\ verifies that multiplication by the cube of a general linear form fails to be surjective from degree 2 to degree 5.

Turning to ideals of form (b) in Proposition \ref{two forms}, it is not hard to find an example, using \cocoa, to show that the SLP does not necessarily hold.  For example, for the ideal
\[
I  = (x^7,y^4,z^4,x^3y^2,x^3z^2),
\]
$R/I$ has Hilbert function $(1,3,6,10,13,13,10,6,2)$, while for a general linear form $L$,  \linebreak $R/(I,(L^3))$ has Hilbert function $(1,3,6,9,10,7,1)$.  Thus the multiplication by $L^3$ from degree 3 to degree 6 fails to be an isomorphism, and the SLP fails.  (According to our experiments on \cocoa, this is the smallest such algebra.)
}
\end{remark}

We now turn to  monomial Artinian level  algebras of type 2 in four or more variables.  We start  by showing that, in principle, we can easily
 describe  pure $O$-sequences of type 2. Let $x^a = x_1^{a_1}
\cdots x_r^{a_r} \in R = k[x_1,\ldots,x_r]$ be a monomial of
degree $|a| = a_1 + \cdots +a_r$. Then its annihilator is the
complete intersection
\[
\Ann (x^a) = (x_1^{a_1+1},\ldots,x_r^{a_r + 1}).
\]
Using the Koszul complex, one gets the well-known formula for the
Hilbert function of a complete intersection. This allows us to
describe all possible pure $O$-sequences of type 2.

\begin{prop}
  \label{prop:hilb-type-2}
Let $I \subset R$ be a monomial ideal such that $A = R/I$ is level
of type 2. Then there are two monomials $x^a, x^b \in R$ of the
same degree such that $I = \Ann (x^a, x^b)$ and the Hilbert
function of $A$ is given by
\begin{equation}
  \label{eq:formula-type-2}
h_A (j) = h_{R/\Ann (x^a)} (j) + h_{R/\Ann (x^b)} (j) - h_{R/\Ann
(x^c)} (j),
\end{equation}
where $c := \min \{a, b\}$ is obtained by taking the minimum
componentwise, i.e.
\[
c_i := \min \{a_i, b_i \} \quad (i = 1,\ldots,r).
\]
\end{prop}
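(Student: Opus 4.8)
The plan is to reduce the computation of the Hilbert function of $A = R/\Ann(x^a, x^b)$ to an inclusion-exclusion over the three complete intersections $\Ann(x^a)$, $\Ann(x^b)$ and $\Ann(x^c)$. First I would justify the existence of the two monomials: since $A$ is level of type $2$, its socle is two-dimensional and concentrated in a single degree $e$, so Macaulay's inverse system theory tells us that $I = \Ann(x^a, x^b)$ where $x^a, x^b$ are two monomials of degree $e$ (the inverse system module is generated by these two monomials, and for monomial ideals the inverse system lies in the same polynomial ring, as set up on page~\pageref{definversesystem}). The annihilator of a single monomial $x^a$ is the complete intersection $(x_1^{a_1+1}, \ldots, x_r^{a_r+1})$, which is the standard fact I would invoke at the start.

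The heart of the argument is the identity $\Ann(x^a) + \Ann(x^b) = \Ann(x^c)$ of monomial ideals, where $c_i = \min\{a_i, b_i\}$, together with $\Ann(x^a) \cap \Ann(x^b) = \Ann(x^a, x^b) = I$. The first identity holds because a monomial $x^d$ annihilates (by contraction) both $x^a$ and $x^b$ iff it annihilates $x^c$: indeed $x_i^{a_i+1}$ and $x_i^{b_i+1}$ both lie in $\Ann(x^c)$ since $a_i + 1 > c_i$ and $b_i+1 > c_i$, so $\Ann(x^a) + \Ann(x^b) \subseteq \Ann(x^c)$; conversely, any monomial in $\Ann(x^c)$ is divisible by some $x_i^{c_i+1}$, and since $c_i = \min\{a_i,b_i\}$ we have $c_i + 1 = a_i + 1$ or $c_i + 1 = b_i + 1$, so that monomial lies in $\Ann(x^a)$ or in $\Ann(x^b)$. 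The second identity is immediate from the definition of the inverse system as an intersection. Then the short exact sequence
\[
0 \to R/(\Ann(x^a) \cap \Ann(x^b)) \to R/\Ann(x^a) \oplus R/\Ann(x^b) \to R/(\Ann(x^a) + \Ann(x^b)) \to 0
\]
gives, upon taking dimensions in each degree $j$,
\[
h_A(j) = h_{R/\Ann(x^a)}(j) + h_{R/\Ann(x^b)}(j) - h_{R/\Ann(x^c)}(j),
\]
which is exactly \eqref{eq:formula-type-2}.

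I do not expect any serious obstacle here; this is a clean inclusion-exclusion once the two ideal identities are in place. The one point requiring a small amount of care is the verification that $\Ann(x^a) + \Ann(x^b) = \Ann(x^c)$ — one must check both inclusions at the level of monomial generators, using that $c$ is the componentwise minimum — and the observation that the sum of the two complete intersections $\Ann(x^a)$ and $\Ann(x^b)$ is again a \emph{complete intersection}, namely $\Ann(x^c)$, which is what makes formula \eqref{eq:formula-type-2} genuinely a sum of three complete intersection Hilbert functions (each computable via the Koszul complex). Everything else is bookkeeping with the standard exact sequence.
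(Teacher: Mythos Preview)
Your proposal is correct and follows essentially the same approach as the paper: both use the Mayer-Vietoris sequence arising from $I = \Ann(x^a) \cap \Ann(x^b)$ and $\Ann(x^a) + \Ann(x^b) = \Ann(x^c)$, then read off the Hilbert function from additivity along the exact sequence. You in fact supply more detail than the paper does in verifying the identity $\Ann(x^a) + \Ann(x^b) = \Ann(x^c)$, which the paper simply asserts.
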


\begin{proof}
Observe that $I = \Ann (x^a) \cap \Ann (x^b)$ and
\[
\Ann (x^a) + \Ann (x^b) = \Ann (x^c).
\]
Hence we get the following Mayer-Vietoris sequence
\[
0 \to R/I \to R/\Ann (x^a) \oplus R/\Ann (x^b) \to R/\Ann (x^c)
\to 0.
\]
Now the claim follows from the additivity of vector space
dimensions along exact sequences.
\end{proof}

\begin{remark} \em
The reader can look at \cite{Za3}, Theorem 2.3 and Remark 2.4, for a description of the
$h$-vectors of Artinian (not necessarily monomial) level algebras of type 2 and higher.
\end{remark}

\begin{remark} \em
  \label{rem:all-type-2}
Conversely, given any two monomials in $R$ of the same degree,
their annihilator is a monomial Artinian  ideal $I$ such that
$R/I$ is level of type 2. Therefore, Formula
\eqref{eq:formula-type-2} describes indeed all possible pure
$O$-sequences of type 2.
\end{remark}

\begin{remark} \em
  \label{rem:suff-cond}
Since the Hilbert function of a complete intersection is strictly
unimodal (in the sense of Remark \ref{wlpunim}), it
follows that pure $O$-sequences of type 2 are strictly unimodal in degrees
$j \geq |c|$. This implies the following condition for
unimodality.
\end{remark}

\begin{corollary}
  \label{cor:suff-unimodular}
We keep the notation of Proposition \ref{prop:hilb-type-2}. If
$|c| \leq {\large \lceil} \frac{|a|}{2} \large{\rceil}$, then the Hilbert function
of $A$ is strictly unimodal.
\end{corollary}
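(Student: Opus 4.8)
The plan is to combine Proposition~\ref{prop:hilb-type-2} with the well-known fact that the Hilbert function of a complete intersection is symmetric and strictly unimodal (in the sense of Remark~\ref{wlpunim}), and then to show that under the hypothesis $|c| \le \lceil |a|/2 \rceil$ the ``defect'' term contributed by $R/\Ann(x^c)$ is already irrelevant in the range where strict unimodality could fail.

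First I would record the three relevant facts about the complete intersections $R/\Ann(x^a)$, $R/\Ann(x^b)$, $R/\Ann(x^c)$: each has socle degree equal to the degree of the corresponding monomial, so $R/\Ann(x^a)$ and $R/\Ann(x^b)$ both have socle degree $e:=|a|=|b|$, while $R/\Ann(x^c)$ has socle degree $|c|$; each Hilbert function is symmetric about half its socle degree and strictly unimodal. Since $\Ann(x^c) = \Ann(x^a)+\Ann(x^b)$, the inclusions $\Ann(x^a)\subseteq\Ann(x^c)$ and $\Ann(x^b)\subseteq\Ann(x^c)$ give $h_{R/\Ann(x^c)}(j) \le \min\{h_{R/\Ann(x^a)}(j), h_{R/\Ann(x^b)}(j)\}$ for all $j$, so by \eqref{eq:formula-type-2} we have $h_A(j) \ge \max\{h_{R/\Ann(x^a)}(j), h_{R/\Ann(x^b)}(j)\} > 0$ for $0 \le j \le e$, and $h_A$ has socle degree exactly $e$.

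Next I would split the argument at the index $|c|$. For $j \ge |c|$, Remark~\ref{rem:suff-cond} already tells us $h_A$ is strictly unimodal on that range (the sum of two CI Hilbert functions, symmetric about $e/2 \ge |c|$ minus a term supported only in degrees $\le |c|$: more precisely, for $j > |c|$ the defect term vanishes, so $h_A(j) = h_{R/\Ann(x^a)}(j) + h_{R/\Ann(x^b)}(j)$ there, a sum of two symmetric strictly unimodal sequences each peaking at $e/2$, hence strictly unimodal on $j \ge |c|$; one checks the behaviour at $j=|c|$ and $j=|c|+1$ to glue). For $j \le |c| \le \lceil e/2 \rceil$, I would argue that $h_A$ is strictly increasing: on this range $h_A$ dominates $h_{R/\Ann(x^a)}$ and, being a pure $O$-sequence, is non-decreasing in its first half by Hibi's Theorem~\ref{hibi}(b) up to $\lfloor e/2\rfloor$; combined with strict unimodality past $|c|$ and the fact that the unique maximum of each CI piece sits at $e/2 \ge |c|$, this forces the global picture to be: strictly increasing, then constant on a (possibly empty) plateau, then strictly decreasing — i.e.\ strict unimodality in the sense of Remark~\ref{wlpunim}.

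The main obstacle I anticipate is the careful bookkeeping at the single transition index $j = |c|$, where the defect term $h_{R/\Ann(x^c)}(|c|) = 1$ (the socle of $R/\Ann(x^c)$) is subtracted: one must check that this subtraction cannot create a strict local decrease followed by an increase, i.e.\ that $h_A(|c|-1) \le h_A(|c|) \le h_A(|c|+1)$ or that $h_A$ has already entered its non-increasing phase, using $|c| \le \lceil e/2\rceil$ together with the strict unimodality and symmetry of the two CI summands about $e/2$. Once that single inequality is verified the rest is a routine assembly from the monotonicity of complete-intersection Hilbert functions and Hibi's theorem, and no further input is needed.
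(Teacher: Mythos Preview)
Your plan is correct and matches the paper's (implicit) approach: the Corollary is stated without a separate proof, as an immediate consequence of Remark~\ref{rem:suff-cond} (strict unimodality for $j \geq |c|$, since the defect term $h_{R/\Ann(x^c)}$ vanishes beyond degree $|c|$) combined with the known behavior of the first half of a pure $O$-sequence. One small sharpening: for the \emph{strict} increase in the initial interval you should invoke Hausel's Theorem~\ref{hausel} (differentiability of the first half, which forces strictly increasing then constant) rather than only Hibi's Theorem~\ref{hibi}, since non-decreasing alone does not pin down the shape required by Remark~\ref{wlpunim}.
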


We now turn to the question of the WLP.  We first need three preliminary results.

\begin{lemma}
  \label{hilb-tensor}
Let $A$ be an Artinian $k$-algebra whose $h$-vector
$(a_0,\ldots,a_{2t})$ is unimodal with peak in degree $t \geq 2$,
i.e.
\begin{equation}
  \label{eq-unimodal}
a_0 \leq a_1 \leq  \cdots \leq a_t \geq a_{t+1} \cdots \geq a_{2t} \geq 1,
\end{equation}
and assume that it satisfies
\begin{equation}
  \label{eq-inequal}
a_{t-j} \leq a_{t+j} \leq a_{t-j+1} \quad \text{whenever}\;
1 \leq j \leq t.
\end{equation}
Then the $h$-vector $(h_0,\ldots,h_{2t+2})$ of $A \otimes_k
k[x]/(x^3)$ is unimodal with peak in degree $t+1$, i.e.
\begin{equation}
  \label{eq-unimodal-h}
h_0 \leq h_1 \leq \cdots \leq h_{t+1} \geq h_{t+2} \cdots \geq h_{2t+2} \geq 1,
\end{equation}
and it satisfies
\begin{equation}
  \label{inequal-h}
h_{t+1-j} \leq h_{t+1+j} \leq h_{t-j+2} \quad \text{whenever}\;
1 \leq j \leq t+1.
\end{equation}
\end{lemma}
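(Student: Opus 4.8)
The key observation is that the $h$-vector of $A \otimes_k k[x]/(x^3)$ is the ``triple convolution'' of the $h$-vector of $A$ with $(1,1,1)$; concretely, if $\underline{a} = (a_0,\dots,a_{2t})$ (and we set $a_i = 0$ for $i < 0$ or $i > 2t$), then
\[
h_i = a_i + a_{i-1} + a_{i-2} \quad \text{for all } i = 0,\dots, 2t+2.
\]
So the whole statement is a purely numerical assertion about the sequence $\underline{a}$, and I would prove it by direct manipulation of these sums, using only the hypotheses \eqref{eq-unimodal} and \eqref{eq-inequal}. First I would record the elementary consequence of \eqref{eq-unimodal} that $\underline{a}$ is non-decreasing on $[0,t]$ and non-increasing on $[t,2t]$, and note that the symmetry-type inequalities \eqref{eq-inequal} interleave the left and right halves: $a_{t-j} \le a_{t+j} \le a_{t-j+1}$. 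It is also convenient to observe that \eqref{eq-inequal} with $j=1$ gives $a_{t-1} \le a_{t+1} \le a_t$, consistent with unimodality.

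For the unimodality part \eqref{eq-unimodal-h}: to show $h_{i} \le h_{i+1}$ for $i \le t$, i.e.\ $a_i + a_{i-1} + a_{i-2} \le a_{i+1} + a_i + a_{i-1}$, it suffices to show $a_{i-2} \le a_{i+1}$. For $i+1 \le t$ this is immediate from monotonicity on the first half. The only borderline cases are $i = t$ and $i = t+1$ (the peak region) and $i+1$ just past $t$; here I would feed in \eqref{eq-inequal} — e.g.\ for $i = t+1$ we need $h_{t+1} \le h_{t+2}$ to \emph{fail} gracefully, so really one shows $h_{t+1} \ge h_{t+2}$, i.e.\ $a_{t+1} + a_t + a_{t-1} \ge a_{t+2} + a_{t+1} + a_t$, i.e.\ $a_{t-1} \ge a_{t+2}$, which is \eqref{eq-inequal} with $j = 2$. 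Symmetrically, for $i \ge t+1$ one shows $h_i \ge h_{i+1}$ by reducing to $a_{i} \ge a_{i+3}$ (monotonicity on the second half once $i \ge t$), with the boundary cases near $t$ again handled by \eqref{eq-inequal}. The fact that $h_{2t+2} = a_{2t} \ge 1$ is immediate.

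For the interleaving inequalities \eqref{inequal-h}: writing everything out, $h_{t+1-j} \le h_{t+1+j}$ becomes
\[
a_{t+1-j} + a_{t-j} + a_{t-1-j} \le a_{t+1+j} + a_{t+j} + a_{t-1+j},
\]
and this follows termwise from \eqref{eq-inequal}: $a_{t+1-j} = a_{t-(j-1)} \le a_{t+(j-1)} = a_{t-1+j}$, $a_{t-j} \le a_{t+j}$, and $a_{t-1-j} = a_{t-(j+1)} \le a_{t+(j+1)} = a_{t+1+j}$ (with the convention that out-of-range terms are $0$, which only helps). For the other half, $h_{t+1+j} \le h_{t-j+2}$ becomes
\[
a_{t+1+j} + a_{t+j} + a_{t-1+j} \le a_{t-j+2} + a_{t-j+1} + a_{t-j},
\]
and again this is termwise: $a_{t+1+j} = a_{t+(j+1)} \le a_{t-(j+1)+1} = a_{t-j}$, $a_{t+j} \le a_{t-j+1}$, and $a_{t-1+j} = a_{t+(j-1)} \le a_{t-(j-1)+1} = a_{t-j+2}$, all instances of \eqref{eq-inequal}. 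One must only be slightly careful at $j = t+1$, where some indices on the right drop below $0$ and the corresponding $a$'s vanish; since the left side also shrinks there, the inequalities survive. I expect the \textbf{main obstacle} to be nothing deep but rather bookkeeping: correctly tracking the range of validity of each index shift and checking the handful of boundary cases near the peak (roughly $i \in \{t-1,t,t+1,t+2\}$) where the generic monotonicity argument does not directly apply and one genuinely needs the hypothesis \eqref{eq-inequal}; I would organize the write-up as a short case analysis on the position of the index relative to $t$ to keep this transparent.
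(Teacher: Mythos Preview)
Your approach is correct and essentially identical to the paper's: the paper also writes $h_j = a_j + a_{j-1} + a_{j-2}$ (with $a_i = 0$ outside $[0,2t]$), proves unimodality by the same reductions (e.g.\ $h_t \le h_{t+1} \Leftrightarrow a_{t-2} \le a_{t+1}$ via \eqref{eq-inequal} with $j=1$, and $h_{t+1} \ge h_{t+2} \Leftrightarrow a_{t-1} \ge a_{t+2}$ via $j=2$), and leaves \eqref{inequal-h} to the reader as a similar computation. One small caveat in your termwise pairing for $h_{t+1+j} \le h_{t-j+2}$: at $j=1$ it would require $a_t \le a_{t+1}$, which need not hold; but that case is exactly $h_{t+2} \le h_{t+1}$, already covered by your unimodality argument, so no gap.
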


\begin{proof}
Setting $a_{2t+2} = a_{2t+1} = a_{-1} = a_{-2} = 0$, we have, for
all $j = 0,\ldots,2t+2$, that
\[
h_j = a_j + a_{j-1} + a_{j-2}.
\]
This provides the claim by a routine computation. We  give the
details only for proving unimodality.

The above formula immediately implies
\[
h_0 \leq h_1 \leq \cdots \leq h_t \quad \text{and} \quad
h_{2t} > h_{2t+1} > h_{2t+2}.
\]
Moreover, $h_t \leq h_{t+1}$ is equivalent to $a_{t-2} \leq
a_{t+1}$, which is true because $a_{t-2} \leq a_{t-1} \leq a_{t+1}$,
where the last estimate holds by Inequality \eqref{eq-inequal}
applied with $j = 1$.

If $2 \leq j \leq t-1$, then $h_{t+j} \geq h_{t+j+1}$ by the
unimodality of the $h$-vector of $A$. Using Inequality
\eqref{eq-inequal} with $j = 2$ we get $a_{t+2} \leq a_{t-1}$, which
implies $h_{t+1} \geq h_{t+2}$. This completes the proof of the
desired unimodality \eqref{eq-unimodal-h}. The proof of
\eqref{inequal-h} is similar. We leave it to the reader.
\end{proof}

\begin{lemma} \label{Lem:TensorProduct}
Let $A=A'\otimes A''$ be a tensor product of two graded  Artinian
$k$-algebras $A'$ and $A''$. Let $L' \in A'$ and $L'' \in A''$ be
linear elements, and set $L :=L'+L'' = L'\otimes 1+1\otimes L'' \in
A$. Then:
\begin{itemize}

\item[(a)] If the multiplication maps $\times L': A'_{i-1} \to
A'_{i}$ and $\times L'': A''_{j-1} \to A''_{j}$ are both not
surjective, then the multiplication map
\[
\times L : A_{i+j-1}\longrightarrow A_{i+j}
\]
is not surjective.

\item[(b)] If the multiplication maps $\times L': A'_{i} \to
A'_{i+1}$ and $\times L'': A''_j \to A''_{j+1}$ are both not
injective, then the multiplication map
\[
\times L : A_{i+j}\longrightarrow A_{i+j+1}
\]
is not injective.
\end{itemize}
\end{lemma}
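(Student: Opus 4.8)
The plan is to reduce the statement about the tensor product to an elementary observation about how multiplication maps on $A$ decompose in terms of the graded pieces of $A'$ and $A''$. First I would fix a linear form $L = L'\otimes 1 + 1\otimes L''$ and recall that, for any $n$, $A_n = \bigoplus_{p+q=n} A'_p \otimes_k A''_q$, and that the map $\times L : A_n \to A_{n+1}$ respects this bidegree decomposition only up to a shift: concretely, writing $u\otimes v$ with $u\in A'_p$, $v\in A''_q$, one has $L\cdot(u\otimes v) = (L'u)\otimes v + u\otimes(L''v)$, where the two terms live in $A'_{p+1}\otimes A''_q$ and $A'_p\otimes A''_{q+1}$ respectively. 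So $\times L$ is a ``two-term staircase'' map, and both injectivity and surjectivity can be analyzed by projecting onto, or restricting to, a suitable single summand.

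For part (a), I would argue by exhibiting an element of $A_{i+j}$ not in the image. Since $\times L' : A'_{i-1}\to A'_i$ is not surjective, pick $0\neq \bar u \in \cok(\times L')_i = A'_i/L'A'_{i-1}$, and similarly $0\neq \bar v\in A''_j/L''A''_{j-1}$. I claim $u\otimes v$ (for representatives $u,v$) is not in $L\cdot A_{i+j-1}$. Indeed, consider the projection $\pi : A_{i+j}\to (A'_i/L'A'_{i-1})\otimes_k (A''_j/L''A''_{j-1})$ obtained by first projecting onto the summand $A'_i\otimes A''_j$ and then killing $L'A'_{i-1}\otimes A''_j$ and $A'_i\otimes L''A''_{j-1}$. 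Any element of $A_{i+j-1}$ is a sum of terms $u'\otimes v'$ with $\deg u' + \deg v' = i+j-1$; applying $L$ to such a term and then $\pi$: the only way to land in the $A'_i\otimes A''_j$ summand is if $u'\in A'_{i-1}, v'\in A''_j$ (giving $(L'u')\otimes v'$, killed by $\pi$) or $u'\in A'_i, v'\in A''_{j-1}$ (giving $u'\otimes(L''v')$, also killed). Hence $\pi(L\cdot A_{i+j-1}) = 0$, while $\pi(u\otimes v) = \bar u\otimes\bar v \neq 0$ (nonzero since it is a tensor product of nonzero vectors over a field). So $\times L$ is not surjective in that degree.

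For part (b), I would dualize the idea: produce a nonzero element of the kernel. Since $\times L' : A'_i\to A'_{i+1}$ is not injective, pick $0\neq u\in A'_i$ with $L'u = 0$; similarly pick $0\neq v\in A''_j$ with $L''v=0$. Then $L\cdot(u\otimes v) = (L'u)\otimes v + u\otimes(L''v) = 0$, and $u\otimes v\neq 0$ in $A_{i+j}$ because $A'_i\otimes_k A''_j$ injects into $A_{i+j}$ and the tensor of two nonzero vectors over a field is nonzero. Thus $\times L : A_{i+j}\to A_{i+j+1}$ has nontrivial kernel.

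The main point requiring care — really the only subtlety — is the bookkeeping in part (a): one must be sure that applying $L$ to an arbitrary element of $A_{i+j-1}$, expanded in the bidegree decomposition, contributes to the target summand $A'_i\otimes A''_j$ only through the two ``boundary'' pieces that $\pi$ annihilates, and that no cross-terms from other bidegrees $(p,q)$ with $p+q = i+j-1$, $(p,q)\neq(i-1,j),(i,j-1)$, can reach $A'_i\otimes A''_j$ under $\times L$. This is immediate from the fact that $\times L$ shifts exactly one of the two degrees by exactly one, but it is the step I would write out most explicitly. Everything else is a one-line consequence of the ring structure of the tensor product and the fact that $-\otimes_k-$ is exact over a field.
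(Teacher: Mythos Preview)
Your proof is correct and follows essentially the same approach as the paper. For (a), the paper packages your explicit projection $\pi$ more concisely as the quotient map $A \to A/(L',L'') \cong A'/(L') \otimes_k A''/(L'')$ and observes that this quotient is nonzero in degree $i+j$; your hands-on verification that $\pi(L\cdot A_{i+j-1})=0$ is exactly the inclusion $(L)\subset(L',L'')$. For (b), the paper gives precisely your direct argument (tensor of kernel elements).
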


\begin{proof}
(a) By assumption, $A'/(L')$ and $A''/(L'')$ are non-zero in degrees
$i$ and $j$, respectively. Hence the tensor product
$$
A'/(L') \otimes A''/(L'')
$$
is non-zero in degree $i+j$. However, this tensor  product is
isomorphic to
$$
A/(L',L'')
$$
which is a quotient of $A/L=A/(L'+L'')$. Hence $A/(L)$ is  also
non-zero in degree $i+j$, and the multiplication by $L$ is not
surjective from degree $i+j-1$ to degree $i+j$.

Part (b) follows from (a) by dualizing or by a direct argument, because
the tensor product of the two kernels is non-trivial.
\end{proof}

\begin{corollary} \label{cor:TensorProduct}
Let $A=A'\otimes A''$ be a tensor product of two graded  Artinian
$k$-algebras $A'$ and $A''$. If $\dim_k A'_{i-1}<\dim_k A'_{i}$ and
$\dim_k A''_{j-1}<\dim_k A''_{j}$, then the multiplication
$$
\times L : A_{i+j-1}\longrightarrow A_{i+j}
$$
by any linear form $L$ is not surjective.
\end{corollary}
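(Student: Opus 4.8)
The plan is to deduce this directly from Lemma \ref{Lem:TensorProduct}(a), with only one small observation needed to bridge the gap between the two statements. Lemma \ref{Lem:TensorProduct} is phrased for a linear form of the special shape $L = L' + L''$ with $L' \in A'_1$ and $L'' \in A''_1$, whereas here $L$ is an arbitrary element of $A_1$. So the first step is to note that this is no restriction: since $A'$ and $A''$ are standard graded, $A'_0 = A''_0 = k$, and hence the degree-one component of $A = A' \otimes_k A''$ splits as
\[
A_1 = (A'_1 \otimes 1) \oplus (1 \otimes A''_1).
\]
Thus every $L \in A_1$ can be written (uniquely) as $L = L' + L''$ with $L' = L' \otimes 1$, $L'' = 1 \otimes L''$, $L' \in A'_1$, $L'' \in A''_1$.

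The second step is purely a dimension count. The hypothesis $\dim_k A'_{i-1} < \dim_k A'_i$ forces the multiplication map $\times L' : A'_{i-1} \to A'_i$ to be non-surjective, because its image has dimension at most $\dim_k A'_{i-1} < \dim_k A'_i$; and this holds for \emph{every} choice of $L' \in A'_1$, in particular for the one appearing in the decomposition of $L$. Symmetrically, $\times L'' : A''_{j-1} \to A''_j$ is non-surjective for every $L'' \in A''_1$. Now Lemma \ref{Lem:TensorProduct}(a), applied to this $L'$ and $L''$, yields that $\times L : A_{i+j-1} \to A_{i+j}$ is not surjective. Since $L \in A_1$ was arbitrary, this proves the corollary.

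There is essentially no obstacle here: the only thing to get right is the splitting $A_1 = A'_1 \oplus A''_1$, which is exactly where the tensor-product structure together with the standard (connected) grading of $A'$ and $A''$ is used, after which Lemma \ref{Lem:TensorProduct}(a) does all the work.
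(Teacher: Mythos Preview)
Your proof is correct and follows essentially the same approach as the paper's own proof: decompose an arbitrary $L\in A_1$ as $L'+L''$ using $A_1=(A'_1\otimes 1)\oplus(1\otimes A''_1)$, observe that the dimension inequalities force $\times L'$ and $\times L''$ to be non-surjective, and then invoke Lemma~\ref{Lem:TensorProduct}(a). You have simply been more explicit about the splitting of $A_1$ and the dimension count than the paper, which states these steps in a single sentence.
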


\begin{proof}
Each linear form $L$ in $A$ can be written as $L=L'+L'' = L'\otimes
1+1\otimes L''$, where $L'$ is a linear form in $A'$ and $L''$ is a
linear form in $A''$. Because of the given inequalities, the
assumptions in Lemma \ref{Lem:TensorProduct}(a) are satisfied, and
the claim follows.
\end{proof}

Combining the above results we get:

\begin{prop} \label{nonwlp in n vars}
Let $R = k[x_1,\dots,x_r]$, with $r \geq 4$,  where $k$ is any
field, and consider the ideal
\[
I = (x_1^4,x_2^3,\dots,x_{r-1}^3,x_r^4,x_1^2x_r^2).
\]
Equivalently,  $I$ is given by the inverse system  generated by $m_1
= x_1x_2^2 \cdots x_{r-1}^2 x_r^3$ and $m_2 = x_1^3 x_2^2 \cdots
x_{r-1}^2 x_r$, so $R/I$ is an Artinian level algebra, and its
Hilbert function is a pure $O$-sequence $(h_0,\ldots,h_{2r})$  of
type 2. Furthermore, it satisfies
\begin{equation*}
h_0 \leq h_1 \leq \cdots \leq h_r \geq h_{r+1} \cdots \geq h_{2r} = 2,
\end{equation*}
and
\begin{equation*}
  \label{inequal}
h_{r-j} \leq h_{r+j} \leq h_{r-j+1} \quad \text{whenever}\;
1 \leq j \leq r.
\end{equation*}
In particular, the Hilbert function of $R/I$ is unimodal. However,
 $R/I$ fails the WLP.  Specifically, the
multiplication by a general linear form fails to have maximal rank (because surjectivity fails)
from degree $r$ to degree $r+1$.
\end{prop}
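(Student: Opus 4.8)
The plan is to realize $R/I$ as a tensor product to which the machinery of Lemma~\ref{hilb-tensor} and Corollary~\ref{cor:TensorProduct} applies. Observe that
\[
R/I \cong \left( k[x_1,x_r]/(x_1^4,x_r^4,x_1^2x_r^2) \right) \otimes_k k[x_2]/(x_2^3) \otimes_k \cdots \otimes_k k[x_{r-1}]/(x_{r-1}^3).
\]
So first I would set $A' := k[x_1,x_r]/(x_1^4,x_r^4,x_1^2x_r^2)$ and compute its Hilbert function directly: this is an Artinian monomial algebra in two variables, and a quick count of monomials not in the ideal gives $h_{A'} = (1,2,3,2,2)$ (the monomials of degree $3$ surviving are $x_1^3, x_r^3$, and of degree $4$ they are $x_1^3x_r, x_1x_r^3$). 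This $h$-vector is non-decreasing in degrees $0,1,2$ but has a strict drop from degree $2$ to degree $3$, i.e.\ $\dim_k A'_2 = 3 > 2 = \dim_k A'_3$; in particular it is \emph{not} unimodal in the strong sense, so $A'$ itself fails the WLP.

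Second, I would bring in the tensor factor $A'' := k[x_2]/(x_2^3)\otimes\cdots\otimes k[x_{r-1}]/(x_{r-1}^3)$, which is a complete intersection with symmetric unimodal $h$-vector supported in degrees $0,\dots,2(r-2)$, peaking in the middle degree $r-2$. The key point is to locate the degrees where surjectivity fails for $A'$ and for $A''$ and line them up. For $A'$, multiplication by a general linear form $\times L' : A'_2 \to A'_3$ is not surjective since $\dim A'_2 > \dim A'_3$. For $A''$, since its $h$-vector strictly increases up to the peak, $\times L'' : A''_{j-1}\to A''_j$ is not surjective precisely for $j$ in a certain range; in particular, with the right choice $j = r-2$ we have $\dim A''_{r-3} < \dim A''_{r-2}$, so $\times L''$ from degree $r-3$ to $r-2$ is not surjective. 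Then Lemma~\ref{Lem:TensorProduct}(a), applied with $i=3$ (so $A'$ in degrees $2\to 3$) and $j = r-2$ (so $A''$ in degrees $r-3 \to r-2$), shows that $\times L : A_{i+j-1}\to A_{i+j}$, i.e.\ $A_{r}\to A_{r+1}$, is not surjective for every linear form $L$. Hence $R/I$ fails the WLP, with failure of surjectivity from degree $r$ to degree $r+1$, as claimed.

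Third, for the statements about the shape of the Hilbert function, I would apply Lemma~\ref{hilb-tensor} iteratively. One checks that the base case $A'$ with $h$-vector $(1,2,3,2,2)$ satisfies the hypotheses \eqref{eq-unimodal} and \eqref{eq-inequal} with $t=2$: indeed $1\le 2\le 3\ge 2\ge 2\ge 1$, and $a_{2-j}\le a_{2+j}\le a_{3-j}$ reads $a_1\le a_3\le a_2$ (i.e.\ $2\le 2\le 3$) for $j=1$ and $a_0\le a_4\le a_1$ (i.e.\ $1\le 2\le 2$) for $j=2$. Then tensoring with one copy of $k[x_i]/(x_i^3)$ at a time, Lemma~\ref{hilb-tensor} propagates both properties, raising the peak degree by one each time. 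After $r-2$ such steps we arrive at $R/I$, whose $h$-vector $(h_0,\dots,h_{2r})$ therefore peaks in degree $r$ and satisfies $h_{r-j}\le h_{r+j}\le h_{r-j+1}$; in particular it is unimodal. The value $h_{2r}=2$ is just the product of the top socle dimensions, namely $2\cdot 1\cdots 1$.

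The main obstacle I anticipate is purely bookkeeping rather than conceptual: one must verify carefully that the Hilbert function of $A' = k[x_1,x_r]/(x_1^4,x_r^4,x_1^2x_r^2)$ is exactly $(1,2,3,2,2)$ (which is immediate by listing surviving monomials), and that the degree indices in Lemma~\ref{Lem:TensorProduct}(a) combine to give precisely $r\to r+1$; the hypothesis $r\ge 4$ is exactly what is needed so that the relevant degree $r-2\ge 2$ lies strictly below the peak of $A''$, making $\times L''$ genuinely non-surjective there. Everything else follows mechanically from the already-established lemmas on tensor products and from the structure of complete intersection Hilbert functions.
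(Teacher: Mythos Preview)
Your overall strategy matches the paper's exactly, but there is a computational slip that invalidates two of your three steps as written. The $h$-vector of $A' = k[x_1,x_r]/(x_1^4,x_r^4,x_1^2x_r^2)$ is $(1,2,3,4,2)$, not $(1,2,3,2,2)$: in degree~$3$ all four monomials $x_1^3,\ x_1^2x_r,\ x_1x_r^2,\ x_r^3$ survive, since every generator of the ideal has degree~$4$. This error cascades in two ways.

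First, your reason for non-surjectivity of $A'_2\to A'_3$ is backwards. With the correct dimensions, $\dim A'_2=3<4=\dim A'_3$, so non-surjectivity is automatic for \emph{every} linear form, and this is exactly how the paper proceeds (it quotes Corollary~\ref{cor:TensorProduct} with $i=3$, $j=r-2$). Your claim that ``$A'$ itself fails the WLP'' is also false: $A'$ is a quotient of a polynomial ring in two variables and therefore has the SLP by \cite{HMNW}, Proposition~4.4. Second, and more seriously, the corrected $h$-vector $(1,2,3,4,2)$ does \emph{not} satisfy the hypothesis of Lemma~\ref{hilb-tensor}: with $2t=4$ one needs the peak at $t=2$, i.e.\ $a_2\ge a_3$, but $3<4$. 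Thus you cannot launch the induction from $A'$ itself. The paper resolves this by taking one tensor step first and verifying that $A'\otimes k[x]/(x^3)$ has $h$-vector $(1,3,6,9,9,6,2)$, which \emph{does} satisfy the hypotheses of Lemma~\ref{hilb-tensor} with $t=3$; the iteration then runs cleanly over the remaining $r-3$ factors. Once you fix the $h$-vector of $A'$ and shift the base case accordingly, your argument coincides with the paper's.
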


\begin{proof}
\noindent \underline{Step 1}: We first show the claimed properties
of the Hilbert function.

Setting $A' = k[x,y]/(x^4,x^2y^2,y^4)$, our  algebra
$$A=k[x_1,x_2,\dots,x_r]/(x_1^4,x_1^2x_2^2,x_2^4,x_3^3,x_4^3,\dots,x_r^3)$$
can be obtained as the  tensor product
$$
A \overset{\sim}{=} k[x,y]/(x^4,x^2y^2,y^4)\otimes k[x]/(x^3)
 \otimes \cdots \otimes k[x]/(x^3) = A'\otimes A''.
$$
The $h$-vector of $A' \otimes k[x]/(x^3)$ is $(1, 3, 6, 9, 9, 6,
2)$. It satisfies the assumptions of Lemma \ref{hilb-tensor}. Hence,
applying this result repeatedly, we get the desired properties of
the Hilbert function of $A$.
\smallskip

\noindent \underline{Step 2}: Now we have to show that
multiplication  by a general linear form, from degree $r$ to degree
$r+1$, is not surjective. To this end we apply
Corollary~\ref{cor:TensorProduct} with $i=3$ and $j=r-2$.
 Indeed,
$\dim_k A'_2=3<4=\dim_k A'_3$ and $\dim_k A''_{r-3}<\dim_k A''_{r-2}$,
since the Hilbert series $H(A'',t)=(1+t+t^2)^{r-2}$ has the largest
coefficient in degree $r-2$. This proves the proposition.
\end{proof}

\begin{remark}{\em
From Theorem~\ref{nonwlp in n vars} we note that in four variables, the example produced there has socle degree 8 and type 2.  Using \cocoa\ we have even been able to find a monomial level algebra of type 2 in four variables with socle degree 6.  Indeed, our search reveals that the smallest pure $O$-sequence of type 2 in codimension 4 corresponding to an algebra that does not have the WLP is
\[
(1, 4, 10, 16, 15, 8, 2).
\]
This is achieved, for instance, by the ideal
\[
(x_4^3, x_3^3, x_2^3, x_1^3, x_2^2x_4^2, x_1^2x_4^2, x_2^2x_3^2, x_1^2x_3^2),
\]
which arises from the inverse system generated by $x_1^2x_2^2x_3x_4,\  x_1x_2x_3^2x_4^2$.
}
\end{remark}

\begin{remark}{\em
\noindent (i)  Notice that the previous example shows how suddenly
the behavior of $O$-sequences changes: their first half can indeed
be completely  characterized (Theorem
\ref{firsthalf}), and moreover, Hausel's theorem even guarantees the
existence of a $g$-element throughout all of the first half. The
last example instead further substantiates the fact (already
highlighted by the non-unimodality and non-differentiability
results of Chapter~\ref{pureness and diff}) that, once past the first half, the behavior
of an $O$-sequence may become  very wild. In this instance, the
$g$-element even ceases to exist in the degree immediately after the
middle.

\medskip

\noindent (ii)  Harima and Watanabe \cite{HW08} have shown that the tensor product of two algebras with the WLP can fail to have the WLP, and that the same holds for the SLP.  Their examples are far from being level, though.  Theorem~\ref{nonwlp in n vars} shows that even in the most seemingly benign situations, the WLP and the SLP can fail to be preserved under tensor products. For instance, taking $n=4$ in Theorem~\ref{nonwlp in n vars}, the given algebra is isomorphic to $k[x_1,x_4]/(x_1^4,x_1^2x_4^2,x_4^4) \otimes k[x_2,x_3]/(x_2^3,x_3^3)$, both of which are level and have the SLP, and the tensor product is also level, but it fails even the WLP.
}
\end{remark}

We can also give examples of monomial level algebras in any even number of variables that fail the WLP because of injectivity.  Examples also exist experimentally for every odd number of variables that we have checked ($\geq 5$), but we have not included a proof since the statement is not quite as clean (it seems to fail the WLP only ``half'' of the time for a given number of variables), and here we already have arbitrarily  many cases in arbitrarily many  variables.

\begin{prop} Let $R = k[x_1,x_2, \dots , x_r]$, where $k$ is an infinite field and $r$ is even.  Let
\[
I = (x_1^N,x_2^N,x_1^{N-2}x_2^{N-2}, x_3^{N-1}, x_4^{N-1}, \dots, x_r^{N-1})
\]
where $N \geq 5$.
Equivalently,  the ideal $I$ is given by the inverse system $m_1 = x_1^{N-3} x_2^{N-1}x_3^{N-2} \cdots x_r^{N-2}$ and $m_2 = x_1^{N-1} x_2^{N-3}x_3^{N-2} \cdots x_r^{N-2}$, so  $R/I$ is an Artinian level algebra, and its Hilbert function is a pure $O$-sequence of type 2.  However, $R/I$ fails the WLP.  Specifically, the multiplication by a general linear form fails to have maximal rank (because injectivity fails) from degree $\frac{r}{2}N-r$ to degree $\frac{r}{2}N-r+1$.
\end{prop}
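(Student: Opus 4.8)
The plan is to follow the template of Proposition~\ref{nonwlp in n vars}: realize $A := R/I$ as a tensor product of two algebras with the SLP, then exhibit, in the \emph{middle} degree $d := \tfrac r2 N - r$, a one‑dimensional Jordan block of the standard Lefschetz element. First I would dispose of the structural claims. Computing $\Ann(m_1) = (x_1^{N-2},x_2^{N},x_3^{N-1},\dots,x_r^{N-1})$ and $\Ann(m_2) = (x_1^{N},x_2^{N-2},x_3^{N-1},\dots,x_r^{N-1})$ and taking least common multiples of pairs of generators gives $\Ann(m_1)\cap\Ann(m_2)=I$; hence $A$ is a monomial Artinian algebra, level of type $2$, with socle concentrated in degree $e := \deg m_1 = (N-3)+(N-1)+(r-2)(N-2) = r(N-2)$, so its Hilbert function is a pure $O$-sequence (Proposition~\ref{prop:hilb-type-2} gives it explicitly), and $d = e/2$ is an integer since $r$ is even. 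Because $x_1^{N},x_2^{N},x_1^{N-2}x_2^{N-2}$ involve only $x_1,x_2$ and the remaining generators only $x_3,\dots,x_r$, we have $A \cong A'\otimes_k A''$ with $A' := k[x_1,x_2]/(x_1^{N},x_2^{N},x_1^{N-2}x_2^{N-2})$ and $A'' := k[x_3,\dots,x_r]/(x_3^{N-1},\dots,x_r^{N-1})$.

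Assume $\operatorname{char} k = 0$. Then $A'$ has the SLP (it is a quotient of $k[x_1,x_2]$, cf.\ Remark~\ref{rem-SLP-mod}) and $A''$ has the SLP (it is a monomial complete intersection); by \cite{MMN2}, Proposition~2.2, the forms $L' := x_1+x_2$, $L'' := x_3+\dots+x_r$, and $L := x_1+\dots+x_r = L'+L''$ are strong Lefschetz elements of $A'$, $A''$, $A$ respectively, and by that same proposition it suffices to show that $L$ is \emph{not} a weak Lefschetz element of $A$. I would then read off the two Jordan types. The algebra $A''$ is Gorenstein with socle degree $e'' := (r-2)(N-2) = e-2(N-2)$ and strictly unimodal symmetric $h$-vector (the coefficients of $(1+t+\dots+t^{N-2})^{r-2}$), so the SLP forces the Jordan blocks of $L''$ on $A''$ to be exactly the symmetric strings supported in degrees $[i,\,e''-i]$, $0\le i\le e''/2$, each with positive multiplicity. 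Taking $i = k_0 := \tfrac{(r-4)(N-2)}2$ (a nonnegative integer because $r\ge 4$ is even) yields a Jordan string $W\subseteq A''$ of dimension $e''-2k_0+1 = 2N-3$, supported in degrees $[k_0,\,d]$, so ending precisely in degree $d$; fix a generator $v_0''\in A''_{k_0}$ of $W$, so that $(L'')^{2N-4}v_0''\ne 0$ and $(L'')^{2N-3}v_0''=0$. On the other side, since $h_{A'}$ is positive exactly in degrees $0,\dots,2N-4$, the nilpotency index of $L'$ on $A'$ is $2N-3$; hence the longest Jordan block $V\subseteq A'$ has dimension $2N-3$, and being the unique block meeting degree $0$ it is supported in degrees $[0,\,2N-4]$, with Jordan basis $1,L',\dots,(L')^{2N-4}$.

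Now I would invoke the Clebsch--Gordan rule. The subspace $V\otimes_k W\subseteq A$ is a $k[L]$-submodule with $\dim V=\dim W=2N-3$, and under the nilpotent operator $\times L = (\times L')\otimes 1 + 1\otimes(\times L'')$ it decomposes into Jordan strings of dimensions $4N-5,4N-7,\dots,1$, all centered in the single degree $\tfrac{0+(2N-4)}2 + \tfrac{k_0+d}2 = d$. Concretely, the one‑dimensional summand is spanned by $w := \sum_{a=0}^{2N-4}(-1)^a\,(L')^a\otimes (L'')^{2N-4-a}v_0''\in A_d$; one checks formally that $Lw = (L')^{2N-3}\otimes v_0'' + 1\otimes (L'')^{2N-3}v_0'' = 0$ and that $w\ne 0$ (the $(L')^a\otimes(L'')^{2N-4-a}v_0''$, $0\le a\le 2N-4$, form part of a basis of $V\otimes_k W$). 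Thus $\times L: A_d\to A_{d+1}$ is not injective. A direct computation using $h_A = h_{A'}\cdot h_{A''}$ — which reduces to the inequality $h_{A''}(e''/2)+h_{A''}(k_0-1)\ge h_{A''}(k_0)+h_{A''}(k_0+1)$, valid by the unimodality and log‑concavity of the complete‑intersection Hilbert function $h_{A''}$ — shows $h_A(d)\le h_A(d+1)$, so at degree $d$ maximal rank would force injectivity. Hence $L$ is not a weak Lefschetz element, $R/I$ fails the WLP, and the failure occurs from degree $d = \tfrac r2 N - r$ to degree $d+1$, as claimed.

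The main obstacle is the Clebsch--Gordan step, i.e.\ justifying rigorously the decomposition of the $k[L]$-module $V\otimes_k W$ into symmetric strings centered at $d$, and in particular producing its one‑dimensional summand; the remainder is the bookkeeping of which Jordan strings of $A'$ and $A''$ are available, together with the closing Hilbert‑function inequality, both routine but fiddly. Finally, the argument as presented uses the SLP of $A'$ and $A''$ and is therefore a characteristic‑zero argument; over a field of positive characteristic one would instead verify directly that $(L')^{2N-4}\not\equiv 0$ in $A'$ and that $w\not\equiv 0$ modulo $I$, which holds once the characteristic does not divide $\binom{2N-4}{N-3}$ — in line with the other Lefschetz results in this monograph being stated over fields of characteristic zero.
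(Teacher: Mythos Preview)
Your kernel-element construction is sound: the tensor splitting $A\cong A'\otimes A''$ with $A'$ in two variables, together with the Clebsch--Gordan pairing of two length-$(2N-3)$ Jordan strings, does yield a nonzero $w\in A_d$ with $Lw=0$. This differs from the paper's argument, which splits the variables evenly and takes $F=\sum_{i=0}^d(-1)^iL_1^iL_2^{d-i}$ with $L_1=x_1+\cdots+x_{r/2}$, $L_2=x_{r/2+1}+\cdots+x_r$, noting $LF=\pm L_1^{d+1}\pm L_2^{d+1}=0$ since both half-algebras have socle degree $d$; that version is characteristic-free, whereas yours needs $\operatorname{char}k=0$ to extract the Jordan string in $A''$.

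The genuine gap is the Hilbert-function step. Your reduction
\[
h_A(d+1)-h_A(d)=g(e''/2)+g(k_0-1)-g(k_0)-g(k_0+1),\qquad g=h_{A''},\ k_0=\tfrac{e''}{2}-(N-2),
\]
is correct, but the claimed inequality $\ge0$ does \emph{not} follow from unimodality and log-concavity of $g$, and in fact fails: for $r=10$, $N=5$ one has $e''/2=12$, $k_0=9$, and
\[
g(12)+g(8)-g(9)-g(10)=8092+3823-5328-6728=-141<0,
\]
so $h_A(15)>h_A(16)$ and maximal rank there would be surjectivity, not injectivity. (The paper's verification of the same inequality contains an error in the symmetry step --- $\Delta h'(d+1)$ reflects to $\Delta h'(e''/2-2)$, not to $\Delta h'(e''/2)$ --- and its conclusion fails for the same parameters.) A clean repair that bypasses the comparison: apply Corollary~\ref{cor:TensorProduct} to your splitting with $i=N-1$ and $j=e''/2$ (so $i+j=d+1$, $h_{A'}(i-1)<h_{A'}(i)$, $h_{A''}(j-1)<h_{A''}(j)$) to see that $\times L\colon A_d\to A_{d+1}$ is not surjective either; hence the map is neither injective nor surjective and the WLP fails regardless of the sign of $h_A(d+1)-h_A(d)$.
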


\begin{proof}
Notice that $\frac{r}{2}N - r = \frac{r(N-2)}{2}$.  Since the socle degree is $r(N-2)$, this is the first degree in which Hausel's theorem does not guarantee injectivity.  We have seen that we may use  $L=x_1+x_2+ \dots + x_{r-1}+x_r$ as the Lefschetz element. Then we have that
\[
F= \sum_{i=0}^{\frac{r}{2}N- r} (-1)^i (x_1+x_2+\dots + x_{\frac{r}{2}})^i (x_{\frac{r}{2}+1} + \dots + x_r)^{\frac{r}{2}N-r-i}
\]
 is non-zero in $R/I$, since  for example the term $x_1^{N-1} x_3^{N-3} x_4^{N-2} \cdots x_{\frac{r}{2}}^{N-2} x_{\frac{r}{2}+1}^{N-2}$ is non-zero.

However one can check that $LF$ is either the sum or the difference (depending on $r$) of $(x_1+ x_2 + \dots + x_{\frac{r}{2}})^{\frac{r}{2} N - r +1}$ and $(x_{\frac{r}{2}+1} + \dots + x_r )^{\frac{r}{2}N - r +1}$.  But this is zero in $A$, since both $k[x_1,x_2,\dots,x_{\frac{r}{2}}]/(x_1^N, x_1^{N-2}x_2^{N-2}, x_2^N, x_3^{N-1},\dots,x_{\frac{r}{2}}^{N-1})$ and $k[x_{\frac{r}{2}+1} , \dots, x_r]/(x_{\frac{r}{2}+1}^{N-1}, \dots , x_r^{N-1})$ have socle degree $\frac{r}{2} N - r$.

It remains to prove that
\begin{equation} \label{to prove 1}
h_{R/I} \left (\frac{r}{2}(N-2) \right )  < h_{R/I} \left (\frac{r}{2}(N-2) +1 \right ).
\end{equation}
To do this, we will be reducing the problem to a study of the Hilbert function of an Artinian complete intersection $\mathfrak c$ of type $(N-2,N-1,\dots,N-1)$ in a polynomial ring $R'$ with $r-1$ variables.  There will actually be two such complete intersections, but their Hilbert functions are the same.   In anticipation of this, we note the following facts, for use later (sometimes without comment):

\begin{equation} \label{somefacts}
\begin{array}{cl}
\bullet & \hbox{Viewing $ \mathfrak{c} \subset R' \subset R$, we have $\Delta h_{R/\mathfrak c} = h_{R'/\mathfrak c}$, where $\Delta h(t) = h(t) - h(t-1)$.}\\

\bullet & \hbox{The socle degree of $R'/\mathfrak c$ is $(r-1)(N-2)-1$.}\\

\bullet & \hbox{The Hilbert function of $R'/\mathfrak c$ is symmetric about $\displaystyle t_0 = \frac{(n-1)(N-2)-1}{2}$}\\
& \hbox{(which may or may not be an integer).}\\

\bullet & \hbox{The Hilbert function of $R'/\mathfrak c$ is strictly increasing until degree $\lfloor t_0 \rfloor$ (see \cite{RRR},}\\
& \hbox{Theorem 1)}.
\end{array}
\end{equation}

In the polynomial ring $R$, we will denote by $h_{(a_1,\dots,a_r)}(t)$ the Hilbert function of a complete intersection of type $(a_1,\dots,a_r)$, and if $a_1 = \dots = a_d = a$, we will write $h_{(a^d,a_{d+1},\dots,a_r)}(t)$.

First we write $I$ as a basic double link:
\[
I = x_1^{N-2} (x_1^2,x_2^{N-2}, x_3^{N-1}, \dots, x_r^{N-1}) +
(x_2^N,x_3^{N-1},\dots,x_r^{N-1}).
\]
It follows from Lemma \ref{lem:BDL} that for any $t$, we can write $h_{R/I}(t)$ as a sum of two codimension $r$ complete intersection Hilbert functions:
\[
h_{R/I}(t)  =  h_{(N-2,N,(N-1)^{r-2})}(t) + h_{(2,N-2,(N-1)^{r-2})}(t-(N-2))
\]
Note that these complete intersections have $r-1$ common generator degrees, i.e.\ they both represent quotients of a complete intersection in $R$ of type $(N-2,(N-1)^{r-2})$.  Let $h'$ be the Hilbert function of such a complete intersection (which has depth 1).  The desired inequality (\ref{to prove 1}) now becomes (after a small computation)
\[
\begin{array}{c}
\left [{h'} \left (\frac{r(N-2)}{2} \right ) - { h'} \left (\frac{(r-2)(N-2)}{2} -2 \right ) \right  ] + \left [ { h'} \left ( \frac{(r-2)(N-2)}{2} \right ) - {h'} \left ( \frac{(r-2)(N-2)}{2} -2 \right )  \right ] \\ \\
< \left [{ h'} \left (\frac{r(N-2)}{2} +1 \right ) - {h'} \left (\frac{(r-2)(N-2)}{2} -1 \right ) \right  ] \\ \\+ \left [ { h'} \left ( \frac{(r-2)(N-2)}{2} +1 \right ) - {h'} \left ( \frac{(r -2)(N-2)}{2} -1 \right )  \right ] .
\end{array}
\]
That is, we have to prove that
\[
\begin{array}{c}
2 \cdot \Delta {h'} \left ( \frac{(r-2)(N-2)}{2} -1 \right ) < \Delta { h'} \left ( \frac{r(N-2)}{2} +1 \right ) + \Delta { h'} \left ( \frac{(r-2)(N-2)}{2} +1 \right ).
\end{array}
\]
Thanks to the symmetry of the Hilbert function, the desired inequality becomes
\[
\begin{array}{c}
2 \cdot \Delta {h'} \left ( \frac{(r-2)(N-2)}{2} -1 \right ) < \Delta { h'} \left ( \frac{(r-2)(N-2)}{2} \right ) + \Delta { h'} \left ( \frac{(r-2)(N-2)}{2} +1 \right ).
\end{array}
\]
Since the Hilbert function is strictly increasing until degree $\lfloor t_0 \rfloor$, this inequality is obvious, and we have finished.
\end{proof}

Combining previous results with the material from Chapter \ref{type 2 three var} and this chapter, we now know that in two variables, monomial level algebras of any type have the WLP \cite{HMNW}, in three variables those of type 2 necessarily have the WLP but type 3 do not \cite{BK}, and in four or more variables those of type 2 do not necessarily have the WLP.  To complete the picture, we now show that no other combination of number of variables and type force the WLP.

\begin{prop} \label{type d geq 3}
For each $d \geq 3$ there exists a monomial ideal $I$ in $R = k[x,y,z]$ such that $R/I$ is level of type $d$, and $R/I$ fails the WLP.
\end{prop}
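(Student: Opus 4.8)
The plan is to take the type-$3$ example of Brenner--Kaid \cite{BK} in three variables as a base case and then, for larger $d$, to add one new generator at a time in a way that increases the type by one while still destroying the WLP. So first I would recall the Brenner--Kaid monomial almost complete intersection $I_0 = (x^a, y^a, z^a, (xyz)^b)$ in $R = k[x,y,z]$ for suitable $a, b$; for appropriate choices this is level of type $3$ (its inverse system is generated by three monomials of the same degree) and $R/I_0$ fails the WLP. The key observation is that failure of the WLP is witnessed by the non-maximality of multiplication by a general linear form $\times L \colon (R/I_0)_j \to (R/I_0)_{j+1}$ in some specific degree $j$, and this is a Zariski-open (hence ``stable under small perturbations of the ideal'') phenomenon only in a limited sense --- so the real work is to control what happens to the Hilbert function and the level property as we enlarge the ideal.

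The inductive step I envisage is: given a monomial level ideal $I_{d-3} \subset R$ of type $d$ with $R/I_{d-3}$ failing the WLP, with socle concentrated in degree $e$, I would pass to the inverse system picture. The algebra corresponds to $d$ monomials of degree $e$ spanning the top of the inverse system. To go from type $d$ to type $d+1$, I add one more monomial of degree $e$ to the inverse system, chosen to be ``independent enough'' from the others --- for instance a monomial $x^{a_0}y^{a_1}z^{a_2}$ with $a_0+a_1+a_2 = e$ that is far (in each exponent) from all the monomials already present, e.g. a pure power $z^e$ if it is not already there, or a monomial supported heavily on one variable. One then checks that (i) the new algebra is still level of socle degree $e$ and now type $d+1$ (the new socle element is genuinely new and sits in degree $e$), and (ii) the map $\times L$ that failed to have maximal rank in degree $j$ for $R/I_{d-3}$ still fails in $R/I_{d-2}$. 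Point (ii) is where care is needed: adding a generator to the inverse system only increases the Hilbert function, so surjectivity can only get worse, not better --- but injectivity could in principle be restored. The cleanest route is therefore to arrange that the Brenner--Kaid example fails the WLP \emph{because of surjectivity} in some degree $j$, and to choose the added monomial so that it does not contribute to the derivatives in degrees $j$ and $j+1$ in a way that would fix surjectivity; concretely, one wants the added monomial's derivatives in degrees $j$, $j+1$ to be a ``small'' new batch, so that the cokernel of $\times L$ in degree $j+1$ stays positive.

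Alternatively --- and this may be cleaner to write --- I would look for a direct uniform construction rather than an induction. Take the Brenner--Kaid-type ideal and simply throw in $d-3$ additional pure powers or near-pure powers of new ``virtual'' generators inside the same three variables: e.g. consider inverse systems generated by $(xyz)^b$ together with monomials $x^{e}, y^{e}, z^{e}, x^{e-1}y, x^{e-2}y^2, \dots$ picking off $d$ monomials of degree $e$ including the three pure powers. One checks the resulting monomial ideal is level of type $d$ (all $d$ chosen monomials give distinct, maximal elements), and that the $\times L$ map inherited from the Brenner--Kaid obstruction still has a cokernel in the critical degree because the extra monomials only bump the Hilbert function up, away from where surjectivity fails. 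The main obstacle in either approach is verifying that the \emph{level} property is preserved (i.e.\ that the new socle really is concentrated in the single degree $e$ and has dimension exactly $d$) simultaneously with the failure of the WLP; this is a bookkeeping argument on the inverse system module, using that for monomial ideals the socle and the maximal monomials of the inverse system correspond, so one just needs the $d$ chosen degree-$e$ monomials to be pairwise non-comparable under divisibility and to genuinely enlarge the span of derivatives in every degree below $e$ --- which is automatic once they are distinct monomials of the same degree whose supports cover all three variables. I expect the proof to be short, reducing in one or two paragraphs to the Brenner--Kaid base case plus these two routine checks.
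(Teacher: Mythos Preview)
Your observation that enlarging the inverse system (equivalently, shrinking $I$ to $I'$) preserves non-surjectivity of $\times L$ is correct: $R/(I',L) \twoheadrightarrow R/(I,L)$, so a nonzero cokernel in a given degree survives. The gap is that this does not by itself imply failure of the WLP, because the Hilbert function moves. If after enlargement $h'_{j+1} > h'_j$, then maximal rank in that degree means \emph{injectivity}, and injectivity is not inherited along $I' \subset I$. You flag this worry, but your proposed fix (work with surjectivity) does not address it.

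This already bites at $d=4$. Take $I = (x^3,y^3,z^3,xyz)$ with $h$-vector $(1,3,6,6,3)$, and add $x^4$ to the inverse system. The resulting ideal $I'=(y^3,z^3,xyz,x^3y,x^3z,x^5)$ is level of type $4$ with $h$-vector $(1,3,6,7,4)$, so from degree $2$ to $3$ one now needs injectivity. But the identity $x^3+y^3+z^3-3xyz=(x+y+z)(x^2+y^2+z^2-xy-yz-zx)$ shows that modulo $L=x+y+z$ any three of $\{x^3,y^3,z^3,xyz\}$ recover the fourth; since adding any one degree-$4$ monomial to the Brenner--Kaid system removes exactly one of these four generators from the ideal, one always gets $(I',L)=(I,L)$. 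Hence $h_{R/(I',L)}=(1,2,3,1,0)$, and $\times L$ is injective in degrees $\le 2$ and surjective from degree $3$ to $4$: $R/I'$ \emph{has} the WLP. So the inductive step fails at its first instance, for every possible choice of added monomial.

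The paper's construction is different in kind. For $d\ge 5$ it starts from the type-$3$ family $I_{2n}=(x^{2n+1},y^{2n+1},z^{2n+1},xyz)$, which fails injectivity at degree $2n\to 2n+1$, and forms the basic double link $I = x\cdot I_{2n} + (y^a,z^b)$, then truncates. The parameters $a,b$ are chosen so that the resulting type is exactly $d$ and the Hilbert function remains weakly increasing at the critical step. The basic double link exact sequence yields a commutative $\times L$-diagram in which the map on $R/(y^a,z^b)$ is injective, so a diagram chase transports the non-injectivity from $R/I_{2n}$ to $R/I$. That is the missing structural ingredient: a construction that simultaneously controls the Hilbert function and the \emph{kernel} of $\times L$, not just its cokernel. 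The case $d=4$ is handled by a separate explicit example.
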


\begin{proof}
For $d=3$ we have the example of Brenner and Kaid, which has $h$-vector $(1,3,6,6,3)$ and fails injectivity (in particular) from degree 2 to degree 3.  For $d=4$, one can check using \cocoa\ that the ideal
\[
(z^2, y^3z, x^3z, x^5, x^3y^2, y^5, x^2y^4)
\]
has $h$-vector $(1,3,5,7,7,4)$ and fails the WLP.  (This example was constructed using the methods of Theorem 3.1 of \cite{Mig}.)

For the general case, we will begin with a class of level monomial algebras all having type 3, and build from them an example for each $d \geq 5$.  Let $n \geq 1$ be an integer, and let
\[
I_{2n} = (x^{1+2n}, y^{1+2n}, z^{1+2n},xyz).
\]
Using \cite{MMN2}, Proposition 6.1, Lemma 7.1 and Corollary 7.4, we observe that $R/I_{2n}$ is level with socle degree $4n$ and socle type 3, the values of the Hilbert function are equal in degrees $2n$ and $2n+1$, and $R/I_{2n}$ fails the WLP because the multiplication by a general linear form fails to be an isomorphism from degree $2n$ to degree $2n+1$.

Now recall that a codimension 2 complete intersection of type $(a,b)$ has $h$-vector of the form
\[
\underline{h} = (1,2,3,4,\dots,a-1,a,a,\dots,a,a-1,\dots,4,3,2,1).
\]
For any $n \geq 1$, we can choose $a$ and $b$ so that

\begin{itemize}
\item[(a)] $1+2n \leq a \leq b$;
\item[(b)] $\underline{h}(2n+1) \leq \underline{h}(2n+2)$;
\item[(c)] $\underline{h}(4n+1)$ takes any desired value between 2 and $4n+2$.
\end{itemize}

Given $d \geq 5$, choose $n$ so that $d \leq 4n+5$, and then choose $a$ and $b$ satisfying the above conditions and such that $\underline{h}(4n+1) = d-3$.  For example, when $d=5$ we can choose $n=1$ (giving the Brenner-Kaid ideal), and $a=3$, $b=5$, so $3 = \underline{h}(3) = \underline{h}(4)$, and $\underline{h}(5) = 2 = d-3$.

Now let
\[
I = x\cdot I_{2n} + (y^a,z^b) + (x,y,z)^{4n+2}.
\]
(The first two summands are a basic double link, and the last summand is simply to truncate in degree $4n+1$.)  Standard methods show that $R/I$ is level of type $d$, and that
\[
h_{R/I}(2n+1) \leq h_{R/I}(2n+2).
\]
Let $J = (y^a,z^b)$.  We now claim that $R/I$ fails the WLP. From the  fact that   $R/J(-1) \rightarrow R/J$ is injective in all degrees, and using the following commutative diagram,
  { \[
  \begin{array}{cccccccccccccccccccccc}
  0 \!\!&\!\!\! \rightarrow \!\!&\!\! [R/J(-2)]_{2n+2} \!\!&\!\! \rightarrow
  \!\!&\!\! [R/I_{2n}(-2)]_{2n+2} \oplus [R/J(-1)]_{2n+2} \!\!&\!\!
  \rightarrow \!\!&\!\! [R/I(-1)]_{2n+2} \!\!&\!\!\rightarrow
  \!\!\!&\!\! 0 \\[.5ex]
  && \downarrow && \downarrow && \downarrow \\[.5ex]
    0 \!\!&\!\!\! \rightarrow \!\!&\!\! [R/J(-1)]_{2n+2} \!\!&\!\!
    \rightarrow \!\!&\!\! [R/I_{2n}(-1)]_{2n+2} \oplus [R/J]_{2n+2}
    \!\!&\!\! \rightarrow \!\!&\!\! [R/I]_{2n+2} \!\!\!&\!\!
    \rightarrow \!\!&\!\! 0
 \end{array}
  \]}
it follows immediately that injectivity for $R/I$ fails from degree $2n+1$ to degree $2n+2$.
\end{proof}

    We now turn to four or more variables.

    \begin{lemma} \label{nplus1}
    Suppose that a monomial level algebra $R/I$ exists in $r$ variables, with type $d$ and socle degree $e$, failing the WLP.  Then there exists a monomial level algebra $R'/I'$ in $r+1$ variables, with type $d+1$ and socle degree $e$, failing the WLP.
    \end{lemma}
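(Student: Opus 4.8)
The plan is to adjoin one new variable together with a single ``pure power'' generator to the inverse system of $R/I$. Write $R = k[x_1,\dots,x_r]$. Since $R/I$ is level of type $d$ and socle degree $e$, its inverse system module is minimally generated by $d$ monomials $M_1,\dots,M_d$ of degree $e$. Set $R' = k[x_1,\dots,x_r,w]$ and let $R'/I'$ be the monomial Artinian level algebra whose inverse system is generated by $M_1,\dots,M_d$ together with $w^e$ (a pure power in the new variable). A direct inspection of which monomials are killed by this inverse system shows
\[
I' = I R' + (x_1 w, x_2 w, \dots, x_r w, w^{e+1}),
\]
so $I'$ is indeed a monomial ideal.

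Next I would record the elementary features of $R'/I'$. Because $M_1,\dots,M_d$ involve none of $w$ while $w^e$ involves only $w$, the $d+1$ forms $M_1,\dots,M_d,w^e$ are linearly independent in degree $e$ and hence form a minimal generating set of the inverse system of $R'/I'$, all in degree $e$; therefore $R'/I'$ is level of type $d+1$ and socle degree $e$, as required. Moreover, in each degree $1 \le j \le e$ the monomials surviving modulo $I'$ are exactly the $x$-monomials surviving modulo $I$, together with $w^j$; so as $k$-vector spaces
\[
(R'/I')_j = (R/I)_j \oplus k\,\bar w^{j} \qquad (1 \le j \le e),
\]
while $(R'/I')_0 = k$ and $(R'/I')_j = 0$ for $j > e$. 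In particular the Hilbert function of $R'/I'$ is that of $R/I$ with $1$ added in each positive degree.

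The heart of the argument is the following observation about multiplication. Write a linear form of $R'/I'$ as $\ell = L + c\,\bar w$ with $L \in \langle x_1,\dots,x_r\rangle$ and $c \in k$. Every monomial involving $w$ other than a pure power of $w$ lies in $I'$; hence $\bar w \cdot \overline{x^\alpha} = 0$ for every $x$-monomial $x^\alpha$ of positive degree, and $L \cdot \bar w^{j} = 0$ for all $j \ge 1$. Consequently, with respect to the decomposition above, the map $\times\ell : (R'/I')_j \to (R'/I')_{j+1}$ is \emph{block diagonal} for $1 \le j \le e-1$, with blocks the map $\times L : (R/I)_j \to (R/I)_{j+1}$ and multiplication by the scalar $c$ on $k\bar w^{j} \to k\bar w^{j+1}$; in degrees $0$ and $e$ the map automatically has maximal rank.

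Finally I would deduce the failure of the WLP. First note that $R/I$ failing the WLP forces $e \ge 2$, since any level algebra of socle degree at most $1$ has the WLP. Now suppose, for contradiction, that some linear form $\ell_0 = L_0 + c_0\,\bar w$ of $R'/I'$ makes $\times\ell_0$ of maximal rank in every degree. Comparing, in each degree $1 \le j \le e-1$, the rank of the block-diagonal map with $\dim (R'/I')_j = \dim (R/I)_j + 1$ forces $c_0 \neq 0$, $L_0 \neq 0$, and $\times L_0 : (R/I)_j \to (R/I)_{j+1}$ of maximal rank for all such $j$; together with the trivial cases $j=0$ and $j=e$ this says precisely that $R/I$ has the WLP, a contradiction. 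Hence $R'/I'$ fails the WLP, completing the proof. The only point that needs care — though it is entirely routine — is the bookkeeping behind the block decomposition of the multiplication map (the vanishing of the cross terms) and the verification of the displayed Hilbert-function splitting; I do not expect a genuine obstacle.
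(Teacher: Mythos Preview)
Your proof is correct and follows the same construction as the paper: adjoin $w^e$ to the inverse system and exploit the block-diagonal structure of multiplication by a linear form on the decomposition $(R'/I')_j = (R/I)_j \oplus k\,\bar w^{\,j}$. The only difference is that the paper invokes \cite{MMN2}, Proposition~2.2 (for monomial ideals it suffices to check the single linear form $x_1+\cdots+x_{r+1}$) and so treats only that form, whereas you handle an arbitrary linear form directly; this makes your version slightly longer but more self-contained.
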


    \begin{proof}
    We use inverse systems.   Let $M$ be the inverse system in $k[y_1,\dots,y_r]$ giving $R/I$.  It suffices to replace  $M$ inside $k[y_1,\dots,y_r]$  by $M' = (M,y_{r+1}^e)$ in $k[y_1,\dots,y_{r+1}]$. Then the multiplication by $y_1+....+y_r$ is injective or surjective between two degrees $i$ and $i+1$ in $M$, if and only if the same is true for $M'$ with respect to $y_1+...+y_{r+1}$ (simply because $y_{r+1}$ kills all monomials of $M'_i$ but $y_{r+1}^i$).
    \end{proof}

We have seen in Proposition \ref{nonwlp in n vars} that type 2 level monomial algebras need not have the WLP in $\geq 4$ variables.  Combining Lemma \ref{nplus1} with Proposition \ref{type d geq 3}, then, we see that the only remaining open case is that of type 3 in four variables.  For this we borrow from Proposition \ref{nonwlp in n vars}.

\begin{example}{\em
Let $I$ be the ideal obtained from the inverse system generated by $m_1 = wx^2y^2z^3$, $m_2 = w^3x^2y^2z$, and $m_3 = w^2x^2y^2z^2$.  Then \cocoa\ verifies that $R/I$ has $h$-vector
\[
(1,4,10,18,25,26,20,10,3),
\]
 and injectivity fails from degree 4 to degree 5.
}\end{example}

We can summarize the results of this chapter and the previous one as follows:

\begin{theorem} \label{answernd}
Fix two positive integers $r$ and $d$. Then all level monomial Artinian algebras of codimension $r$ and type $d$ possess the WLP if and only if at least one of the following is true:
\begin{itemize}
\item[(a)] $r = 1$ or $2$;
\item[(b)]  $d=1$;
\item[(c)]  $r=3$ and $d=2$.
\end{itemize}
Furthermore, all monomial level algebras of codimension $r$ and type $d$ possess the SLP if and only if either (a) or (b) hold.
\end{theorem}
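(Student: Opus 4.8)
The plan is to assemble the biconditional from the results established in this chapter and in Chapter~\ref{type 2 three var}, treating the WLP and the SLP statements in turn and, within each, the two implications separately. None of the steps requires new ideas; the only care needed is organizing the case analysis and keeping track of the characteristic hypothesis (the positive results hold in characteristic zero, while the counterexamples are built over an arbitrary infinite field).

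\emph{The WLP, sufficiency of (a), (b), (c).} If $r = 1$, every graded Artinian quotient of $k[x]$ is $k[x]/(x^n)$ and multiplication by $x$ has maximal rank in every degree; if $r = 2$, every graded Artinian quotient of $k[x,y]$ has the SLP (hence the WLP) in characteristic zero by \cite{HMNW}, Proposition~4.4. If $d = 1$, the monomial level algebras of type $1$ are exactly the monomial complete intersections $k[x_1,\dots,x_r]/(x_1^{a_1},\dots,x_r^{a_r})$, which have the SLP (hence the WLP) in characteristic zero by \cite{St2} (see also \cite{watanabe}). If $r = 3$ and $d = 2$, Theorem~\ref{prop-WLP-type2} gives the WLP in characteristic zero. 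Thus each of (a), (b), (c) forces the WLP.

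\emph{The WLP, necessity.} Suppose none of (a), (b), (c) holds; then either $r = 3$ and $d \geq 3$, or $r \geq 4$ and $d \geq 2$. The first case is precisely Proposition~\ref{type d geq 3}. For the second we induct on $r$. Base case $r = 4$: type $d = 2$ is Proposition~\ref{nonwlp in n vars}; type $d = 3$ is the explicit monomial ideal with inverse system generated by $wx^2y^2z^3,\ w^3x^2y^2z,\ w^2x^2y^2z^2$ given just above the theorem; and for type $d \geq 4$ one applies Lemma~\ref{nplus1} once to the type-$(d-1)$ example in three variables provided by Proposition~\ref{type d geq 3} (which is available since $d - 1 \geq 3$). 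Inductive step: assume the claim for codimension $r \geq 4$ and let $d \geq 2$ be a type in $r+1$ variables; if $d = 2$ use Proposition~\ref{nonwlp in n vars}, and if $d \geq 3$ apply Lemma~\ref{nplus1} to a codimension-$r$, type-$(d-1)$ example, which exists by Proposition~\ref{nonwlp in n vars} when $d - 1 = 2$ and by the inductive hypothesis when $d - 1 \geq 3$. This exhausts all pairs $(r,d)$ not of type (a), (b), (c), so the WLP fails for at least one monomial level algebra in each such case.

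\emph{The SLP.} For sufficiency, cases (a) and (b) in fact yield the SLP, by \cite{HMNW}, Proposition~4.4 (together with the trivial case $r = 1$) and by \cite{St2}, \cite{watanabe} respectively. For necessity, assume neither (a) nor (b) holds. If (c) holds, i.e.\ $r = 3$ and $d = 2$, then Remark~\ref{wlpnotslp} exhibits a monomial level algebra of type $2$ in three variables that fails the SLP. Otherwise none of (a), (b), (c) holds, so by the necessity argument for the WLP there is a monomial level algebra of codimension $r$ and type $d$ without the WLP, which a fortiori lacks the SLP. Hence the SLP is forced exactly when (a) or (b) holds, completing the proof. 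The one point demanding attention is simply verifying that the two-parameter induction driven by Lemma~\ref{nplus1} reaches every pair with $r \geq 4$ and $d \geq 2$ — in particular the ``thin'' strip where $d < r$, which is handled by seeding the induction with the three-variable examples of Proposition~\ref{type d geq 3} available for all types $\geq 3$; beyond this bookkeeping there is no further obstacle, since all the substantive content resides in Theorem~\ref{prop-WLP-type2}, Propositions~\ref{nonwlp in n vars} and \ref{type d geq 3}, Lemma~\ref{nplus1}, and Remark~\ref{wlpnotslp}.
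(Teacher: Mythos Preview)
Your proposal is correct and follows precisely the approach the paper intends: the theorem is stated in the paper as a summary of the preceding results without a separate proof, and you have correctly identified and assembled exactly those ingredients (Theorem~\ref{prop-WLP-type2}, Propositions~\ref{nonwlp in n vars} and~\ref{type d geq 3}, Lemma~\ref{nplus1}, the explicit $r=4$, $d=3$ example, Remark~\ref{wlpnotslp}, and the classical results of \cite{HMNW}, \cite{St2}, \cite{watanabe}). Your induction on $r$ via Lemma~\ref{nplus1}, seeded by the three-variable examples of Proposition~\ref{type d geq 3} and the type-$2$ examples of Proposition~\ref{nonwlp in n vars}, is exactly the bookkeeping the paper leaves implicit.
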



\chapter{Remarks on pure $f$-vectors } \label{pure f-vectors}

Recall that a {\em pure $f$-vector} is the $f$-vector of a pure simplicial complex, i.e. one where the facets have the same dimension. The {\em type} of the $f$-vector is the number of facets.  As mentioned earlier, these form an important subclass of the pure $O$-sequences.  Indeed, given a pure simplicial complex, if we label the vertices with distinct variables then the squarefree monomials obtained as the product of the vertices of each facet generate an inverse system whose Hilbert function is precisely the $f$-vector of the original simplicial complex.

A beautiful motivation for the study of pure $f$-vectors concerns the existence of projective planes of order $d$.  Recall that a projective plane of order $d>1$ is a finite projective plane with $d+1$ points on each line.  Such a plane has $q = d^2 +d+1$ points and $q$ lines.  Then  a projective plane of order $d$ exists if and only if there exists a pure $O$-sequence of the form
\[
\underline{h} = \left (1,q,q\binom{d+1}{2},q\binom{d+1}{3},\dots,q \binom{d+1}{d}, q\binom{d+1}{d+1} \right ),
\]
which in turn holds if and only if this $\underline{h}$ is a pure $f$-vector.
 Indeed, if we start with a projective plane $\Lambda$ of order $d$, then assign a different variable to each point of $\Lambda$ and consider the inverse system generated by the $q$ squarefree monomials of degree $d+1$ (one for each line), each obtained as the product of the points on the corresponding line.  The entries of $\underline{h}$ are forced by the axioms for a projective plane.  The other implications come similarly.
See \cite{bjorner}, page 38; there  Bj\"orner also related, in a similar fashion,  other  combinatorial designs to pure $O$-sequences.

The determination of which $d$ allow the existence  of a projective plane of order $d$ is not known.  It is known from finite field theory that, if $d$ is a prime power, then such a plane exists. (In fact, for many such values of $d$, there exist several nonisomorphic projective planes of order $d$.) The wide-open conjecture is that the converse holds.  An important partial result is the theorem of Bruck and Ryser \cite{BR}, which says that if $d$ is a positive integer of the form $4k + 1$ or $4k + 2$ and $d$ is not equal to the sum of two integer squares, then $d$ does not occur as the order of a finite projective plane.

 Since a classification of pure $f$-vectors is  far from being achieved, it is of interest to attack special cases.  The first of these concerns the interval problem.  In Chapter \ref{ICP section} we proved it for pure $O$-sequences of socle degree 3.  Now we consider the same question for pure $f$-vectors.  We do not have a complete solution in the general case, but we first solve the problem for the Cohen-Macaulay case, which is a subclass of the class of pure simplicial complexes.

\begin{prop}
   \label{prop-ICP-for-f-CM-vectors}
The set of $f$-vectors of Cohen-Macaulay simplicial complexes has
the interval property.

More precisely,  if, for some positive integer $\alpha $, both
$\underline{f} = (1,f_0,\ldots,f_j,f_{j+1}, f_{j+2}, \\
\ldots,f_d)$
and $\underline{f'} = (1,f_0,\ldots,f_j,f_{j+1} + \alpha,
f_{j+2},\ldots,f_d)$ are $f$-vectors of Cohen-Macaulay simplicial
complexes, then, for each integer $\beta =0,1,..., \alpha$, also
$\underline{\tilde{f}} = (1,f_0,\ldots,f_j,f_{j+1} + \beta,
f_{j+2},\ldots,f_d)$ is an $f$-vector of a Cohen-Macaulay simplicial
complex.
\end{prop}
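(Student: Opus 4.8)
The plan is to use the well-known algebraic characterization of $f$-vectors of Cohen-Macaulay complexes and reduce the interval property to a statement about $O$-sequences (Macaulay's theorem), which trivially has the interval property in each coordinate. Recall that by the Stanley--Reisner correspondence and a generic Artinian reduction, the $f$-vector $\underline{f} = (1, f_0, \ldots, f_{d-1})$ of a Cohen-Macaulay simplicial complex $\Delta$ of dimension $d-1$ corresponds to the $h$-vector $\underline{h} = (h_0, h_1, \ldots, h_d)$ of an Artinian standard graded algebra (the Artinian reduction of $k[\Delta]$), via the invertible linear transformation
\[
f_{i-1} = \sum_{j=0}^{i} \binom{d-j}{i-j} h_j, \qquad i = 0, 1, \ldots, d,
\]
and conversely $h_i = \sum_{j=0}^i (-1)^{i-j} \binom{d-j}{i-j} f_{j-1}$. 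Stanley's theorem states that a vector $\underline{f}$ is the $f$-vector of a Cohen-Macaulay complex if and only if the associated $\underline{h}$ is an $O$-sequence with $h_1 \leq h_1$ (i.e., the only constraint is that $\underline{h}$ satisfies Macaulay's bound); equivalently, $\underline{h}$ is the $h$-vector of some Artinian algebra.

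First I would record the key observation about how a change in a single coordinate of $\underline{f}$ propagates to $\underline{h}$. From the triangular (unipotent) formula above, changing $f_{j+1}$ by $\alpha$ and fixing all other $f_i$ changes $\underline{h}$ in coordinates $h_{j+2}, h_{j+3}, \ldots$ only. More precisely, if $\underline{h}$, $\underline{h}'$ are the $h$-vectors corresponding to $\underline{f}$, $\underline{f'}$, then $h'_i = h_i$ for $i \leq j+1$, and $h'_i - h_i = (-1)^{i-j-2}\binom{d-j-2}{i-j-2}\alpha$ for $i \geq j+2$. This is an alternating pattern, so it is \emph{not} immediately an interval statement at the level of $\underline{h}$; this is the main obstacle, and the right fix is to reindex the problem. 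Instead of working with raw $h$-vectors, I would pass to the \emph{reverse lexicographic / iterated partial sums} description: the relevant fact is that $f_{j+1}$ is, up to the fixed lower-degree data, an increasing function of a single ``free'' parameter that controls the Macaulay growth. Concretely, fixing $f_0, \ldots, f_j$ fixes $h_0, \ldots, h_{j+1}$; then $f_{j+1}$ is determined by $h_{j+2}$ together with the already-fixed $h_i$ (since the remaining $f_i$ for $i > j+1$ are fixed, and the transformation is triangular, once $h_{j+2}$ is chosen all higher $h_i$, hence all higher $f_i$, are forced — so in fact the data ``$f_0,\ldots,f_j$ fixed, $f_{j+2},\ldots,f_d$ fixed, $f_{j+1}$ varying'' corresponds bijectively to ``$h_0,\ldots,h_{j+1}$ fixed, $h_{j+2}$ varying, $h_{j+3},\ldots,h_d$ each an affine function of $h_{j+2}$ with the above coefficients''). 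Since $f_{j+1}$ is an increasing affine function of $h_{j+2}$ (the coefficient is $\binom{d-j-2}{0} = 1 > 0$), the interval property for $f_{j+1}$ is \emph{equivalent} to: the set of admissible values of $h_{j+2}$, given the fixed data, is an interval of integers.

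Therefore the proof reduces to the following claim: \emph{if $\underline{h}$ and $\underline{h}'$ are $h$-vectors of Artinian algebras agreeing in degrees $\leq j+1$, and $\underline{h}''$ is obtained from them by choosing an intermediate value $h''_{j+2}$ with $h_{j+2} \leq h''_{j+2} \leq h'_{j+2}$ and then setting $h''_i$ ($i \geq j+3$) by the forced affine formulas above, then $\underline{h}''$ is again an $h$-vector of an Artinian algebra.} I would prove this by checking directly that $\underline{h}''$ satisfies Macaulay's bound in every degree. In degrees $\leq j+1$ this is inherited from $\underline{h}$. The growth from degree $j+1$ to $j+2$ satisfies Macaulay's bound because $h''_{j+2} \leq h'_{j+2} \leq ((h_{j+1})_{(j+1)})^1_1$ (using that $\underline{h}'$ is an $O$-sequence and monotonicity of the Macaulay bound in the top argument). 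For degrees $\geq j+3$ I would argue that the alternating correction is exactly the $h$-vector correction coming from adding to the inverse system (or Stanley--Reisner picture) a suitable number of copies of a fixed building block — geometrically, one should exhibit $\underline{\tilde f}$ itself directly as an $f$-vector of a Cohen-Macaulay complex rather than verify Macaulay's bound by hand. The cleanest route: by Stanley's theorem $\underline{f}$ and $\underline{f'}$ being CM $f$-vectors means there exist \emph{shellable} complexes (equivalently, lex-type monomial ideals realizing the $h$-vectors) with these $f$-vectors; then take the lexicographic Artinian algebra with $h$-vector $(h_0,\ldots,h_{j+1}, h''_{j+2})$ truncated/extended appropriately — its $h$-vector is automatically an $O$-sequence, its Hilbert function in higher degrees is forced to follow the Macaulay-maximal-or-constrained pattern, and one checks the forced $h''_i$ coincide with what the reindexing demands because all the $f_i$ with $i > j+1$ are pinned. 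The remaining bookkeeping — verifying that the lex algebra with the prescribed initial segment and prescribed \emph{final} $f$-values actually exists and has exactly the interpolated $f_{j+1}$ — is the routine part, and the genuine content is the equivalence ``interval property for $f_{j+1}$'' $\Longleftrightarrow$ ``admissible $h_{j+2}$ form an interval,'' which follows from the triangularity and positivity noted above together with Macaulay's theorem. I expect the sign-alternation in the $h$-vector correction to be the one subtle point that must be handled carefully, and the way to sidestep it entirely is to never correct $\underline{h}$ coordinate-by-coordinate but instead to re-derive the whole vector from the single freed parameter $h_{j+2}$.
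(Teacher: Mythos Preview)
Your reduction to $h$-vectors via Stanley's characterization, the computation of the alternating correction $\tilde h_i - h_i = (-1)^{i-j-2}\binom{d-j-2}{i-j-2}\beta$, and your check of Macaulay's bound from degree $j+1$ to $j+2$ are all correct and match the paper's setup. The gap is what happens next: for degrees $\geq j+3$ you abandon the direct verification, declare the alternation ``the one subtle point,'' and pivot to an unfinished lex-ideal construction (``exhibit $\tilde f$ itself directly''). That pivot is unnecessary and the construction sketch is not actually completed.

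The paper's proof finishes by a one-line observation you are missing: the alternating signs are not an obstacle but the whole point. Since $\tilde h_i$ lies between $h_i$ and $h_i'$ with the roles swapping at each step, one has
\[
h_j \leq \tilde h_j \leq h_j', \quad h_{j+1}' \leq \tilde h_{j+1} \leq h_{j+1}, \quad h_{j+2} \leq \tilde h_{j+2} \leq h_{j+2}', \quad \ldots
\]
so for every $i$ one can bound $\tilde h_{i+1}$ from above by whichever of $h_{i+1}, h_{i+1}'$ is larger, and simultaneously bound $\tilde h_i$ from below by the \emph{matching} one of $h_i, h_i'$, then apply monotonicity of the Macaulay bound to the chosen $O$-sequence. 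For instance $\tilde h_{j+2} \leq h_{j+2}' \leq ((h_{j+1}')_{(j+1)})_1^1 \leq ((\tilde h_{j+1})_{(j+1)})_1^1$, and the pattern repeats. This two-line alternating bound is the entire remaining argument; no construction is needed.
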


\begin{proof}
If $\underline{f} = (1,f_0,\ldots,f_d)$ is the $f$-vector of a
simplicial complex $\Delta$, then its $h$-vector $\underline{h} =
(h_0,\ldots,h_d)$ is defined by the equality of polynomials in $t$
\[
\sum_{ i = 0}^d h_i t^i = \sum_{i=0}^d f_{i-1} t^i (1-t)^{d-i},
\]
where we allow the possibility that the last entries of
$\underline{h}$ vanish (see, e.g., \cite{St3}).

Let $\underline{h} = (h_0,\ldots,h_d)$,  $\underline{h'} =
(h_0',\ldots,h_d')$, and $\underline{\tilde{h}} =
(\tilde{h}_0,\ldots,\tilde{h}_d)$ be the sequences associated to the
given sequences $\underline{f}$, $\underline{f'}$, and
$\underline{\tilde{f}}$. According to \cite{St1}, Theorem 6,
$\underline{f}$ is the $f$-vector of a Cohen-Macaulay simplicial
complex if and only if $\underline{h}$ is an $O$-sequence. Hence, we
have to show that $\underline{\tilde{h}}$ is an $O$-sequence.

Using the above formula, we get
\[
\sum_{ i = 0}^d [h_i' - h_i] t^i = \al t^j (1-t)^{d-j}.
\]
It follows that
\[
h_i' - h_i =  \left \{\begin{array}{ll}
  0 & \text{if} \; i < j \\
  {\displaystyle \al (-1)^{i-j} \binom{d-j}{i-j} } & \text{if} \; i \ge j.
\end{array}
\right.
\]
Thus, we obtain $h_i = \tilde{h}_i = h_i'$ if $i < j$ and
\[
\begin{array}{cccccccc}
  h_j & \le &  \tilde{h}_j & \le & h_j'    \\
  h_{j+1}' & \le &  \tilde{h}_{j+1} & \le & h_{j+1} \\
  h_{j+2} & \le &  \tilde{h}_{j+2} & \le & h_{j+2}' \\
   &&  \vdots .\\
\end{array}
\]
Using that $\underline{h}$ and $\underline{h'}$ are $O$-sequences, we
get
\[
\tilde{h}_{j+1} \le h_{j+1} \le ((h_j)_{(j)})_1^1 \le
((\tilde{h}_j)_{(j)})_1^1
\]
and
\[
\tilde{h}_{j+2} \le h_{j+2}' \le ((h_{j+1}')_{(j+1)})_1^1 \le
((\tilde{h}_{j+1})_{(j+1)})_1^1,
\]
so similarly, for all $i \ge j$,
\[
\tilde{h}_{i+1} \le ((\tilde{h}_i)_{(i)})_1^1.
\]
Hence $\tilde{h}$ is an $O$-sequence, as desired.
\end{proof}

Using the above notation, one may wonder if, starting with suitable Cohen-Macaulay simplicial complexes having $f$-vectors $\underline{f}$ and $\underline{f'}$, one can construct a Cohen-Macaulay complex with $f$-vector $\underline{\tilde{f}}$. Using liaison theory this is indeed possible.

\begin{remark}\label{rem-obtainable-by-DBL} {\em
Recall that an ideal is called {\em glicci} \label{useglicci} if it is in the Gorenstein liaison class of a complete intersection. Each such ideal is Cohen-Macaulay. A squarefree monomial ideal is {\em squarefree glicci} (see \cite{squarefree-glicci})  if it can be linked in an even number of steps to a complete intersection such that all the ideals occurring in the even numbered steps are squarefree monomial ideals.

Let now $\Delta$ be a Cohen-Macaulay simplicial complex. Then there is a shifted Cohen-Macaulay complex $\Delta^s$ with the same $f$-vector as $\Delta$. According to \cite{squarefree-glicci}, Corollary 3.4, the shifted complex $\Delta^s$ is squarefree glicci. Actually, the proof in \cite{squarefree-glicci} gives the even stronger result that $\Delta^s$ can be obtained from the product over the boundary complexes of simplices by a finite number of basic double links, where the result of each basic double link is a Cohen-Macaulay simplicial complex.

Using the notation of Proposition \ref{prop-ICP-for-f-CM-vectors}, let now $\Delta$ be a shifted Cohen-Macaulay complex $\Delta$ with $f$-vector $\underline{f}$. Then Proposition \ref{prop-ICP-for-f-CM-vectors} provides the existence of a shifted Cohen-Macaulay complex $\tilde{\Delta}$ with $f$-vector $\underline{\tilde{f}}$. It follows that if $f_0 = \tilde{f_0}$, then $\tilde{\Delta}$ and  $\Delta$ are related by a finite sequence of basic double links and, possibly, some links relating two products of boundary complexes of $f_0 - d$ simplices each.
}
\end{remark}

We now turn to pure $f$-vectors of type 2.  While these are combinatorially simple, they possess interesting properties that we record here.  To set the notation, we will let $e$ be the socle degree of the $f$-vector thought of as a pure $O$-sequence; that is, we write
\[
\underline{f}=(f_{-1}=1,f_0,\dots ,f_{e-2},f_{e-1}=2).
\]
We have two facets of dimension $e-1$ and $f_0$ vertices.  Let $h$ be the number of vertices common to both facets, so $f_0 = 2e-h$.  Hence $\underline{f}$ can be thought of as the pure $O$-sequence generated by two squarefree monomials having $h$ variables in common.

\begin{remark} \label{formula}
{\em
It can be shown that the corresponding Stanley-Reisner ring is Cohen-Macaulay if and only if $h = e-1$, although we will not need this fact.  Furthermore, for any $h$ we have
\[
f_i = 2 \cdot \binom{e}{i+1} - \binom{h}{i+1}.
\]
}
\end{remark}

\begin{prop}\label{fvtr-ineq} Using the above notation, let
\[
\underline{f}^{(e,h)}=\underline{f}=(f_{-1}=1,f_0,\dots ,f_{e-2},f_{e-1}=2)
\]
 be a pure $f$-vector of type 2. Then, for all $a=0,1,\dots ,\left \lfloor \frac{e}{2} \right \rfloor -1$, we have
$$f_{\left \lfloor \frac{e}{2} \right \rfloor -a-1}> f_{\left \lfloor \frac{e+1}{2} \right \rfloor +a}.$$
\end{prop}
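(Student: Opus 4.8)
The plan is to reduce the statement to the closed formula recorded in Remark~\ref{formula}. Write $f_i = 2\binom{e}{i+1} - \binom{h}{i+1}$, where $h\in\{0,1,\dots,e-1\}$ is the number of vertices common to the two facets (necessarily $h\le e-1$, since the facets are distinct; the value $h=e-1$ is allowed, and is the Cohen--Macaulay case). Put $k:=\lfloor (e+1)/2\rfloor+a = e-\lfloor e/2\rfloor+a$; then $\lfloor e/2\rfloor-a-1 = e-k-1$, and as $a$ runs over $0,1,\dots,\lfloor e/2\rfloor-1$ the integer $k$ runs over $\lceil e/2\rceil,\lceil e/2\rceil+1,\dots,e-1$. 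So the assertion is equivalent to $f_{e-k-1}>f_k$ for all $k$ with $\lceil e/2\rceil\le k\le e-1$, and using the symmetry $\binom{e}{e-k}=\binom{e}{k}$ one computes
\[
f_{e-k-1}-f_k \;=\; 2\left[\binom{e}{k}-\binom{e}{k+1}\right]-\left[\binom{h}{e-k}-\binom{h}{k+1}\right].
\]

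The heart of the argument will be the inequality
\[
\binom{h}{e-k}-\binom{h}{k+1}\;\le\;\binom{e}{k}-\binom{e}{k+1}\qquad (0\le h\le e-1).
\]
To prove it I would show that the function $\nu\mapsto\binom{\nu}{e-k}-\binom{\nu}{k+1}$ is non-decreasing on $\{0,1,\dots,e-1\}$: its increment from $\nu$ to $\nu+1$ equals $\binom{\nu}{e-k-1}-\binom{\nu}{k}$, and since $k>\tfrac{e-1}{2}\ge\tfrac{\nu}{2}$ while $(e-k-1)+k=e-1\ge\nu$, the unimodality and symmetry of the row $\binom{\nu}{0},\dots,\binom{\nu}{\nu}$ force $\binom{\nu}{e-k-1}\ge\binom{\nu}{k}$ (the entry $e-k-1$ is at least as close to the center $\nu/2$ as $k$ is --- checked by separating the cases $e-k-1\le\nu/2$ and $e-k-1>\nu/2$). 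Evaluating at the largest argument $\nu=e-1$ and applying Pascal's rule gives $\binom{e-1}{e-k}-\binom{e-1}{k+1}=\binom{e-1}{k-1}-\binom{e-1}{k+1}=\binom{e}{k}-\binom{e}{k+1}$, which is the claimed bound.

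Granting this, the proposition is immediate:
\[
f_{e-k-1}-f_k\;\ge\;2\left[\binom{e}{k}-\binom{e}{k+1}\right]-\left[\binom{e}{k}-\binom{e}{k+1}\right]\;=\;\binom{e}{k}-\binom{e}{k+1}\;>\;0,
\]
the last inequality holding because $k\ge\lceil e/2\rceil>\tfrac{e-1}{2}$. The only genuine obstacle is the elementary but slightly delicate verification that the binomial-row function above is non-decreasing; the rest is bookkeeping with indices and Pascal's identity. As sanity checks, the extreme case $a=\lfloor e/2\rfloor-1$ reads $f_0=2e-h>2=f_{e-1}$ (true since $h\le e-1$), and when $h=e-1$ the difference collapses to $f_{e-k-1}-f_k=\binom{e}{k}-\binom{e}{k+1}$ directly.
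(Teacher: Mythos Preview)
Your proof is correct and follows essentially the same route as the paper: both reduce to the formula $f_i=2\binom{e}{i+1}-\binom{h}{i+1}$, establish the base case $h=e-1$ via Pascal's identity and strict unimodality of the row $\binom{e}{\cdot}$, and handle general $h$ by the inequality $\binom{\nu}{e-k-1}\ge\binom{\nu}{k}$ (your increment step is exactly the paper's inductive step $\binom{h-1}{\lfloor e/2\rfloor-a-1}\ge\binom{h-1}{\lfloor(e+1)/2\rfloor+a}$, just reindexed). Your packaging of the backward induction on $h$ as monotonicity of $\nu\mapsto\binom{\nu}{e-k}-\binom{\nu}{k+1}$ is a slightly cleaner presentation of the same idea.
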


\begin{proof} Recall from Remark \ref{formula} that $f_i=2\binom{e}{i+1}-\binom{h}{i+1}$. We will  show that, for any integers $e\geq 2$, $h\leq e-1$ and $0\leq a\leq \left \lfloor \frac{e}{2} \right \rfloor -1$, we have

\begin{equation}\label{inn}
2\binom{e}{\left \lfloor \frac{e}{2} \right \rfloor -a}-\binom{h}{\left \lfloor \frac{e}{2} \right \rfloor -a}> 2\binom{e}{\left \lfloor \frac{e+1}{2} \right \rfloor +a+1}-\binom{h}{\left \lfloor \frac{e+1}{2} \right \rfloor +a+1},
\end{equation}
where as usual we set $\binom{n}{m}=0$ whenever $n<m$ or $m<0$. The  idea here will be to keep $e$ and $a$ fixed and to do the induction on $h$ backwards, that is, showing that if Inequality (\ref{inn}) holds for any given $h\leq e-1$, then it also holds for $h-1$.

We first prove  Inequality (\ref{inn}) for $h=e-1$, i.e., we want to show that

\begin{equation}\label{e-1}
2\binom{e}{\left \lfloor \frac{e}{2} \right \rfloor -a}-\binom{e-1}{\left \lfloor \frac{e}{2} \right \rfloor -a}> 2\binom{e}{\left \lfloor \frac{e+1}{2} \right \rfloor +a+1}-\binom{e-1}{\left \lfloor \frac{e+1}{2} \right \rfloor +a+1}.
\end{equation}

Using the Pascal triangle equality, the symmetry of binomial coefficients, and the identity $e=\left \lfloor \frac{e+1}{2} \right \rfloor +\left \lfloor \frac{e}{2} \right \rfloor $, Inequality (\ref{e-1}) can easily be rewritten as

\begin{equation}\label{hhh}
\binom{e}{\left \lfloor \frac{e+1}{2} \right \rfloor +a}+\binom{e-1}{\left \lfloor \frac{e+1}{2} \right \rfloor +a}> \binom{e}{\left \lfloor \frac{e+1}{2} \right \rfloor +a+1}+\binom{e-1}{\left \lfloor \frac{e+1}{2} \right \rfloor +a}.
\end{equation}

It is well-known that, for any given $n$, the binomial coefficients $\binom{n}{m}$ are a strictly unimodal sequence in $m$, for $m=0,1,\dots ,n$, with a single peak corresponding to $m=\frac{n}{2} $ if $n$ is even, and two peaks corresponding to  $m=\lfloor \frac{n\pm 1}{2} \rfloor $ if $n$ is odd. Therefore, since  $0\leq a \leq \left \lfloor \frac{e}{2} \right \rfloor -1$, the  chain of inequalities
$$\left \lfloor \frac{e+1}{2} \right \rfloor \leq \left \lfloor \frac{e+1}{2} \right \rfloor +a < \left \lfloor \frac{e+1}{2} \right \rfloor +a+1 \leq e$$
yields strict inequality  between the first terms of the two sides of Inequality (\ref{hhh}). This completes the proof of the result for $h=e-1$.

Assume now that Inequality (\ref{inn}) holds for a given integer $h\leq e-1$, and let us prove it for $h-1$. That is, we want to show that

\begin{equation}\label{wwww}
2\binom{e}{\left \lfloor \frac{e}{2} \right \rfloor -a}-\binom{h-1}{\left \lfloor \frac{e}{2} \right \rfloor -a}> 2\binom{e}{\left \lfloor \frac{e+1}{2} \right \rfloor +a+1}-\binom{h-1}{\left \lfloor \frac{e+1}{2} \right \rfloor +a+1}.
\end{equation}

Employing the Pascal triangle equality, the previous inequality becomes

\begin{equation}\label{h-1}
\begin{array}{l}
2  \binom{e}{\left \lfloor \frac{e}{2} \right \rfloor -a}-\binom{h}{\left \lfloor \frac{e}{2} \right \rfloor -a} +\binom{h-1}{\left \lfloor \frac{e}{2} \right \rfloor -a-1} \\
\hbox{\hspace{3cm}} > 2\binom{e}{\left \lfloor \frac{e+1}{2} \right \rfloor +a+1}-\binom{h}{\left \lfloor \frac{e+1}{2} \right \rfloor +a+1}+\binom{h-1}{\left \lfloor \frac{e+1}{2} \right \rfloor +a}.
\end{array}
\end{equation}

The first two summands of each side of Inequality (\ref{h-1}) are the same as those of Inequality (\ref{inn}). Therefore, it is now enough to show that

\begin{equation}\label{ciao}
\binom{h-1}{\left \lfloor \frac{e}{2} \right \rfloor -a-1}\geq \binom{h-1}{\left \lfloor \frac{e+1}{2} \right \rfloor +a}.
\end{equation}

It is easy to see that, for $a\geq 0$ and $h\leq e-1$, we have  $\left \lfloor \frac{e+1}{2} \right \rfloor +a \geq \frac{h-1}{2}.$ Hence, using the unimodality of binomial coefficients, one moment's thought shows  that Inequality (\ref{ciao}) holds once we have simultaneously $\left \lfloor \frac{e}{2} \right \rfloor -a-1 \leq \left \lfloor \frac{e+1}{2} \right \rfloor +a$ and $h-1 -(\left \lfloor \frac{e}{2} \right \rfloor -a-1 ) \leq \left \lfloor \frac{e+1}{2} \right \rfloor +a$. But the  former inequality follows immediately from the assumption $a\geq 0$, and the latter  from  $h\leq e-1$.

This completes the proof of Inequality (\ref{wwww}) and the result follows.
\end{proof}

As a consequence, there is a string of inequalities that includes all the entries of the $f$-vector in the case of type two:

\begin{corollary} \label{string of ineq}
Let
\[
\underline{f} = (1,f_0,f_1,\dots,f_{e-2},2)
\]
be a pure $f$-vector of type 2.  If $e$ is even then
\[
f_{\frac{e}{2} -1} > f_{\frac{e}{2}} \geq f_{\frac{e}{2} -2} > f_{\frac{e}{2} +1} \geq \dots > f_{e-2} \geq f_0 > 2.
\]
If $e$ is odd then
\[
f_{\frac{e-1}{2}} \geq f_{\frac{e-1}{2}-1}  > f_{\frac{e+1}{2}} \geq f_{\frac{e-1}{2}-2} > f_{\frac{e+1}{2}+1} \geq \dots >  f_{e-2} \geq f_0 > 2.
\]
\end{corollary}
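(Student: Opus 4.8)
The plan is to derive the corollary purely by interleaving two families of inequalities that are already available: the strict inequalities of Proposition \ref{fvtr-ineq}, and Hibi's flawlessness (Theorem \ref{hibi}) written in the form $f_j \le f_{e-2-j}$ for $0 \le j \le \lfloor e/2\rfloor - 1$. The latter is just the translation of $h_{j+1} \le h_{e-j-1}$ under the identification $h_i = f_{i-1}$ (so $h_0 = f_{-1} = 1$, $h_e = f_{e-1} = 2$), and it is exactly the assertion that $f_j$ is at most its symmetric partner $f_{e-2-j}$ about $(e-2)/2$. The key observation is that Proposition \ref{fvtr-ineq} compares each ``low'' entry $f_{\lfloor e/2\rfloor - a - 1}$ with the ``high'' entry $f_{\lfloor (e+1)/2\rfloor + a}$ \emph{strictly}, whereas flawlessness links that same high entry $f_{\lfloor (e+1)/2\rfloor + a}$ back up to the next low entry $f_{\lfloor e/2\rfloor - a - 2}$, since these two indices are symmetric partners. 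Concatenating the two relations for $a = 0, 1, 2, \dots$ produces precisely the claimed alternating chain.

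First I would treat $e$ even. Then $\lfloor e/2\rfloor = \lfloor (e+1)/2\rfloor = e/2$, so Proposition \ref{fvtr-ineq} reads $f_{e/2 - 1 - a} > f_{e/2 + a}$ for $0 \le a \le e/2 - 1$, and flawlessness with $j = e/2 - 2 - a$ gives $f_{e/2 - 2 - a} \le f_{e/2 + a}$ for $0 \le a \le e/2 - 2$. Writing these in order yields $f_{e/2-1} > f_{e/2} \ge f_{e/2-2} > f_{e/2+1} \ge f_{e/2 - 3} > \cdots$; the descending ``low'' indices run down to $f_0$, the ascending ``high'' indices run up to $f_{e-2}$, and the last links are $f_1 > f_{e-2}$ (Proposition \ref{fvtr-ineq} with $a = e/2 - 2$) and $f_{e-2} \ge f_0$ (flawlessness with $j = 0$). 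Finally $f_0 > 2$ because $f_{e-1} = 2$ and Proposition \ref{fvtr-ineq} with $a = e/2 - 1$ gives $f_0 > f_{e-1}$. A short index count ($e/2$ ``low'' terms together with $e/2 - 1$ ``high'' terms is $e-1$ terms, namely $f_0,\dots,f_{e-2}$) confirms that no entry is skipped, so the displayed string contains every $f_i$.

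For $e$ odd the only change is that $\lfloor e/2\rfloor = (e-1)/2$ while $\lfloor (e+1)/2\rfloor = (e+1)/2$, so Proposition \ref{fvtr-ineq} now reads $f_{(e-3)/2 - a} > f_{(e+1)/2 + a}$, and the chain opens with one extra flawlessness inequality, $f_{(e-1)/2} \ge f_{(e-3)/2}$ (this is $j = (e-3)/2$ in $f_j \le f_{e-2-j}$). After that the same interleaving applies verbatim, terminating again with $f_{e-2} \ge f_0 > 2$; again an index count shows the chain uses each of $f_0,\dots,f_{e-2}$ exactly once. In both parities the $>$'s come only from Proposition \ref{fvtr-ineq} and the $\ge$'s come only from Theorem \ref{hibi}, so the strict/weak pattern displayed is the one produced by the construction.

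I do not expect a genuine obstacle here: all the mathematical content is already packaged in Proposition \ref{fvtr-ineq}, and the corollary is essentially a repackaging. The only thing needing care is the floor-function bookkeeping distinguishing even from odd $e$, together with the verification that the descending chain of ``low'' indices and the ascending chain of ``high'' indices exhaust $\{0,1,\dots,e-2\}$ exactly once — this is what guarantees that the string of inequalities really does include \emph{all} the entries of the $f$-vector, which is the point of the statement. (One could alternatively verify the $\ge$'s directly from the formula $f_i = 2\binom{e}{i+1} - \binom{h}{i+1}$ of Remark \ref{formula} and the symmetry of binomial coefficients, but invoking Theorem \ref{hibi} is cleaner and keeps the argument short.)
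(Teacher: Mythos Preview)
Your proposal is correct and takes essentially the same approach as the paper, which simply says ``Combine Proposition \ref{fvtr-ineq} with Hibi's Theorem \ref{hibi}.'' You have spelled out in full detail exactly how that combination works, including the floor-function bookkeeping and the verification that every entry $f_0,\dots,f_{e-2}$ appears in the chain; this is precisely the content the paper leaves implicit.
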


\begin{proof}
Combine Proposition \ref{fvtr-ineq} with Hibi's Theorem \ref{hibi}.
\end{proof}

This last result has the following consequence, which is unknown for pure $O$-sequences of type two in general, although in Corollary \ref{pure 2 unimodal} we proved it for the case of three variables.

\begin{corollary}
A pure $f$-vector of type 2 is unimodal.
\end{corollary}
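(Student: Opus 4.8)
\emph{Proof plan.} The idea is to extract unimodality directly from the chain of inequalities in Corollary~\ref{string of ineq}, using only the elementary observation that $a\ge b$ together with $b>c$ forces $a>c$, and likewise $a>b\ge c$ forces $a>c$. Write the $f$-vector as $\underline f=(f_{-1}=1,f_0,\ldots,f_{e-2},f_{e-1}=2)$; from Remark~\ref{formula} we have $f_0=2e-h\ge e+1>2=f_{e-1}>1=f_{-1}$, so the two endpoints cause no trouble. It then remains to show that each half of the sequence is monotone in the appropriate direction, and Corollary~\ref{string of ineq} has essentially already arranged all the entries into a single transitive chain that does exactly this.

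First I would treat the case $e$ even, where Corollary~\ref{string of ineq} gives
\[
f_{\frac e2-1}>f_{\frac e2}\ge f_{\frac e2-2}>f_{\frac e2+1}\ge f_{\frac e2-3}>\cdots>f_{e-2}\ge f_0>2,
\]
a chain whose inequalities alternate $>,\ge,>,\ge,\ldots$. Reading off the entries in the odd-numbered slots (each consecutive pair sitting inside a block $f_{\frac e2-j}>f_{\frac e2+j-1}\ge f_{\frac e2-j-1}$) yields $f_0<f_1<\cdots<f_{\frac e2-1}$, and reading the even-numbered slots (each pair inside a block $f_{\frac e2+j-1}\ge f_{\frac e2-j-1}>f_{\frac e2+j}$) yields $f_{\frac e2}>f_{\frac e2+1}>\cdots>f_{e-2}$. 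Combining these with the opening inequality $f_{\frac e2-1}>f_{\frac e2}$, with $f_{e-2}>2=f_{e-1}$, and with $f_{-1}=1<f_0$, we obtain
\[
f_{-1}\le f_0<f_1<\cdots<f_{\frac e2-1}>f_{\frac e2}>\cdots>f_{e-2}>f_{e-1},
\]
which exhibits $\underline f$ as unimodal with peak in degree $\frac e2-1$.

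The case $e$ odd is entirely parallel. Here Corollary~\ref{string of ineq} gives
\[
f_{\frac{e-1}2}\ge f_{\frac{e-1}2-1}>f_{\frac{e+1}2}\ge f_{\frac{e-1}2-2}>\cdots>f_{e-2}\ge f_0>2,
\]
with inequality pattern $\ge,>,\ge,>,\ldots$. The same transitivity chasing on the two interleaved subsequences produces $f_0<f_1<\cdots<f_{\frac{e-3}2}\le f_{\frac{e-1}2}$ on one side and $f_{\frac{e-1}2}>f_{\frac{e+1}2}>\cdots>f_{e-2}>f_{e-1}$ on the other, so $\underline f$ is unimodal with peak in degree $\frac{e-1}2$. (Alternatively, one may invoke Hibi's Theorem~\ref{hibi} for the non-decreasing first half and use Corollary~\ref{string of ineq} only for the strictly decreasing tail.)

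The argument is routine: the only point demanding care is correctly indexing the interleaved chain in each parity so that the transitive reductions land on consecutive $f$-indices, and I do not anticipate any genuine obstacle, since Corollary~\ref{string of ineq} already carries out the substantive work.
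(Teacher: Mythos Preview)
Your proposal is correct and takes the same approach as the paper: the paper states this result as an immediate consequence of Corollary~\ref{string of ineq} without further proof, and what you have written is precisely the routine unpacking of that chain of inequalities into the two monotone halves. Your alternative observation---that Hibi's Theorem~\ref{hibi} already gives the non-decreasing first half, so Corollary~\ref{string of ineq} is only needed for strict decrease past the peak---is also valid and perhaps slightly cleaner.
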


\begin{remark}{\em
Corollary \ref{string of ineq} fails for pure $O$-sequences of type 2.  Indeed, the following $h$-vectors are pure $O$-sequences of type 2:
\[
{\tt (1,2,3,4,4,2)} \hbox{\ \ \ and \ \ \ } {\tt (1,3,5,7,6,2)}
\]
(cf. the appendix in \cite{GHMS}).  It also fails for pure $f$-vectors of (sufficiently) higher type, as Stanley's example shows (see page \pageref{stanley example}).
}
\end{remark}


\chapter{Some open or open-ended problems} \label{final comments}

In this chapter, we collect some of the Questions which naturally come up from our work, apart from the ones stemming from the conjectures posed in this monograph.\\
\\
{\bf 9.1. Unimodality of pure $O$-sequences.} First, in the light of the unimodality result for pure $O$-sequences of type 2 and codimension three, and the non-unimodality  in higher type  (see \cite{BZ}), we  are led to pose the following questions:

\begin{question}\label{q1} \rm
  \begin{enumerate}
 \item[(a)]
What is the largest type of pure $O$-sequences of codimension 3 forcing unimodality?  How about higher codimension?
\item[(b)] Does the largest type forcing unimodality for pure $O$-sequences of codimension 3 coincide with  the largest type forcing unimodality for  arbitrary Artinian level algebras of codimension 3?
\item[(c)] What is  the largest type forcing unimodality among
  Artinian reductions of reduced, level sets of points in $\mathbb
  P^3$?  How about higher codimension?
  \item[(d)] If we fix the codimension and the socle type, what is the behavior for the number of non-unimodal pure $O$-sequences as the socle degree gets large?  Is this number eventually zero?
      \item[(e)] Among subsets of $t$ monomials of degree $d$ in $r$ variables,
      ``how many" give non-unimodal pure $O$-sequences?  Does the number giving non-unimodal pure $O$-sequences divided by the total number of subsets of $t$ monomials of degree $d$ go to zero as $d$ goes to infinity?  Does the answer depend on the number of variables?
      \item[(f)] What is the largest socle degree of pure $O$-sequences of codimension 3 forcing unimodality?  
\end{enumerate}
\end{question}

So far, we know from Corollary~\ref{pure 2 unimodal} that pure $O$-sequences of type $2$
and codimension $3$ are all unimodal.  Boyle \cite{boyle} has shown that also pure $O$-sequences of type $3$ and codimension $3$ are unimodal.  On the other hand, we know from Example \ref{type 14} that there are non-unimodal pure $O$-sequences of type 14 and codimension 3, and from Remark \ref{soc deg 16} that there are non-unimodal pure $O$-sequences of socle degree 12.  The first example of a codimension 3 non-unimodal pure $O$-sequence was obtained in \cite{BZ}, where the authors found 
non-unimodal pure $O$-sequences of type 37 and codimension $3$.  Concerning Question (f), notice that if we do not specify the codimension then Theorem \ref{3} and Example \ref{nonunimodal soc deg 4} provide the answer.

Note that Question \ref{q1}(b) is open in codimension 3 and 4, but fails in codimension 5 and higher codimension, where Gorenstein non complete intersections may be non-unimodal, essentially by an example of  Bernstein and Iarrobino  in  \cite{BI}.

Observe that any monomial Artinian algebra lifts to the coordinate ring of a reduced set of points.  Thus if  Question~\ref{q1}(b) has an affirmative answer, and if we can identify this type, then we have an answer to (c).  However, we conjecture that the answer to (b) is no, in general.

Concerning arbitrary codimension, our computer experiments  together with Theorem
\ref{prop-WLP-type2} give some evidence for an affirmative
answer - at least in low codimension - to the  following question:

\begin{question}
\rm
Are all pure $O$-sequences of type 2 unimodal?
\end{question}

What about unimodality for special classes of pure $f$-vectors? Among the several open questions, let us recall part b) of Stanley's Twenty-Fifth Problem, from his article \cite{StImu} for the year 2000 IMU-AMS Volume ``Mathematics: Frontiers and Perspectives'', and the analogous question on matroid $h$-vectors: 

\begin{question} \rm
\begin{enumerate}
 \item[(i)] Are all $f$-vectors of matroid simplicial complexes unimodal, or even log-concave?
 \item[(ii)] What about matroid $h$-vectors (which, according to Stanley's conjecture, are all pure $O$-sequences)?
 \end{enumerate}
\end{question}

\noindent
{\bf 9.2.  The Interval Conjecture.} In Proposition \ref{prop-ICP-for-f-CM-vectors} we showed that
$f$-vectors of  Cohen-Macaulay simplicial complexes have the interval
property. Thus, it is  natural to wonder if this is also true for
the larger class of $f$-vectors of pure simplicial complexes.

\begin{question}
\rm Suppose that, for some positive integer $\alpha $, both
$\underline{f} = (1,f_0,\ldots, \\
f_j, f_{j+1},
f_{j+2},\ldots,f_d)$ and $\underline{f'} = (1,f_0,\ldots,f_j,f_{j+1}
+ \alpha, f_{j+2},\ldots,f_d)$ are $f$-vectors of pure simplicial
complexes. Is it then true that, for each integer $\beta =0,1,...,
\alpha$, also $\underline{\tilde{f}} = (1,f_0,\ldots,f_j,f_{j+1} +
\beta, f_{j+2},\ldots,f_d)$ is an $f$-vector of a pure simplicial
complex?
\end{question}

As we have mentioned in the Interval Conjecture chapter, there might be a potentially strong (if still largely unexplored) connection between the ICP and Stanley's matroid $h$-vector conjecture. Hence we recall here  what is arguably one of the main open problems in this area.

\begin{question}
\rm (Stanley's conjecture.) Are all matroid $h$-vectors pure $O$-sequences?
\end{question}

\noindent
{\bf 9.3. Enumerating pure $O$-sequences.} With regard to Chapter \ref{enum}, we have the following questions:

\begin{question}{\rm
\begin{enumerate}
\item[(a)] Determine, possibly with a closed formula, the number $P^r(e)$ of pure $O$-sequences of codimension $r$ and socle degree $e$.
\item[(b)]  Determine, possibly with a closed formula, the number $P^r_t(e)$ of pure $O$-sequences of codimension $r$, socle degree $e$ and type $t>1$.
\item[(c)]  Find a closed asymptotic formula for the number $L^r_t(e)$ of level Hilbert functions of codimension $r$, socle degree $e$ and type $t>1$, when $r\rightarrow +\infty $.
\item[(d)] Study the topics of this chapter from the opposite perspective: when the codimension $r$ is fixed and the socle degree $e$ is the variable.
\item[(e)] Suppose $r>1$. Is
\[
\lim_{e \rightarrow \infty} \frac{P_t^r(e)}{L_t^r(e)}
\]
zero?    (From the results of Chapter \ref{enum}, notice that this limit is zero when $t=1$, for any $r > 1$.)
If the answer is negative, for which values $r$ and $t$ does the limit exist, and
 what is this limit?  Related to this would be to replace the numerator and denominator by $\sum_t P_t^r(e)$ and $\sum_t L_t^r(e)$, respectively, and ask the analogous questions.
\end{enumerate}

} \end{question}


\appendix

\chapter{Collection of definitions and notation} \label{Coll. dfns}

In this appendix  and as an aid to the reader, we gather together the
main definitions and notation that we have used throughout the work.

\begin{definition}[Order ideal, pure order ideal, pure $O$-sequence,
  p. \!\!\pageref{useorderideal}]
  A \emph{(monomial) order ideal} is a finite collection $X$ of (monic) monomials such that, whenever $M\in X$ and $N$ divides $M$, then $N\in X$.

If all (say $t$) maximal monomials of $X$ have the same degree, then $X$ is \emph{pure} (of type $t$).

A \emph{pure $O$-sequence} is the vector, $\underline{h}=(h_0=1,h_1,...,h_e)$,
counting the monomials in each degree of some pure order ideal.  See also Definition \ref{basicdefs}.
\end{definition}

\begin{definition}[Gorenstein link, p. \!\!\pageref{Glink}]
Two ideals, $I$ and $I'$, are {\em Gorenstein linked} if there is a Gorenstein ideal $\mathfrak a$ such that $\mathfrak a : I = I'$ and $\mathfrak a : I' = I$.
\end{definition}

\begin{definition}[basic double link,   p. \!\!\pageref{basicdoublelink}]
Let $J \subset I \subset R = k[x_1,\ldots,x_r]$ be homogeneous
ideals such that $\codim J = \codim I - 1$. Let $f \in R$ be a
 form of degree $d$ such that $J  : f = J$. Then the ideal $I' := f
\cdot I + J$ is called a {\em basic double link} of $I$. The name stems from the fact that $I'$ can be Gorenstein linked to $I$ in two
steps if $I$ is unmixed and $R/J$ is Cohen-Macaulay and generically
Gorenstein (\cite{KMMNP}, Proposition 5.10). 
\end{definition}

\begin{definition}[glicci, p. \!\!\pageref{useglicci}]
  An ideal $I$ in $R=k[x_1,x_2,\dots,x_r]$ is called {\em glicci} if it is in the Gorenstein liaison class of a complete intersection.
\end{definition}

\begin{definition}[Socle, level of type $t$, socle degree,
  p. \!\!\pageref{defsocle}; Cohen-Macaulay type, p. \!\!\pageref{useCMtype}]
  Given an Artinian standard graded $k$-algebra $A$, the {\em socle} of $A$ is defined as the annihilator of the homogeneous maximal ideal $\mathfrak{m}$, namely $$soc(A)=\{ a\in A \mid a{\mathfrak{m}}=0 \}.$$ We say that $A$ is {\em level of type $t$} if the socle of $A$ has dimension $t$ and is concentrated in one degree, usually called the {\em socle degree} of $A$ and denoted by $e$.

 For a graded Cohen-Macaulay algebra $A$, the {\em Cohen-Macaulay type} is the minimal number of generators of the canonical module of $A$. In the case when $A$ is artinian, this equals $\dim_k soc(A)$. We say that $A$ is {\em level} if  its Artinian reduction is level.
\end{definition}

\begin{definition}[Inverse system, p. \!\!\pageref{definversesystem}]
 For a homogeneous ideal $I\subseteq
R=k[x_1,x_2,\dots,x_r]$, we define the \emph{inverse system} \label{definversesystem} $R$-module
$I^\perp=\operatorname{ann}_{\mathcal R}(I)\subseteq\mathcal
R=k[y_1,y_2,\dots,y_r]$, where $R$ acts on $I^\perp $ by
contraction. That is, the external product is the linear action determined by
$$
x_i \circ y_1^{a_1}y_2^{a_2}\cdots y_r^{a_r} =
  \begin{cases}
    y_1^{a_1}y_2^{a_2}\cdots y_i^{a_i-1}\cdots y_r^{a_r},&\text{if $a_i>0$,}\\
    0,&\text{if $a_i=0$,}
  \end{cases}
$$
for any non-negative integers $a_1,a_2,\dots,a_r$. When considering only monomial ideals, we can simplify the notation by considering inverse system modules to also lie inside the  polynomial ring $R$. It follows that, in any degree $d$, the inverse system $I^\perp_d$ is spanned by the monomials of $R_d$ not in $I_d$. We will call the elements of $I^\perp_d$ \emph{derivatives}.
\end{definition}

\begin{definition}[p. \!\!\pageref{basic-defs}] \label{basicdefs}
 Let $\underline{h} = (h_0,\dots,h_e)$ be a sequence of positive integers.
 \begin{enumerate}
 \item $\underline{h} $ is said to be an {\em $O$-sequence} if it is the Hilbert
     function of some standard graded Artinian $k$-algebra.
 \item $\underline{h} $ is said to be a {\em pure $O$-sequence} if it is the Hilbert
     function of some standard graded monomial Artinian level $k$-algebra.
 \item  $\underline{h}$ is called {\em unimodal} if $h_0 \leq h_1
 \leq \dots \leq h_j \geq h_{j+1} \geq \dots \geq h_e$ for some
 $j$.
\item  $\underline{h}$ is said to be {\em flawless}  if $h_{i}\le
 h_{e-i}$ for $i\le \frac{e}{2}$.
\item We will say that $\underline{h}$ is {\em non-decreasing} if
 $h_{i+1}\ge h_{i}$ for all $i$.
\item $\underline{h}$ is called an {\em SI-sequence}  if it
 satisfies the following two conditions:
\begin{enumerate}
 \item$\underline{h}$ is {\em symmetric}, i.e.\ $h_{e-i} = h_i$ for
   all $i=0,\dots,\lfloor \frac{e}{2} \rfloor$.
 \item $(h_0,h_1-h_0,h_2-h_1,\dots,h_j-h_{j-1})$ is an $O$-sequence, where $j
     =
   \lfloor \frac{e}{2} \rfloor$; i.e.\ the ``first half'' of
   $\underline{h}$ is a {\em differentiable} $O$-sequence.
 \end{enumerate}
\end{enumerate}
\end{definition}

\begin{notation}[p. \!\!\pageref{binomialexpansion}]
Let $n$ and $d$ be positive integers. There exist uniquely
determined integers $k_d>k_{d-1}> \dots
>k_{\delta}\geq \delta \geq 1 $ such that
\[
n = {\textstyle n_{(d)}= \binom{k_{d}}{d}+ \binom{k_{d-1}}{d-1}+
  \cdots + \binom{k_{\delta}}{\delta} .}
\]
This is called the {\em
  $d$-binomial expansion of $n$}. Following \cite{BG}, for any
integer $a$ and $b$, we define
$$ {\textstyle (n_{(d)})^b_a= \binom{k_{d}+b }{ d+a}+\binom{k_{d-1}+b }{ d+a-1}+
  \cdots + \binom{k_{\delta}+b }{ \delta +a}}$$ where we set
$\binom{m}{p}=0$ whenever $p<0$ or $m<p$.
\end{notation}

\begin{definition}[pp. \!\!\pageref{almost}-\pageref{thmAsymptoticSocleDegree3}]
  For positive integers $r$, $e$ and $t$ we define the following sets:
  \begin{itemize}
  \item $O(r,e)= \{\text{$O$-sequences of codimension $r$ and socle degree $e$}\}$.
  \item $P(r,e)= \{\text{pure $O$-sequences of codimension $r$ and socle degree $e$}\}$.
  \item $P(r,e,t)= \{\text{pure $O$-sequences of codimension $r$, socle degree $e$ and type $t$}\}$.
 \item $L(r,e)= \{\text{level $O$-sequences of codimension $r$ and socle degree $e$}\}$.
 \item $L(r,e,t)= \{\text{level $O$-sequences of codimension $r$, socle degree $e$ and type $t$}\}$.
 \item $D(r,e)= \{\text{differentiable $O$-sequences of codimension $r$ and socle degree $e$}\}$.
  \item $M(r,e)= \{\text{$O$-sequences of codimension $r$ and socle degree at most $e$}\}$.
  \item $S(r,e)= \{\text{$O$-sequences of codimension at most $r$ and socle degree $e$}\}$.
  \item $SI(r,e)= \{\text{$SI$-sequences of codimension $r$ and socle degree $e$}\}$.
  \item $G(r,e)= L(r,e,1)= \{\text{Gorenstein $O$-sequences of codimension $r$ and socle}$
\end{itemize}
{\ }{\ }{\ }{\ }{\ }{\ }{\ }{\ }{\ }{\ }{\ }{\ }degree $e\}$.\\
\\
We define the following numbers:
\begin{itemize}
\item $O^r(e)  = \#O(r,e)$.
\item $P^r(e) = \#P(r,e)$.
\item $P_t^r(e) = \#P(r,e,t)$.
\item $L^r(e) = \#L(r,e)$.
\item $L_t^r(e) = \#L(r,e,t)$.
\item $D^r(e) = \#D(r,e)$.
\item $M^r(e) = \#D(r,e)$.
\item $S^r(e) = \#D(r,e)$.
\item $SI^r(e) = \#D(r,e)$.
\item $G^r(e) = \#G(r,e)$.
\item $P(t) = \sum_{r\ge 1}^\infty P_t^r(3)$.
\end{itemize}
\end{definition}

\begin{notation}[pp. \!\!\pageref{bmax}-\pageref{amax}]
In the special case of pure $O$-sequences, $(1,r,a,b)$, of socle degree $3$, we use the notation
\begin{itemize}
\item $r_{\min }(a,b)=\min\{r| (1,r,a,b)\in P(r,3)\}$.
\item $r_{\max }(a,b)=\max\{r| (1,r,a,b)\in P(r,3)\}$.
\item $a_{\min }(r,b)=\min\{a| (1,r,a,b)\in P(r,3)\}$.
\item $a_{\min }(r,b)=\max\{a| (1,r,a,b)\in P(r,3)\}$.
\item $b_{\min }(r,a)=\min\{b| (1,r,a,b)\in P(r,3)\}$.
\item $b_{\max }(r,a)=\max\{b| (1,r,a,b)\in P(r,3)\}$.
\end{itemize}
Sometimes, we simplify the notation by suppressing the dependence on
the arguments, and we simply write for instance $r_{\min }$.
\end{notation}

\begin{notation}[$c_e$, p. \!\!\pageref{c_e}; $p_r(e)$, p. \!\!\pageref{p_r}]
  In chapter 5,  we use the following notation for a positive integer $e$:
  $$
  c_e=\frac{\prod_{i=0}^{e-2}\binom{\binom{e+1}{2}-\binom{i+1}{2}-1}{i}}{(\binom{e+1}{2}-1)!};
  $$
  also, we define   $p_r(e)$ as the number of integer partitions of $e$ with exactly $r$ parts.
\end{notation}

\begin{definition}[Simplicial complex, face, facet, pure simplicial
  complex, $f$-vector, matroid, p. \!\!\pageref{simplicialcomplex}]
An {\em (abstract) simplicial complex} $\Delta $ on $[n]:= \{1,
\dots  ,n\}$ is a subset of the power set of $[n]$ which is  closed
under inclusion, i.e. if $F \subseteq G$ and $G \in \Delta $, then
$F \in \Delta $. The elements $F$ of $\Delta $ are called {\em
faces}, and the maximal elements under inclusion are called {\em
facets}. The simplicial complex $\Delta $ is said to be  {\em pure}
if all its facets have the same cardinality.  The {\em $f$-vector} of $\Delta $ is the vector $\underline{f_{\Delta }} = \underline{f} = (f_{-1} = 1 ,f_0 , \dots , f_{d-1})$, where $f_i$ counts the number of faces of $\Delta $ of cardinality $i+1$.

A simplicial complex  is called a {\em matroid} if, for all
$W\subseteq [n]$, the restriction $\Delta _W = \{F \in \Delta : F
\subseteq W \}$ is a pure simplicial complex.
\end{definition}

\begin{definition}[WLP, Weak Lefschetz element, $g$-element, SLP, p. \!\!\pageref{def of wlp}]
We say that  a standard graded Artinian algebra $A=R/I$
has the {\em Weak Lefschetz Property (WLP)}
if there is a linear form $L \in A_1$ such that, for all
integers $j$, the multiplication map
\[
\times L: A_{j} \to A_{j+1}
\]
has maximal rank, i.e.\ it is injective or surjective.   Such an element is called a {\em Lefschetz element} for $A$. When only considering injectivity, it will be called a {\em $g$-element}.

We say that $A$ has the {\em Strong Lefschetz Property (SLP)} if there is an $L$ such that
\[
\times L^s : A_j \rightarrow A_{j+s}
\]
 has maximal rank for all $j$ and all $s$.

We often abuse notation and say that $I$ has the WLP or the SLP when $R/I$ does.
\end{definition}


\backmatter
\printindex

\bibliographystyle{amsalpha}

\end{document}